\definecolor{dullmagenta}{rgb}{0.4,0,0.4}
\definecolor{darkblue}{rgb}{0,0,0.4}
\newtheorem{theorem}{Theorem}[section]
\newtheorem{lemma}[theorem]{Lemma}
\newtheorem{proposition}[theorem]{Proposition}
\newtheorem{corollary}[theorem]{Corollary}
\theoremstyle{definition}
\newtheorem{definition}[theorem]{Definition}
\newtheorem{example}[theorem]{Example}
\newtheorem{remark}[theorem]{Remark}
\begin{document}

\title[$\Delta$-extensions]{ FCP $\Delta$-extensions of rings}

\author[G. Picavet and M. Picavet]{Gabriel Picavet and Martine Picavet-L'Hermitte}
\address{Math\'ematiques \\
8 Rue du Forez, 63670 - Le Cendre\\
 France}
\email{picavet.mathu (at) orange.fr}

\begin{abstract} We consider ring extensions, whose set of all subextensions is stable under the formation of sums, the so-called $\Delta$-extensions. An integrally closed extension has the $\Delta$-property if and only it is a Pr\"ufer extension. We then give characterizations of FCP $\Delta$-extensions, using the fact that for FCP extensions, it is enough to consider integral FCP extensions. We are able to give substantial results. In particular, our work can be applied to extensions of number field orders because they have the FCP property.

\end{abstract} 

\subjclass[2010]{Primary:13B02, 13B21, 13B22, 06E05, 06C05;  Secondary: 13B30}

\keywords  {FIP, FCP extension, $\Delta$-extension,  minimal extension, integral extension, support of a module,  lattice, 
 modular lattice,
Boolean lattice, pointwise minimal extension}

\maketitle

\section{Introduction and Notation}

In this paper, we  work inside the category of commutative and unital rings, whose    epimorphisms will be involved. If  $R\subseteq S$ is a (ring) extension,  $[R,S]$  denote the set of all $R$-subalgebras of $S$.  
 An extension $R\subseteq S$ is said to have FCP (or is called an FCP extension) if the poset $([R,S), \subseteq)$ is both Artinian and Noetherian, which is equivalent to each chain in $[R,S]$ is finite.
  
The so-called $\Delta$-extensions have been the subject of many  papers. The seminal paper on the subject was authored by Gilmer and Huckaba \cite{GH}. A ring extension $R\subset S$ is called a {\it $\Delta$-extension} if $T+U\in [R,S]$ for each $T,U\in[R,S]$ ({\it i.e.} $T+U=TU$) \cite[Definition, page 414]{GH}. Although the notion of $\Delta$-extensions originates in Commutative Algebra, the lattice properties of  $[R,S]$ associated to an extension $R \subseteq S$  bring a new point of view to their theory.
 We will  explain what we are aiming to show about them in some contexts that have not  been  considered yet.
 
 We  consider lattices of the following form.
    For an extension $R\subseteq S$, the poset $([R,S],\subseteq)$ is a {\it complete} lattice,  where the supremum of any non-void subset  is the compositum of its elements, which we call {\it product} from now on and denote by $\Pi$ when necessary, and the infimum of any non-void subset is the intersection of its elements. 
    
      As a general rule, an extension $R\subseteq S$ is said to have some property of lattices if $[R,S]$ has this property.
 We use lattice definitions and properties described in \cite{NO}. 
 
Any undefined  material
 is explained at the end of the section or in the next sections.
 
 A representative example of the use of lattices is given by the following result. 
    A catenarian ({\it i.e.} verifying the Jordan-H\"older condition) integral FCP extension $R\subset S$, with $t$-closure $T$, is a $\Delta$-extension if and only if $R\subseteq T$  and $T \subseteq S$ are $\Delta$-extensions.
    Note also that an infra-integral (integral, with isomorphic residual field extensions) FCP extension has the $\Delta$-property if and only if it is modular.
    
In case we are dealing with an integrally closed extension, a characterization is immediately given as follows. Such extensions are $\Delta$-extensions if and only if they are Pr\"ufer extensions (defined by Knebush and Zhang \cite{KZ}) or equivalently they are normal pairs. This result is often reproved by authors working in some particular contexts.
  
We mainly consider FCP $\Delta$-extensions. The FCP condition allows us to prove results by induction. FCP extensions are of the form $R\subseteq\overline R\subseteq S$, where $\overline R$ is the integral closure of $R$ in $S$ and $\overline R\subseteq S$ is Pr\"ufer \cite[Proposition 1.3]{Pic 5}. We show that these extensions are $\Delta$-extensions if and only if $R\subseteq\overline R$ is a $\Delta$-extension (Theorem \ref{9.7}). Therefore, we need only to consider integral FCP extensions. Interesting examples of integral FCP extensions are given by extensions of number field orders. We exhibit examples of such extensions, showing that everything is possible. Note also that an extension $R\subset R[t]$ where $t$ is either idempotent or nilpotent of index 2, and such that $R$ is a SPIR, is a $\Delta$-extension.

 Section 2 is  devoted  to some recalls and results on ring extensions  and their lattice properties. 
 
 The general properties of $\Delta$-extensions are given in Section 3. 
 
In Section 4, the main result is Theorem \ref{9.24}, where we give a characterization of $\Delta$-extensions using the canonical decomposition of an integral extension $R\subseteq S$ through the seminormalization and the $t$-closure of $R$ in $S$. Actually the $\Delta$-property of $R\subseteq S$ is equivalent to the $\Delta$-property of all the paths of the canonical decomposition, plus an extra lattice condition relative to some $B_2$-subextensions. The case of infra-integral extensions is specially considered, while length two extensions are strongly involved. For example, an infra-integral FCP extension of length two is a $\Delta$-extension.

The paper ends in Section 5 with Examples of $\Delta$-extensions. In particular, we consider Boolean extensions,  pointwise minimal extensions, extensions of the form $R\subset R^n$. These special cases allow to characterize more generally some $\Delta$-extensions.
 
We denote by $(R:S)$ the conductor of $R\subseteq S$, and by $\overline R$ the integral closure of $R$ in $S$.
 We set $]R,S[:=[R,S]\setminus\{R,S\}$ (with a similar definition for $[R,S[$ or $]R,S]$).      
  
The extension $R\subseteq S$ is said to have FIP (for the ``finitely many intermediate algebras property") or is an FIP extension if $[R,S]$ is finite. A {\it chain} of $R$-subalgebras of $S$ is a set of elements of $[R,S]$ that are pairwise comparable with respect to inclusion. We will say that $R\subseteq S$ is {\it chained} if $[R,S]$ is a chain. We  also say that the extension $R\subseteq S$ has FCP (resp.; FMC) (or is an FCP (resp.; FMC) extension) if each chain in $[R,S]$ is finite (resp.; there exists a maximal finite chain). 
   Clearly,  each extension that satisfies FIP must also satisfy FCP 
 and each extension that satisfies FCP must also satisfy FMC. 
Dobbs and the authors characterized FCP and FIP extensions \cite{DPP2}.
  
Our principal tool  are  the minimal (ring) extensions, a concept that was introduced by Ferrand-Olivier \cite{FO}. 
In our context, minimal extensions coincide with lattice atoms. They are completely known (see Section 2).  Recall that an extension $R\subset S$ is called {\it minimal} if $[R, S]=\{R,S\}$. 
  The key connection between the above ideas is that if $R\subseteq S$ has FCP, then any maximal (necessarily finite) chain $\mathcal C$ of $R$-subalgebras of $S$, $R=R_0\subset R_1\subset\cdots\subset R_{n-1}\subset R_n=S$, with {\it length} $\ell(\mathcal C):=n <\infty$, results from juxtaposing $n$ minimal extensions $R_i\subset R_{i+1},\ 0\leq i\leq n-1$. 
An FCP extension is finitely generated, and  (module) finite if integral.
For an FCP extension $R\subseteq S$, the {\it length} $\ell[R,S]$ of $[R,S]$ is the supremum of the lengths of chains of $R$-subalgebras of $S$. Notice  that   this length is finite
and   there 
 {\it does} exist some maximal chain of $R$-subalgebras of $S$ with length $\ell[R,S]$ \cite[Theorem 4.11]
{DPP3}.

The characteristic of a field $k$ is denoted by $\mathrm{c}(k)$. Finally,  $|X|$ is the cardinality of a set $X$, $\subset$ denotes proper inclusion and, for a positive integer $n$, we set $\mathbb{N}_n:=\{1,\ldots,n\}$.  

 \section {Recalls and results on ring extensions}
This section is devoted to two types of recalls: commutative rings and lattices.

\subsection{Rings and ring extensions}
  A {\it local} ring is here what is called elsewhere a quasi-local ring. As usual, Spec$(R)$ and Max$(R)$ are the set of prime and maximal ideals of a ring $R$.  
     For an extension $R\subseteq S$ and an ideal $I$ of $R$, we write $\mathrm{V}_S(I):=\{P\in\mathrm{Spec }(S)\mid I\subseteq P\}$. 
The support of an $R$-module $E$ is $\mathrm{Supp}_R(E):=\{P\in\mathrm{Spec }(R)\mid E_P\neq 0\}$, and $\mathrm{MSupp}_R(E):=\mathrm{Supp}_R(E)\cap\mathrm{Max}(R)$. 
 Note that  if $R\subseteq S$ is an FMC (or FCP) extension, then $|\mathrm{Supp}_R(S/R)|<\infty$ \cite[Corollary 3.2]{DPP2}.
If $E$ is an $R$-module, ${\mathrm L}_R(E)$ (also denoted 
${\mathrm L}(E)$) is its length as a module.

If $R\subseteq S$ is a ring extension and $P\in\mathrm{Spec}(R)$, then $S_P$ is both the localization $S_{R\setminus P}$ as a ring and the localization at $P$ of the $R$-module $S$. 
  We denote by $\kappa_R(P)$ the residual field $R_P/PR_P$ at $P$. 

The following notions and results are  deeply involved in the sequel. 

\begin{definition}\label{crucial 1}\cite[Definition 2.10]{Pic 7} An extension $R\subset S$ is called  {\it $M$-crucial} if 
 $\mathrm{Supp}(S/R)=\{M\}$. Such $M$
is  called the {\it crucial (maximal) ideal}  $\mathcal{C}(R,S)$ of $R\subset S$. 
\end{definition}

\begin{theorem}\label{crucial}\cite[Th\'eor\`eme 2.2]{FO} A minimal extension $R\subset S$ is either integral (module-finite) or a flat epimorphism and $|\mathrm{Supp}(S/R)|=1$. Moreover, if $\mathrm{Supp}(S/R)=\{M\}$, then $M$
is the  crucial (maximal) ideal of $R\subset S$ (such that  $R_P=S_P$ for all $P\in\mathrm{Spec}(R)\setminus\{ M\}$).
\end{theorem}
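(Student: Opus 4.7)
The result is classical (Ferrand--Olivier), and the plan is to follow the standard argument in that paper. Since $R\subsetneq S$, pick $x \in S \setminus R$. Then $R\subsetneq R[x]\subseteq S$ combined with minimality forces $R[x]=S$, so $S$ is a monogenic $R$-algebra. This monogenicity is the structural hook for everything that follows.

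To identify the crucial ideal, I would exploit the following key construction: for any ideal $J$ of $R$, the set $R+JS$ is an $R$-subalgebra of $S$, since $JS$ is an ideal of $S$ and hence $R+JS$ is closed under multiplication. By minimality, either $R+JS=R$ (equivalently $J\subseteq I$, where $I:=(R:S)$ is the conductor) or $R+JS=S$. Applied to maximal ideals of $R$, this dichotomy yields strong rigidity: one shows that $I$ is itself a maximal ideal, call it $M$. Granted this, for any prime $P\neq M$ one can pick $s\in M\setminus P$ with $sS\subseteq R$ (since $s\in I$); in the localization $sS_P\subseteq R_P$ with $s$ a unit, so $R_P=S_P$. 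This proves $\operatorname{Supp}(S/R)=\{M\}$ and $M$ is the crucial maximal ideal $\mathcal{C}(R,S)$ with the advertised behaviour on localizations.

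For the integral-versus-flat-epimorphism dichotomy, I would localize at $M$ to reduce to a local minimal extension $R_M\subset S_M=R_M[x]$, and examine the kernel $\mathfrak{a}$ of the evaluation $R_M[X]\to S_M$. If $\mathfrak{a}$ contains a monic polynomial, $x$ is integral over $R_M$; combining with $R_P=S_P$ for $P\neq M$, a patching argument yields that $S$ is integral, and since $S=R[x]$ is finitely generated as an $R$-algebra, integrality upgrades to module-finiteness. If $\mathfrak{a}$ contains no monic polynomial, a careful analysis of the leading coefficients of elements of $\mathfrak{a}$ (a Nakayama-style argument in the local ring $R_M$) shows that $R_M\to S_M$ is a flat epimorphism; together with triviality outside $M$, this gives that $R\to S$ itself is a flat epimorphism.

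The main obstacles are, first, pinning down that the conductor is in fact a maximal ideal (not merely a radical ideal or an intersection of maximals), and second, ruling out mixed behaviour in the local case $R_M\subset S_M$. The integral case is direct once a monic relation is produced, whereas the flat epimorphism case requires the more delicate module-theoretic bookkeeping on $R_M[x]$ to exclude the possibility of partial integrality and confirm epimorphicity.
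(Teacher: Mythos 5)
Note first that the paper offers no proof of this statement: it is quoted as a known result from Ferrand--Olivier \cite[Th\'eor\`eme 2.2]{FO}, so your sketch can only be judged against the classical argument it claims to follow. Judged that way, it contains a genuine gap at its pivotal step: the assertion that the dichotomy $R+JS\in\{R,S\}$ ``shows that $I:=(R:S)$ is itself a maximal ideal.'' What that dichotomy actually gives is only this: \emph{if} some maximal ideal $N$ of $R$ satisfies $NS\subseteq R$, then $I=N$ is maximal. It can perfectly well happen that $R+NS=S$ for every maximal ideal $N$, and this is exactly what happens in the non-integral half of the theorem. For instance, $\mathbb{Z}_{(p)}\subset\mathbb{Q}$ is a minimal (Pr\"ufer) extension whose conductor is $(0)$; there is no nonzero $s$ with $s\mathbb{Q}\subseteq\mathbb{Z}_{(p)}$, so your step ``pick $s\in M\setminus P$ with $sS\subseteq R$'' cannot be carried out, even though the conclusion $\mathrm{Supp}(S/R)=\{p\mathbb{Z}_{(p)}\}$ is true. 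Conductor maximality is a feature of the \emph{finite} minimal extensions only (it is part of Theorem \ref{minimal}), so your identification of the crucial ideal and the proof that $R_P=S_P$ for $P\neq M$ are valid only in the integral case.

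This gap then propagates: your treatment of the dichotomy begins by ``localizing at $M$,'' i.e.\ it presupposes the existence and uniqueness of the crucial maximal ideal, which you have only justified under the false conductor claim; and in the flat-epimorphism branch the phrase ``together with triviality outside $M$'' invokes precisely the statement that broke down. (That branch is in any case only gestured at --- ``careful analysis of leading coefficients,'' ``Nakayama-style'' --- and it is the genuinely delicate part of \cite{FO}.) The integral half of your sketch is sound: $S=R[x]$ by minimality, the conductor argument does work there, and the patching via the integral closure $\overline R\in\{R,S\}$ upgrades to module-finiteness. To repair the rest you must prove $|\mathrm{Supp}(S/R)|=1$ by an argument that does not go through the conductor (Ferrand--Olivier obtain it from minimality by producing suitable intermediate rings attached to localizations), and only then separate the integral case from the integrally closed one and prove that the latter is a flat epimorphism.
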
 

Recall that an extension $R\subseteq S$ is called {\it Pr\"ufer} if $R\subseteq T$ is a flat epimorphism for each $T\in[R,S]$ (or equivalently, if $R\subseteq S$ is a normal pair) \cite[Theorem 5.2, page 47]{KZ}. 
 In \cite{Pic 5}, we called an extension which is a minimal flat epimorphism, a {\it Pr\"ufer minimal} extension. Three types of minimal integral extensions exist, characterized in the next theorem, (a consequence of  the fundamental lemma of Ferrand-Olivier), so that there are four types of minimal extensions, mutually exclusive.
 
\begin{theorem}\label{minimal} \cite [Theorems 2.2 and 2.3]{DPP2} Let $R\subset T$ be an extension and  $M:=(R: T)$. Then $R\subset T$ is minimal and finite if and only if $M\in\mathrm{Max}(R)$ and one of the following three conditions holds:

\noindent  {\bf inert case}: $M\in\mathrm{Max}(T)$ and $R/M\to T/M$ is a minimal field extension.

\noindent  {\bf decomposed case}: There exist $M_1,M_2\in\mathrm{Max}(T)$ such that $M= M _1\cap M_2$ and the natural maps $R/M\to T/M_1$ and $R/M\to T/M_2$ are both isomorphisms, or equivalently, there exists $q\in T\setminus R$ such that $T=R[q],\ q^2-q\in M$ and $Mq\subseteq M$.

\noindent  {\bf ramified case}: There exists $M'\in\mathrm{Max}(T)$ such that ${M'}^2 \subseteq M\subset M',\  [T/M:R/M]=2$, and the natural map $R/M\to T/M'$ is an isomorphism, or equivalently, there exists $q\in T\setminus R$ such that $T=R[q],\ q^2\in M$ and $Mq\subseteq M$.

In each of the above  cases, $M=\mathcal{C}(R,T)$.
\end{theorem}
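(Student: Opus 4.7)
The plan is to reduce modulo the conductor $M$ and then invoke Ferrand-Olivier's fundamental lemma to classify $T/M$ as an $R/M$-algebra.

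For the forward direction, suppose $R \subset T$ is minimal and finite. Theorem \ref{crucial} gives $\mathrm{Supp}(T/R) = \{M_0\}$ with $M_0 \in \mathrm{Max}(R)$ the crucial ideal, and $R_P = T_P$ for every other prime $P$. Since $T$ is module-finite, $(R:T) = \mathrm{Ann}_R(T/R)$ satisfies $\mathrm{V}((R:T)) = \{M_0\}$, so $(R:T)$ is $M_0$-primary with radical $M_0$. Localizing at $M_0$ reduces us to the local case, where a short argument using minimality of $R_{M_0} \subset T_{M_0}$ and the structure of $T_{M_0}/R_{M_0}$ as a cyclic module shows $M_0 T \subseteq R$, so that $M = (R:T) = M_0 \in \mathrm{Max}(R)$; in particular $M = \mathcal{C}(R,T)$.

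Now set $k := R/M$ and $A := T/M$. Then $A$ is a $k$-algebra with $k \subsetneq A$, and by minimality $A$ admits no nontrivial $k$-subalgebras, since any such pulls back to an intermediate ring of $R \subset T$ containing the conductor. Ferrand-Olivier's fundamental lemma classifies such one-generated $k$-algebras into three mutually exclusive types: (i) $A$ is a field and $k \subsetneq A$ is a minimal field extension (inert); (ii) $A \cong k \times k$, realized by an element $\bar q$ with $\bar q^2 = \bar q$ (decomposed); (iii) $A \cong k[\varepsilon]/(\varepsilon^2)$, realized by $\bar q$ with $\bar q^2 = 0$ (ramified). Lifting $\bar q$ to $q \in T$ with $T = R[q]$ yields the relations $q^2 - q \in M$ (resp.\ $q^2 \in M$), and $Mq \subseteq M$ is automatic from $M$ being an ideal of $T$. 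The explicit descriptions of maximal ideals of $T$ in each case follow from the structure of $A$.

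For the converse, assume $M \in \mathrm{Max}(R)$ together with one of the three conditions. In cases (ii) and (iii) we directly have $T = R[q]$ spanned as an $R$-module by $\{1, q\}$ via the quadratic relation, hence module-finite; in case (i) module-finiteness comes from the minimal field extension. Given any $R' \in [R, T]$, each hypothesis forces $M$ to be an ideal of $T$ (via $Mq \subseteq M$, or via $M \in \mathrm{Max}(T)$, or via $M$ being the intersection of the $M_i$), hence of $R'$. Thus $R'/M$ lies between $k$ and $A$, and minimality of $k \subset A$ (either as a field extension in case (i), or because $\dim_k A = 2$ in cases (ii) and (iii)) forces $R'/M \in \{k, A\}$, whence $R' \in \{R, T\}$.

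The main obstacle is the reduction step showing $M = (R:T) \in \mathrm{Max}(R)$ and the structural classification of $A$ as one of three specific $k$-algebras — both being the content of Ferrand-Olivier's fundamental lemma, which requires careful analysis of the cyclic module $T/R$, its annihilator, and the minimal polynomial of a chosen generator modulo $M$. Once the lemma is in hand, the remainder of the argument is the standard correspondence between intermediate rings of $R \subset T$ and $k$-subalgebras of $T/M$.
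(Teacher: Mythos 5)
The paper offers no proof of this statement: it is recalled verbatim from \cite[Theorems 2.2 and 2.3]{DPP2} and explicitly described as a consequence of Ferrand--Olivier's fundamental lemma, and your sketch reconstructs exactly that standard argument --- reduce modulo the conductor (shown to equal the crucial maximal ideal via the Nakayama-plus-minimality argument at $M_0$, using that the conductor localizes for module-finite extensions) and then apply Ferrand--Olivier's classification of minimal extensions of a field to $A=T/M$, with the converse handled by the bijection between $[R,T]$ and the $k$-subalgebras of $A$. Your outline is correct and essentially the intended route; the only cosmetic slips are describing $T_{M_0}/R_{M_0}$ as a cyclic module (not needed, and false in the inert case) and calling the objects classified by the fundamental lemma ``one-generated $k$-algebras'' rather than minimal ring extensions of the field $k$.
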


The crucial ideals of  minimal subextensions of an FCP extension give many useful properties as we can see in the following. 

\begin{lemma} \label{1.12} \cite[Lemma 1.5]{Pic 6} Let $R\subset S$ be an  extension and $T,U\in[R,S]$ such that $R\subset T$ is  finite minimal   and $R\subset U$ is  Pr\"ufer minimal. Then, $\mathcal{C}(R,T)\neq\mathcal{C}(R,U)$, so that $R$ is not a local ring.
\end{lemma}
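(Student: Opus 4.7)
My plan is to argue by contradiction: assume $M := \mathcal{C}(R,T) = \mathcal{C}(R,U)$ and derive the inclusion $T \subseteq U$. Since $R \subsetneq T$ and $R \subset U$ is minimal, the containment $T \in [R,U]$ would then force $T = U$, making $R \subset T$ simultaneously finite integral and a flat epimorphism, which contradicts the mutually exclusive dichotomy in Theorem \ref{crucial}. The whole argument reduces to two ideal-theoretic facts: $MT \subseteq R$ on the integral side, and $MU = U$ on the Pr\"ufer side.

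The first fact is immediate from Theorem \ref{minimal}, which gives $M = (R : T)$, so $MT \subseteq R$ by the very definition of the conductor. For the second, I would combine Theorem \ref{crucial} with the flat-epimorphism hypothesis to show $U/MU = 0$. At every prime $P \neq M$, the equality $R_P = U_P$ (Theorem \ref{crucial}) together with $M \not\subseteq P$ gives $(U/MU)_P = R_P / MR_P = 0$. At $P = M$, if $U_M/MU_M$ were nonzero, then the flat map $R_M \to U_M$ would be surjective on spectra, hence faithfully flat; being also an epimorphism, it would therefore be an isomorphism, contradicting $R_M \subsetneq U_M$. So $U/MU$ vanishes everywhere, and $MU = U$.

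Given both facts, the proof finishes in one line. Write $1 = \sum_i m_i u_i$ with $m_i \in M$ and $u_i \in U$. For any $t \in T$, one has $t = \sum_i (t m_i) u_i$ with $t m_i \in MT \subseteq R$, so $t \in R \cdot U = U$. Hence $T \subseteq U$, and the contradiction follows exactly as above. Since $\mathcal{C}(R,T)$ and $\mathcal{C}(R,U)$ are then distinct maximal ideals of $R$ (maximality coming from Theorems \ref{crucial} and \ref{minimal}), the ring $R$ has at least two maximal ideals and cannot be local.

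The main obstacle I foresee is making the step $MU = U$ airtight, as it silently invokes the standard but unrecalled fact that a faithfully flat epimorphism of rings is an isomorphism. I would either quote this directly or route the verification through the Pr\"ufer-extension theory of \cite{KZ}, where the relevant localization behavior of flat epimorphisms is built into the definition. Everything else is straightforward once the two key ideal identities are in place.
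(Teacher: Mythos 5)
Your argument is correct, and it is worth noting that the paper does not prove this lemma at all: it is imported verbatim from \cite[Lemma 1.5]{Pic 6}, so your proof is a self-contained substitute rather than a variant of anything in the text. The skeleton is sound: assuming $M:=\mathcal{C}(R,T)=\mathcal{C}(R,U)$, Theorem \ref{minimal} gives $MT\subseteq R$, the support computation gives $MU=U$, and then $1=\sum m_iu_i$ forces $T\subseteq U$, hence $T=U$ by minimality of $R\subset U$, contradicting the mutual exclusivity of the integral and flat-epimorphism types recorded after Theorem \ref{crucial}; finally both crucial ideals are maximal, so their being distinct indeed prevents $R$ from being local. The only point to make airtight is the one you flagged: the step ``$U_M/MU_M\neq 0$ implies $R_M\to U_M$ is faithfully flat, and a faithfully flat epimorphism is an isomorphism'' uses a standard fact not recalled in the paper, so it should be quoted explicitly (it is available in the flat-epimorphism literature, e.g. via \cite{KZ} or the Scholium of \cite{Pic 5} invoked in the proof of Proposition \ref{desc}). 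Alternatively, you could bypass this computation entirely: the characterization of Pr\"ufer (i.e.\ non-integral) minimal extensions in \cite{FO} and \cite{DPP2} already records that the crucial maximal ideal $M$ of such an extension satisfies $MU=U$, whereas for a finite minimal extension $M=(R:T)$; quoting these two facts reduces your proof to its last three lines. Either way the argument stands.
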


\begin{lemma}\label{1.13} (Crosswise exchange) \cite[Lemma 2.7]{DPP2} Let $R\subset S$ and $S\subset T$ be minimal extensions,  $M:=\mathcal{C}(R,S)$, $N:=\mathcal{C}(S,T)$ and $P:=N\cap R$ be such that $P\not\subseteq M$. Then there is $S' \in [R,T]$ such that $R\subset S'$ is minimal of the same type as $S\subset T$ and $P= \mathcal{C}(R,S')$; and $S'\subset T$ is minimal of the same type as $R\subset S$ with $MS'=\mathcal{C}(S',T)$. Moreover, $[R,T]=\{R,S,S',T\}$ and $R_Q=S_Q=S'_Q=T_Q$ for $Q\in \mathrm{Max}(R)\setminus \{M,P\}$.
\end{lemma}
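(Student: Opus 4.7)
The plan is to argue entirely via the local structure at the two relevant maximal ideals of $R$. Observe first that $P \not\subseteq M$ forces $P$ and $M$ to be distinct maximal ideals ($P$ is maximal because $N$ is maximal in $S$ and the contraction of a maximal ideal under a minimal extension stays maximal). I would begin by showing that at every prime $Q \in \mathrm{Spec}(R)$ with $Q \notin \{M, P\}$, all three extensions collapse: $R_Q = S_Q$ by the $M$-cruciality of $R \subset S$ (Theorem \ref{crucial}), and $S_Q = T_Q$ because any prime $N'$ of $S$ contributing to $(T/S)_Q$ would satisfy $N' \cap R \subseteq Q$, forcing $N' \neq N$ (since $N \cap R = P \not\subseteq Q$), hence $S_{N'} = T_{N'}$. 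The same logic applied at $Q = P$ gives $R_P = S_P$, so $R_P \subset T_P$ is the minimal extension $S_P \subset T_P$; and at $Q = M$ (using $P \neq M$) it gives $S_M = T_M$, so $R_M \subset T_M$ is the minimal extension $R_M \subset S_M$.

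Next I would construct $S'$ by prescribing its local data and assembling inside $T$:
\[
S' \;:=\; \{\, x \in T \;\mid\; x/1 \in R_M \text{ inside } T_M \,\}.
\]
A direct computation of preimages gives $S'_M = R_M$, while $S'_P = T_P$ and $S'_Q = R_Q = T_Q$ for $Q \notin \{M, P\}$ follow from the equalities obtained in the first step. Consequently $R \subset S'$ is $P$-crucial, and localizing at $P$ recovers the minimal extension $S_P \subset T_P$; by Theorem \ref{minimal} the type (inert, decomposed, ramified, or Pr\"ufer) is detected by the local data at the crucial ideal, so $R \subset S'$ is minimal of the same type as $S \subset T$ with $\mathcal C(R, S') = P$. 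Symmetrically $S' \subset T$ is $MS'$-crucial, and localizing at $M$ recovers the minimal extension $R_M \subset S_M$, whence it is minimal of the same type as $R \subset S$ with $\mathcal C(S', T) = MS'$.

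For the lattice statement $[R,T] = \{R, S, S', T\}$, I would take an arbitrary $U \in [R,T]$ and localize: by the previous paragraph $[R_M, T_M] = \{R_M, S_M\}$ and $[R_P, T_P] = \{R_P, T_P\}$ are both two-element chains, while at every other $Q$ the lattice $[R_Q, T_Q]$ is trivial. The two independent local choices produce exactly four global subextensions, matching $R, S, S', T$. The closing assertion $R_Q = S_Q = S'_Q = T_Q$ for $Q \in \mathrm{Max}(R) \setminus \{M, P\}$ is precisely the content of step one.

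The main obstacle I expect is making the patching in step two airtight: one must verify that the set $S'$ as defined really has the claimed localizations at $P$ and at the irrelevant primes, and that the crucial ideal of $S' \subset T$ is exactly $MS'$ rather than some other maximal ideal of $S'$. This calls for using the flatness of the Pr\"ufer-minimal pieces (to lift local isomorphisms back to $T$) together with a short case check through the four Ferrand--Olivier types of Theorem \ref{minimal} to confirm that ``same type'' genuinely transfers between $R \subset S$ and $S' \subset T$ (and between $S \subset T$ and $R \subset S'$) under the local-to-global construction.
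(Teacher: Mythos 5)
This lemma is only quoted in the paper (it is \cite[Lemma 2.7]{DPP2}; no proof is reproduced here), so your attempt can only be measured against the standard local--global argument behind that citation, and in outline yours matches it: analyze the two crucial ideals, define $S'$ as the preimage of $R_M$ in $T$, and glue. Most of the plan is sound, and the deferred verifications you flag do go through, but the key input you should make explicit is $\mathrm{Supp}_R(T/R)=\{M,P\}$, coming from $\mathrm{Supp}_R(S/R)=\{M\}$ (Theorem \ref{crucial}) and $\mathrm{Supp}_R(T/S)=\{P\}$. This is exactly what makes the localization computation for $S'=T\cap R_M$ work: $(T/R)_M$ is supported only at $M$, so $S'_Q=T_Q$ for every maximal $Q\neq M$ (in particular $S'_P=T_P$) and $S'_M=R_M$; it also gives $\mathcal{C}(R,S')=P$, shows that $S'$ has a unique maximal ideal above $M$, and a local comparison identifies that ideal with $MS'$, whence $\mathcal{C}(S',T)=MS'$. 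The count $[R,T]=\{R,S,S',T\}$ via injectivity of $U\mapsto(U_Q)_Q$ is fine. For ``same type'' you should not appeal to Theorem \ref{minimal} alone: what is needed is the local characterization of minimal extensions and of their type at the crucial maximal ideal (material of the type \cite[Lemma 2.4]{DPP2}), plus the facts that locally integral implies integral (integral closure commutes with localization) and locally a flat epimorphism implies a flat epimorphism; with $S'_P=T_P=T_N$ and $S'_M=R_M$, $T_M=S_M$, the types then transfer as claimed.

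The one genuinely incorrect step is your justification that $P\in\mathrm{Max}(R)$: ``the contraction of a maximal ideal under a minimal extension stays maximal'' is false when $R\subset S$ is Pr\"ufer minimal (for instance $\mathbb{Z}_{(p)}\subset\mathbb{Q}$ is minimal and the maximal ideal $(0)$ of $\mathbb{Q}$ contracts to a non-maximal prime). The conclusion is nevertheless true under the hypothesis $P\not\subseteq M$, but it requires an argument: any maximal ideal $Q$ of $R$ containing $P$ satisfies $Q\neq M$, hence $R_Q=S_Q$; if $P\subsetneq Q$, then inside $S_Q=R_Q$ the prime $NS_Q=PR_Q$ is strictly contained in $QS_Q$, and contracting back to $S$ yields a prime strictly containing $N$, contradicting the maximality of $N=\mathcal{C}(S,T)$ given by Theorem \ref{crucial}; so $P=Q$ is maximal. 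Note also that in your first step, at $Q=M$ what you actually need is the full hypothesis $P\not\subseteq M$ (only equivalent to $P\neq M$ once maximality of $P$ is established). With this repair and the support computation written out, your plan yields a complete proof along what is essentially the cited route.
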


\subsection{ Some special ring extensions}

Let $R\subset S$ be an extension and $\mathcal{C}:=\{T_i\}_{i\in\mathbb N_n}\subset]R,S[,\ n\geq 1$ be a finite chain. We say that  $R\subset S$ is {\it pinched} at $\mathcal{C}$ if $[R,S]=\cup_{i=0}^n[T_i,T_{i+1}]$, where $T_0:=R$ and $T_{n+1}:=S$, which means that any element of $[R,S]$ is comparable to the $T_i$'s. 

If $R\subset S$ is an extension, we say that $R$ is {\it unbranched} in $S$ if $\overline R$ is local. We also say that $R\subset S$ is unbranched. If $R\subset S$ is unbranched and FCP, then each $T\in [R,S]$ is a local ring \cite[Lemma 3.29]{Pic 11}. An extension $R\subset S$ is said {\it locally unbranched} if $R_M\subset S_M$ is unbranched for all $M\in\mathrm{MSupp}(S/R)$. In particular, for an FCP extension $R\subset S$, this is equivalent to $\mathrm{Spec}(\overline R)\to\mathrm{Spec}(R)$ is bijective. An extension is said {\it branched} if it is not unbranched. 
An extension $R\subset S$ is said {\it almost unbranched} if each $T\in[R,\overline R[$ is a local ring. Then unbranched implies almost unbranched.  

\begin{remark} \label{1.120} Let $R\subset S$ be a ring extension and $T\in[R,S]$. Then, it is easily seen that $T\cap\overline R$ (resp.; $T\overline R$) is the integral closure of $R$ in $T$ (resp.; of $T$ in $S$). Similar relations exist for the t-closure and the seninormalization. We warn the reader that these properties will be often used in this paper. 
\end{remark}
 
\begin{corollary} \label{1.121} An FCP almost unbranched extension is  pinched at $\overline R$.
\end{corollary}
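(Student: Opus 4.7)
The chain $\mathcal{C}$ in the pinching conclusion has the single element $\overline R$, so it suffices to show that for every $T\in[R,S]$ one has $T\subseteq\overline R$ or $\overline R\subseteq T$. I would argue by contradiction: assume $T\not\subseteq\overline R$ and $\overline R\not\subseteq T$, and set $U:=T\cap\overline R$. By Remark \ref{1.120}, $U$ is the integral closure of $R$ in $T$, so $R\subseteq U$ is integral; moreover any element of $T$ integral over $U\subseteq\overline R$ lies in $\overline R\cap T=U$, so $U$ is integrally closed in $T$. Because $U\subseteq T$ is FCP and integrally closed, the structural decomposition $R\subseteq\overline R\subseteq S$ (recalled in the introduction) applied to $U\subseteq T$ makes $U\subseteq T$ a Pr\"ufer extension.

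The working hypothesis gives both $U\subsetneq T$ and $U\subsetneq\overline R$; in particular $U\in[R,\overline R[$. The almost unbranched hypothesis then forces $U$ to be a local ring. This is the pivot of the argument, because I will now produce two minimal extensions of $U$ of different Ferrand--Olivier type and invoke Lemma \ref{1.12}.

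Since $U\subsetneq\overline R$ is a nontrivial integral FCP extension, FCP guarantees a minimal subextension $U\subset U_{1}\subseteq\overline R$, which is integral, hence finite minimal by Theorem \ref{minimal}. Since $U\subsetneq T$ is a nontrivial Pr\"ufer FCP extension, FCP guarantees a minimal subextension $U\subset U_{2}\subseteq T$; being a subextension of a Pr\"ufer extension it is a flat epimorphism, hence Pr\"ufer minimal by Theorem \ref{minimal}. Applying Lemma \ref{1.12} to $U_{1}$ and $U_{2}$ inside $[U,S]$ yields that $U$ is not local, the desired contradiction.

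The main obstacle I anticipate is not the pigeonhole via Lemma \ref{1.12}, which is essentially automatic once the two minimal subextensions are produced; rather, it is the correct verification that $U\subseteq T$ is Pr\"ufer (equivalently that $U$ is integrally closed in $T$), so that a Pr\"ufer minimal extension $U\subset U_{2}\subseteq T$ is available in the first place. This rests on the fact, recalled via Remark \ref{1.120}, that the integral closure of $R$ in $T$ is $T\cap\overline R$, together with $\overline R$ being integrally closed in $S$.
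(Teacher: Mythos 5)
Your proposal is correct and follows essentially the same route as the paper: take $T$ incomparable with $\overline R$, set $U:=T\cap\overline R$, use the almost unbranched hypothesis to make $U\in[R,\overline R[$ local, then produce a minimal integral subextension of $U\subset\overline R$ and a minimal Pr\"ufer subextension of $U\subset T$ (the latter because $U$ is integrally closed in $T$ by Remark \ref{1.120}), contradicting Lemma \ref{1.12}. The only cosmetic difference is that you explicitly pass through the Pr\"ufer property of $U\subseteq T$, while the paper just uses directly that a minimal subextension of $U$ inside $T$ must be Pr\"ufer minimal.
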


\begin{proof} Assume that $R\subset S$ is not pinched at $\overline R$, so that there exists $T\in[R,S]\setminus[R,\overline R]\cup[\overline R,S]$. Set $U:=T\cap \overline R\in[R,\overline R]$. Since $R\subset S$ is 
 almost unbranched and FCP, then $U$ is a local ring when $U\neq\overline R$, that is $T\not\in[\overline R,S]$, which is satisfied. 
Because $T\not\in[R,\overline R]$, it follows that $U\neq T$. 
 Then, there exist $V\in[U,T]$ and $W\in[U,\overline R]$ such that $U\subset V$ is minimal Pr\"ufer and $U\subset W$ is minimal integral, a contradiction by Lemma  \ref{1.12}. 
\end{proof}

\begin{proposition}\label{1.131} Let $R\subset S$ be an FCP extension such that $\overline R\neq R,S$. Then, $R\subset S$ is pinched at $\overline R$ if and only if, for any $U\in[R,S]$ such that $U\subset\overline R$ is minimal, then $\mathrm{MSupp}_{\overline R}(S/\overline R)\subseteq \mathrm{V}_{\overline R}((U:\overline R))$.  \end{proposition}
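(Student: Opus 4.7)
The plan is to prove both implications by contrapositive, using Lemma \ref{1.13} (crosswise exchange) as the main engine and Lemma \ref{1.12} as the comparison tool for crucial ideals of minimal extensions.

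For the direction pinched $\Rightarrow$ the support condition, suppose for contradiction that there exist $U \in [R,S]$ with $U \subset \overline R$ minimal integral and $N \in \mathrm{MSupp}_{\overline R}(S/\overline R)$ with $M := (U:\overline R) \not\subseteq N$. Since $\overline R \subseteq S$ is Pr\"ufer FCP, $N$ is realized as $\mathcal{C}(\overline R, V)$ for some minimal Pr\"ufer $\overline R \subset V$ inside $S$. In the chain $U \subset \overline R \subset V$, integrality of $U \subset \overline R$ makes $P := N \cap U$ maximal in $U$, and $M \not\subseteq N$ forces $P \neq M$, hence $P \not\subseteq M$ by maximality. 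Lemma \ref{1.13} then produces $U' \in [U,V] \subseteq [R,S]$ with $U \subset U'$ minimal Pr\"ufer and $U' \subset V$ minimal integral, together with $[U,V] = \{U, \overline R, U', V\}$. Pr\"uferness of $U \subset U'$ yields $U' \cap \overline R = U \neq U'$, so $U' \not\subseteq \overline R$, while the four-element lattice structure forces $\overline R \not\subseteq U'$. Thus $U'$ is incomparable with $\overline R$, contradicting pinchedness.

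For the converse, assume some $T \in [R,S]$ is incomparable with $\overline R$ and set $U_0 := T \cap \overline R$, so by Remark \ref{1.120}, $U_0 \subsetneq \overline R$ is integral while $U_0 \subsetneq T$ is Pr\"ufer. Pick a minimal Pr\"ufer $U_0 \subset V_0 \subseteq T$ with crucial ideal $N_0$, and fix a maximal chain of minimal integral extensions $U_0 = U^0 \subset U^1 \subset \cdots \subset U^k = \overline R$ with $\mathcal{M}_i := \mathcal{C}(U^i, U^{i+1})$; Lemma \ref{1.12} gives $\mathcal{M}_0 \neq N_0$. Propagate $V_0$ up the chain by setting $V^i := V_0 U^i$ and proving by induction on $i$ that $U^i \subset V^i$ is minimal Pr\"ufer with crucial ideal $N^i$ satisfying $N^i \cap U^{i-1} = N^{i-1}$. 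At each step, Lemma \ref{1.12} applied at $U^i$ yields $\mathcal{M}_i \neq N^i$; combined with the fact that $V^i \subset V^{i+1}$ is minimal integral with crucial ideal $\mathcal{M}_i V^i$, Lemma \ref{1.13} applied to the chain $U^i \subset V^i \subset V^{i+1}$ identifies $U^{i+1} \subset V^{i+1}$ as minimal Pr\"ufer with crucial ideal $N^i U^{i+1}$. At the top, set $N := N^k$, which lies in $\mathrm{MSupp}_{\overline R}(S/\overline R)$ because $\overline R \subset V^k$ is a minimal Pr\"ufer extension inside $S$, and set $U := U^{k-1} \in [R,S]$. Then $(U:\overline R) = \mathcal{M}_{k-1}$, and since $N \cap U^{k-1} = N^{k-1} \neq \mathcal{M}_{k-1}$ with both maximal in $U^{k-1}$, we deduce $\mathcal{M}_{k-1} \not\subseteq N$, contradicting the support condition.

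The main obstacle is verifying, inside the inductive step, that $V^i \subset V^{i+1}$ really is a minimal extension with crucial ideal $\mathcal{M}_i V^i$, which is what allows the second application of Lemma \ref{1.13}. This is checked by localizing at primes of $V^i$: via the Pr\"ufer minimal extension $U^i \subset V^i$, those primes correspond to the primes of $U^i$ not containing $N^i$, and the quotient $V^{i+1}/V^i$ vanishes at every such prime except at the extension of $\mathcal{M}_i$, where it has length one. The disjointness $\mathcal{M}_i \neq N^i$ supplied by Lemma \ref{1.12} is exactly what guarantees that $\mathcal{M}_i$ survives in $\mathrm{Spec}(V^i)$, that $U^{i+1}/U^i$ contributes nontrivially after base change to $V^i$, and hence that the crosswise exchange can be iterated all the way up to $\overline R$.
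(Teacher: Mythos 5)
Your argument is correct and follows the same overall strategy as the paper's proof: the forward direction is the paper's argument run as a contrapositive (realize $N\in\mathrm{MSupp}_{\overline R}(S/\overline R)$ as $\mathcal{C}(\overline R,V)$ for a Pr\"ufer minimal $\overline R\subset V$ --- this is the fact the paper quotes from \cite{Pic 6} and you should cite it too --- then use Lemma \ref{1.13} to manufacture an element of $[R,S]$ incomparable with $\overline R$), and the converse is the same induction along a maximal chain from $T\cap\overline R$ up to $\overline R$, ending in the same clash between Lemma \ref{1.12} and the support condition at the top step. Where you genuinely differ is the inductive engine: the paper invokes \cite[Proposition 7.10]{DPPS} to control the compositum of a minimal integral and a minimal Pr\"ufer extension with the same base, whereas you re-prove what is needed by localization (showing $V^i\subset V^{i+1}$ is minimal with support the unique prime over $\mathcal{M}_i$, which exists precisely because $\mathcal{M}_i\neq N^i$ keeps $\mathcal{M}_i$ in the image of $\mathrm{Spec}(V^i)\to\mathrm{Spec}(U^i)$) and then apply the Crosswise Exchange a second time; this buys a self-contained proof at the cost of bookkeeping the paper outsources to the reference. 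Two details should be tightened but are not gaps: the localization of $V^{i+1}/V^i$ at the prime over $\mathcal{M}_i$ has length one only when $U^i\subset U^{i+1}$ is ramified or decomposed (for an inert step it can be longer), but what you actually need --- that the localized extension identifies with the minimal extension $(U^i)_{\mathcal{M}_i}\subset(U^{i+1})_{\mathcal{M}_i}$, so no intermediate ring can appear --- still holds; and the identification of the ring $S'$ produced by Lemma \ref{1.13} with $U^{i+1}$ deserves one line, namely $U^{i+1}\in[U^i,V^{i+1}]=\{U^i,V^i,S',V^{i+1}\}$ while $U^{i+1}\subseteq\overline R$ and $V^i,V^{i+1}\not\subseteq\overline R$, since they contain $V_0$ and $V_0\cap\overline R=U_0$ (the same observation you already made in the forward direction).
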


\begin{proof} Assume that $R\subset S$ is pinched at $\overline R$, so that $[R,S]=[R,\overline R]\cup[\overline R,S]$. Let $U\in[R,S]$ be such that $U\subset\overline R$ is minimal and set $P:=(U:\overline R)\in\mathrm{Max}(U)$. Let $M\in\mathrm{MSupp}_{\overline R}(S/\overline R)$. According to \cite[Lemma 1.8]{Pic 6}, there exists $V\in[\overline R,S]$ such that $\overline R\subset V$ is minimal Pr\"ufer with $M=\mathcal{C}(\overline R,V)$. If $M\cap U\not\subseteq P$, by the Crosswise Exchange, there exists $T\in[R,S]$ such that $U\subset T$ is minimal Pr\"ufer, so that $T\not\in [R,\overline R]$. Then, $T\in]\overline R,S]$, a contradiction with $U\subset \overline R\subset T$ and $U\subset T$ minimal. Then, $M\cap U\subseteq P$, and, more precisely, $M\cap U=P$ because $M\in\mathrm{Max}(\overline R)$ and $U\subset \overline R$ is integral. To conclude, $P\subseteq M$, that is $M\in \mathrm{V}_{\overline R}((U:\overline R))$.

Conversely, assume that $\mathrm{MSupp}_{\overline R}(S/\overline R)\subseteq\mathrm{V}_{\overline R}((U:\overline R))$ for any $U\in[R,S]$ such that $U\subset\overline R$ is minimal. Supppose that $[R,S]\neq[R,\overline R]\cup[\overline R,S]$ and let $T\in[R,S]\setminus([R,\overline R]\cup[\overline R,S])$. Set $U:=T\cap\overline R\subset\overline R$ because $U=\overline R $ implies $T\in[\overline R,S]$. We also have $U\neq T$ because $U=T$ implies $T\in[R,\overline R]$. Then, there exist $U_1\in[U,\overline R]$ and $T_1\in[U,T]$ such that $U\subset U_1$ is minimal integral and $U\subset T_1$ is minimal Pr\"ufer. By Lemma \ref{1.12}, we have $\mathcal{C}(U,U_1)\neq\mathcal{C}(U,T_1)$. It follows from \cite[Proposition 7.10]{DPPS} that $U_1\subset U_1T_1$ is minimal Pr\"ufer and $T_1\subset T_1U_1$ is minimal integral with $\mathcal{C}(U_1,U_1T_1)\not\in\mathrm{MSupp}_{U_1}(\overline R/U_1)$.  Of course, $U_1T_1\not\in[R,\overline R]$ because $T_1\in[U,U_1T_1]$. 

If $T_1U_1\in[\overline R,S]$, then $U_1\subseteq\overline R\subset T_1U_1$ implies $U_1=\overline R$, so that $U\subset\overline R$ is minimal. 

If $T_1U_1\not\in[\overline R,S]$, then $T_1\in[R,S]\setminus([R,\overline R]\cup[\overline R,S])$. It follows that we get $U_1=U_1T_1\cap\overline R$, because $U_1$ is the integral closure of $U\subseteq U_1T_1$. Since $\ell[U_1,\overline R]<\ell[U,\overline R]$, an easy induction shows that there exists a maximal finite chain $\{U_i\}_{i=0}^n$ such that $U_0=U,\ U_n=\overline R$ with $T_1U_i\not\in[R,\overline R]\cup[\overline R,S]$ for each $i\in\{0,\ldots,n-1\}$. We have the following commutative diagram:
$$\begin{matrix}
          T       &  {}  &      {}       &  \longrightarrow  & {}         & {}  &  S       \\
    \uparrow & {}  &      {}       &  {}  & {}                 & {}  & \uparrow          \\
  T_1          & \to & T_1U_1  & \to & T_1U_{n-1} & \to & T_1\overline R \\
   \uparrow & {}  & \uparrow & {}  & \uparrow      & {}   & \uparrow           \\
    U           & \to &      U_1   & \to & U_{n-1}       & \to & \overline R  
\end{matrix}$$
Then, in both cases, we get that $U_{n-1}\subset \overline R$ is minimal integral, with $U_{n-1}\subset T_1U_{n-1}$ minimal Pr\"ufer as $\overline R\subset T_1 \overline R$, so that $T_1\overline R\in[\overline R,S]$. We still have $\mathcal{C}(U_{n-1},U_{n-1}T_1)\neq\mathcal{C}(U_{n-1},\overline R)\ (*)$ with $\mathcal{C}(\overline R,\overline RT_1)\cap U_{n-1}=\mathcal{C}(U_{n-1},U_{n-1}T_1)\neq\mathcal{C}(U_{n-1},\overline R)=(U_{n-1}:\overline R)$. But $M:=\mathcal{C}(\overline R,\overline RT_1)\in\mathrm{MSupp}_{\overline R}(S/\overline R)$ gives by assumption that $(U_{n-1}:\overline R)\subseteq M$, so that $M\cap U_{n-1}=(U_{n-1}:\overline R)$ because $(U_{n-1}:\overline R)\in\mathrm{Max}(U_{n-1})$, a contradiction with $(*)$, which is $M\cap U_{n-1}\neq(U_{n-1}:\overline R)$. Hence, $[R,S]=[R,\overline R]\cup[\overline R,S]$.
\end{proof}

The following definitions are needed for our study.
  
\begin{definition}\label{1.3} An integral extension $R\subseteq S$ is called {\it infra-integral} \cite{Pic 2} (resp.; {\it subintegral} \cite{S}) if all its residual extensions $\kappa_R(P)\to \kappa_S(Q)$, (with $Q\in\mathrm {Spec}(S)$ and $P:=Q\cap R$) are isomorphisms (resp$.$; and the natural map $\mathrm {Spec}(S)\to\mathrm{Spec}(R)$ is bijective). An extension $R\subseteq S$ is called {\it t-closed} (cf. \cite{Pic 2}) if the relations $b\in S,\ r\in R,\ b^2-rb\in R,\ b^3-rb^2\in R$ imply $b\in R$. The $t$-{\it closure} ${}_S^tR$ of $R$ in $S$ is the smallest element $B\in [R,S]$  such that $B\subseteq S$ is t-closed and the greatest element  $B'\in [R,S]$ such that $R\subseteq B'$ is infra-integral. An extension $R\subseteq S$ is called {\it seminormal} (cf. \cite{S}) if the relations $b\in S,\ b^2\in R,\ b^3\in R$ imply $b\in R$. The {\it seminormalization} ${}_S^+R$ of $R$ in $S$ is the smallest element $B\in [R,S]$ such that $B\subseteq S$ is seminormal and the greatest  element $ B'\in[R,S]$ such that $R\subseteq B'$ is subintegral. 
  The {\it canonical decomposition} of an arbitrary ring extension $R\subset S$ is $R \subseteq {}_S^+R\subseteq {}_S^tR \subseteq \overline R \subseteq S$. 
 \end{definition}   
 
\begin{proposition}\label{3.5} \cite[Proposition, 4.5]{Pic 6} and \cite[Lemma 3.1]{Pic 4} Let there be an integral extension $R\subset S$ admitting a maximal chain $\mathcal C$ of $ R$-subextensions of $S$, defined by $R=R_0\subset\cdots\subset R_i\subset\cdots\subset R _n= S$, where each $R_i\subset R_{i+1}$ is  minimal.  The following statements hold: 

\begin{enumerate}
\item $R\subset S$ is subintegral if and only if each $R_i\subset R_{i+1}$ is  ramified. 

\item $R\subset S$ is  infra-integral if and only if each $R_i\subset R_{i+1}$ is either ramified or  decomposed.

\item $R\subset S$ is seminormal and infra-integral if and only if each  $R_i\subset R_{i+1}$ is decomposed. 

\item $R \subset S$ is t-closed if and only if  each $R_i\subset R_{i+1}$ is inert. 
\end{enumerate}

Moreover, $\mathrm{Spec}(S)\to\mathrm{Spec}(R)$ is bijective if and only if each $R_i\subset R_{i+1}$ is either ramified or inert.
\end{proposition}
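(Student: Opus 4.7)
The plan is to argue all four equivalences in parallel by induction on the length $n$ of the chain $\mathcal{C}$, with the base case $n=1$ being Theorem~\ref{minimal}. The induction step rests on two ingredients for each of the four properties $\mathcal{P}\in\{\text{subintegral},\text{infra-integral},\text{seminormal+infra-integral},\text{t-closed}\}$: (i) if both $R\subseteq T$ and $T\subseteq S$ have $\mathcal{P}$, then so does $R\subseteq S$, and (ii) if $R\subseteq S$ has $\mathcal{P}$, then so do both $R\subseteq T$ and $T\subseteq S$ for every intermediate $T\in[R,S]$. Granted (i) and (ii), splitting $\mathcal{C}$ at $R_i$ reduces each global equivalence to its one-step version, which is exactly the trichotomy in Theorem~\ref{minimal}.

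For subintegrality and infra-integrality, both (i) and (ii) follow immediately from the pointwise definitions involving residue-field isomorphisms and bijectivity of $\mathrm{Spec}$, since isomorphisms and bijections compose and restrict. For t-closedness and seminormal infra-integrality, I would pass to the closure-operator reformulations $R={}_S^tR$ and $R={}_S^+R$, and use the transitivity of these operators together with the intersection identity ${}_T^tR=T\cap{}_S^tR$ (and its ${}^+$-analogue) alluded to in Remark~\ref{1.120}. The one-step case is then a direct reading of Theorem~\ref{minimal}: the ramified case is subintegral (residues iso, Spec bijective above the crucial ideal), the decomposed case is infra-integral and seminormal but not subintegral (residues iso, but Spec splits into two primes), and the inert case is t-closed (the residue extension is a nontrivial minimal field extension, precluding any infra-integral sub-step). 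This one-step bookkeeping matches each property to the claimed minimal type and yields (1)--(4).

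For the \emph{moreover} statement, bijectivity of $\mathrm{Spec}(S)\to\mathrm{Spec}(R)$ is again transitive and descends to each step, and Theorem~\ref{minimal} shows that among the three integral minimal types only the decomposed one produces two primes over the crucial ideal; hence the map is bijective iff no step is decomposed, i.e., each $R_i\subset R_{i+1}$ is ramified or inert. The main obstacle I anticipate is verifying (ii) for t-closedness (and for seminormality) on the \emph{upper} factor $T\subseteq S$, where the element-level definition does not visibly restrict to intermediate subextensions. To handle this I would either invoke the functorial properties of the t-closure operator directly, or fall back on the crosswise exchange (Lemma~\ref{1.13}) to rearrange any maximal chain so that all ramified steps come first, then all decomposed, then all inert, matching the canonical decomposition $R\subseteq{}_S^+R\subseteq{}_S^tR\subseteq\overline R$ step by step; this rearrangement aligns the count of each minimal type in $\mathcal{C}$ with the lengths of the canonical stages, from which all four equivalences fall out at once.
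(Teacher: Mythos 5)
Two preliminary remarks on how your attempt sits relative to the paper: the paper does not reprove items (1)--(4) at all (they are quoted from \cite{Pic 6} and \cite{Pic 4}); its only written proof is the ``Moreover'' clause, and there your argument coincides with the paper's: by Theorem~\ref{minimal} the decomposed case is the only minimal integral type whose spectral map fails to be bijective, and bijectivity is transitive and descends to each step of the chain. Your treatment of (1) and (2) is also sound: subintegrality and infra-integrality pass to both factors of any subextension by the formal argument with injective residue maps and lying over, so the induction does reduce to the one-step classification of Theorem~\ref{minimal}.

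The genuine gap is exactly the point you flag and do not close: for the ``only if'' directions of (3) and (4) you need seminormality (resp.\ t-closedness) of $R\subset S$ to be inherited by the middle factors $R_i\subset R_{i+1}$, equivalently by the upper factors $T\subseteq S$; only the lower factor $R\subseteq T$ is formal from the element-level definitions. Neither of your proposed fixes works as stated. The closure-operator identities of Remark~\ref{1.120}, namely ${}_T^+R=T\cap{}_S^+R$ and ${}_T^tR=T\cap{}_S^tR$, again concern the lower factor and say nothing about ${}_S^+T$ or ${}_S^tT$. And the crosswise exchange (Lemma~\ref{1.13}) cannot be used to push all ramified steps of an arbitrary maximal chain to the bottom: it requires $P\not\subseteq M$ for the crucial ideals of the two consecutive steps, and the problematic situation is precisely two consecutive steps whose crucial ideals lie over the same maximal ideal, where no exchange is available (compare Proposition~\ref{3.6} and Remark~\ref{9.230}, whose maximal chain has pattern ramified--decomposed--ramified). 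A correct way to finish is to localize at the crucial maximal ideal of the offending step and pass to the quotient by the conductor: for a t-closed FCP extension over a local ring $(R,M)$ one has $M=(R:S)$ with $R/M\subset S/M$ a field extension \cite[Lemma 3.17]{DPP3}, and every field extension is t-closed (from $b^2-rb=c$, $b^3-rb^2=d=bc$ one gets $b=d/c$ or $b\in\{0,r\}$); for a seminormal infra-integral FCP extension over a local ring, $(R:S)=M$ is an intersection of maximal ideals of $S$ and $S/M\cong(R/M)^n$ \cite[Proposition 4.9]{DPP2} (as used in the proof of Corollary~\ref{9.182}), and any intermediate $T/M$ is a partition subalgebra $k^m\subseteq k^n$, which is seminormal in $k^n$. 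Since $M$ is an ideal shared by $T$ and $S$, both properties lift from $T/M\subseteq S/M$ to $T\subseteq S$, and this closes the induction; this is in substance the content of the cited \cite[Proposition 4.5]{Pic 6}, which the paper simply quotes rather than reproves.
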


\begin{proof} The last result comes from Theorem \ref{minimal} where  it is shown that decomposed minimal extensions are the only extensions whose spectral maps are not bijective.
\end{proof}

\subsection{Lattice Properties}
Let $R\subseteq S$ be an FCP extension, then $[R,S]$ is a complete Noetherian Artinian lattice, $R$ being the least element and $S$  the largest. In the context of the lattice $[R,S]$, some definitions and properties of lattices have the following formulations. (see \cite{NO})
 
(1) An element $T\in[R,S]$ is an {\it atom} if and only if $R\subset T$ is a minimal extension. We denote by $\mathcal{A}$  the set of atoms  of $[R,S]$. 

(2) $R\subseteq S$ is called {\it catenarian}, or graded by some authors working in the lattices context, if $R\subset S$ has FCP and all maximal chains between two comparable elements have the same length.
 
(3) $R\subseteq S$ is called {\it semimodular} if, for each $T_1,T_2\in[R,S]$ such that $T_1\cap T_2\subset T_i$ is minimal for $i=1,2$, then $T_i\subset T_1T_2$ is minimal for $i=1,2$.

(4) $R\subseteq S$ is called {\it modular} if $T_1\cap(T_2T_3)= T_2(T_1\cap T_3)$ for each $T_1,T_2,T_3\in[R,S]$ such that $T_2\subseteq T_1$. 

(5) $R\subseteq S$ is called {\it distributive} if intersection and product are each distributive with respect to the other. Actually, each distributivity implies the other \cite[Exercise 5, page 33]{NO}. 

Moreover, $R\subseteq S$ distributive $\Rightarrow R\subseteq S$ modular $\Rightarrow R\subseteq S$ semi-modular $\Rightarrow R\subseteq S$ catenarian.

(6) Let $T\in[R,S]$. Then, $T'\in[R,S]$ is called a {\it complement} of $T$ if $T\cap T'=R$ and $TT'=S$. 

(7) $R\subseteq S$ is called {\it Boolean} if $([R,S],\cap,\cdot)$ is a  distributive lattice such that each $T\in[R,S]$ has a (necessarily unique) complement.
 
(8) $R\subset S$ is called a $B_2$-{\it extension} if $R\subset S$ is a Boolean extension of length 2 (which is equivalent to $\ell[R,S]=2$ and $|[R,S]|=4$). See \cite[Fig. 1]{LSi}. In particular, if $R\subset S \subset T$ is an extension satisfying the Crosswise Exchange, then $R\subset T$ is a $B_2$-extension.

(9) $R\subseteq S$ is called {\it simple} if there exists $x\in S\setminus R$ such that $S=R[x]$.
 
(10) $R\subseteq S$ is called {\it arithmetic} if $R_P\subseteq S_P$ is  chained for each $P\in\mathrm{Spec}(R)$.
 
\begin{proposition} \label{1.0} \cite[Corollary 1.3.4, p. 172]{NO} A modular lattice of finite length is catenarian (the Jordan-H\"older chain condition holds).
\end{proposition}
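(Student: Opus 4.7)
The plan is to prove the classical Jordan--H\"older chain condition for modular lattices of finite length by reducing everything to Dedekind's transposition principle. First I would establish that, in any modular lattice $L$, for elements $a,b \in L$, the intervals $[a\wedge b,a]$ and $[b,a\vee b]$ are isomorphic as posets through the ``diamond'' maps $\varphi(x) := x\vee b$ and $\psi(y) := y\wedge a$. This is a direct consequence of the modular identity: if $a\wedge b \leq x \leq a$, then $\psi(\varphi(x)) = (x\vee b)\wedge a = x\vee(b\wedge a) = x$, and dually for the other composition; both maps are clearly order-preserving.

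Next I would deduce the covering condition (equivalently, the semimodularity of $L$): if $a$ covers $c:=a\wedge b$ and $b \neq a\vee b$, then $a\vee b$ covers $b$. Indeed, the two-element interval $[c,a]$ is carried by $\varphi$ onto the interval $[b,a\vee b]$, which must therefore also have exactly two elements.

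With those ingredients, I would prove catenarity by induction on the length of the lattice. Given two maximal chains between comparable elements $u<v$, say $u = a_0 \lessdot a_1 \lessdot \cdots \lessdot a_m = v$ and $u = b_0 \lessdot b_1 \lessdot \cdots \lessdot b_n = v$, the case $a_1 = b_1$ reduces the problem to the interval $[a_1,v]$, to which the inductive hypothesis applies. Otherwise, set $w := a_1\vee b_1$; the covering condition forces $a_1 \lessdot w$ and $b_1 \lessdot w$. Extending $w$ to a maximal chain inside $[w,v]$ and applying the inductive hypothesis separately inside $[a_1,v]$ and inside $[b_1,v]$ shows that both $m-1$ and $n-1$ equal one plus the length of any maximal chain in $[w,v]$, so $m=n$. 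Specialising to $u=R$ and $v=S$ yields the catenarity of $[R,S]$.

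The hard part will be the proof of Dedekind's transposition principle, since it is precisely there that the full modular identity is needed (rather than just the weaker semimodularity); once that is in hand, the downstream covering condition and the Jordan--H\"older induction are routine.
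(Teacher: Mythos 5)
Your proof is correct. The paper gives no argument for this proposition at all---it is quoted with the citation \cite[Corollary 1.3.4, p. 172]{NO}---so there is nothing to compare against except the implication chain modular $\Rightarrow$ semimodular $\Rightarrow$ catenarian that the paper records among its lattice properties, and your route (Dedekind's transposition isomorphism $[a\wedge b,a]\cong[b,a\vee b]$, hence the upper covering condition, hence Jordan--H\"older by induction on the length of the interval) is precisely that classical argument. The only step you leave implicit is the trivial check that, when $a_1\neq b_1$, one has $a_1\wedge b_1=u$ (needed before invoking the covering condition); it follows immediately because $u$ is covered by both $a_1$ and $b_1$, so your sketch fills in to a complete proof.
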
 

\begin{proposition}\label{cat} \cite[Proposition 4.7]{Pic 12} An infra-integral FCP extension is catenarian.
\end{proposition}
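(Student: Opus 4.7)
The plan is to use the canonical description of minimal steps provided by Proposition \ref{3.5} to show that in an infra-integral FCP extension every link in every maximal chain contributes exactly one unit of $R$-module length. The length of an arbitrary maximal chain from $R$ to $S$ then equals the chain-independent invariant $\ell_R(S/R)$, so all maximal chains from $R$ to $S$ have the same length. Since every comparable pair $T\subseteq T'$ in $[R,S]$ sits in an infra-integral FCP sub-extension (infra-integrality passes to sub- and intermediate extensions because all residual maps in the ambient extension are isomorphisms, and the intermediate ones are sandwiched between them), the same computation applied to $T\subseteq T'$ upgrades the conclusion to the full catenarian property.

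Concretely, first I would fix a maximal chain $R = R_0\subset R_1\subset\cdots\subset R_n = S$ and focus on one link $R_i\subset R_{i+1}$. By Proposition \ref{3.5} this minimal step is ramified or decomposed, so Theorem \ref{minimal} furnishes $q\in R_{i+1}\setminus R_i$ with $R_{i+1} = R_i[q]$ and $M_i q\subseteq M_i$, where $M_i := (R_i:R_{i+1}) = \mathcal{C}(R_i,R_{i+1})\in\mathrm{Max}(R_i)$. Thus $R_{i+1}/R_i$ is cyclic over $R_i$, generated by the class of $q$, and $M_i$ kills that class; being a nonzero cyclic $R_i$-module annihilated by a maximal ideal, it is isomorphic to $R_i/M_i$, hence simple over $R_i$. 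Infra-integrality of $R\subseteq R_i$ makes the residual map $R/(M_i\cap R)\to R_i/M_i$ an isomorphism, so $R_{i+1}/R_i$ remains simple over $R$. Additivity of length along the filtration $R_0\subset R_1\subset\cdots\subset R_n$ then yields $\ell_R(S/R) = n$, independent of the chain.

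The delicate point, and the only place where infra-integrality is genuinely used, is the equality $\ell_R(R_{i+1}/R_i) = 1$: for an inert minimal link this quotient would still be simple over $R_i$ but would have $R$-length equal to the residual degree $[R_i/M_i : R/(M_i\cap R)]$, possibly larger than one, and additivity would no longer pin the chain length to an invariant. Once this step is secured, catenarity between any two comparable $T\subseteq T'$ in $[R,S]$ is immediate: apply the same reasoning inside the infra-integral FCP sub-extension $T\subseteq T'$ to conclude that every maximal chain from $T$ to $T'$ has length $\ell_T(T'/T)$.
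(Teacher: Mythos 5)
Your proof is correct. There is nothing in the paper to compare it against: the authors give no argument for this proposition, only the citation \cite[Proposition 4.7]{Pic 12}, so your write-up serves as a self-contained replacement. Your route --- each link $R_i\subset R_{i+1}$ of a maximal chain is minimal ramified or decomposed by Proposition \ref{3.5}, hence $R_{i+1}=R_i+R_iq$ with $M_iR_{i+1}\subseteq R_i$, so $R_{i+1}/R_i\cong R_i/M_i$, and infra-integrality of $R\subseteq R_i$ (which does pass to subextensions, since a composite of field embeddings that is an isomorphism forces each factor to be one) identifies this with $R/(M_i\cap R)$; therefore every maximal chain of $[R,S]$ has length $\ell_R(S/R)$ by Jordan--H\"older, and the same computation applied inside any interval $[T,T']$, which is again infra-integral and FCP, gives the full catenarian property --- is exactly the kind of module-length bookkeeping the authors use elsewhere (compare the repeated appeals to \cite[Lemma 5.4]{DPP2}, where lattice lengths of subintegral and seminormal infra-integral pieces are computed as module lengths). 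The two points you flag are indeed the load-bearing ones: $M_i\cap R$ is maximal because the extension is integral, so $\kappa_R(M_i\cap R)=R/(M_i\cap R)$ and the infra-integrality hypothesis applies verbatim; and your remark about the inert case, where $\ell_R(R_{i+1}/R_i)$ would be the residual degree rather than $1$, correctly isolates why the statement fails for general integral FCP extensions, in line with Proposition \ref{3.6}(2).
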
 
   
According to \cite[Exercise 3.2, p. 37]{Cal}, a finite length lattice is modular if and only if it satisfies both {\it covering conditions}, which means that for a ring extension $R\subset S$, for each $T,U\in[R,S]$ such that $T\cap U\subset T$ is minimal, then $U\subset TU$ is minimal (upper covering condition), and for each $T,U\in[R,S]$ such that  $U\subset TU$ is minimal, then $T\cap U\subset T$ is minimal (lower covering condition). 
   In particular, we have the following:
   \begin{lemma} \label{1.1} Let $R\subset S$ be an FCP modular extension. Then, $\ell[T,UV]=2$ for $T, U, V\in[R,S],\ U\neq V$ such that $T\subset U$ and $T\subset V$ are minimal.
\end{lemma}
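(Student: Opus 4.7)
The plan is to exploit the two consequences of modularity that the authors have just recorded: the upper covering condition (which is equivalent to modularity for finite-length lattices) and catenarity (Proposition \ref{1.0}).

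First I would pin down $U\cap V$. Since $U\cap V\in [T,U]\cap [T,V]$ and both $T\subset U$ and $T\subset V$ are minimal, either $U\cap V=T$ or $U\cap V\in\{U,V\}$. The latter would force $U\subseteq V$ or $V\subseteq U$, and then minimality of $T\subset U$ and $T\subset V$ together with $U\neq V$ would give a contradiction. Hence $U\cap V=T$.

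Next I would apply the upper covering condition. From $U\cap V=T\subset U$ minimal, modularity yields that $V\subset UV$ is minimal; symmetrically, $U\subset UV$ is minimal. Thus $T\subset U\subset UV$ is a chain of length $2$ in $[T,UV]$, so $\ell[T,UV]\ge 2$.

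Finally, I would invoke catenarity. Since $R\subset S$ is modular FCP, so is the subextension $T\subseteq UV$; by Proposition \ref{1.0} it is catenarian, so every maximal chain in $[T,UV]$ has the same length. The maximal chain $T\subset U\subset UV$ exhibited above has length $2$, so $\ell[T,UV]=2$.

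The only place a subtlety could arise is checking $U\cap V=T$ in the first step, but the minimality hypothesis together with $U\neq V$ rules out the alternatives immediately; after that, the lemma is a direct application of the covering condition plus catenarity, both of which the excerpt has already attached to modularity.
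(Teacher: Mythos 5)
Your proposal is correct and follows essentially the same route as the paper: identify $U\cap V=T$, apply the upper covering condition (equivalent to modularity in finite length) to get $U\subset UV$ minimal, and conclude by catenarity that the maximal chain $T\subset U\subset UV$ forces $\ell[T,UV]=2$. The only cosmetic differences are that you spell out the verification of $U\cap V=T$ and work with catenarity of $[T,UV]$ rather than of all of $[R,S]$, neither of which changes the argument.
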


\begin{proof} Assume  that $R\subset S$ is modular and let $T, U, V\in[R,S],\ U\neq V$ such that $T\subset U$ and $T\subset V$ are minimal. Then, $T=U\cap V$ implies that $U\subset UV$ is minimal. Since any maximal chains of $R\subset S$ have the same length, $\ell[T,UV]=\ell[T,U]+\ell[U,UV]=2$.
\end{proof} 

\begin{proposition}\label{1.2} A length 2 extension is modular, whence catenarian.
\end{proposition}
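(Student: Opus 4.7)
The plan is to verify modularity directly from the definition by a short case analysis, exploiting the fact that in a length $2$ lattice there is essentially no room for a nontrivial containment $T_2 \subsetneq T_1$ strictly interior to $[R,S]$. Once modularity is in hand, the ``catenarian'' conclusion follows immediately from Proposition \ref{1.0}, since a length $2$ lattice trivially has finite length.

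More concretely, let $R\subset S$ be an extension with $\ell[R,S]=2$. I need to show that for any $T_1,T_2,T_3\in[R,S]$ with $T_2\subseteq T_1$ one has $T_1\cap(T_2T_3)=T_2(T_1\cap T_3)$. Observe first that the hypothesis $R\subsetneq T_2\subsetneq T_1\subsetneq S$ cannot occur: it would yield a chain of length at least $3$ in $[R,S]$, contradicting $\ell[R,S]=2$. Hence the hypothesis $T_2\subseteq T_1$ forces one of the following four (non-exclusive) situations: $T_2=T_1$, $T_2=R$, $T_1=S$, or $T_2=T_1$ with both being atoms.

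The verification in each case is immediate from the lattice axioms, without using the specific ring-theoretic structure at all: if $T_2=T_1$, both sides reduce to $T_1$; if $T_2=R$, both sides reduce to $T_1\cap T_3$; if $T_1=S$, both sides reduce to $T_2T_3$. So the modular identity holds on $[R,S]$. Then Proposition \ref{1.0} applies (since a length $2$ lattice has finite length), giving that $R\subseteq S$ is catenarian.

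There really is no obstacle here; the statement is essentially a lattice-theoretic triviality that is being recorded for later use. The only thing to be careful about is confirming that the ``missing'' case $R\subsetneq T_2\subsetneq T_1\subsetneq S$ is indeed ruled out by the length hypothesis, and that the definition of modularity given in item (4) of the lattice subsection is precisely the identity $T_1\cap(T_2T_3)=T_2(T_1\cap T_3)$ under the assumption $T_2\subseteq T_1$, which is exactly what is used above.
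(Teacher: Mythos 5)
Your argument is correct and is essentially the same as the paper's: reduce to the cases $T_1=T_2$, $T_2=R$, or $T_1=S$ (the strict interior chain being excluded by $\ell[R,S]=2$), check the modular identity trivially in each case, and deduce catenarity from Proposition \ref{1.0}. The fourth ``case'' in your list is redundant but harmless.
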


\begin{proof} Let $R\subset S$ be a length 2 extension with $T_1,T_2,T_3\in[R,S]$ such that $T_2\subseteq T_1$. It $T_1=T_2$, then obviously $T_1\cap(T_2T_3)=T_2(T_1\cap T_3)$. If $T_1\neq T_2$, then either $T_2=R$ or $T_1=S$, and  $T_1\cap(T_2T_3)=T_2(T_1\cap T_3)$ also holds.
\end{proof} 

The following Proposition summarizes \cite[Propositions 7.1, 7.4, 7.6 and 7.10]{DPPS}. Different cases occur when considering  two minimal integral extensions with the same domain. The discussion is organized with respect to their crucial maximal ideals.
 
\begin{proposition}\label{3.6} Let $R\subset T$ and $R\subset U$ be two distinct minimal integral extensions, whose compositum $S:=TU$ exists. Set $M:=\mathcal{C}(R,T)$ and $N:=\mathcal{C}(R,U)$. The following statements hold: 
   \begin{enumerate}
 \item Assume that $M\neq N$. Then $[R,S]=\{R,T,U,S\}$.
 
\item Assume that $M=N,\ R\subset T$ is inert and $R\subset U$ is not inert. Then $R\subset S$ is not catenarian.
 
\item Assume that $M=N,\ R\subset T,\ R\subset U$ are both non-inert and $PQ\subseteq M$ for 
 some $P\in\mathrm{Max}(T)$ and some
 $Q\in\mathrm{Max}(U)$ both lying above $M$. Then $R\subset S$ is catenarian  infra-integral and $\ell[R,S]=2$. 

\item Assume that $M=N,\ R\subset T,\ R\subset U$ are both non-inert and $PQ\not\subseteq M$ for any $P\in\mathrm{Max}(T)$ and any
 $Q\in\mathrm{Max}(U)$ both lying above $M$. Then $R\subset S$ is  catenarian  infra-integral and  $\ell[R,S]=3$.  
\end{enumerate}
  \end{proposition}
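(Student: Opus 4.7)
The plan is to prove each of the four cases by combining the Ferrand--Olivier trichotomy (Theorem \ref{minimal}) with the Crosswise Exchange (Lemma \ref{1.13}) and localization at the crucial ideals. Throughout, I would exploit that a minimal integral extension $R \subset V$ with crucial ideal $C$ satisfies $R_P = V_P$ for all primes $P \neq C$, so the structure of the extension is entirely controlled by what happens over its crucial ideal.

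For (1), since $M$ and $N$ are distinct maximal ideals of $R$, localizing gives $U_M = R_M$ and hence $(TU)_M = T_M$; symmetrically $(TU)_N = U_N$ and $T_N = R_N$. It follows that $T \subset TU$ is an isomorphism at every prime of $T$ other than the unique ideal $N'$ lying over $N$, and at $N'$ it identifies with $R_N \subset U_N$. Thus $T \subset TU$ is minimal of the same type as $R \subset U$, with crucial ideal $N'$. Lemma \ref{1.13} applied to $R \subset T \subset TU$ (the hypothesis $N' \cap R = N \not\subseteq M$ being satisfied) produces a fourth subring $S' \in [R, TU]$ with $R \subset S'$ minimal of crucial ideal $N$, and gives $[R, TU] = \{R, T, S', TU\}$. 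Since $U \in [R, TU]$ has crucial ideal $N$, it cannot be $R$, $T$, or $TU$, forcing $U = S'$.

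For (2)--(4), I would reduce to the local case at $M = N$ (so $R_P = T_P = U_P$ for all other primes $P$) and use the primitive presentations $T = R[q]$, $U = R[q']$ from Theorem \ref{minimal}. The ring $S = R[q, q']$ is then a finitely generated $R$-module, and its lattice of intermediate $R$-subalgebras is controlled by the algebra structure of $S/MS$ together with the conductors. In case (2), $T/M$ is a non-trivial minimal field extension of $k := R/M$, while $R \subset U$ non-inert makes all residual extensions of $U/M$ over $k$ trivial. This asymmetry allows one to exhibit an intermediate subring $V \in [R, S]$ with $R \subset V$ of a third Ferrand--Olivier type, producing a chain from $R$ to $S$ whose length strictly exceeds the length-$2$ chain $R \subset T \subset S$, hence non-catenarian.

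In cases (3) and (4), both extensions being non-inert forces all residue extensions to be trivial, so $R \subset S$ is infra-integral by Proposition \ref{3.5} and then catenarian by Proposition \ref{cat}. For (3), the containment $PQ \subseteq M$ forces the product $qq'$ to lie in $R + Rq + Rq'$ (after appropriate normalization), so $S$ is obtained from $T$ by a single minimal step and $[R, S] = \{R, T, U, S\}$ with $\ell[R, S] = 2$. For (4), the failure of any such containment causes $qq'$ to contribute an independent direction modulo $MS$, yielding an intermediate ring $V$ with $R \subset V \subset S$ and a maximal chain from $R$ to $S$ of length $3$. The main obstacle will be the explicit bookkeeping in (2) and (4): one must cleanly identify the witness intermediate subrings while navigating the ramified and decomposed subcases for each non-inert extension, and verify the precise lengths by tracking which polynomial expressions in $q$ and $q'$ lie in $R$ as functions of the residues of $q, q'$ and of the ideals $P, Q$ modulo $M$.
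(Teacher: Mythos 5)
Your treatment of (1) is essentially the paper's argument: the paper quotes \cite[Proposition 7.10]{DPPS} for the fact that $T\subset S$ is minimal with crucial ideal lying over $N$ and then applies the Crosswise Exchange (Lemma \ref{1.13}); you rederive that fact by localizing at $M$ and $N$, which is correct. For (2)--(4), however, the paper simply cites \cite[Propositions 7.1, 7.4, 7.6]{DPPS}, and your sketches contain genuine errors besides leaving the decisive computations undone. In (2) the proposed witness cannot exist: since $R\subset T$ is inert, $M$ is a maximal ideal of $T$ and $MS=M$, so $S/M\cong (T/M)[Y]/(Y^2)$ when $R\subset U$ is ramified and $S/M\cong T/M\times T/M$ when $R\subset U$ is decomposed. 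In the first case $S$ has a unique maximal ideal above $M$, so no $V\in[R,S]$ with $R\subset V$ minimal decomposed can exist; in the second case $S/M$ is reduced, so no $V$ with $R\subset V$ minimal ramified can exist. Hence there is no intermediate ring ``of a third Ferrand--Olivier type'' (and even if there were, a third atom would not by itself yield a longer chain). Non-catenarity must instead be obtained by showing that $T\subset S$ is minimal while $U\subset S$ is not (modulo $M$, the inclusion $k[Y]/(Y^2)\subset (T/M)[Y]/(Y^2)$ refines through $k+(T/M)Y$), none of which appears in your sketch.

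In (3) the asserted equality $[R,S]=\{R,T,U,S\}$ is false in general: take $R:=k$ a field, $S:=k[X,Y]/(X,Y)^2$, $T:=k[x]$, $U:=k[y]$ with $x,y$ the images of $X,Y$; this is exactly situation (3) (here $M=N=0$, $P=kx$, $Q=ky$, $PQ=0$), yet $k+k(ax+by)$ is an intermediate ring for every $(a,b)\neq (0,0)$, so $|[R,S]|=|k|+3$. The conclusion $\ell[R,S]=2$ does survive, but it has to come from the length count ${\mathrm L}_R(S/R)=2$ (equivalently $\dim_{R/M}(S/M)=3$), not from a four-element lattice. Finally, the heart of (3) and (4) --- that, after suitably normalizing the generators $q,q'$ of Theorem \ref{minimal}, the existence of $P,Q$ above $M$ with $PQ\subseteq M$ is equivalent to $qq'\in R+Rq+Rq'$ modulo $M$, giving ${\mathrm L}_R(S/R)=2$, while its failure for all such $P,Q$ forces $1,q,q',qq'$ to be independent modulo $M$ and ${\mathrm L}_R(S/R)=3$ --- is precisely the content of \cite[Propositions 7.4 and 7.6]{DPPS}, and your proposal explicitly defers it as ``bookkeeping''. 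As written, parts (2) and (4) are therefore not proved, and (3) rests on an incorrect intermediate claim.
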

  
\begin{proof}  We only prove that $[R,S]=\{R,T,U,S\}$ in (1). Indeed, \cite[Proposition 7.10]{DPPS} says that $T\subset S$ is a minimal extension such that $\mathcal{C}(T,S)$ lies over $N$ in $R$. In particular, $N=R\cap\mathcal{C}(T,S)\not\subseteq M$ shows that $|[R,S]|=4$ by the Crosswise Exchange (Lemma \ref{1.13}). Since $\{R,T,U,S\}\subseteq[R,S]$, the result follows.
  \end{proof} 

\section{General properties of $\Delta$-extensions}

We begin by recalling  a Gilmer-Huckaba's  result about $\Delta$-extensions.

\begin{proposition} \label{9.0} \cite[Proposition 1]{GH} Let $R\subset S$ be a ring extension. The following conditions are equivalent:
\begin{enumerate}
\item $R\subset S$ is a  $\Delta$-extension;
\item $R[s,t]=R[s]+R[t]$ for all $s,t\in S$;
\item $R[s_1,\dots,s_n]=\sum_{i=1}^nR[s_i]$ for all integer $n$ and   $s_1,\ldots,s_n\in S$.
\end{enumerate}
\end{proposition}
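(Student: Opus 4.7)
The plan is to prove the cycle (1) $\Rightarrow$ (3) $\Rightarrow$ (2) $\Rightarrow$ (1); the implication (3) $\Rightarrow$ (2) is just the special case $n = 2$ and needs no argument.

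For (1) $\Rightarrow$ (3), I would proceed by induction on $n$, the base case $n = 1$ being trivial. Each $R[s_i]$ belongs to $[R,S]$, so by the inductive hypothesis $\sum_{i=1}^{n-1} R[s_i]$ equals $R[s_1, \ldots, s_{n-1}] \in [R,S]$. Applying the $\Delta$-property to this subalgebra together with $R[s_n]$ then gives
\[ \sum_{i=1}^{n} R[s_i] = R[s_1, \ldots, s_{n-1}] + R[s_n] = R[s_1, \ldots, s_{n-1}] \cdot R[s_n] = R[s_1, \ldots, s_n], \]
which is the claim.

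For (2) $\Rightarrow$ (1), I would fix $T, U \in [R,S]$ and show $T + U \in [R,S]$. The $R$-submodule $T + U$ of $S$ already contains $R$ and is closed under addition, so the only point is to verify multiplicative closure. Expanding a typical product
\[ (t_1 + u_1)(t_2 + u_2) = t_1 t_2 + u_1 u_2 + t_1 u_2 + u_1 t_2, \]
the first two terms lie in $T$ and $U$ respectively, while each cross term $t_i u_j$ lies in $R[t_i, u_j] = R[t_i] + R[u_j] \subseteq T + U$ by hypothesis. Hence $T + U$ is a subring of $S$ containing $T \cup U$, so it contains the compositum $TU$; the reverse inclusion $T + U \subseteq TU$ is automatic because $TU$ is an additive group containing both $T$ and $U$. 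Therefore $T + U = TU \in [R,S]$.

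The only step with any substance is this last implication, and even there the argument reduces to a one-line cross-term calculation. The conceptual point being exploited is that condition (2), which a priori only controls sums of singly-generated subalgebras, already suffices to force the sum-equals-product identity for all pairs in $[R,S]$: multiplication in an arbitrary $T + U$ can be analyzed termwise at the generator level.
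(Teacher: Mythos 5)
Your proof is correct. The paper itself gives no argument here (it simply cites Gilmer--Huckaba \cite[Proposition 1]{GH}), and your cycle (1)~$\Rightarrow$~(3)~$\Rightarrow$~(2)~$\Rightarrow$~(1) is the standard one: the induction in (1)~$\Rightarrow$~(3) legitimately uses that the $\Delta$-property forces $T+U=TU$ (the parenthetical in the paper's definition, spelled out in its Proposition \ref{9.1}), and the cross-term computation in (2)~$\Rightarrow$~(1) correctly shows $T+U$ is an $R$-subalgebra, hence equals $TU$.
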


\begin{corollary} \label{9.2} A ring extension $R\subset S$ is  a  $\Delta$-extension if and only if $T\subset U$ is  a  $\Delta$-extension for each $T,U\in[R,S]$.
\end{corollary}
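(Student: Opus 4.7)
The plan is to unpack the definition of a $\Delta$-extension and observe that the sum operation $A+B$ is purely set-theoretic, so it does not depend on whether we view $A, B$ as $R$-subalgebras of $S$ or as $T$-subalgebras of $U$. The key identification is that for any $T \subseteq U$ in $[R,S]$, one has the equality of posets
\[
[T,U] \;=\; \{W \in [R,S] : T \subseteq W \subseteq U\},
\]
since a subring of $U$ containing $T$ is the same thing as a subring of $S$ containing $R$, squeezed between $T$ and $U$.

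For the forward direction ($\Rightarrow$), I would assume $R \subset S$ is a $\Delta$-extension and fix an arbitrary pair $T, U \in [R,S]$ with $T \subseteq U$ (if $T$ and $U$ are not comparable the statement $T \subset U$ is vacuous). Given $A, B \in [T,U]$, the identification above gives $A, B \in [R,S]$, whence $A + B \in [R,S]$ by hypothesis. The inclusions $T \subseteq A \subseteq A + B$ and $A + B \subseteq U$ (using $A, B \subseteq U$ together with $U$ being a ring closed under addition) immediately place $A + B$ into $[T,U]$. This shows that $T \subset U$ is a $\Delta$-extension.

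The backward direction ($\Leftarrow$) is obtained by specializing to the pair $T = R$, $U = S$, which is in $[R,S]$ by definition. No calculations are required, and there is no real obstacle here: the statement is essentially a tautology once one notices that the $\Delta$-property is hereditary for intermediate subextensions because it is expressed solely in terms of sums of subalgebras, an operation that is computed intrinsically. (Alternatively, one could route the forward direction through Proposition \ref{9.0}(2), writing $T[s,t] = T[s] + T[t]$ for $s,t \in U$, but invoking the definition directly is cleaner.)
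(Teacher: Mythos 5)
Your proof is correct and is exactly the argument the paper has in mind: the paper's proof is simply "Obvious," relying on the same observation that the sum $A+B$ of two intermediate rings is computed intrinsically, so the $\Delta$-property passes to every subextension $[T,U]\subseteq[R,S]$, and the converse follows by taking $T=R$, $U=S$.
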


\begin{proof} Obvious.
\end{proof} 

Lemma \ref{1.1} and Proposition \ref{1.2} show that modular extensions and extensions of length 2 are linked. All along the paper, we will see that extensions of length 2 play a significant role. In particular, we have the following Corollary:

\begin{corollary} \label{9.04} A ring extension $R\subset S$ is not a $\Delta$-extension if there exist $T,U\in[R,S],\ T\subset U$,  such that $\ell[T,U]=2$ and  $T\subset U$ is not a  $\Delta$-extension.
\end{corollary}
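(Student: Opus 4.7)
My plan is to observe that this corollary is essentially the contrapositive of Corollary \ref{9.2}, restricted to the special class of subextensions of length two. By Corollary \ref{9.2}, $R \subset S$ is a $\Delta$-extension if and only if every subextension $T \subset U$ (with $T,U \in [R,S]$) is a $\Delta$-extension. Taking the contrapositive, if there exists \emph{any} pair $T,U \in [R,S]$ with $T \subset U$ such that $T \subset U$ fails the $\Delta$-property, then $R \subset S$ itself cannot be a $\Delta$-extension.

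The hypothesis $\ell[T,U] = 2$ is therefore a strictly stronger hypothesis than is logically needed; it simply records that length-two witnesses suffice for the argument, but the proof does not invoke the length condition at all. I expect the authors include the length hypothesis only to emphasize the practical utility of the statement: when hunting for obstructions to the $\Delta$-property one can confine attention to length-two subextensions (which by Proposition \ref{1.2} are automatically modular), and this viewpoint will presumably be exploited in later sections.

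There is no substantive obstacle: the proof is a one-line appeal to Corollary \ref{9.2}. The only thing to be careful about is not to invoke the length hypothesis unnecessarily — it is a red herring from a purely logical standpoint, though it foreshadows the structural role that $B_2$-extensions and length-two subextensions play in the rest of the paper.
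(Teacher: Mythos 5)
Your argument matches the paper's proof exactly: the authors also dispose of this corollary by a direct appeal to Corollary \ref{9.2}, declaring it obvious, and your observation that the length-two hypothesis plays no logical role (it only signals where obstructions will be sought later) is accurate.
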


\begin{proof} Obvious according to Corollary \ref{9.2}.
\end{proof} 

\begin{proposition} \label{9.1} A ring extension $R\subset S$ is  a  $\Delta$-extension if and only if $T+U=TU$ for each $T,U\in[R,S]$.
\end{proposition}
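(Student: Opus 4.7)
The plan is a short unpacking of definitions, hinging on the observation (made in the introduction) that the compositum $TU$ is by construction the supremum of $\{T,U\}$ in the lattice $[R,S]$, that is, the smallest $R$-subalgebra of $S$ containing both $T$ and $U$.

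First I would record the inclusion that always holds, without any hypothesis: for arbitrary $T,U\in[R,S]$, the compositum $TU$ is a subring of $S$ containing both $T$ and $U$, hence closed under addition, so every element $t+u$ with $t\in T$ and $u\in U$ lies in $TU$. Thus $T+U\subseteq TU$.

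For the ``if'' direction, if $T+U=TU$ for all $T,U\in[R,S]$, then since $TU\in[R,S]$ the sum $T+U$ is in $[R,S]$ as well, which is exactly the defining condition of a $\Delta$-extension. For the ``only if'' direction, assume $R\subset S$ is a $\Delta$-extension and fix $T,U\in[R,S]$. Then $T+U$ is an $R$-subalgebra of $S$ containing both $T$ and $U$, so by the minimality of the compositum, $TU\subseteq T+U$; combined with the first step, equality follows.

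There is no real obstacle here: the statement is essentially a restatement of the definition, promoting the parenthetical equivalence ``\emph{i.e.} $T+U=TU$'' from \cite[Definition, page 414]{GH} to a proposition. The only thing worth emphasising in the write-up is the double inclusion argument using that $TU$ is the lattice-theoretic supremum of $T$ and $U$.
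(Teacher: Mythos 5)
Your proposal is correct and follows essentially the same route as the paper's proof: both directions come down to the double inclusion $T+U\subseteq TU$ (always) and $TU\subseteq T+U$ (when $T+U$ is an $R$-subalgebra, i.e. under the $\Delta$-hypothesis), plus the trivial observation that $T+U=TU$ forces $T+U\in[R,S]$. Nothing is missing.
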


\begin{proof} Let $R\subset S$ be a $\Delta$-extension and let $T,U\in[R,S]$. Since  $T+U$ is an $R$-algebra, we get that $T,U\subseteq T+U\subseteq TU$, so that $T+U=TU$. Conversely, $T+U=TU$ shows that $T+U\in [R,S]$.
\end{proof} 

 \begin{corollary} \label{9.03} Let $R\subset S$ be a ring extension and  $\mathcal{C}:=\{T_i\}_{i\in\mathbb N_n}\subset]R,S[,\ n\geq 1$ be a finite chain such that  $R\subset S$ is  pinched at $\mathcal{C}$. Set $T_0:=R$ and $T_{n+1}:=S$. Then, $R\subset S$ is a $\Delta$-extension if and only if $T_i\subset T_{i+1}$ is a $\Delta$-extension for each $i\in\{0,\ldots,n\}$.
\end{corollary}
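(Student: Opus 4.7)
The plan is to prove the two directions separately, with the forward direction being an immediate consequence of Corollary \ref{9.2} and the backward direction reducing to a simple case analysis based on the pinching hypothesis.

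For the forward implication, if $R\subset S$ is a $\Delta$-extension, then every subextension of $R\subset S$ is again a $\Delta$-extension by Corollary \ref{9.2}; in particular, each $T_i\subset T_{i+1}$ is.

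For the converse, assume each $T_i\subset T_{i+1}$ is a $\Delta$-extension and let $U,V\in[R,S]$. Since $R\subset S$ is pinched at $\mathcal{C}$, we have $[R,S]=\bigcup_{i=0}^n[T_i,T_{i+1}]$, so we may pick indices $i,j\in\{0,\ldots,n\}$ with $U\in[T_i,T_{i+1}]$ and $V\in[T_j,T_{j+1}]$. Without loss of generality $i\le j$. If $i<j$, then $U\subseteq T_{i+1}\subseteq T_j\subseteq V$, and hence $U+V=V\in[R,S]$. If $i=j$, then $U$ and $V$ both lie in $[T_i,T_{i+1}]$, and the $\Delta$-property of $T_i\subset T_{i+1}$ gives $U+V\in[T_i,T_{i+1}]\subseteq[R,S]$. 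In either case $U+V\in[R,S]$, so $R\subset S$ is a $\Delta$-extension.

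The argument is essentially mechanical: the key conceptual input is already contained in the definition of pinching (every element of $[R,S]$ is comparable to each $T_i$, forcing it to sit in some interval $[T_i,T_{i+1}]$), which trivializes the two-variable sum closure condition into a per-interval check. There is no real obstacle; the only point requiring a mild amount of care is the case distinction, and one should note that the ``WLOG $i\le j$'' step is legitimate because $T+U$ is symmetric in $T,U$.
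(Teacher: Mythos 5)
Your proof is correct and follows essentially the same argument as the paper: the forward direction via Corollary \ref{9.2}, and the converse by placing $U,V$ in intervals $[T_i,T_{i+1}]$, $[T_j,T_{j+1}]$ and splitting into the cases $i=j$ (use the $\Delta$-property of that interval) and $i\neq j$ (where $U\subseteq T_{i+1}\subseteq T_j\subseteq V$ forces $U+V=V$). The only cosmetic difference is that the paper phrases the conclusion via Proposition \ref{9.1} ($T+U=TU$) rather than directly checking $U+V\in[R,S]$.
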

\begin{proof}
 One implication results from Corollary \ref{9.2}. Conversely, assume that $T_i\subset T_{i+1}$ is a $\Delta$-extension for each $i\in\{0,\ldots,n\}$ Let $U,V\in[R,S]=\cup_{i=0}^n[T_i, T_{i+1}]$. If $U,V$ are both  in some $[T_i, T_{i+1}]$, then $U+V=UV$ by Proposition \ref{9.1} applied  to $[T_i, T_{i+1}]$. Assume that $U\in[T_i, T_{i+1}]$ and $V\in[T_j, T_{j+1}]$ with $i\neq j$. Let, for instance, $i<j$, so that $i+1\leq j$. Then, $U\subseteq T_{i+1}\subseteq T_j\subseteq V$ leads to $U+V=V=UV$.  Another application of Proposition \ref{9.1} shows that $R\subset S$ is a $\Delta$-extension. 
\end{proof} 

\begin{proposition}\label{9.02} Let $R\subseteq S$ be a ring  extension. The following statements are equivalent: 
\begin{enumerate}
\item $R\subseteq  S$ is a $\Delta$-extension;

\item $R_M \subseteq S_M$ is  a $\Delta$-extension  for each $M\in\mathrm{MSupp}(S/R)$;

\item $R_P \subseteq S_P$ is  a $\Delta$-extension   for each $P\in\mathrm{Supp}(S/R)$;

\item  $R/I\subseteq S/I$ is  a $\Delta$-extension  for some  ideal $I$ shared by $R$ and $S$.
\end{enumerate}
\end{proposition}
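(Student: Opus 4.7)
The whole argument is driven by Proposition \ref{9.1}, which recasts the $\Delta$-condition as the compositum equation $T+U = TU$ for all $T,U \in [R,S]$. Since sum and product of intermediate subalgebras commute with localization and with passage to a quotient by a shared ideal, the plan is to push this equation through those operations and, in the other direction, to lift intermediate subalgebras back along them.

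First I would establish (1) $\Leftrightarrow$ (4). If $I$ is an ideal shared by $R$ and $S$, then every element of $[R,S]$ automatically contains $I$, so $T \mapsto T/I$ is a bijection $[R,S] \to [R/I, S/I]$ of posets that preserves both $+$ and $\cdot$. Applying Proposition \ref{9.1} to each side gives the equivalence directly.

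Next I would handle the localization part through the cycle (1) $\Rightarrow$ (3) $\Rightarrow$ (2) $\Rightarrow$ (1). For (1) $\Rightarrow$ (3), the key step is to show that for any $P \in \mathrm{Spec}(R)$ the map $[R,S] \to [R_P, S_P]$, $T \mapsto T_P$, is surjective. Given $T' \in [R_P, S_P]$, I set $T := \{s \in S \mid s/1 \in T'\}$; this is visibly an $R$-subalgebra of $S$, and for any $t' = s/r \in T'$ with $s \in S$, $r \in R \setminus P$ one has $s/1 = r\,t' \in T'$, so $s \in T$ and $t' \in T_P$, yielding $T_P = T'$. Now localizing the identity $T+U = TU$ gives $T'+U' = T'U'$, and Proposition \ref{9.1} yields the $\Delta$-property for $R_P \subseteq S_P$. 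The implication (3) $\Rightarrow$ (2) is immediate since $\mathrm{MSupp}(S/R) \subseteq \mathrm{Supp}(S/R)$. For (2) $\Rightarrow$ (1), given $T,U \in [R,S]$ I would check $(T+U)_M = (TU)_M$ for every $M \in \mathrm{Max}(R)$: if $M \in \mathrm{MSupp}(S/R)$, the hypothesis combined with Proposition \ref{9.1} gives $T_M + U_M = T_M U_M$, while if $M \notin \mathrm{MSupp}(S/R)$ then $R_M = S_M$ forces $T_M = U_M = S_M$ and the equality holds trivially. Faithfulness of localization at maximal ideals then delivers $T+U = TU$.

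The main obstacle is the lifting step in (1) $\Rightarrow$ (3): although $T_P \subseteq T'$ is tautological from the definition of $T$, the reverse inclusion requires the little denominator bookkeeping described above. Once that surjectivity is in hand, the rest is routine, relying only on the standard compatibility of localization and quotient with the subalgebra operations $+$ and $\cdot$.
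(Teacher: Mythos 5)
Your proposal is correct and matches the paper's intended argument: the paper simply declares the proposition ``obvious with the help of Proposition \ref{9.1}'', and your write-up is exactly the natural fleshing-out of that, reducing everything to the equation $T+U=TU$ and its compatibility with localization and with quotients by a shared ideal. The lifting step $T:=\{s\in S\mid s/1\in T'\}$ and the local-to-global check at maximal ideals are both sound, so nothing further is needed.
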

\begin{proof} Obvious with the help of Proposition \ref{9.1}.
\end{proof} 

\begin{remark}\label{9.031} According to the equivalences of Proposition \ref{9.02}, and as we mostly consider in the rest of the paper FCP extensions, there is no harm to replace FCP integral extensions by locally FCP integral extensions. 
 Let $R\subset S$ be an integral extension satisfying the following property: any $T\in[R,S]$ such that $R\subset T$ is finite implies that $|\mathrm{MSupp}(T/R)|<\infty$. Then $R\subset S$ is locally FCP if and only if $R\subset S$ is locally finite, and for any $T\in[R,S]$ such that $R\subset T$ is finite, then $R\subset T$ has FMC.
  
First  assume that $R\subset S$ is locally finite and for any $T\in[R,S]$ such that $R\subset T$ is finite, then $R\subset T$ has FMC.
 Let $M\in\mathrm{Max}(R)$, so that there exist $x_1,\ldots,x_n\in R,\ s\in R\setminus M$ such that $S_M=R_M[x_1/s,\ldots,x_n/s]$. Set $T:=R[x_1,\ldots,x_n]$, which is a finite extension of $R$. In particular,  $R\subset T$ has FMC and so has $R_M\subseteq T_M=S_M$. It follows that  $R_M\subset S_M$ has FCP by \cite[Theorem 4.2]{DPP2}.

Conversely, assume that $R\subset S$ is locally FCP. The previous reference shows that $R_M\subset S_M$ is locally finite. Let $T\in[R,S]$ be such that $R\subseteq T$ is finite. In particular, $R_M\subseteq T_M$ is finite for each $M\in\mathrm{Max}(R)$ and $(R_M:T_M)=(R:T)_M$, so that $R_M/(R_M:T_M)=R_M/(R:T)_M=(R/(R:T))_M$. Since $R_M\subset S_M$ has FCP, so has $R_M\subseteq T_M$. But, $|\mathrm{MSupp}(T/R)|<\infty$ implies that $R\subset T$ has FCP  according to \cite[Proposition 3.7]{DPP2}, and then has FMC.
\end{remark} 

\begin{proposition} \label{desc} Let $R\subset S$  be a ring extension, $f: R \to R'$ a  ring morphism and $S':=  R'\otimes_RS$. 

\begin{enumerate}

\item If $f: R \to R'$ is a faithfully flat ring morphism and if $ R '\subset S'$ is a $\Delta$-extension,  then so is   $R\subset S$.
 
\item If $f: R \to R'$ is a flat ring epimorphism (for example, a localization with respect to a multiplicatively closed subset) and $ R \subset S$  is a $\Delta$-extension,  then so is   $R'\subset S'$.
\end{enumerate}
\end{proposition}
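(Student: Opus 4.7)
The approach starts from Proposition \ref{9.1}, which reduces everything to verifying $T+U=TU$ for arbitrary $T,U$ in the appropriate subalgebra lattice.

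For (1), I would fix any $T,U\in[R,S]$ and set $T':=R'\otimes_R T$ and $U':=R'\otimes_R U$, using flatness of $R\to R'$ to realize them as genuine $R'$-subalgebras of $S'=R'\otimes_R S$. The $\Delta$-hypothesis for $R'\subset S'$ gives $T'+U'=T'U'$. Since tensor product commutes with sums of submodules, and since the $R'$-subalgebra product $T'U'$ coincides with the image of $R'\otimes_R(TU)\hookrightarrow S'$ (note $T'=R'\cdot T$, $U'=R'\cdot U$, whence $T'U'=R'\cdot(TU)$), the previous equality reads $R'\otimes_R(T+U)=R'\otimes_R(TU)$ inside $S'$. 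Tensoring the inclusion $T+U\subseteq TU$ with the faithfully flat module $R'$ yields that the cokernel $R'\otimes_R(TU/(T+U))$ vanishes, and faithful flatness forces $TU/(T+U)=0$, i.e. $T+U=TU$ in $S$. Proposition \ref{9.1} then gives that $R\subset S$ is a $\Delta$-extension.

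For (2), I would localize and invoke Proposition \ref{9.02}. A classical consequence of the flat-epi identity $R'\otimes_R R'=R'$ is that for every $P'\in\mathrm{Spec}(R')$ with contraction $P:=P'\cap R$, the canonical map $R_P\to R'_{P'}$ is an isomorphism; tensoring with $S$ yields $S'_{P'}\cong S_P$, and under these identifications the localized extension $R'_{P'}\subset S'_{P'}$ becomes $R_P\subset S_P$. Given $M'\in\mathrm{MSupp}_{R'}(S'/R')$, its contraction $P:=M'\cap R$ lies in $\mathrm{Supp}_R(S/R)$ (else the localization would vanish). Proposition \ref{9.02}(3) applied to the $\Delta$-extension $R\subset S$ gives that $R_P\subset S_P$ is $\Delta$, hence so is $R'_{M'}\subset S'_{M'}$. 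Applying Proposition \ref{9.02}(2) to $R'\subset S'$ concludes.

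The main obstacle I expect is justifying the identification $R'_{P'}\cong R_P$ in (2), which is the substantive content of the flat-epi hypothesis rather than mere flatness; it is transparent for localizations but must be extracted from the general theory of flat epimorphisms (Lazard--Olivier) otherwise. A minor technical point in (1) is verifying cleanly that $T'U'=R'\otimes_R(TU)$ inside $S'$, which is routine once one writes $T'=R'\cdot T$ and $U'=R'\cdot U$ and uses injectivity of the relevant base changes.
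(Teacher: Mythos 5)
Your proposal is correct and follows essentially the same route as the paper: for (1) the paper also base-changes subalgebras along the faithfully flat map and descends the equality $T'+U'=T'U'$ (citing Bourbaki's descent of submodules, where you instead argue via vanishing of $R'\otimes_R(TU/(T+U))$, which is the same mechanism); for (2) the paper likewise uses that a flat epimorphism induces isomorphisms $R_P\to R'_Q$ for $Q$ lying over $P$ and identifies $R'_Q\subset S'_Q$ with $R_P\subset S_P$, then concludes by the local characterization of the $\Delta$-property.
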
 
\begin{proof} 

(1) The ring morphism $\varphi : S \to S'$ defines a map $\psi : [R,S] \to [R',S']$ by $\psi (T)= R'\otimes_RT$ and $ \theta: [R',S'] \to [R,S]$  by $\theta(T')=T'\cap S$ such that $ \theta \circ \psi   $ is the identity of $[R,S]$ by \cite[Proposition 10, p.52]{ALCO} (it is enough to take  $F=S$ and to observe that if $M$ is an $R$-submodule of $S$, then  with the notation of the above reference,  $R'M$ identifies to  $R'\otimes_RM$). The same reference shows that $\psi(T+U) = \psi(T) + \psi (U) $  and $\psi(T\cap U) = \psi(T) \cap  \psi (U) $ for $U, T\in [R,S]$. It is easy to show that $\psi(TU) = \psi(T)\psi(U)$. Then the result follows. 

(2) The proof is a consequence of the following facts. Let $f: R\to R'$ be a flat epimorphism and $Q\in\mathrm{Spec}(R')$, lying over $P$ in $R$, 
 then $R_P\to{R'}_Q$ is an isomorphism by \cite[Scholium A]{Pic 5}. 
 Moreover, we have $(R'\otimes_RS)_Q\cong R'_Q\otimes_{R_P}S_P$, so that $R_P\to S_P$ identifies to ${R'}_Q \to  (R'\otimes_RS)_Q $.
\end{proof}

\begin{remark} We deduce from the above statement (1), that the $\Delta$-property is local on the spectrum. This means that for a ring extension $R\subset S$ and any finite set $\{r_1,\ldots,r_n\}$ of elements of $R$, such that $R =Rr_1+\cdots+Rr_n$, then $R\subset S$ is a $\Delta$-extension if and only if all the extensions $R_{r_i} \subset S_{r_i}$ have the $\Delta$-property. 
\end{remark}

Given a ring $R$, recall that its {\it Nagata ring} $R(X)$ is the localization $R(X)=T^{-1}R[X]$ of the ring of polynomials $R[X]$ with respect to the multiplicatively closed subset $T$ of all  polynomials with  content $R$.
In \cite[Theorem 32]{DPP4}, Dobbs and the authors proved that when $R\subset S$ is an extension, whose Nagata extension $R(X)\subset S(X)$ has FIP, the map $\varphi:[R,S]\to[R(X), S(X)]$ defined by $\varphi(T)=T(X)$ is an order-isomorphism. We look at the transfer of the $\Delta$-property between $R\subset S$ and $R(X)\subset S(X)$.
 We recall the following \cite[Corollary 4.3]{Pic 4}: If $R\subset S$ has FIP, then $R(X)\subset S(X)$ has FIP if and  only if $R\subset {}_S^+R$ is arithmetic.

\begin{proposition}\label{8.14} An FCP extension $R\subset S$ is a $\Delta$-extension if  
$R(X)\subset S(X)$ is a $\Delta$-extension. 
 The converse hold if  $R(X)\subset S(X)$ has FIP. 
 \end{proposition}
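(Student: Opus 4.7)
The proof rests on three routine identities for the Nagata-ring construction applied to $R$-submodules $M, N$ of $S$:
(a) $M(X) + N(X) = (M+N)(X)$,
(b) $T(X)\cdot U(X) = (TU)(X)$ when $T,U\in [R,S]$, and
(c) $M(X) \cap S = M$.
Identity (a) follows from the fact that the product of two unit-content polynomials in $R[X]$ has unit content, so that any sum $f/g + f'/g' = (fg' + f'g)/(gg')$ of elements in $M(X)$ and $N(X)$ lies in $(M+N)(X)$, and conversely; identity (b) holds because $(TU)(X)$ is the $R(X)$-subalgebra of $S(X)$ generated by $TU$, which coincides with $T(X)\cdot U(X)$; identity (c) is the following computation: if $s = h/g$ with $h\in M[X]$ and $g = \sum g_i X^i$ of unit content, then $sg_i = h_i\in M$ for each $i$, and writing $1 = \sum r_i g_i$ gives $s = \sum r_i(sg_i)\in M$.

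For the first implication, assume $R(X)\subset S(X)$ is a $\Delta$-extension. Given $T,U\in[R,S]$, the Nagata rings $T(X), U(X)$ lie in $[R(X), S(X)]$, so the $\Delta$-property on $R(X)\subset S(X)$ gives $T(X)+U(X)\in [R(X), S(X)]$. By Proposition \ref{9.1} applied to $R(X)\subset S(X)$, this yields $T(X)+U(X) = T(X)\cdot U(X)$. Using identities (a) and (b), this rewrites as $(T+U)(X) = (TU)(X)$. Intersecting with $S$ and invoking identity (c), I conclude $T+U = TU$, hence $T+U\in[R,S]$. Thus $R\subset S$ is a $\Delta$-extension. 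Notice the FCP hypothesis is not used in this direction.

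For the converse, assume additionally that $R(X)\subset S(X)$ has FIP. By \cite[Theorem 32]{DPP4}, the map $\varphi\colon [R,S]\to [R(X),S(X)]$, $T\mapsto T(X)$, is an order-isomorphism, so every $V\in[R(X), S(X)]$ has the form $T(X)$ for a unique $T\in[R,S]$. Given $V = T(X)$ and $W = U(X)$ in $[R(X),S(X)]$, the $\Delta$-property of $R\subset S$ gives $T+U\in[R,S]$, so $(T+U)(X)\in [R(X),S(X)]$. Identity (a) then yields $V+W = T(X)+U(X) = (T+U)(X)\in[R(X),S(X)]$, and $R(X)\subset S(X)$ is a $\Delta$-extension.

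The only genuine obstacle is verifying identities (a)--(c) carefully; each reduces to a short manipulation with contents of polynomials, but they are the load-bearing facts. Note that the converse really requires FIP (used to represent every intermediate ring of the Nagata extension as $T(X)$), whereas the first direction works without it because it only needs the $\Delta$-property on the special intermediate rings of the form $T(X)$.
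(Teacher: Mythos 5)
Your proof is correct, and the converse direction coincides with the paper's (both rest on the order-isomorphism $T\mapsto T(X)$ from \cite[Theorem 32]{DPP4} under the FIP hypothesis). The forward direction, however, takes a genuinely different route. The paper gets it abstractly: it quotes \cite[Corollary 3.5]{DPP3} to write $S(X)=R(X)\otimes_RS$ and then applies the faithfully flat descent statement (Proposition \ref{desc}(1)), so no element-wise computation is needed. You instead argue inside $S(X)$ with contents of polynomials: the Dedekind--Mertens type fact that products of unit-content polynomials have unit content, and the contraction identity $M(X)\cap S=M$, which is the load-bearing step and which you verify correctly. One caveat: your identities (a)--(c) are proved with denominators of unit content \emph{over $R$}, i.e.\ for the $R(X)$-subalgebras $R(X)[T]$ (equivalently $R(X)\otimes_RT$), whereas the symbol $T(X)$ normally denotes the Nagata ring of $T$, whose denominators have unit content over $T$; the identification of these two objects is exactly the non-formal content of the paper's citation of \cite[Corollary 3.5]{DPP3} and should not be asserted as routine (your justification of (b) does precisely that). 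This is harmless for your forward argument, because all you need is \emph{some} pair of rings in $[R(X),S(X)]$ containing $T$ and $U$ respectively, and $R(X)[T]$, $R(X)[U]$ serve; with that reading your observation that the forward implication needs neither FCP nor FIP is valid, and your proof is in fact more elementary and slightly more general than the paper's on this half. In the converse both you and the paper apply the sum identity to genuine Nagata rings, where the FIP hypothesis (via the cited transfer results) supplies the identification just mentioned, so the two arguments are on the same footing there.
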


\begin{proof}  Assume that $R(X)\subset S(X)$  is a $\Delta$-extension.
By \cite[Corollary 3.5]{DPP3}, we have $S(X)=R(X)\otimes_RS$. Since $R\subset R(X)$ is faithfully flat, an application of Proposition \ref{desc} gives the result.

Conversely, assume that $R\subset S$ is a $\Delta$-extension and $R(X)\subset S(X)$ has FIP. According to the previous remark, the map $\varphi:[R,S]\to [R(X),S(X)]$ defined by $\varphi(T)=T(X)$ is an order-isomorphism, and more precisely, a lattice isomorphism. Then, two elements of $[R(X),S(X)]$ are of the form $T(X),U(X)$ for some $T,U\in[R,S]$. In particular, $T(X)+U(X)=(T+U)(X)=(TU)(X)=T(X)U(X)$. Then, Proposition \ref{9.1} shows that $R(X)\subset S(X)$ is a $\Delta$-extension.
\end{proof} 

\begin{proposition} \label{1.013} An arithmetic extension is  a $\Delta$-extension. 
\end{proposition}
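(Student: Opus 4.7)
The plan is to reduce the global statement to the local case via Proposition \ref{9.02}, which says that $R\subseteq S$ is a $\Delta$-extension precisely when each localization $R_P\subseteq S_P$ is a $\Delta$-extension for $P\in\mathrm{Spec}(R)$ (or just for $P\in\mathrm{Supp}(S/R)$). Since an arithmetic extension is by definition one whose localizations at each prime are chained, it therefore suffices to observe that a chained extension is a $\Delta$-extension.

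For the local step, I would fix $P\in\mathrm{Spec}(R)$ and pick arbitrary $T',U'\in[R_P,S_P]$. Because $[R_P,S_P]$ is a chain, one of the inclusions $T'\subseteq U'$ or $U'\subseteq T'$ holds. In either case the sum $T'+U'$ coincides with the larger of the two subalgebras, which already lies in $[R_P,S_P]$. Hence the defining condition of a $\Delta$-extension is satisfied on the nose for $R_P\subseteq S_P$.

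Combining the two observations, $R_P\subseteq S_P$ is a $\Delta$-extension for every $P\in\mathrm{Spec}(R)$, so Proposition \ref{9.02} yields that $R\subseteq S$ is a $\Delta$-extension. There is essentially no obstacle here; the only point to be careful about is citing Proposition \ref{9.02} (rather than trying to argue globally with $T+U$ versus $TU$), since the compatibility of sums and products with localization is precisely what makes the descent from the local $\Delta$-property to the global one clean.
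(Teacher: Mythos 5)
Your argument is correct. Both steps are sound: the reduction to the local case is exactly the equivalence (1)$\Leftrightarrow$(3) of Proposition \ref{9.02} (which holds without any FCP hypothesis and appears earlier in the paper, so there is no circularity), and the local step is the trivial observation that in a chained extension the sum of two comparable subalgebras is the larger one, hence again a subalgebra. The only difference from the paper is that the paper does not argue at all: it simply cites \cite[Proposition 5.16]{Pic 4}, outsourcing the proof to an earlier article. Your write-up is thus a self-contained replacement for that citation, and it is essentially the natural proof one would expect the cited reference to contain; nothing further is needed, beyond perhaps noting explicitly (as you implicitly do) that sums and products of subalgebras commute with localization, which is what makes the descent in Proposition \ref{9.02} work.
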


\begin{proof} \cite[Proposition 5.16]{Pic 4}.
\end{proof} 

\begin{corollary} \label{1.015} Let $R\subset S$ be a ring extension and $T,U\in[R,S]$ such that $R\subset T$ and $R\subset U$ are minimal. Assume that either $\mathcal{C}(R,T)\neq\mathcal{C}(R,U)$, or $R\subset T$ and $R\subset U$ are minimal of different types with $\ell[R,TU]=2$. 
 Then, $R\subset TU$ is a $B_2$-extension and a $\Delta$-extension.
\end{corollary}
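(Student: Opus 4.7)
My plan is to handle the two disjunctive cases of the hypothesis separately; in each I establish that $|[R,TU]|=4$ (so $R\subset TU$ is a $B_2$-extension) and that $T+U=TU$ (which by Proposition \ref{9.1} yields the $\Delta$-property).

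\textbf{Case 1:} $M:=\mathcal{C}(R,T)\neq\mathcal{C}(R,U)=:N$. A localization analysis shows that $\mathrm{Supp}(TU/R)=\{M,N\}$ with $(TU)_M=T_M$ and $(TU)_N=U_N$; in particular, $T\subset TU$ is minimal of the same type as $R\subset U$ with crucial ideal lying over $N$ (by \cite[Proposition 7.10]{DPPS}). Since $N\neq M$ are distinct maximal ideals, $N\not\subseteq M$, so the Crosswise Exchange (Lemma \ref{1.13}) applied to the chain $R\subset T\subset TU$ delivers $[R,TU]=\{R,T,S',TU\}$ with $R\subset S'$ minimal and $\mathcal{C}(R,S')=N$. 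Localizing at $N$ identifies $S'=U$, and hence $R\subset TU$ is $B_2$. At every prime $P$ of $R$, the interval $[R_P,(TU)_P]$ is a chain of length at most one, so $R\subset TU$ is arithmetic, and Proposition \ref{1.013} delivers the $\Delta$-property.

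\textbf{Case 2:} $R\subset T$ and $R\subset U$ are minimal of different types with $\ell[R,TU]=2$. If the crucial ideals differ then Case 1 applies, so assume $\mathcal{C}(R,T)=\mathcal{C}(R,U)=M$. Lemma \ref{1.12} then forces both extensions to be integral. By Proposition \ref{1.2}, $R\subset TU$ is modular, hence catenarian by Proposition \ref{1.0}. Proposition \ref{3.6}(2) rules out the inert$+$ramified and inert$+$decomposed pairings (they are non-catenarian), so we are necessarily in the ramified$+$decomposed situation of Proposition \ref{3.6}(3). Write $T=R+Rt$ with $t^2\in M$ and $Mt\subseteq M$, and $U=R+Ru$ with $u^2-u\in M$ and $Mu\subseteq M$. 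The condition $PQ\subseteq M$ of Proposition \ref{3.6}(3), applied to $P=M+Rt\in\mathrm{Max}(T)$ and the appropriate $Q\in\mathrm{Max}(U)$, expands as $M^2+Mt+Mu+Rtu\subseteq M$, forcing $tu\in M\subseteq R$. Consequently $TU=R+Rt+Ru=T+U$, proving the $\Delta$-property. For the lattice count, $R\subset TU$ is $M$-crucial, so $[R,TU]$ is in bijection with $[R_M,(TU)_M]$, and a case analysis on $v=r_1t+r_2u\in(TU)_M$---distinguishing units from non-units in the local ring $R_M$ and using $Mt,Mu\subseteq M$ together with $tu\in R$---shows the only atoms are $T_M$ and $U_M$. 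Hence $|[R,TU]|=4$, and $R\subset TU$ is $B_2$.

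The hardest step I expect is the coefficient-level atom enumeration in Case 2: even once $tu\in R$ and $T+U=TU$ are in hand, one must rule out any putative extra minimal extension $R_M[r_1t+r_2u]$ lying strictly between $R_M$ and $(TU)_M$ other than $T_M,U_M$, which relies on the precise arithmetic $t^2,u^2-u\in M$ together with $Mt,Mu\subseteq M$. Case 1 is comparatively routine, resting on the Crosswise Exchange and the arithmetic$\Rightarrow\Delta$ implication from Proposition \ref{1.013}.
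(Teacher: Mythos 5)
Your proof is correct. Case 1 is in substance the paper's argument: both of you observe $(TU)_M=T_M$, $(TU)_N=U_N$ and $R_P=(TU)_P$ elsewhere, deduce that $R\subset TU$ is arithmetic and hence a $\Delta$-extension by Proposition \ref{1.013}, and then count $|[R,TU]|=4$; the paper gets the count from the product decomposition $[R,TU]\cong[R_M,(TU)_M]\times[R_N,(TU)_N]$ of \cite[Theorem 3.6]{DPP2} where you use the Crosswise Exchange, but the two are interchangeable here. In Case 2 you genuinely diverge after the common reduction (Lemma \ref{1.12} forces integrality; catenarity plus Proposition \ref{3.6}(2) forces the ramified--decomposed pairing): the paper argues structurally, identifying $T={}_{TU}^+R$ and ruling out a third atom $W$ because it could be neither ramified (it would lie in ${}_{TU}^+R=T$) nor decomposed (two decomposed atoms would make $R\subset UW=TU$ seminormal infra-integral by \cite[Proposition 7.6]{DPPS}, contradicting the ramified step $R\subset T$), whereas you work at the level of generators $t,u$ with the relation $PQ\subseteq M$ from Proposition \ref{3.6}(3). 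Your route has the merit of explicitly verifying $T+U=TU$, which the paper's proof of this corollary leaves implicit (it only surfaces later, in the computation inside Lemma \ref{9.17}); the paper's route avoids the coefficient case analysis. Two small points to tighten: Proposition \ref{3.6}(3) only guarantees $PQ\subseteq M$ for \emph{some} maximal ideal $Q$ of $U$ above $M$, so you may obtain $t(1-u)\in M$ rather than $tu\in M$ --- harmless after replacing $u$ by $1-u$, but it should be said; and the inference ``$T+U=TU$, hence $\Delta$'' requires knowing $[R,TU]=\{R,T,U,TU\}$ so that $(T,U)$ is the only incomparable pair, a fact you do establish but only afterwards, so the order of the two halves of your Case 2 should be reversed.
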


\begin{proof} Set $M:=\mathcal{C}(R,T)$ and $N:=\mathcal{C}(R,U)$  
with $M\neq N$.
 Obviously, $\mathrm{Supp}(TU/R)=\mathrm{MSupp}(TU/R)=\{M,N\}$. Since $R_M=U_M$ and $R_N=T_N$, it follows that $R\subset TU$ is locally minimal,  then arithmetic and a $\Delta$-extension by Proposition \ref{1.013}. Moreover, \cite[Theorem 3.6]{DPP2} shows that the map $\varphi:[R,TU]\to [R_M,T_MU_M]\times[R_N,T_NU_N]$ defined by $V\mapsto (V_M,V_N)$ for any $V\in[R,TU]$, is bijective, with $T_MU_M=T_M$ and $T_NU_N=U_N$, so that $|[R,TU]|=4$, with $\ell[R,TU]=2$. Therefore,  $R\subset TU$ is a $B_2$-extension.
  
Assume now that $M:=\mathcal{C}(R,T)=\mathcal{C}(R,U)$ with $R\subset T$ and $R\subset U$  minimal of different types and $\ell[R,TU]=2$.  
In this case, neither $R\subset T$ nor $R\subset U$ is minimal Pr\"ufer by Lemma \ref{1.12}.
In particular, $R\subset TU$ is catenarian by Proposition \ref{1.2} and integral, so that neither $R\subset T$ nor $R\subset U$ is minimal inert according to Proposition \ref{3.6} (2).
   It follows that $R\subset TU$ is infra-integral with ${}_{TU}^+R\neq R,TU$. Assume that $R\subset T$ is minimal ramified and $R\subset U$ is minimal decomposed. Since $T\subset TU$ is minimal, it is necessarily decomposed, so that $T={}_{TU}^+R$. We now show that $[R,TU]=\{R,T,U,TU\}$. Assume that there exists some $W\in[R,TU]\setminus\{R,T,U,TU\}$, so that $R\subset W$ and $W\subset TU$ are minimal. Because $R\subset {}_{TU}^+R$ is minimal, we cannot have $R\subset W$ minimal ramified. But, $R\subset U$ and $R\subset W$ both minimal decomposed implies $R\subset UW$ seminormal infra-integral according to \cite[Proposition 7.6]{DPPS}, with $UW=S$, a contradiction. To conclude, $[R,TU]=\{R,T,U,TU\}$. 
    Then, $R\subset TU$ is a $B_2$-extension and a $\Delta$-extension.
   \end{proof}

\begin{proposition} \label{1.012}  A $\Delta$-extension is modular, and hence  is catenarian when it has  FCP.
\end{proposition}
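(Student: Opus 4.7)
The plan is to verify the lattice-theoretic modular law directly by exploiting the fact that, in a $\Delta$-extension, the join operation (product) coincides with the module-theoretic sum. More precisely, I would start with $T_1,T_2,T_3\in[R,S]$ satisfying $T_2\subseteq T_1$ and aim to prove the identity
\[
T_1\cap(T_2T_3)=T_2(T_1\cap T_3).
\]
Since $R\subseteq S$ is a $\Delta$-extension, Proposition \ref{9.1} gives $T_2T_3=T_2+T_3$ and $T_2(T_1\cap T_3)=T_2+(T_1\cap T_3)$, so the identity to prove rewrites as
\[
T_1\cap(T_2+T_3)=T_2+(T_1\cap T_3).
\]

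This is exactly the classical modular law for $R$-submodules of $S$, valid whenever $T_2\subseteq T_1$: the inclusion $\supseteq$ is immediate (both $T_2$ and $T_1\cap T_3$ sit inside $T_1\cap(T_2+T_3)$), while for $\subseteq$, any element $x\in T_1\cap(T_2+T_3)$ can be written $x=t_2+t_3$ with $t_2\in T_2\subseteq T_1$ and $t_3\in T_3$; then $t_3=x-t_2\in T_1$, so $t_3\in T_1\cap T_3$ and $x\in T_2+(T_1\cap T_3)$. Substituting the $\Delta$-equalities back yields modularity of $[R,S]$.

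For the second assertion, once modularity is established, the FCP hypothesis ensures that $[R,S]$ has finite length, so Proposition \ref{1.0} directly gives that $R\subseteq S$ is catenarian.

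There is no real obstacle here: the entire argument is a two-line reduction from the product-to-sum translation provided by the $\Delta$-hypothesis to the well-known modular law for submodules, followed by an appeal to Proposition \ref{1.0}. The only subtlety worth noting is the inclusion $T_2\subseteq T_1$, which is essential for the module-theoretic modular identity and is of course supplied by the definition of the modular law itself.
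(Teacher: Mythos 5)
Your proposal is correct and follows essentially the same route as the paper: replace products by sums via the $\Delta$-property, invoke the modular law for $R$-submodules of $S$ (which the paper uses in the same one-line form that you spell out), and then apply Proposition \ref{1.0} for the catenarian conclusion under FCP.
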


\begin{proof} Let $R\subset S$ be a $\Delta$-extension and let $T,U,V\in[R,S]$ be such that $U\subseteq T$. We always have $U(T\cap V)\subseteq T\cap(UV)$. Since $T\cap(UV)=T\cap(U+V)\subseteq U+(T\cap V)=U(T\cap V)$, we get that $T\cap UV=U(T\cap V)$ and $R\subset S$ is a modular extension.
In particular, an FCP $\Delta$-extension is catenarian by Proposition \ref{1.0}. 
\end{proof} 

\begin{proposition} \label{1.014} \cite[Theorem 1]{GH} Let $k\subset L$ be a field extension. Then $k\subset L$ is  a $\Delta$-extension if and only if $k\subset L$ is chained. 
\end{proposition}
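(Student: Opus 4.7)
The easy direction ($[k, L]$ chained $\Rightarrow$ $\Delta$): any two intermediate rings $T, U$ are comparable, say $T \subseteq U$, so $T + U = U = TU$, and Proposition \ref{9.1} applies.

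For the converse, I would first reduce to the algebraic case. If some $s \in L$ were transcendental over $k$, then $T := k[s^2]$ and $U := k[s^3]$ lie in $[k, L]$: the $k$-subspace $T + U$ is spanned by the linearly independent family $\{s^{2i}\}_{i \geq 0} \cup \{s^{3j}\}_{j \geq 0}$, which omits $s^5$, whereas $s^5 = s^2 \cdot s^3 \in TU$. Hence $T + U \subsetneq TU$, contradicting Proposition \ref{9.1}. Thus $L/k$ is algebraic, and every element of $[k, L]$ is then a subfield of $L$ (a $k$-subalgebra generated by elements integral over a field is itself a field).

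Finally, I would show $[k, L]$ is totally ordered. Suppose otherwise: pick incomparable $F, G \in [k, L]$ and choose $s \in F \setminus G$, $t \in G \setminus F$. The simple subfields $k(s), k(t) \subseteq L$ are then also incomparable ($k(s) \subseteq k(t)$ would force $s \in G$). Set $T := k(s)$, $U := k(t)$, $W := T \cap U$, and $V := TU = k(s,t)$, all finite over $k$. Proposition \ref{9.1} yields $T + U = V$, and Grassmann's formula gives $[V:k] = [T:k] + [U:k] - [W:k]$. Writing $a := [T:W] \geq 2$ and $b := [U:W] \geq 2$, this becomes $[V:W] = a + b - 1$. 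But then the tower $W \subset T \subset V$ forces $a \mid (a + b - 1)$, i.e. $a \mid (b - 1)$, so $a \leq b - 1$; symmetrically $b \leq a - 1$, and summing gives the absurdity $a + b \leq a + b - 2$. I expect the main obstacle to be spotting the right transcendental witness in the algebraicity reduction and the small divisibility argument in the algebraic case; both are elementary once seen.
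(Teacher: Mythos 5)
Your proof is correct. Note that the paper itself gives no argument for this statement: it is quoted directly from Gilmer--Huckaba \cite[Theorem 1]{GH}, so there is no internal proof to compare against; what you have written is a complete, self-contained and elementary proof of the cited result. All three steps check out. The easy direction is immediate from Proposition \ref{9.1}. The transcendence reduction works because the powers of a transcendental $s$ are $k$-linearly independent, so $T+U=k[s^2]+k[s^3]$ is spanned by monomials whose exponents lie in $2\mathbb{Z}_{\geq 0}\cup 3\mathbb{Z}_{\geq 0}$, which indeed excludes $5$, while $s^5\in TU$; and once $L/k$ is algebraic, every $k$-subalgebra of $L$ is a field, so chainedness of $[k,L]$ amounts to the intermediate fields being totally ordered. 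In the final step, the incomparability of $F$ and $G$ does pass to the simple subfields $k(s)$ and $k(t)$ as you say, $W:=T\cap U$ is a field with $a:=[T:W]\geq 2$ and $b:=[U:W]\geq 2$, and since $T+U=TU=k(s,t)$ is finite over $k$, Grassmann's formula together with multiplicativity of degrees in the towers $W\subset T\subset V$ and $W\subset U\subset V$ gives $a\mid(b-1)$ and $b\mid(a-1)$, hence the contradiction $a+b\leq a+b-2$. The argument is in the spirit of the original Gilmer--Huckaba proof (reduce to the algebraic case, then a degree count on a compositum of two incomparable simple subextensions), and it could even serve as a short appendix making the paper's use of \cite{GH} self-contained.
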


 In particular, the previous Proposition is satisfied when $k\subset L$ is either an FIP purely inseparable extension \cite[Lemma 4.1]{Pic 10}, or a cyclic extension whose degree is a power of a prime integer.

In \cite[before Proposition 2.7]{Pic 12}, we define the following notion:  A property $(\mathcal T)$ of ring extensions $R\subset S$ is called  {\it convenient} if the following conditions are equivalent.
\begin{enumerate}
\item $R\subset S$ satisfies $(\mathcal T)$. 

\item $R_M\subset S_M$ satisfies $(\mathcal T)$ for any $M\in\mathrm{MSupp}(S/R)$. 

\item $R/I\subset S/I$ satisfies $(\mathcal T)$ for any ideal $I$ shared by $R$ and $S$. 
\end{enumerate}

\begin{proposition}\label{5.13} Let $R\subseteq S$ be an FCP extension and $(\mathcal T)$ a convenient property. Assume that $R=\prod_{i=1}^n R_i$ is a product of rings. For each $i\in\mathbb N_n$, there exist FCP ring extensions $R_i\subseteq S_i$ such that $S\cong \prod_{i=1}^n S_i$. Moreover $R \subseteq S$ satisfies property $(\mathcal T)$ if and only if so does $R_i\subseteq S_i$ for each $i\in\mathbb N_n$. Since the $\Delta$-property is convenient, $R\subseteq S$ is a $\Delta$-extension if and only if so is $R_i\subseteq S_i$ for each $i\in\mathbb N_n$.
\end{proposition}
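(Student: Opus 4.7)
The plan is to reduce everything to the product decomposition coming from the idempotents of $R$. Write $1 = e_1 + \cdots + e_n$ for the orthogonal idempotents of $R = \prod_{i=1}^n R_i$, so that $R_i = e_i R$. Since $R \subseteq S$, these idempotents also lie in $S$ and produce a ring decomposition $S = \prod_{i=1}^n S_i$ with $S_i := e_i S$, and $R_i \subseteq S_i$ is a ring extension. Any $T \in [R,S]$ contains every $e_i$, so $T = \prod_{i=1}^n e_i T$ with $e_i T \in [R_i, S_i]$; conversely, any product of $T_i \in [R_i, S_i]$ lies in $[R,S]$. This yields a lattice isomorphism $[R,S] \cong \prod_{i=1}^n [R_i, S_i]$, from which FCP of $R \subseteq S$ is equivalent to FCP of each $R_i \subseteq S_i$.

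Next I would transfer property $(\mathcal T)$ in both directions, using that it is convenient. For each $i$, set $I_i := (1 - e_i)R$. Since $1 - e_i$ is an idempotent shared by $R$ and $S$, one has $(1 - e_i)R = (1 - e_i)S$, so $I_i$ is an ideal common to $R$ and $S$, with natural isomorphisms $R/I_i \cong R_i$ and $S/I_i \cong S_i$. If $R \subseteq S$ satisfies $(\mathcal T)$, then the implication (1)$\Rightarrow$(3) in the definition of convenient, applied with $I = I_i$, yields that $R_i \subseteq S_i$ does. Conversely, assume each $R_i \subseteq S_i$ has $(\mathcal T)$. Every $M \in \mathrm{MSupp}(S/R)$ is of the form $M_i \times \prod_{j \neq i} R_j$ for a unique $i$ and some $M_i \in \mathrm{MSupp}(S_i/R_i)$; since $e_i \notin M$ and $e_j \in M$ for $j \neq i$, localizing at $M$ annihilates the factors with $j \neq i$ and inverts $R_i \setminus M_i$, yielding $R_M \cong (R_i)_{M_i}$ and, similarly from $S \cong \prod S_j$, $S_M \cong (S_i)_{M_i}$. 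Applying convenience (1)$\Rightarrow$(2) to each $R_i \subseteq S_i$ shows every such localization has $(\mathcal T)$, and then convenience (2)$\Rightarrow$(1) for $R \subseteq S$ concludes.

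The last sentence of the proposition is immediate once one knows that the $\Delta$-property is convenient, which is precisely the content of Proposition \ref{9.02}. The main obstacle is modest and essentially bookkeeping: correctly identifying the shared ideals $I_i$ as those cut out by the complementary idempotents $1-e_i$, and verifying the spectral/localization isomorphisms $R_M \cong (R_i)_{M_i}$ and $S_M \cong (S_i)_{M_i}$ so that convenience conditions (2) and (3) can be invoked symmetrically.
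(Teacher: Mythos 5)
Your decomposition of $S$ and of $[R,S]$ via the orthogonal idempotents $e_1,\ldots,e_n$ is fine (the paper instead cites \cite[Lemma III.3]{DMPP} for the existence of the $S_i$, and gets FCP from the same observation that any $T\in[R,S]$ splits as $\prod T_i$), and your converse direction via localization is exactly the paper's argument: maximal ideals of $R$ have the form $R_1\times\cdots\times M_i\times\cdots\times R_n$, one identifies $R_M\cong (R_i)_{M_i}$ and $S_M\cong (S_i)_{M_i}$, matches up the supports, and invokes condition (2) of convenience. However, your forward direction contains a genuine error. You claim that $(1-e_i)R=(1-e_i)S$, so that $I_i:=(1-e_i)R$ is an ideal shared by $R$ and $S$, and then apply condition (3). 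This is false in general: $(1-e_i)R=\prod_{j\neq i}R_j$ is not an ideal of $S$ unless $R_j=S_j$ for every $j\neq i$, and $(1-e_i)S=\prod_{j\neq i}S_j$ is not contained in $R$. For instance, with $R:=k\times k\subset S:=k\times k^2$ and $i=1$, one has $(1-e_1)R=0\times k\subsetneq 0\times k^2=(1-e_1)S$, so $I_1$ is not a common ideal and condition (3) simply does not apply to it; the hypothesis ``idempotent shared by $R$ and $S$'' does not yield equality of the two ideals it generates.

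The gap is localized and easily repaired: run the localization argument in both directions, i.e.\ use the equivalence (1)$\Leftrightarrow$(2) of convenience for $R\subseteq S$ and for each $R_i\subseteq S_i$, together with the isomorphisms $R_M\cong (R_i)_{M_i}$, $S_M\cong (S_i)_{M_i}$ and the bijection $\mathrm{MSupp}(S/R)\leftrightarrow\coprod_i\mathrm{MSupp}(S_i/R_i)$ that you have already established. That is precisely how the paper proves the equivalence, and it makes the appeal to shared ideals unnecessary. The final sentence of your argument (the $\Delta$-property is convenient by Proposition \ref{9.02}) is correct and is also how the paper concludes.
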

\begin{proof} The first part of the statement is \cite[Lemma III.3]{DMPP}, 
 except for the FCP property of the $R_i\subseteq S_i$. But this property is obvious since any $T\in[R,S]$ is of the form $\prod_{i=1}^n T_i$, where $T_i\in[R_i,S_i]$ for each $i\in\mathbb N_n$.

We recall the following of \cite[the last paragraph of Section 1]{Pic 9}:
 If $R_1,\ldots,R_n$ are finitely many rings,  the ring $R_1\times \cdots \times R_n$ localized at the prime ideal $P_1\times R_2\times\cdots \times R_n$ is isomorphic to $ (R_1)_{P_1}$ for $P_1 \in \mathrm{Spec}(R_1)$. This rule works for any prime ideal of the product. Since a maximal ideal $M\in\mathrm{Max}(R)$ is of the form $R_1\times \cdots \times M_i\times \cdots \times R_n$ for some $i\in\mathbb N_n$ and $M_i\in\mathrm{Max}(R_i)$, we get that $R_M\cong  (R_i)_{M_i}$ and $S_M\cong  (S_i)_{M_i}$. In particular, $R_M\subseteq S_M$ can be identified with $(R_i)_{M_i}\subseteq  (S_i)_{M_i}$. It follows that $M\in\mathrm{MSupp}(S/R)\Leftrightarrow R_M\neq S_M \Leftrightarrow (R_i)_{M_i}\neq  (S_i)_{M_i}\Leftrightarrow M_i\in\mathrm{MSupp}(S_i/R_i)$. Since  property $(\mathcal T)$ holds for $R\subseteq S$ if and only if it holds for $R_M\subseteq S_M$ for each $M\in\mathrm{MSupp}(S/R)$ because convenient, the previous isomorphisms give the last result.
\end{proof} 

\section {Characterization of $\Delta$-extensions} 

\subsection{First characterizations of $\Delta$-extensions}
Our results mainly hold for FCP extensions.

\begin{proposition} \label{9.3} Let $R\subset S$ be a ring extension. The following statements hold:
 \begin{enumerate}
\item $R\subset S$ is a Pr\"ufer extension if and only if $R\subset S$ is  an integrally closed   $\Delta$-extension.

\item If $R\subset S$ is  integrally closed and $|\mathrm{Supp}(S/R)|<\infty$,   the following conditions are equivalent:
 \begin{enumerate}
\item $R\subset S$ is a $\Delta$-extension;
\item $R\subset S$ is an FCP extension;
\item $R\subset S$ is an FIP extension.
\end{enumerate}
\end{enumerate}
\end{proposition}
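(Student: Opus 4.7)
My plan is to establish (1) first and then derive (2) as a combination of part (1) with the FCP machinery already developed. For the forward direction of (1), I would argue that a Pr\"ufer extension is automatically integrally closed, because any flat epimorphism of rings is integrally closed, and that it satisfies the $\Delta$-property because all intermediate rings in a Pr\"ufer extension are themselves flat epimorphisms over $R$; consequently, for any $T,U\in[R,S]$ one has $T+U=TU$ (compositum equals sum), which by Proposition \ref{9.1} is the $\Delta$-property. This is essentially a packaging of results of Knebusch--Zhang \cite{KZ}.

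For the converse direction of (1), assuming $R\subset S$ is integrally closed and a $\Delta$-extension, I would show that the pair $(R,S)$ is a normal pair, which is equivalent to being Pr\"ufer. Concretely, I would prove that each $T\in[R,S]$ is integrally closed in $S$. By Proposition \ref{9.02} and the fact that the integral closure commutes with localization, I can reduce to $R$ local. Then, for $y\in S$ integral over $T$, the $\Delta$-identity $T+R[y]=T\cdot R[y]=T[y]$ (Proposition \ref{9.1}) presents $R[y]$ modulo $T\cap R[y]$ as a finitely generated $T$-module, which contradicts the fact that $y$ is not integral over $R$ unless $y\in T$. The main subtlety here lies in making that last step precise.

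For part (2), the implication (c)$\Rightarrow$(b) is immediate. For (b)$\Rightarrow$(a), \cite[Proposition 1.3]{Pic 5} says that an integrally closed FCP extension is Pr\"ufer, and then part (1) gives the $\Delta$-property. The nontrivial implication (a)$\Rightarrow$(c) uses part (1) to conclude that $R\subset S$ is a Pr\"ufer extension. The key observation is that in a Pr\"ufer extension the map $T\mapsto\mathrm{MSupp}_R(S/T)$ injects $[R,S]$ into the power set of $\mathrm{MSupp}_R(S/R)$; this is a standard local-global feature of Pr\"ufer extensions since each $T$ is reconstructed from its localizations at the maximal ideals in $\mathrm{Supp}(S/R)$. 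The finiteness hypothesis then immediately bounds $|[R,S]|\le 2^{|\mathrm{Supp}(S/R)|}$, which is FIP.

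The principal obstacle I expect is the converse direction of (1): extracting from the identity $T+R[y]=T[y]$ a genuine contradiction with the integral closedness of $R$ in $S$. After localizing at a maximal ideal $M\in\mathrm{MSupp}(S/R)$, the finite $T$-module structure of $T[y]/T$ should conflict with the fact that $R[y]/R$ contains no integral element of $S$ over $R$; the cleanest way will probably be to choose $y$ generating a minimal counterexample in $[T,T[y]]$ and argue locally using Theorem \ref{minimal}, but one must be careful not to import FCP hypotheses that are not assumed here.
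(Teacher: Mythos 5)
Your proposal contains two genuine gaps. The first is the converse of (1), which you yourself flag as the main obstacle: the module-theoretic device you sketch does not close. From $T+R[y]=T[y]$ you get an $R$-module isomorphism $T[y]/T\cong R[y]/(T\cap R[y])$, and $T[y]/T$ is a finitely generated $T$-module when $y$ is integral over $T$; but none of this bears on whether $y$ is integral over $R$ --- the transported $T$-module structure on $R[y]/(T\cap R[y])$ carries no information about integral dependence of $y$ on the smaller ring, and there is no contradiction to extract. The known proofs of ``integrally closed $+$ $\Delta$ $\Rightarrow$ normal pair'' (Davis and Gilmer--Huckaba in the domain case, Knebusch--Zhang in general) use a genuinely different device (polynomial identities obtained from writing elements of $R[u]R[v]$ as sums, or Manis-valuation machinery), not a finiteness-of-modules argument. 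The paper does not reprove this at all: it simply invokes \cite[Theorem 1.7, page 88]{KZ}. For the forward direction of (1) your citation-level argument is acceptable and parallels the paper's (which goes through integral closedness via \cite{Pic 5}, then ``arithmetic'' via \cite{Pic 4} and Proposition \ref{1.013}).

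The second gap is in (2), (a)$\Rightarrow$(c): the map $T\mapsto\mathrm{MSupp}_R(S/T)$ is \emph{not} injective on $[R,S]$ for a Pr\"ufer extension. Take $R$ a valuation domain with prime spectrum $0\subset P\subset M$ and $S$ its quotient field: then $[R,S]=\{R,R_P,S\}$, but $\mathrm{MSupp}_R(S/R)=\mathrm{MSupp}_R(S/R_P)=\{M\}$, so $R$ and $R_P$ have the same image (and $|[R,S]|=3>2^{|\mathrm{MSupp}_R(S/R)|}$). The correct local--global statement is that $T\mapsto(T_M)_M$ injects $[R,S]$ into $\prod_M[R_M,S_M]$, where each factor is a chain whose distinct terms are localizations at distinct primes of $\mathrm{Supp}(S/R)$; finiteness of the support then gives FIP, but that is precisely the content of \cite[Proposition 1.3]{Pic 5} combined with \cite[Theorem 6.3]{DPP2}, which is how the paper argues (it establishes (a)$\Leftrightarrow$(b) via ``integrally closed FCP $\Leftrightarrow$ Pr\"ufer with finite support'' and (b)$\Leftrightarrow$(c) via ``FCP $\Leftrightarrow$ FIP for integrally closed extensions''). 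Your steps (c)$\Rightarrow$(b) and (b)$\Rightarrow$(a) are fine and agree with the paper.
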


\begin{proof} (1) A Pr\"ufer extension is integrally closed by 
\cite[Scholium B]{Pic 5}, then arithmetic by \cite[Theorem 5.17]{Pic 4} and a $\Delta$-extension by Proposition \ref{1.013}. 

Conversely, if $R\subset S$ is  an integrally closed   $\Delta$-extension, then $R\subset S$ is a Pr\"ufer extension by \cite[Theorem 1.7, page 88]{KZ}. 

(2) Using (1),  \cite[Proposition 1.3]{Pic 5} gives the equivalence of (a) and (b) because an integrally closed extension $R\subset S$ has FCP if and only if $R\subset S$ is Pr\"ufer with $|\mathrm{Supp}(S/R)|<\infty$. The equivalence of (b) and (c) follows from \cite[Theorem 6.3]{DPP2} because an integrally closed extension  has FIP if and only if it has FCP.  
\end{proof} 

\begin{lemma} \label{9.4} Let $R\subset S$ be a ring extension and $T,U\in[R,S]$. Then, $T+U=TU$ if $\mathrm{MSupp}(T/R)\cap \mathrm{MSupp}(U/R)=\emptyset$.
\end{lemma}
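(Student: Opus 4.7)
The plan is to verify the equality locally at each maximal ideal of $R$. The inclusion $T+U\subseteq TU$ is automatic, since $1\in T\cap U$ gives $T=T\cdot 1\subseteq TU$ and $U=1\cdot U\subseteq TU$, so their sum is contained in $TU$. For the reverse inclusion, since $T+U$ is an $R$-submodule of $TU$, it suffices to show that the quotient $R$-module $TU/(T+U)$ vanishes, and this can be checked locally at each $M\in\mathrm{Max}(R)$. Thus the task reduces to establishing $(T+U)_M=(TU)_M$ for every $M\in\mathrm{Max}(R)$.

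Fix such an $M$. Localization commutes with finite sums and with the formation of subalgebra products (the latter because $TU$ is the $R$-submodule of $S$ generated by $\{tu\mid t\in T,\ u\in U\}$), so $(T+U)_M=T_M+U_M$ and $(TU)_M=T_M U_M$. By the standing hypothesis $\mathrm{MSupp}(T/R)\cap\mathrm{MSupp}(U/R)=\emptyset$, at least one of $T_M=R_M$ or $U_M=R_M$ must hold. Say $T_M=R_M$; then $T_M U_M=R_M U_M=U_M=R_M+U_M=T_M+U_M$, so the two sides agree. The other case is symmetric, which finishes the argument.

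I do not anticipate any real obstacle here: the statement is essentially a reformulation of the fact that the ``mass'' of $T$ and that of $U$ above $R$ live over disjoint sets of maximal ideals, so at each local stalk one of the two algebras collapses to $R_M$ and the product degenerates to the sum. The only technical point worth noting is the commutation of localization with the subalgebra product $TU$, which is immediate from the generator description of $TU$ just recalled.
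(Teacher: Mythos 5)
Your proposal is correct and follows essentially the same route as the paper: reduce to checking $(T+U)_M=(TU)_M$ at every $M\in\mathrm{Max}(R)$ and use the disjointness of the supports to conclude that $T_M=R_M$ or $U_M=R_M$, which collapses both sides to the same module. The paper merely writes this as three explicit cases (neither support, $\mathrm{MSupp}(T/R)$, $\mathrm{MSupp}(U/R)$) where you invoke symmetry, so there is no substantive difference.
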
 

\begin{proof} Let $M\in\mathrm{Max}(R)\setminus[\mathrm{MSupp}(T/R)\cup\mathrm{MSupp}(U/R)]$. Then, $R_M=T_M=U_M$ yields $(T+U)_M=T_M+U_M=R_M=T_MU_M=(TU)_M$.

Let $M\in\mathrm{MSupp}(T/R)$, so that $M\not\in\mathrm{MSupp}(U/R)$. Then, $R_M=U_M$ yields $(T+U)_M=T_M+U_M=T_M=T_MU_M=(TU)_M$.

Let $M\in\mathrm{MSupp}(U/R)$, so that $M\not\in\mathrm{MSupp}(T/R)$. Then, $R_M=T_M$ yields $(T+U)_M=T_M+U_M=U_M=T_MU_M=(TU)_M$.

Since  for each $M\in\mathrm{Max}(R)$, it holds that $(T+U)_M=(TU)_M$, we have $T+U=TU$.
\end{proof} 

In \cite[Definition 4.1]{Pic 5}, we call an extension $R\subset S$  {\it  almost-Pr\"ufer} if it can be factored $R\subseteq U\subseteq S$, where $R\subseteq U$ is Pr\"ufer and $U\subseteq S$ is integral. Actually, $U$ is the {\it Pr\"ufer hull} $\widetilde{R}$ of the extension, that is the greatest $T\in[R,S]$ such that $R\subseteq T$ is Pr\"ufer.

\begin{lemma} \label{9.5} Let $R\subset S$ be an FCP extension.  Then, $R\subseteq \overline R$ is  a  $\Delta$-extension if and only if $T\subseteq \overline T^{U}$ is  a  $\Delta$-extension for any subextension $T\subset U$ of $R\subset S$.
\end{lemma}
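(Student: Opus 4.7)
The backward implication is immediate by specialization: setting $T=R$ and $U=S$ gives $\overline T^U=\overline R^S=\overline R$, so $R\subseteq\overline R$ is a $\Delta$-extension by hypothesis.

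For the forward implication, I would assume $R\subseteq\overline R$ is $\Delta$ and fix a subextension $T\subseteq U$ of $R\subseteq S$. By Proposition~\ref{9.1}, it suffices to show $A+B=AB$ for every $A,B\in[T,\overline T^U]$; since $A+B\subseteq AB$ is automatic, the real content is $AB\subseteq A+B$. By Remark~\ref{1.120}, $\overline T^U=U\cap T\overline R$, so $A$ and $B$ are finite and integral over $T$, sitting in $T\overline R$.

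My plan rests on sandwiching $A+B$ and $AB$ between two $\Delta$-lattices. On the integral side, $[T\cap\overline R,\,U\cap\overline R]$ is a sub-interval of $[R,\overline R]$, hence $\Delta$ by hypothesis and Corollary~\ref{9.2}. On the Pr\"ufer side, $\overline R\subseteq S$ is Pr\"ufer by \cite[Proposition~1.3]{Pic 5} and hence $\Delta$ by Proposition~\ref{9.3}(1); restricting via Corollary~\ref{9.2} makes $[\overline R,T\overline R]$ a $\Delta$-lattice. Applying $A\mapsto A\overline R$ and using the routine identities $(A+B)\overline R=A\overline R+B\overline R$ and $(AB)\overline R=(A\overline R)(B\overline R)$, together with the $\Delta$-identity $A\overline R+B\overline R=(A\overline R)(B\overline R)$ in $[\overline R,T\overline R]$, yields
\begin{equation*}
(A+B)\overline R=(AB)\overline R.
\end{equation*}

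The main obstacle is to promote this equality, which holds after extension by $\overline R$, to the genuine equality $A+B=AB$. I would reduce to the local case via Proposition~\ref{9.02} and then induct on $\ell[T,T\overline R]$, which is finite by the FCP hypothesis. Supposing for contradiction $A+B\subsetneq AB$, there is a minimal step $V\subset V'$ inside $[A+B,AB]$, necessarily a minimal integral extension classified by Theorem~\ref{minimal}. The displayed identity forces $V\overline R=V'\overline R$, i.e., this step becomes trivial after extending to the Pr\"ufer tower $\overline R\subseteq T\overline R$. A crucial-ideal analysis for such a step, combined with the crosswise exchange (Lemma~\ref{1.13}), should rule out a minimal integral extension that is simultaneously invisible under the extension map to $[\overline R,T\overline R]$, producing the desired contradiction. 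Making this reconciliation airtight -- reconciling the invisibility with the classification of minimal integral extensions -- is the real technical difficulty I foresee.
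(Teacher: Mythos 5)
Your backward implication (specialize to $T=R$, $U=S$) is fine, but the forward argument breaks down at its central step: the map $A\mapsto A\overline R$ is \emph{constant} on $[T,\overline T^{U}]$. Indeed, by Remark~\ref{1.120} the ring $T\overline R$ is the integral closure of $T$ in $S$, so $\overline T^{U}\subseteq T\overline R$; since also $T\subseteq A$ for every $A\in[T,\overline T^{U}]$, we get $T\overline R\subseteq A\overline R\subseteq (T\overline R)\overline R=T\overline R$, i.e. $A\overline R=T\overline R$ for all such $A$. Hence your displayed identity $(A+B)\overline R=(AB)\overline R$ holds trivially for \emph{any} extension, $\Delta$ or not, and carries no information. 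Consequently the contradiction you hope to extract in the final "promotion" step can never materialize: every minimal integral step $V\subset V'$ with $T\subseteq V\subset V'\subseteq T\overline R$ satisfies $V\overline R=V'\overline R=T\overline R$ automatically, so "invisibility under $\,\cdot\,\overline R$" rules out nothing, and no crucial-ideal or crosswise-exchange analysis can rescue it. (There is also the minor point that $[A+B,AB]$ is not an interval of rings unless $A+B$ is already known to be a ring, which is what you are trying to prove.) Finally, the genuinely useful observation you do make, that $[T\cap\overline R,\,U\cap\overline R]$ is a $\Delta$-lattice, is never actually connected to the lattice $[T,\overline T^{U}]$ you need to control.

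The missing idea is local, not global. Set $R_1:=T\cap\overline R$ and $S_1:=\overline T^{U}$. The paper's proof notes that $R_1\subseteq S_1$ is almost-Pr\"ufer with Pr\"ufer hull $T$ and integral closure $U\cap\overline R$, so that $\mathrm{MSupp}(T/R_1)$ and $\mathrm{MSupp}((U\cap\overline R)/R_1)$ are disjoint by \cite[Proposition 4.18]{Pic 5}. One then checks $V+W=VW$ for $V,W\in[T,S_1]$ after localizing at each $M\in\mathrm{Max}(R_1)$: if $M\notin\mathrm{MSupp}((U\cap\overline R)/R_1)$, then $(R_1)_M\subseteq(S_1)_M$ is Pr\"ufer, forcing $T_M=(S_1)_M$ (Pr\"ufer plus integral gives equality), so there is nothing to prove; if $M\in\mathrm{MSupp}((U\cap\overline R)/R_1)$, then $T_M=(R_1)_M$ and $(S_1)_M=(U\cap\overline R)_M$, so $T_M\subseteq(S_1)_M$ is a localization of the subextension $R_1\subseteq U\cap\overline R$ of $R\subseteq\overline R$, hence a $\Delta$-extension by Corollary~\ref{9.2} and Proposition~\ref{9.02}, giving $(V+W)_M=(VW)_M$. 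This local dichotomy, driven by the support disjointness of the almost-Pr\"ufer extension $R_1\subseteq S_1$, is exactly what replaces your vacuous global identity; without it, your proposal has a genuine gap.
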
 

\begin{proof} One implication is obvious. Now, assume that $R\subseteq \overline R$ is  a  $\Delta$-extension and consider the tower $R\subseteq T\subset U\subseteq S$. Set $R_1:=\overline R^T$ and $S_1:=\overline T^{U}$. We get the following commutative  diagram, where {\it i}, (resp. {\it p}) indicates an integral (resp. Pr\"ufer) extension:
$$\begin{matrix}
{} & {}  & T  & \overset{i}\rightarrow & S_1 & \overset{p}\rightarrow & U & {} \\
{} & {}  & p\uparrow & {} & p\uparrow & {} & \downarrow \\
R & \overset{i}\rightarrow &  R_1 & \overset{i}\rightarrow  &  \overline R^{U}   & \to          & S   
\end{matrix}$$
The definition of $R_1$ and $S_1$ implies  that $R_1\subseteq T$ and $\overline R^{U}\subseteq S_1=\overline T^{U}$ are  integrally closed, and then Pr\"ufer extensions by \cite[Proposition 1.3 (2)]{Pic 5}, while $T\subset S_1$ and $R_1\subset \overline R^{U}$ are  integral. Then, $R_1\subseteq S_1$ is almost-Pr\"ufer, with $T=\widetilde {R_1}^{S_1}$, the  Pr\"ufer hull of the extension $R_1\subseteq S_1$ and $\overline R^{U}=\overline {R_1}^{S_1}$. It follows that $\mathrm{MSupp}(T/R_1)\cap \mathrm{MSupp}(\overline R^{U}/R_1)=\emptyset$  and $\mathrm{MSupp}(S_1/R_1)=\mathrm{MSupp}(T/R_1)\cup\mathrm{MSupp}(\overline R^{U}/R_1)$ by \cite[Proposition 4.18]{Pic 5}. 

Since $\overline R^{U}\subseteq\overline R$, we deduce from Corollary  \ref{9.2} that $R_1\subseteq \overline R^{U}$ is a $\Delta$-extension, and so is $(R_1)_M\subseteq (\overline R^{U})_M$ for each $M\in\mathrm{Max}(R_1)$ by Proposition \ref{9.02}. Let $V,W\in[T,S_1]$ and $M\in\mathrm{Max}(R_1)$. If $M\not\in\mathrm{MSupp}(\overline R^{U}/R_1)$, we get that $(\overline R^{U})_M=(R_1)_M$, which implies that $(R_1)_M\subseteq(S_1)_M$ is Pr\"ufer, so that $T_M=(S_1)_M=V_M=W_M$. Then, $(V+W)_M=V_M+W_M=V_M=V_MW_M=(VW)_M$. If $M\in\mathrm{MSupp}(\overline R^{U}/R_1)$, we get that $M\not\in\mathrm{MSupp}(T/R_1)$, so that $T_M=(R_1)_M$, which in turn  implies that $(R_1)_M\subseteq(S_1)_M$ is integral and $(S_1)_M=(\overline R^{U})_M$. This shows that $T_M=(R_1)_M\subseteq (S_1)_M=(\overline R^{U})_M$ is a $\Delta$-extension, so that $(V+W)_M=V_M+W_M=V_MW_M=(VW)_M$. To conclude, we have $(V+W)_M=(VW)_M$ for each $M\in\mathrm{Max}(R_1)$, and then $V+W=VW$, showing that $T\subset S_1=\overline T^{U}$ is  a  $\Delta$-extension. 
\end{proof} 

\begin{proposition} \label{9.6} Let $R\subset S$ be an FCP extension and $T,U\in[R,S]$ such that $R\subseteq T$ is integral and $R\subseteq U$ is Pr\"ufer. Then $U+T=UT$.  
 \end{proposition}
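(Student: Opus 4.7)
The plan is to reduce the assertion $U+T=UT$ to a disjointness of supports statement via Lemma \ref{9.4}, and then exploit the local incompatibility of integral and Pr\"ufer minimal extensions given by Lemma \ref{1.12}.

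More precisely, my first step is to invoke Lemma \ref{9.4}: it suffices to prove that
\[
\mathrm{MSupp}(T/R)\cap \mathrm{MSupp}(U/R)=\emptyset.
\]
I would argue by contradiction, assuming that some $M\in\mathrm{Max}(R)$ lies in both supports. Localizing at $M$, the ring $R_M$ is local, $R_M\subset T_M$ inherits being integral (and FCP), and $R_M\subset U_M$ inherits being Pr\"ufer (flat epimorphisms are stable under localization, so subextensions of Pr\"ufer extensions localize to Pr\"ufer extensions). By the assumption that $M$ is in both supports, both inclusions are proper.

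Next, using that $R_M\subset T_M$ is a proper integral FCP extension, I pick a minimal subextension $R_M\subset T'\subseteq T_M$; such $T'$ exists as the first link of any maximal chain and is automatically integral, hence finite minimal over $R_M$. Analogously, from the proper Pr\"ufer FCP extension $R_M\subset U_M$ I extract a minimal Pr\"ufer subextension $R_M\subset U'\subseteq U_M$ (any minimal subextension of a Pr\"ufer extension is a flat epimorphism, hence Pr\"ufer minimal). Now Lemma \ref{1.12} applied to $R_M\subset S_M$ with the two minimals $R_M\subset T'$ and $R_M\subset U'$ forces $R_M$ to be non-local, contradicting that $M$ is a maximal ideal of $R$. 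This yields the desired disjointness of supports, and Lemma \ref{9.4} then delivers $U+T=UT$.

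The key insight, and really the only nontrivial step, is the use of Lemma \ref{1.12} after localization; the rest is just packaging via the local-global principle for supports in Lemma \ref{9.4} and verifying that the integral/Pr\"ufer dichotomy is preserved under localization and under passing to minimal subextensions. I do not anticipate genuine obstacles, only the need to be careful that the FCP hypothesis on $R\subset S$ transfers to both $R_M\subset T_M$ and $R_M\subset U_M$ so that minimal subextensions exist.
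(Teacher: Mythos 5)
Your proof is correct, and its overall skeleton coincides with the paper's: both arguments reduce the statement to the disjointness $\mathrm{MSupp}(T/R)\cap\mathrm{MSupp}(U/R)=\emptyset$ and then conclude with Lemma \ref{9.4}. Where you diverge is in how that disjointness is obtained. The paper observes that $T\in[R,\overline R]$ and $U\in[R,\widetilde R]$ (the Pr\"ufer hull) and then quotes the external fact that $\mathrm{MSupp}(\overline R/R)\cap\mathrm{MSupp}(\widetilde R/R)=\emptyset$ (\cite[Proposition 4.18]{Pic 5}), so its proof is a one-liner resting on the almost-Pr\"ufer machinery. You instead prove the disjointness from scratch: localize at a putative common $M$, extract a minimal integral subextension of $R_M\subset T_M$ and a minimal Pr\"ufer subextension of $R_M\subset U_M$, and invoke Lemma \ref{1.12} to contradict the locality of $R_M$. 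This is a more self-contained route, using only a lemma already stated in the paper, and it essentially reproves the local content of the cited support-disjointness result. The only points to keep tidy are the transfer facts you lean on: that every element of $[R_M,T_M]$ (resp.\ $[R_M,U_M]$) is the localization of an element of $[R,T]$ (resp.\ $[R,U]$), which gives both that FCP localizes (so atoms exist) and that $R_M\subseteq U_M$ is again Pr\"ufer; these are standard and are used freely in the paper (e.g.\ via \cite[Proposition 1.1]{Pic 5}), so they do not constitute a gap, but your parenthetical justification for the Pr\"ufer step should really include this lifting of intermediate rings rather than only the stability of flat epimorphisms under localization.
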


\begin{proof}   Since $R\subseteq T$ is integral, we have $T\in[R, \overline R]$ and since $R\subseteq U$ is Pr\"ufer, we have $U\in[R, \tilde R]$ by definition of the Pr\"ufer hull. Then, $\mathrm{MSupp}(T/R)\subseteq \mathrm{MSupp}(\overline R/R)$ and $\mathrm{MSupp}(U/R)\subseteq \mathrm{MSupp}(\tilde R/R)$. But $\mathrm{MSupp}(\overline R/R)\cap \mathrm{MSupp}(\tilde R/R)=\emptyset$ by \cite[Proposition 4.18]{Pic 5}. It follows that $\mathrm{MSupp}(T/R)\cap \mathrm{MSupp}(U/R)=\emptyset$ and Lemma \ref{9.4} shows that $U+T=UT$.
\end{proof} 

\begin{theorem} \label{9.7} An FCP extension $R\subset S$ is  a  $\Delta$-extension if and only if $R\subseteq \overline R$ is  a  $\Delta$-extension.
\end{theorem}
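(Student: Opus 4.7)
The forward direction is immediate from Corollary \ref{9.2}. For the converse, assume $R\subseteq\overline R$ is a $\Delta$-extension; by Proposition \ref{9.1}, it suffices to show $T+U=TU$ for every $T,U\in[R,S]$. The key technical step is a bootstrap lemma: for every $T\in[R,S]$ and every $V\in[R,\overline R]\cup[\overline R,S]$, one has $T+V=TV$. For $V\in[R,\overline R]$, set $T_1:=T\cap\overline R$, the integral closure of $R$ in $T$ (Remark \ref{1.120}); the FCP decomposition \cite[Proposition 1.3]{Pic 5} makes $T_1\subseteq T$ Prüfer. The $\Delta$-hypothesis on $R\subseteq\overline R$ gives $T_1+V=T_1V\in[R,\overline R]$, integral over $T_1$, so Proposition \ref{9.6} applied at base $T_1$ (to the integral subextension $T_1\subseteq T_1V$ and the Prüfer subextension $T_1\subseteq T$) yields $T+T_1V=T\cdot T_1V=TV$, which reduces to $T+V=TV$ since $T_1\subseteq T$. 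For $V\in[\overline R,S]$, the Prüfer extension $\overline R\subseteq S$ is a $\Delta$-extension by Proposition \ref{9.3}(1), whence $T\overline R+V=TV$; the previous case with $V=\overline R$ gives $T+\overline R=T\overline R$, so $T+V\supseteq T\overline R$ using $\overline R\subseteq V$, and together with $V\subseteq T+V$ this yields $T+V\supseteq T\overline R+V=TV$, the reverse inclusion being trivial.

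For the general case, applying the bootstrap with $V=\overline R$ gives $T+\overline R=T\overline R$ and $U+\overline R=U\overline R$, hence $T+U+\overline R=T\overline R+U\overline R=TU\overline R$ by the $\Delta$-property of $\overline R\subseteq S$. Thus for any $t\in T$ and $u\in U$ there exist $t'\in T$, $u'\in U$, $r\in\overline R$ with $tu=t'+u'+r$, and then $r=tu-t'-u'\in TU\cap\overline R=\overline R^{TU}$. Setting $T_1:=T\cap\overline R$ and $U_1:=U\cap\overline R$, the $\Delta$-hypothesis yields $T_1+U_1=T_1U_1\subseteq T+U$. Therefore the desired inclusion $TU\subseteq T+U$ follows once one establishes the reconstruction identity $\overline R^{TU}=T_1U_1$, for then $r\in T_1U_1\subseteq T+U$ gives $tu\in T+U$.

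The main obstacle is this reconstruction identity $\overline R^{TU}=T_1U_1$. Both sides lie in $[R,\overline R^{TU}]$, which is a $\Delta$-extension by Corollary \ref{9.2} (as a subextension of $R\subseteq\overline R$), and the inclusion $T_1U_1\subseteq\overline R^{TU}$ is trivial. Proving the reverse would, I expect, proceed by induction on $\ell[T_1U_1,\overline R^{TU}]$ combined with Proposition \ref{9.02} to localize at a maximal ideal of $T_1U_1$; in the local setting, Lemma \ref{1.12} constrains any minimal subextension of $T_1U_1$ inside $\overline R^{TU}$ to have a specific crucial-ideal structure compatible with the $\Delta$-behavior on $R\subseteq\overline R$, ultimately forcing $\overline R^{TU}=T_1U_1$.
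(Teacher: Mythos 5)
Your forward direction and your ``bootstrap lemma'' are correct, and the element-chasing reduction ($tu=t'+u'+r$ with $r\in TU\cap\overline R$) is a clean way to package the problem. But the proof is not complete: everything has been funneled into the single identity $TU\cap\overline R=(T\cap\overline R)(U\cap\overline R)$, and you explicitly do not prove it. That identity is not a routine verification to be dispatched by ``induction plus Lemma \ref{1.12}''; it is precisely the hard core of the theorem. It amounts to showing that $T_1U_1\subseteq TU$ is integrally closed (equivalently Pr\"ufer), and the paper's own proof spends most of its effort on exactly this point, in the equivalent form ``$\overline V\,\overline W\subseteq VW$ is Pr\"ufer.'' The mechanism there is not crucial-ideal bookkeeping via Lemma \ref{1.12} but the structure theory of almost-Pr\"ufer extensions: $T_1\subseteq TU_1$ factors as the Pr\"ufer piece $T_1\subseteq T$ followed by the integral piece $T\subseteq TU_1$, so by \cite[Propositions 4.16 and 4.18]{Pic 5} the maximal supports of the Pr\"ufer part and of the integral part over $T_1$ are disjoint; localizing at each maximal ideal of $T_1$ then shows $T_1U_1\subseteq TU_1$ (and symmetrically $T_1U_1\subseteq T_1U$) is locally, hence globally, Pr\"ufer, whence $T_1U_1\subseteq TU$ is Pr\"ufer because $TU_1$ and $T_1U$ both sit inside the Pr\"ufer hull of $T_1U_1\subseteq TU$. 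Until you supply this argument (or an equivalent one), the proposal has a genuine gap, and your sketched route via Lemma \ref{1.12} does not obviously produce the needed contradiction, since you would first have to manufacture a minimal Pr\"ufer subextension of $T_1U_1$ with the same crucial ideal as the putative minimal integral one --- which again requires the support-disjointness analysis.

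For comparison: where your argument is sound it is a genuinely different organization from the paper's. The paper fixes $V,W$, passes to $T:=V\cap W$ and the integral closures $\overline V,\overline W$ of $T$ in $V,W$, and assembles $V+W=VW$ from four additive identities (using Lemma \ref{9.5} to transport the $\Delta$-hypothesis to $[T,\overline U]$, Proposition \ref{9.6} twice, and the Pr\"ufer step above). Your version works over the base $R$ throughout and replaces Lemma \ref{9.5} by a direct appeal to the $\Delta$-property of $R\subseteq\overline R$ applied to $T\cap\overline R$ and $U\cap\overline R$; that is a real simplification of the bookkeeping, but it does not let you avoid the Pr\"ufer/support argument --- it only relocates it into the unproved reconstruction identity.
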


\begin{proof} Corollary \ref{9.2} gives one implication. Conversely, assume that $R\subset S$ is  an FCP extension such that $R\subseteq \overline R$ is  a  $\Delta$-extension. Let  $V,W\in[R,S]$ and set $T:=V\cap W,\ U:=VW$. We denote by $ \overline U$ (resp. $\overline V,\ \overline W$) the integral closure of $T$ in $U$ (resp. $V,\ W$). According to Lemma \ref{9.5}, $T\subseteq \overline U$ is  a  $\Delta$-extension. In particular, $\overline V+ \overline W=\overline V \ \overline W$ (1) since $\overline V,\ \overline W\subseteq   \overline U$. 

We get the following commutative diagram, where {\it i}, (resp. {\it p}) indicates an integral (resp. Pr\"ufer) extension:
$$\begin{matrix}
{} & {} & \overline V & \overset{p}\rightarrow & V & \overset{i}\rightarrow & V\overline W & {}  & {} \\
{} & i\nearrow & {} & i\searrow & {} & p\nearrow & {} & p\searrow &{} \\
T & {} & {} & {} & \overline V \ \overline W & {} & {} & {} & U \\
{} & i\searrow & {} & i\nearrow & {} & p\searrow & {} & p\nearrow & {} \\
{} & {} & \overline W & \overset{p}\rightarrow & W & \overset{i}\rightarrow & \overline VW & {}  & {} 
\end{matrix}$$
Since $ \overline V\subseteq V$ is Pr\"ufer and $\overline V\subseteq \overline V\ \overline W$ is integral, Proposition \ref{9.6} leads to $V+\overline V\ \overline W=V\overline V\ \overline W=V\overline W$ (2). For the same reason, we get $W+\overline V\ \overline W=\overline V W$ (3).

Moreover, $\overline V\subseteq V\overline W$ and $\overline W\subseteq\overline VW$ are almost-Pr\"ufer with $V$ (resp. $W$) as Pr\"ufer hull of $\overline V\subseteq V\overline W$ (resp. $\overline W\subseteq\overline VW$). According to \cite[Proposition 4.16]{Pic 5}, $\mathrm{MSupp}(V/\overline V)\cap\mathrm{MSupp}(\overline V\ \overline W/\overline V)=\emptyset$.
We claim that, $\overline V\overline W\subseteq V\overline W$ and $\overline V\overline W\subseteq\overline VW$ are Pr\"ufer. 
Indeed, $\mathrm{MSupp}(V\overline W/\overline V)=\mathrm{MSupp}(V/\overline V)\cup\mathrm{MSupp}( V \overline W/ V)$. Let $M\in\mathrm{MSupp}(V\overline W/\overline V)$. If $M\in\mathrm{MSupp}(V/\overline V)$, then $ V_M=V_M\overline W_M$, so that $\overline V_M\overline W_M\subseteq V_M=V_M\overline W_M$ is Pr\"ufer. If $M\in\mathrm{MSupp}(V\overline W/ V)$, then $V_M =\overline V_M$ leads to $\overline V_M\overline W_M=V_M\overline W_M$. It follows that $\overline V\overline W\subset V\overline W$ is Pr\"ufer. The same holds for $\overline V\overline W\subset\overline VW$. 
It follows that $\overline V \overline W\subseteq(V\overline W)(\overline VW)=VW$ is Pr\"ufer since $V \overline W$ and $ \overline V  W$ are both contained in the Pr\"ufer hull of $\overline V \overline W\subset U$. Then, $\overline V \overline W\subseteq VW$ is a $\Delta$-extension by Proposition \ref{9.3}, so that $V\overline W+\overline VW=V\overline W\ \overline VW=VW$ (4).  

Adding (2) and (3) and using (1) and (4), this  leads to  $V+\overline V\ \overline W+W+\overline V\ \overline W=V+\overline V+ \overline W+W+\overline V+ \overline W=V+W=V\overline W+\overline V W=VW$. 

To conclude, we obtain $V+W=VW$ for any $V,W\in[R,S]$ and $R\subset S$ is a $\Delta$-extension.
\end{proof} 

\begin{remark} \label{9.71} When  $R\subset S$ is an almost unbranched FCP extension the result of Theorem  \ref{9.7} is gotten immediately. Indeed, Corollary \ref{1.121} shows that $R\subset S$ is pinched at $\overline R$. Since $\overline R\subseteq S$ is  a  $\Delta$-extension by Proposition \ref{9.3}, then Corollary \ref{9.03} gives the result of Theorem  \ref{9.7}.
\end{remark}

Theorem \ref{9.7} shows that it is enough to characterize integral $\Delta$-extensions that are FCP, which we are aiming to do by using the paths  of the canonical decomposition.

\begin{proposition} \label{9.10} A t-closed FCP extension $R\subset S$ is a $\Delta$-extension if and only if $R\subset S$ is arithmetic.
\end{proposition}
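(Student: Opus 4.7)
The easy direction is immediate from Proposition \ref{1.013}. For the nontrivial implication, the plan is: starting from a t-closed FCP $\Delta$-extension $R \subset S$, reduce to the local case via Proposition \ref{9.02}, exploit t-closedness to force almost unbranchedness, use Corollary \ref{1.121} to pinch at $\overline R$, and then handle the two resulting pieces separately.

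I would first assume $(R,M)$ is local; it suffices to prove $[R,S]$ is a chain, since the chained property descends along further localization. By FCP, one has $R \subseteq \overline R \subseteq S$ with $\overline R \subseteq S$ Pr\"ufer (\cite[Proposition 1.3]{Pic 5}). Since $\mathstrut^{t}_{S}R = R$ and any infra-integral subextension of $R$ in $S$ is contained in $\overline R$, one has $\mathstrut^{t}_{\overline R}R = R$, so $R \subseteq \overline R$ is a t-closed integral FCP extension and Proposition \ref{3.5}(4) gives that every minimal subextension of $R \subseteq \overline R$ is inert. Inert minimal extensions preserve localness with the same maximal ideal, so an easy induction along a maximal chain shows every $T \in [R,\overline R]$ is local with maximal ideal $M$. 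In particular $R \subset S$ is almost unbranched and Corollary \ref{1.121} yields $[R,S] = [R,\overline R] \cup [\overline R,S]$.

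I would then show each piece is a chain. For $[\overline R, S]$: $\overline R \subseteq S$ is Pr\"ufer, hence arithmetic by \cite[Theorem 5.17]{Pic 4}; because $\overline R$ is local with unique maximal ideal $M$, localization at $M$ is trivial and $\overline R \subseteq S$ is itself chained. For $[R,\overline R]$: the ideal $M$ is shared by every $T \in [R,\overline R]$ (each being local with maximal ideal $M$), and $R \subseteq \overline R$ is a $\Delta$-extension by Corollary \ref{9.2}. Proposition \ref{9.02}(4) then yields that the finite field extension $R/M \subseteq \overline R/M$ is a $\Delta$-extension, and Proposition \ref{1.014} forces it to be chained. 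The order-isomorphism $[R,\overline R] \to [R/M, \overline R/M]$, $T \mapsto T/M$ (with inverse given by preimage along the quotient map), transports the chain structure back to $[R,\overline R]$. Concatenating the two chains at $\overline R$ shows $[R,S]$ is a chain, completing the local step; Proposition \ref{9.02}(2) then lifts this to arithmeticity of the original extension.

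The main obstacle, as I see it, is packaging t-closedness into almost unbranchedness in the local case, because this single fact both pinches the extension at $\overline R$ (via Corollary \ref{1.121}) and allows the chain analysis to reduce to the field-extension case where Proposition \ref{1.014} applies directly. Once that is clean, the Pr\"ufer piece is immediate from \cite[Theorem 5.17]{Pic 4} and the integral piece is an essentially categorical transfer through the quotient by $M$.
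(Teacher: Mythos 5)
Your proof is correct, but it takes a genuinely different and more self-contained route than the paper's. The paper's argument is three lines: after localizing at $M$ via Proposition \ref{9.02}, it invokes \cite[Lemma 3.17]{DPP3} to get that $S$ is local with maximal ideal $M=(R:S)$, so that $R/M\subset S/M$ is a $\Delta$ field extension, hence chained by Proposition \ref{1.014}, hence $[R,S]$ is chained. You instead rebuild the needed structure from results stated in this paper: t-closedness of $R\subseteq\overline R$ together with Proposition \ref{3.5}(4) and the inert case of Theorem \ref{minimal} force every $T\in[R,\overline R]$ to be local with the \emph{same} maximal ideal $M$, giving almost unbranchedness and the pinching $[R,S]=[R,\overline R]\cup[\overline R,S]$ via Corollary \ref{1.121}; on the integral piece you then run exactly the paper's core step (quotient by the shared ideal $M$ and apply Gilmer--Huckaba through Propositions \ref{9.02} and \ref{1.014}), while the Pr\"ufer piece $[\overline R,S]$ is chained because $\overline R$ is local. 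What your route buys: it avoids the external conductor lemma of \cite{DPP3}, and it treats the possibly non-integral case explicitly, whereas the paper's appeal to that lemma (asserting $(R:S)=M$) really concerns the integral situation and silently absorbs the Pr\"ufer part. What it costs is length, plus two small debts you share with or add to the paper: like the paper, you implicitly use that t-closedness passes to the localizations $R_M\subset S_M$ before applying Proposition \ref{3.5}; and your closing citation of Proposition \ref{9.02}(2) to recover arithmeticity is off target, since that proposition concerns the $\Delta$-property --- the correct justification is the localization-descent remark about chains that you already made at the start, so this is cosmetic rather than a gap.
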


\begin{proof} One implication is Proposition \ref{1.013}. Conversely, assume that $R\subset S$ is a $\Delta$-extension. By Proposition \ref{9.02}, we can assume that $(R,M)$ is a local ring, so that $(R:S)=M$ because $(S,M)$ is a local ring by \cite[Lemma 3.17]{DPP3}. Moreover, $R/M\subset S/M$ is a $\Delta$-extension again by Proposition \ref{9.02} and a field extension. Then, $R/M\subseteq S/M$ is chained by Proposition \ref{1.014} and so is $[R,S]$.
\end{proof} 

\begin{lemma} \label{9.12} If $R\subset S$ is a catenarian  FCP extension and $T,U\in]R,S]$ are such that $R\subset T$ is infra-integral and $R\subset U$ is t-closed, then, $T+U=TU$ and $\mathrm{MSupp}(T/R)\cap \mathrm{MSupp}( U/R)=\emptyset$.
\end{lemma}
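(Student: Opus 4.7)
The plan is to establish the support disjointness first, after which the identity $T+U=TU$ will follow immediately from Lemma \ref{9.4}.

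To show $\mathrm{MSupp}(T/R)\cap \mathrm{MSupp}(U/R)=\emptyset$, I argue by contradiction. Suppose some $M$ lies in both supports. The first step is to produce a minimal subextension $R\subset T'\subseteq T$ with $\mathcal{C}(R,T')=M$, and likewise $R\subset U'\subseteq U$ with $\mathcal{C}(R,U')=M$. Such a $T'$ exists by a standard FCP argument: any maximal chain from $R$ to $T$ must contain a minimal step whose crucial ideal contracts to $M$ in $R$ (since $M\in\mathrm{MSupp}(T/R)$ and the extension is integral), and repeated applications of the crosswise exchange of Lemma \ref{1.13} allow one to move this step to the bottom, producing the desired $T'$. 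The analogous argument inside $[R,U]$ produces $U'$.

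By Proposition \ref{3.5}(2), the infra-integrality of $R\subset T$ forces $R\subset T'$ to be ramified or decomposed, while Proposition \ref{3.5}(4) applied to the t-closed extension $R\subset U$ forces $R\subset U'$ to be inert. In particular $T'\neq U'$, and Proposition \ref{3.6}(2), applied with the common crucial ideal $M=\mathcal{C}(R,T')=\mathcal{C}(R,U')$, yields that $R\subset T'U'$ is not catenarian. On the other hand, $T'U'\in[R,S]$, and the catenarian property of $R\subset S$ descends to every subextension: any maximal chain in $[R,T'U']$ extends by a maximal chain in $[T'U',S]$ to a maximal chain in $[R,S]$ of the common length $\ell[R,S]$, so all maximal chains in $[R,T'U']$ share a common length. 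This contradiction establishes the disjointness of the supports, and Lemma \ref{9.4} then delivers $T+U=TU$.

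The principal obstacle is the production of the minimal subextensions with prescribed crucial ideal $M$; once they are in hand, the rest is a direct invocation of the classification of minimal integral extensions via the canonical decomposition (Proposition \ref{3.5}) together with the incompatibility between an inert and a non-inert minimal extension sharing a crucial ideal (Proposition \ref{3.6}(2)).
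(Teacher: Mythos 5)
Your proposal is correct and follows essentially the same route as the paper: the paper likewise proves the support disjointness by contradiction, producing for a common $M$ a minimal infra-integral subextension of $[R,T]$ and a minimal inert subextension of $[R,U]$ with crucial ideal $M$ (citing \cite[Lemma 1.8]{Pic 6} for their existence, where you instead sketch the crosswise-exchange construction) and then contradicting catenarity via Proposition \ref{3.6}(2). The equality $T+U=TU$ is then obtained exactly as you say, by the localization argument of Lemma \ref{9.4} (which the paper simply repeats inline).
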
 

\begin{proof}  We claim  that $\mathrm{MSupp}(T/R)\cap \mathrm{MSupp}( U/R)=\emptyset$. Deny and let $M\in\mathrm{MSupp}(T/R)\cap \mathrm{MSupp}( U/R)$. According to  \cite[Lemma 1.8]{Pic 6}, there exist $V\in[R,T]$ and $W\in[R,U]$ such that $R\subset V$ is minimal infra-integral (that is either ramified or decomposed) and $R\subset W$ is minimal inert, while both extensions have   $M$ as conductor. It follows that there exist two maximal chains from $R$ to $VW$ whose lengths are different by Proposition \ref{3.6},  contradicting that $R\subset S$ is catenarian. Moreover,  $\mathrm{MSupp}(TU/R)=\mathrm{MSupp}(T/R)\cup\mathrm{MSupp}(U/R)$. Now let $M\in\mathrm{MSupp}(TU/R)$. 

If $M\in\mathrm{MSupp}(T/R)$, then $M\not\in\mathrm{MSupp}(U/R)$, so that $R_M=U_M$, giving $(T+U)_M=T_M+U_M=T_M=T_MU_M=(TU)_M$. If $M\in\mathrm{MSupp}(U/R)$, then, $M\not\in\mathrm{MSupp}(T/R)$,  so that $R_M=T_M$, giving $(T+U)_M=T_M+U_M=U_M=T_MU_M=(TU)_M$.

If finally  $M\not\in\mathrm{MSupp}(TU/R)$, then $R_M=T_M=U_M$, giving $(T+U)_M=T_M+U_M=R_M=T_MU_M=(TU)_M$. 
 
Since $(T+U)_M=(TU)_M$ for any $M\in\mathrm{Max}(R)$, we get $T+U=TU$.
\end{proof} 

\begin{lemma} \label{9.11} Let $R\subset S$ be a catenarian integral FCP extension. Then, ${}_S^tR\subseteq S$ is arithmetic if and only if ${}_U^tT\subseteq U$ is arithmetic for any subextension $T\subset U$ of $R\subset S$. 
\end{lemma}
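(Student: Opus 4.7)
The ``if'' direction is immediate by taking $T=R$ and $U=S$. For the converse, assume ${}_S^tR\subseteq S$ is arithmetic and fix a subextension $T\subset U$ of $R\subset S$; the goal is to show that ${}_U^tT\subseteq U$ is arithmetic. My first step is to bootstrap the hypothesis to the larger $t$-closure: by Remark~\ref{1.120}, ${}_S^tT=T\cdot{}_S^tR\in[{}_S^tR,S]$. Proposition~\ref{9.10} converts the arithmeticity hypothesis on ${}_S^tR\subseteq S$ into the $\Delta$-property, Corollary~\ref{9.2} transfers the $\Delta$-property to the subextension ${}_S^tT\subseteq S$, and a second application of Proposition~\ref{9.10} (using that ${}_S^tT\subseteq S$ is $t$-closed by definition of the $t$-closure) returns arithmeticity of ${}_S^tT\subseteq S$.

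The central tool is the order-preserving map
\[
\phi\colon[{}_U^tT,U]\longrightarrow[{}_S^tT,\,U\cdot{}_S^tT],\qquad \phi(V):=V\cdot{}_S^tT,
\]
for which I would establish the identity $V\cdot{}_S^tT\cap U=V$ for each $V\in[{}_U^tT,U]$. On one side, $T\subseteq{}_S^tT$ is infra-integral by definition of the $t$-closure; every element of ${}_S^tT$ has image modulo any prime of $V\cdot{}_S^tT$ already in the residue field of $V$ at the contracted prime, so $V\subseteq V\cdot{}_S^tT$, and therefore the subextension $V\subseteq V\cdot{}_S^tT\cap U$, is infra-integral. On the other side, the product formula for the $t$-closure gives ${}_U^tV=V\cdot{}_U^tT$, and $V\supseteq{}_U^tT$ forces ${}_U^tV=V$, i.e.\ $V\subseteq U$ is itself $t$-closed. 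The maximal infra-integral subextension of $V\subseteq U$ is therefore $V$ itself, forcing $V\cdot{}_S^tT\cap U\subseteq V$; thus $\phi$ is injective with left inverse $W\mapsto W\cap U$.

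The final step, which I expect to be the main obstacle, is to descend arithmeticity through $\phi$: arithmeticity is a spectrum-local condition while $\phi$ is only a global lattice injection. The plan is to reduce to the local case, localizing at each $M\in\mathrm{MSupp}(S/R)$ and invoking that arithmeticity and the $\Delta$-property are both convenient (Proposition~\ref{9.02}). Once $R$ is local, Proposition~\ref{9.10} applied to $({}_S^tR)_{M'}\subseteq S_{M'}$ at each max $M'$ of ${}_S^tR$ over $M$ forces $S_{M'}$ to be local, whence every integral intermediate ring is local, and $[({}_S^tT)_{M'},(U\cdot{}_S^tT)_{M'}]$ is a chain as a sub-interval of the chain $[({}_S^tT)_{M'},S_{M'}]$. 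The localized $\phi$ then embeds $[({}_U^tT)_{M'},U_{M'}]$ injectively into this chain, so the former is itself a chain. Translating back via lying-over for the integral inclusion ${}_U^tT\subseteq{}_S^tT$ (each max $N$ of ${}_U^tT$ is witnessed by some $N'$ of ${}_S^tT$ above it), Proposition~\ref{9.02} assembles the local chain conditions into global arithmeticity of ${}_U^tT\subseteq U$. The delicate prime-matching and local--global bookkeeping is the heart of this final step, but the structural input from Propositions~\ref{9.10} and~\ref{9.02} makes it manageable.
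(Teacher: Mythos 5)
Your first two steps are sound: transferring arithmeticity from ${}_S^tR\subseteq S$ to ${}_S^tT\subseteq S$ via Proposition~\ref{9.10} and Corollary~\ref{9.2} is correct, and the order-embedding $V\mapsto V\cdot{}_S^tT$ of $[{}_U^tT,U]$ into $[{}_S^tT,U\cdot{}_S^tT]$, with left inverse $W\mapsto W\cap U$, is a valid construction (your verification that $V\cdot{}_S^tT\cap U=V$ is essentially right). The gap is in the final descent, and it is not mere bookkeeping: it is precisely the point where the catenarity hypothesis must enter, and your argument never uses catenarity. There are two problems. First, you localize $U$ and ${}_U^tT$ at maximal ideals $M'$ of ${}_S^tR$; but ${}_S^tR$ is not contained in $U$ in general, so $U_{M'}$ is not a well-defined localization of $U$ (the set of fractions $u/w$ with $u\in U$, $w\in{}_S^tR\setminus M'$ is not closed under addition). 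Second, and more seriously: arithmeticity of ${}_U^tT\subseteq U$ must be checked at primes $N$ of ${}_U^tT$, and at such an $N$ the ring $({}_S^tT)_N$ need not be local, because ${}_U^tT\subseteq{}_S^tT$ is infra-integral (it is a subextension of $T\subseteq{}_S^tT$) and may have several maximal ideals lying over $N$. When $({}_S^tT)_N$ is only semilocal, arithmeticity of ${}_S^tT\subseteq S$ does not force $[({}_S^tT)_N,S_N]$ to be a chain, and your embedding then buys nothing.

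What closes the gap is Lemma~\ref{9.12}: since $R\subset S$ is catenarian, the supports $\mathrm{MSupp}_{{}_U^tT}({}_S^tT/{}_U^tT)$ (infra-integral part) and $\mathrm{MSupp}_{{}_U^tT}(U/{}_U^tT)$ (t-closed part) are disjoint. Hence at every $N$ with $({}_U^tT)_N\neq U_N$ one has $({}_S^tT)_N=({}_U^tT)_N$, which is local with a unique prime of ${}_S^tT$ above $N$; then $[({}_U^tT)_N,U_N]$ is literally a subinterval of the chain $[({}_S^tT)_N,S_N]$, and your map $\phi$ becomes superfluous. This is exactly the mechanism of the paper's proof, which works one level down (with $R_1={}_U^tR$ and $R_2={}_S^tR$), invokes Lemma~\ref{9.12} to get the disjointness of supports from catenarity, and only then localizes. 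In short: your skeleton can be repaired, but only by inserting the catenarity/Lemma~\ref{9.12} argument you omitted; as written, your proof does not use the hypothesis that $R\subset S$ is catenarian and therefore cannot be complete.
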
 

\begin{proof} One implication is obvious. Now, assume that ${}_S^tR\subseteq S$ is arithmetic. Consider the tower $R\subseteq T\subset U\subseteq S$ and set $R_1:={}_U^tR,\ R_2:={}_S^tR$ and $T_1:={}_U^tT$. We get the following commutative  diagram, where {\it i}, (resp. {\it t}) indicates an infra-integral (resp. t-closed) extension:
$$\begin{matrix}
 T  & \overset{i}\rightarrow & T_1 & \overset{t}\rightarrow & U & {} & {} & {}Ê\\
 \uparrow & {} & t\uparrow & {} & {} & \searrow & {} \\
  R & \overset{i}\rightarrow  &  R_1   &  \overset{i}\rightarrow          & R_2  & \overset{t}\rightarrow & S 
\end{matrix}$$
Since $R_1$ is the t-closure of $R$ in $U$ and $R\subseteq T\subseteq U\subseteq S$, we get that $R_1\subseteq T_1$ and $R_1\subseteq R_2$.  The definition of $R_1, R_2$ and $T_1$ implies that $R_1\subseteq T_1,\ T_1\subseteq U$ and $R_2\subseteq S$ are  t-closed, while $R\subseteq R_1,\ T\subseteq T_1$ and $R_1\subseteq  R_2$ are  infra-integral.

By Lemma \ref{9.12} and because $R\subset S$ is catenarian,  
$\mathrm{MSupp}_{R_1}(U/R_1)\cap\mathrm{MSupp}_{R_1}(R_2/R_1)=\emptyset$. Moreover, $\mathrm{MSupp}_{R_1}(U/R_1)\cup\mathrm{MSupp}_{R_1}(R_2/R_1)$

\noindent $=\mathrm{MSupp}_{R_1}(UR_2/R_1)$.
 Let $N\in\mathrm{MSupp}_{T_1}(U/T_1)$ and set $M:=N\cap R_1$,
 so that $M\in\mathrm{MSupp}_{R_1}(U/R_1)$.
  According to \cite[Lemma 3.17]{DPP3}, $M(R_1)_M=((R_1)_M:(T_1)_M)=((R_1)_M:U_M)$ and is the maximal ideal of the local rings $(T_1)_M$ and $U_M$ because $(R_1)_M\subset U_M$ is t-closed as is $(R_1)_M\subset (T_1)_M$. In particular, $N$ is the only maximal ideal of $T_1$ lying above $M$ and we get $(T_1)_M=(T_1)_N$ and $U_M=U_N$. Moreover, $M\not\in\mathrm{MSupp}(R_2/R_1)$ which yields $(R_1)_M=(R_2)_M$. To conclude, we get the tower $(R_2)_M=(R_1)_M\subseteq (T_1)_M\subset U_M\subseteq S_M$, where $(R_2)_M\subseteq  S_M$ is chained, and so is $(T_1)_M= (T_1)_N\subset U_M= U_N$. Then, ${}_U^tT\subseteq U$ is arithmetic.
\end{proof} 

\begin{lemma} \label{9.14} Let $R\subset S$ be a catenarian integral FCP extension. Then, $R\subseteq{}_S^tR$ is a $\Delta$-extension  if and only if $T\subseteq{}_U^tT $ is a $\Delta$-extension  for any subextension $T\subset U$ of $R\subset S$.
\end{lemma}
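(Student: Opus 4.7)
For the easy direction, setting $T=R$ and $U=S$ in the right-hand condition gives $R\subseteq {}_S^tR$ as a $\Delta$-extension, which is the left-hand condition.

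For the converse, assume $R\subseteq R_2:={}_S^tR$ is a $\Delta$-extension and let $T\subset U$ be any subextension of $R\subset S$. The plan is to mimic the strategy of Lemmas~\ref{9.5} and~\ref{9.11}. I introduce $R_1:={}_T^tR$, $R_3:={}_U^tR$, and $T_1:={}_U^tT$, and apply Remark~\ref{1.120} to the ambient integral extension $R\subseteq U$ with intermediate $T$, in its extended form for the $t$-closure. This gives the key identifications
\[
R_1=T\cap R_3\qquad\text{and}\qquad T_1=T\cdot R_3,
\]
producing the diagram of Lemma~\ref{9.11}: $R\subseteq R_1$ and $R_1\subseteq R_3$ are infra-integral while $R_1\subseteq T$ and $R_3\subseteq U$ are $t$-closed. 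Since $R\subseteq R_3$ is infra-integral we have $R_3\subseteq R_2$, so Corollary~\ref{9.2} transmits the $\Delta$-property of $R\subseteq R_2$ down to $R\subseteq R_3$, and further to the subextension $R_1\subseteq R_3$.

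I then apply Lemma~\ref{9.12} to the catenarian FCP extension $R_1\subseteq U$, with $R_1\subseteq R_3$ infra-integral and $R_1\subseteq T$ $t$-closed, to obtain the crucial disjointness
\[
\mathrm{MSupp}_{R_1}(R_3/R_1)\cap\mathrm{MSupp}_{R_1}(T/R_1)=\emptyset.
\]
By Proposition~\ref{9.02}, it suffices to verify that $T_N\subseteq(T_1)_N$ is a $\Delta$-extension for each $N\in\mathrm{Max}(R_1)$. If $N\notin\mathrm{MSupp}_{R_1}(R_3/R_1)$, then $(R_3)_N=(R_1)_N$, so the identification $T_1=T\cdot R_3$ yields $(T_1)_N=T_N\cdot(R_3)_N=T_N$ and the $\Delta$-property holds vacuously. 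If instead $N\in\mathrm{MSupp}_{R_1}(R_3/R_1)$, disjointness forces $(R_1)_N=T_N$, whence $(T_1)_N=T_N\cdot(R_3)_N=(R_3)_N$; the extension $T_N\subseteq(T_1)_N$ is then nothing but the localization $(R_1)_N\subseteq(R_3)_N$ of the $\Delta$-extension $R_1\subseteq R_3$, and is therefore $\Delta$ by Proposition~\ref{9.02}.

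The main obstacle is securing the two identifications $R_1=T\cap R_3$ and $T_1=T\cdot R_3$ through the $t$-closure analogue of Remark~\ref{1.120}, which translates the $t$-closures of the subextension $R\subseteq T$ and of $T\subseteq U$ into the intersection and compositum with the ambient $t$-closure $R_3$. Once these are in hand, Lemma~\ref{9.12} supplies exactly the dichotomy that reduces the local check to the already-established $\Delta$-property of $R_1\subseteq R_3$.
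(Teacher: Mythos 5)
Your proposal is correct and follows essentially the same route as the paper's proof: the same web of $t$-closures ${}_T^tR\subseteq{}_U^tR\subseteq{}_U^tT$, the same appeal to Lemma \ref{9.12} for the disjointness of the supports over ${}_T^tR$, and the same local dichotomy reducing everything to the already-$\Delta$ extension ${}_T^tR\subseteq{}_U^tR$ sitting inside $R\subseteq{}_S^tR$. The only cosmetic difference is that you invoke the $t$-closure form of Remark \ref{1.120} to write ${}_U^tT=T\cdot{}_U^tR$ outright, whereas the paper derives the corresponding local equality $({}_U^tT)_M=({}_U^tR)_M$ directly from infra-integrality and $t$-closedness.
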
 

\begin{proof} One implication is obvious. Now, assume that $R\subseteq {}_S^tR$ is a $\Delta$-extension. Consider the tower $R\subseteq T\subset U\subseteq S$. Set $R_1:={}_U^tR,\ R_2:={}_S^tR,\ T_1:={}_T^tR$ and $T_2:={}_U^tT$. There is no harm to assume that $T_2\neq T$. We get the following commutative diagram, where {\it i}, (resp. {\it t}) indicates an infra-integral (resp. t-closed) extension:

$$\begin{matrix}
T_1 & \overset{t}\rightarrow & T & \overset{i}\rightarrow & T_2 & \overset{t}\rightarrow & U Ê\\
i\uparrow & i\searrow & {} & t\nearrow & {} & {} & \downarrow \\
R & \overset{i}\rightarrow & R_1 & \overset{i}\rightarrow          & R_2 & \overset{t}\rightarrow & S 
\end{matrix}$$

Since $R_1$ is the t-closure of $R$ in $U$ and $R\subseteq T\subseteq U\subseteq S$, we get that $T_1\subseteq R_1\subseteq T_2$ and $R_1\subseteq R_2$. Then, $R_1$ is also the t-closure of $T_1$ in $T_2$. It follows from the definitions of $R_1, R_2,T_1$ and $T_2$ that $T_1\subseteq T,\ R_1\subseteq T_2,\ T_2\subseteq U$ and $R_2\subseteq S$ are  t-closed, while $R\subseteq T_1,\ T_1\subseteq R_1,\ R\subseteq R_1,\ T\subseteq T_2$ and $R_1\subseteq  R_2$ are  infra-integral. Since $R\subseteq R_2$ is a $\Delta$-extension, so is $T_1\subseteq R_1$. Let $N\in\mathrm{MSupp}_T(T_2/T)$ and set $M:=N\cap T_1\in\mathrm{Max}(T_1)$. Then $M\in\mathrm{MSupp}_{T_1}(T_2/T)$ and $(T_1)_M\subset (T_2)_M$ is not t-closed, 
 whence $(T_1)_M\neq(R_1)_M$, from whch we deduce that $M\in\mathrm{MSupp}_{T_1}(R_1/T_1)$. Now  Lemma \ref{9.12} shows that
  $M\not\in\mathrm{MSupp}_{T_1}(T/T_1)$. In particular, $N$ is the only maximal ideal of $T$ lying above $M$. Then, we have $(T_1)_M=T_M=T_N$ by \cite[Lemma 2.4]{DPP2}. For the same reason, $(T_2)_M=(T_2)_N=(R_1)_M$, where the last equality is consequence of the following facts: $(T_1)_M\subset(T_2)_M$ is infra-integral and $R_1\subset T_2$ is t-closed with $R_1\in[T_1,T_2]$. Then, $(T_1)_M=T_M=T_N\subset(T_2)_N=(T_2)_M=(R_1)_M$ is a $\Delta$-extension because so is $R\subseteq R_1$. Since this property holds for any $N\in\mathrm{MSupp}_T(T_2/T)$, we get that $T\subseteq T_2={}_U^tT $ is a $\Delta$-extension by Proposition \ref{9.02}.
\end{proof} 
 
The proof of the next theorem mimics the proof of Theorem~\ref{9.7}.
 
\begin{theorem} \label{9.15} A catenarian integral FCP extension $R\subset S$ is a  $\Delta$-extension if and only if $R\subseteq {}_S^tR$  and ${}_S^tR\subseteq S$ are $\Delta$-extensions.
 The last condition is equivalent to ${}_S^tR\subseteq S$ is arithmetic. 
\end{theorem}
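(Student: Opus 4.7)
The forward direction is immediate from Corollary~\ref{9.2}, and the equivalence between ``${}_S^tR\subseteq S$ is a $\Delta$-extension'' and ``${}_S^tR\subseteq S$ is arithmetic'' is Proposition~\ref{9.10}, since ${}_S^tR\subseteq S$ is a t-closed FCP extension. The substance lies in the converse, where the plan is to transpose the proof of Theorem~\ref{9.7} nearly line for line, with Lemmas~\ref{9.14} and~\ref{9.12} playing the roles of Lemma~\ref{9.5} and Proposition~\ref{9.6}, and with the canonical decomposition through the t-closure replacing the almost-Pr\"ufer decomposition through the integral closure.

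Given $V,W\in[R,S]$, set $T:=V\cap W$, $U:=VW$, and let $\tilde V:={}_V^tT$, $\tilde W:={}_W^tT$, $\tilde U:={}_U^tT$. By Remark~\ref{1.120}, $\tilde V=V\cap\tilde U$ and $\tilde W=W\cap\tilde U$, so $\tilde V,\tilde W\subseteq\tilde U$. By Proposition~\ref{9.1}, it suffices to prove $V+W=VW$, which I would deduce from the four identities
\begin{equation*}
(1)\ \tilde V+\tilde W=\tilde V\tilde W,\ \ (2)\ V+\tilde V\tilde W=V\tilde W,\ \ (3)\ W+\tilde V\tilde W=\tilde V W,\ \ (4)\ V\tilde W+\tilde V W=VW.
\end{equation*}
Identity (1) follows because Lemma~\ref{9.14} promotes ``$R\subseteq{}_S^tR$ is $\Delta$'' to ``$T\subseteq\tilde U$ is $\Delta$'', and $\tilde V,\tilde W\in[T,\tilde U]$. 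Identities (2) and (3) are instances of Lemma~\ref{9.12} applied in the catenarian FCP subextensions $\tilde V\subset V\tilde W$ and $\tilde W\subset\tilde V W$: in each, $\tilde V\subseteq V$ (resp.\ $\tilde W\subseteq W$) is t-closed by definition of the t-closure, while $\tilde V\subseteq\tilde V\tilde W$ (resp.\ $\tilde W\subseteq\tilde V\tilde W$) is infra-integral as a subextension of $T\subseteq\tilde U$. Granting (1)--(4), summing (2) and (3) and using (1) together with $\tilde V\subseteq V$, $\tilde W\subseteq W$ (which makes $\tilde V\tilde W=\tilde V+\tilde W\subseteq V+W$) yields $V+W=V\tilde W+\tilde V W=VW$.

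The main obstacle is identity (4), which requires showing $\tilde V\tilde W\subseteq VW$ is a $\Delta$-extension. The plan is to mimic the Pr\"ufer step at the end of Theorem~\ref{9.7}. First, Lemma~\ref{9.12} applied to $\tilde V\subset V\tilde W$ furnishes the disjoint-support identity $\mathrm{MSupp}_{\tilde V}(\tilde V\tilde W/\tilde V)\cap\mathrm{MSupp}_{\tilde V}(V/\tilde V)=\emptyset$. A local case-analysis at each $M\in\mathrm{Max}(\tilde V)$ of the decomposition $\mathrm{MSupp}_{\tilde V}(V\tilde W/\tilde V)=\mathrm{MSupp}_{\tilde V}(V/\tilde V)\cup\mathrm{MSupp}_{\tilde V}(V\tilde W/V)$ then shows that $\tilde V\tilde W\subseteq V\tilde W$ is t-closed: if $M\in\mathrm{MSupp}(V/\tilde V)$ the disjoint-support collapse forces $\tilde W_M\subseteq\tilde V_M$ and the extension reduces locally to the t-closed $\tilde V_M\subseteq V_M$; if $M\in\mathrm{MSupp}(V\tilde W/V)$ then $V_M=\tilde V_M$ and both sides coincide locally. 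By symmetry $\tilde V\tilde W\subseteq\tilde V W$ is also t-closed. Combining these with the arithmeticity of $\tilde U\subseteq U$ supplied by Lemma~\ref{9.11} (inherited from the hypothesis on ${}_S^tR\subseteq S$), and splicing the support information along the tower $\tilde V\tilde W\subseteq\tilde U\subseteq VW$, I would conclude that $\tilde V\tilde W\subseteq VW=(V\tilde W)(\tilde V W)$ is arithmetic, hence a $\Delta$-extension by Proposition~\ref{1.013}; identity (4) then follows from Proposition~\ref{9.1}. The delicate point, analogous to the Pr\"ufer-hull argument in Theorem~\ref{9.7}, is this final passage from the t-closedness of the two factor-extensions plus the arithmeticity of $\tilde U\subseteq U$ to the arithmeticity of the compositum $\tilde V\tilde W\subseteq VW$.
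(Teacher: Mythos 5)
Your outline reproduces the paper's proof almost exactly: the same reduction via Proposition~\ref{9.1} to the four identities (1)--(4), with Lemma~\ref{9.14} giving (1), Lemma~\ref{9.12} giving (2) and (3), and the same local support analysis showing that $\tilde V\tilde W\subseteq V\tilde W$ and $\tilde V\tilde W\subseteq \tilde VW$ are t-closed. The forward direction and the equivalence with arithmeticity are handled as in the paper. Up to that point everything is correct.

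The one genuine gap is exactly the step you flag as ``delicate,'' and your proposed mechanism for it does not close it. Knowing that $\tilde V\tilde W\subseteq V\tilde W$ and $\tilde V\tilde W\subseteq\tilde VW$ are both t-closed does \emph{not} by itself imply that $\tilde V\tilde W\subseteq VW=(V\tilde W)(\tilde VW)$ is t-closed, and the arithmeticity of $\tilde U\subseteq U$ supplied by Lemma~\ref{9.11} is only usable once you know $\tilde V\tilde W=\tilde U={}_U^tT$ (equivalently, that $\tilde V\tilde W\subseteq VW$ is t-closed, since $\tilde V\tilde W\subseteq\tilde U$ is infra-integral and $\tilde U\subseteq U$ is t-closed); ``splicing support information along the tower'' presupposes precisely the equality you need to prove. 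The paper closes this by a separate contradiction argument: if $\tilde V\tilde W\subset VW$ were not t-closed, take its t-closure $U'$ in $VW$ and some $M\in\mathrm{MSupp}(U'/\tilde V\tilde W)$; Lemma~\ref{9.12}, applied using the catenarity hypothesis, forces $M\notin\mathrm{MSupp}(V\tilde W/\tilde V\tilde W)\cup\mathrm{MSupp}(\tilde VW/\tilde V\tilde W)$, whence $(V\tilde W)_M=(\tilde V\tilde W)_M=(\tilde VW)_M$ and therefore $(\tilde V\tilde W)_M=(VW)_M$, contradicting $M\in\mathrm{MSupp}(U'/\tilde V\tilde W)$. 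Only then do Proposition~\ref{9.10} and Lemma~\ref{9.11} yield that $\tilde V\tilde W\subseteq VW$ is arithmetic, hence a $\Delta$-extension, giving identity (4). You need to supply this (or an equivalent) argument; note that it is also the only place in the converse where the catenarity hypothesis is used beyond feeding Lemmas~\ref{9.12} and~\ref{9.14}.
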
 

\begin{proof} One implication is obvious because of Proposition \ref{9.10} and Corollary \ref{9.2}. Conversely, assume that $R\subset S$ is a catenarian FCP integral extension such that $R\subseteq{}_S^tR$ is a $\Delta$-extension and ${}_S^tR\subseteq S$ is arithmetic. Let $V,W\in[R,S]$ and set $T:=V\cap W,\ U:=VW$ and $U_1:={}_U^tT$ (resp. $V_1:={}_V^tT,\ W_1:={}_W^tT$). 
 
According to Lemma \ref{9.14}, we get that $T\subseteq U_1$ is a $\Delta$-extension. In particular, $V_1+W_1=V_1 W_1$ (1) since 
$ V_1,\  W_1\in[T,   U_1]$. 

We get the following commutative  diagram, where {\it i}, (resp. {\it t}) indicates an infra-integral (resp. t-closed) extension:

$$\begin{matrix}
{} & {} & V_1 & \overset{t}\rightarrow & V & \overset{i}\rightarrow & VW_1 & {}  & {} \\
{} & i\nearrow & {} & i\searrow & {} & t\nearrow & {} & t\searrow &{} \\
T & {} & {} & {} & V_1 W_1 & {} & {} & {} & U=VW \\
{} & i\searrow & {} & i\nearrow & {} & t\searrow & {} & t\nearrow & {} \\
{} & {} & W_1 & \overset{t}\rightarrow & W & \overset{i}\rightarrow & V_1W & {}  & {} 
\end{matrix}$$

Since $V_1\subseteq V$ is t-closed and $V_1\subseteq V_1W_1$ is infra-integral because $T\subseteq V_1,W_1\subseteq U_1$ implies $T\subseteq V_1W_1\subseteq U_1$, the t-closure of $T$ in $U$, so that $T\subseteq U_1$ is infra-integral, Lemma \ref{9.12} gives $V+ V_1W_1=VV_1W_1=VW_1$ (2). For the same reason, we get $W+V_1W_1=V_1 W$ (3). Now, $V_1W_1\subseteq VW_1$ and $V_1W_1\subseteq V_1 W$ are t-closed. Indeed, Lemma \ref{9.12} yields $\mathrm{MSupp}_{V_1}(V/V_1)\cap\mathrm{MSupp}_{V_1}(V_1W_1/ V_1)=\emptyset$ and $\mathrm{MSupp}_{V_1}(V/V_1)\cup\mathrm{MSupp}_{V_1}(V_1 W_1/V_1)=\mathrm{MSupp}_{V_1}(VW_1/V_1)$. Let $M\in\mathrm{MSupp}_{V_1}(VW_1/V_1)$. 

If $M\in\mathrm{MSupp}_{V_1}(V/V_1)$, then $(V_1)_M=(V_1)_M(W_1)_M$ entails that $(V_1)_M=(V_1)_M(W_1)_M\subset V_M(W_1)_M=
(V_1)_MV_M(W_1)_M=(V_1)_MV_M=V_M$ is t-closed. If $M\in \mathrm{MSupp}_{V_1}(V_1W_1/V_1)$, then $(V_1)_M=V_M$, so that $(V_1)_M(W_1)_M=V_M(W_1)_M$. It follows that $V_1W_1\subset VW_1$ is t-closed. For the same reason, $V_1W_1\subset V_1W$ is t-closed. 

Now, $V_1W_1\subset VW$ is also t-closed: assume that the contrary holds. Let $U'$ be the t-closure of $V_1W_1$ in $VW$ and $M\in\mathrm{MSupp}(U'/V_1W_1)$. Lemma \ref{9.12} implies that $M\not\in\mathrm{MSupp}(VW_1/V_1W_1)\cup\mathrm{MSupp}(V_1W/V_1W_1)$ because $R\subset S$ is catenarian. Then, $(VW_1)_M=V_M(W_1)_M=(V_1W_1)_M$
 
\noindent $=(V_1)_M(W_1)_M=(V_1W)_M=(V_1)_MW_M$, which leads to $(V_1W_1)_M=(VW)_M$, a contradiction because $M\in\mathrm{MSupp}(U'/V_1W_1)$ and $U'\in[V_1W_1,VW]$.  
 
Then, $V_1W_1\subset VW$ is a $\Delta$-extension by Proposition \ref{9.10} and Lemma \ref{9.11}, so that $VW_1+V_1W=VW_1V_1W=VW$ (4). 

Now  (2) and (3),  using (1) and (4),  combine to yield that $VW=VW_1+V_1 W=V+V_1W_1+W+V_1W_1=V+V_1+W_1+W+V_1+ W_1=V+W$. 

To conclude, we obtained $V+W=VW$ for any $V,W\in[R,S]$ and $R\subset S$ is a $\Delta$-extension.
\end{proof} 
 
\begin{corollary} \label{9.151} An unbranched integral FCP extension $R\subset S$ is a $\Delta$-extension if and only if $R\subset S$ is pinched at ${}_S^tR,\ R\subseteq{}_S^tR$ is a $\Delta$-extension and ${}_S^tR\subseteq S$ is chained. In particular, if these conditions hold,  $R\subset S$ is catenarian.
\end{corollary}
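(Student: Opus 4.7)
The plan for the forward direction is to obtain (2) and (3) directly from Theorem~\ref{9.15}, and then to establish the pinching (1) by a local catenarity obstruction; the converse will follow by gluing via Corollary~\ref{9.03}. Throughout, set $T:={}_S^tR$; since $R\subset S$ is unbranched integral FCP, each element of $[R,S]$ is a local ring by \cite[Lemma 3.29]{Pic 11}, so in particular $T$ and $S$ are local.

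Assume first that $R\subset S$ is a $\Delta$-extension. By Proposition~\ref{1.012} it is catenarian, so Theorem~\ref{9.15} yields that $R\subseteq T$ is a $\Delta$-extension and that $T\subseteq S$ is arithmetic; since $T$ is local with maximal ideal $M$ and $S$ is local we have $S_M=S$, so arithmetic collapses to chained, giving (2) and (3). For the pinching (1) I argue by contradiction: if $W\in[R,S]\setminus([R,T]\cup[T,S])$, set $U:=W\cap T$ and, using FCP, pick minimal subextensions $U\subset T'\subseteq T$ and $U\subset W'\subseteq W$. Since $R\subseteq T$ is infra-integral, so is $U\subset T'$; the locality of $T'$ excludes the decomposed case, leaving $U\subset T'$ ramified. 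On the other hand, Remark~\ref{1.120} gives ${}_{W'}^tR=W'\cap T=W'\cap W\cap T=U$, so $U\subset W'$ is $t$-closed and minimal, hence inert by Proposition~\ref{3.5}(4). Both crucial ideals coincide with the maximal ideal of the local ring $U$, so Proposition~\ref{3.6}(2) shows that $U\subset T'W'$ is not catenarian; yet it is a $\Delta$-extension by Corollary~\ref{9.2}, hence catenarian by Proposition~\ref{1.012}, a contradiction.

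For the converse, condition (3) makes $T\subseteq S$ trivially a $\Delta$-extension, since in a chain the sum of two subrings is the larger one. Combined with (2) and the pinching (1), Corollary~\ref{9.03} (applied to the one-term chain $\{T\}$, or trivially if $T\in\{R,S\}$) yields that $R\subset S$ is a $\Delta$-extension, and Proposition~\ref{1.012} then delivers catenarity.

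The main obstacle is the pinching step: the crux is recognizing that a hypothetical ``off-axis'' intermediate $W$ produces, over the local ring $U=W\cap T$, a ramified minimal extension inside $T$ and an inert minimal extension inside $W$ sharing the same crucial ideal, so that Proposition~\ref{3.6}(2) can be wielded against the catenarity forced by the $\Delta$-hypothesis. The rest is essentially bookkeeping on top of Theorem~\ref{9.15} and Corollary~\ref{9.03}.
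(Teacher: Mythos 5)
Your proof is correct, but it diverges from the paper's argument at the two places where the real work happens, and in both cases your route is more self-contained. For the pinching in the forward direction, the paper simply cites \cite[Theorem 4.13]{Pic 12}; you instead derive it directly: an off-axis $W$ yields, over the local ring $U=W\cap{}_S^tR$, a minimal ramified extension inside ${}_S^tR$ (decomposed being excluded because every intermediate ring is local by \cite[Lemma 3.29]{Pic 11}) and a minimal inert extension inside $W$ (since $U={}_{W'}^tR$ by Remark \ref{1.120}), sharing the same crucial ideal, so Proposition \ref{3.6}(2) contradicts the catenarity forced by Propositions \ref{1.012} and Corollary \ref{9.2}. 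This argument is sound and replaces an external citation by a short internal one. For the converse, the paper first proves that all maximal chains of $[R,S]$ have the same length and then invokes Theorem \ref{9.15}; you observe that a chained extension is trivially a $\Delta$-extension and apply Corollary \ref{9.03} to the one-term chain $\{{}_S^tR\}$, recovering catenarity afterwards from Proposition \ref{1.012}. That is shorter, avoids Theorem \ref{9.15} altogether, and matches the strategy the paper itself uses in Remark \ref{9.71} for almost unbranched extensions; the paper's longer route has the mild advantage of exhibiting catenarity explicitly before concluding. Your handling of the degenerate cases ${}_S^tR\in\{R,S\}$ and of the collapse of ``arithmetic'' to ``chained'' over a local base is also correct.
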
 

\begin{proof} 
 Since $R\subset S$ is integral unbranched, then $S$ is local. Let $N$ be the maximal ideal of $S$. Then $N=({}_S^tR:S)$ and is also the maximal ideal of ${}_S^tR$ because ${}_S^tR\subseteq S$ is t-closed \cite[Proposition 4.10]{DPP2}. Moreover, ${}_S^+R={}_S^tR$ because $R\subset S$ is spectrally bijective by Proposition \ref{3.5}, so that any $T\in[R,S]$ is local \cite[Lemma 3.29]{Pic 11} and $R\subseteq  {}_S^tR$ is subintegral. 

Assume that $R\subset S$ is a $\Delta$-extension, and then is  catenarian by Proposition \ref{1.012}. Then $R\subseteq{}_S^tR$ is a $\Delta$-extension by Corollary \ref{9.2} and ${}_S^tR\subseteq S$ is chained by Proposition \ref{9.10}. Moreover, $R\subset S$ is pinched at ${}_S^tR$ according to \cite[Theorem 4.13]{Pic 12}.  

Conversely, assume that $R\subset S$ is pinched at ${}_S^tR,\ R\subseteq{}_S^tR$ is a $\Delta$-extension and ${}_S^tR\subseteq S$ is chained. Let $\mathcal C:=\{R_i\}_{i=0}^n$ be a maximal chain of $[R,S]$. Since any $R_i\in[R,{}_S^tR]\cup[{}_S^tR,S]$ and $R_{i-1}\subset R_i$ is minimal for each $i\in\mathbb N_n$, there exists some $k\in\{0,\ldots,n\}$ such that $R_k={}_S^tR$. If $k=0$, then $R\subset S$ is t-closed and chained, then a $\Delta$-extension. If $k=n$, then $R\subset S$ is a $\Delta$-extension by assumption. Now, assume that $k\in\mathbb N_{n-1}$. It follows that $\{R_i\}_{i=0}^k$ is a maximal chain of $[R,{}_S^tR]$ and  $\{R_i\}_{i=k}^n$ is a maximal chain of $[{}_S^tR,S]$. But, $\ell(\mathcal C)=n=k+(n-k)=\ell[R,{}_S^tR]+\ell[{}_S^tR,S]$ because $R\subseteq {}_S^tR$ is a  $\Delta$-extension and then catenarian by Proposition \ref{1.012} and ${}_S^tR\subseteq S$ is  chained. Hence, all the maximal chains of $[R,S]$ have the same length. Then $R\subset S$ is catenarian and is a $\Delta$-extension by Theorem \ref{9.15}.
\end{proof} 

In case of a t-closed extension, we recover Proposition \ref{9.10}, but this Proposition was necessary to prove Theorem \ref{9.15} and Corollary  \ref{9.151}.

\begin{example} \label{9.152} Here is an example of an integral FCP $\Delta$-extension $k\subset S$ such that $k$ is a local ring but $S$ is not a local ring, so that the extension is branched.
  As a corollary we get that $k\subset k^3$ is a $\Delta$-extension.
 
Let $k$ be a field, $k\subset k^3$ the ring extension defined as the diagonal map, $\{e_1,e_2,e_3\}$ the canonical basis of $k^3$, whose elements are idempotents of the ring $k^3$ and $k\subset K$ a minimal field extension. We know that $k\subset k^3$ has FIP by \cite[Proposition 2.1]{Pic 9}. According to \cite[Proposition 4.15]{DPP3}, $[k,k^3]=\{k,R_1,R_2,R_3,k^3\}$, where $R_i:=ke_i+k(1-e_i)$, for $i=1,2,3$. Set $R:=Ke_1+k(1-e_1)$ and $S:=Ke_1+ke_2+ke_3=Rk^3$.
In particular, $N:=(k^3:S)=ke_2+ke_3$, is a maximal ideal of $k^3$ and $S$. It is easily seen that we have the following commutative diagram, where {\it d}, (resp. {\it i}) indicates a minimal decomposed (resp. inert) extension. ($R_2$ and $R_3$ do not appear to make the situation clearer):
$$\begin{matrix}
{} & {}                             & {} & {}              & R & {}             & {} \\
{} & {}                             & {} & i\nearrow & {} & d\searrow & {} \\
k & \overset{d}\rightarrow & R_1& {}       & {} & {}           & {} S \\
{} & {}                             & {} & d\searrow & {} & i\nearrow & {} \\
{} & {}                             & {} & {}              & k^3  & {}          & {} 
\end{matrix}$$
In particular, $k\subset S$ is seminormal because so are $k\subset k^3$ and $k^3\subset S$. Then $k={}_S^+k$. Since $k^3={}_S^tk$, obviously, $R\not\in[k,{}_S^tk]\cup[{}_S^tk,S]$. We claim that $[k,S]=\{k,R_1,R_2,R_3,k^3,R,S\}$. We first show that there does not exist some $L\in[k,S]\setminus\{R_i\}_{i=1}^3$ such that $k\subset L$ is minimal. Suppose that the contrary holds. We proved above that $k\subset L$ can be neither decomposed, nor ramified since $L\not\in[k,k^3]$. Then, $k\subset L$ is inert, and a minimal field extension
with some $x\in L\setminus k$. But $L\subset S$ implies that $x=(y,z,t)$, with $y\in K$ and $z,t\in k$. Let $P(X)\in k[X]$ be the (monic) minimal polynomial of $x$ over $k$. It follows that $P(X)$ is irreducible in $k[X]$ and satisfies $P(y)=P(z)=P(t)=0$, a contradiction since $x\not\in k$.
Hence, there does not exists such $L$. We have just also proved that the only minimal extensions of $R_1$ are $R_1\subset k^3$ and $R_1\subset R$. Indeed, $R_1\subset R$ is the only minimal inert subextension of $R_1\subset S$ starting from $R_1$ and  $R_1\subset k^3$ is the only minimal decomposed subextension of $R_1\subset S$ starting from $R_1$.
 
Let $i\neq 1$, for instance $i=2$ (the proof would be the same for $i=3$). Then, $R_2\subset k^3$ is the only minimal decomposed extension of $R_2$ since $k^3={}_S^tk$. There does not exist any $U\in[R_2,S]$ such that $R_2\subset U$ is minimal ramified for the same reason. Assume that there is some $U\in[R_2,S]$ such that $R_2\subset U$ is minimal inert. Moreover, $R_2$ has two maximal ideals $N_1:=ke_2$ and $N_2:=k(1-e_2)=k(e_1+e_3)$ with $(R_2:k^3)=N_1$. We claim that $(R_2:U)=N_2$. Indeed, by \cite[Proposition 7.1]{DPPS}, $(R_2:U)=N_1$ leads to a contradiction: $k^3\subset S$ is not minimal. Then, $N_2\in\mathrm{Max}(U)$. But $N_1S=(ke_2)(Ke_1+ke_2+ke_3)=ke_2=N_1$ shows that $N_1=(R_2:S)$ is also an ideal of $U$, a contradiction since $N_1+N_2=R_2$. It follows that $k\subset k^3$ and $k\subset R$ are the only extensions of $k$ of length 2. Since $k^3\subset S$ and $R\subset S$ are minimal, we get that 
$[k,S]=\{k,R_1,R_2,R_3,k^3,R,S\}=[k,k^3]\cup\{R,S\}$. It remains to show that $U+V=UV$ for any $U,V\in[k,S]$. The result is obvious if either $U,V\in[k,k^3]$ or $U\in[k,k^3]$ and $V=S$, or $U,V\in\{R,S\}$.  The two last cases are when either $U\in[k,k^3]$ and $V=R$, or $U=R$ and $V\in[k,k^3]$, which are also obvious. Then, $k\subset S$ is a $\Delta$-extension. In particular, $k\subset k^3$ is a $\Delta$-extension,  by Corollary \ref{9.2} because $k^3$ is a subextension of $S=Rk^3$. 
\end{example} 

According to Theorem \ref{9.15}, we can reduce our study to infra-integral FCP extensions. These extensions are catenarian by Proposition~\ref{cat}. We will see in the next subsection that for such extensions, the $\Delta$-property is linked to extensions of length 2. 
  
 \subsection{Properties linked to extensions of length 2}

 We begin with the following  lemmas. 
 
 \begin{lemma} \label{9.16} Let $R\subset S$ be an  FCP extension and  $T,U\in[R,S],\ T\neq U$ be such that $\ell[T\cap U,T]+\ell[T\cap U,U]=\ell[T\cap U,TU]$. If $R\subset T$ and $R\subset U$ are minimal,  so are $T\subset TU$ and $U\subset TU$. In particular,  $R=T\cap U$.
\end{lemma}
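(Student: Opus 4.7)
The plan is in three short stages: establish the ``in particular'' statement $R = T \cap U$, extract the numerical value $\ell[R, TU] = 2$ from the additivity hypothesis, and then read off the two minimality conclusions from this length count.

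For the first stage, I would exploit that $[R,T] = \{R,T\}$ by minimality of $R \subset T$, so $T \cap U$ is forced to equal either $R$ or $T$. The case $T \cap U = T$ means $T \subseteq U$, and minimality of $R \subset U$ then forces $T \in \{R,U\}$; both are excluded by hypothesis ($T \neq R$ since $R \subset T$ is proper, and $T \neq U$ by assumption). Hence $T \cap U = R$.

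With $T \cap U = R$ in hand, both $\ell[R,T]$ and $\ell[R,U]$ equal one, and the length equality assumed in the statement collapses to $\ell[R,TU] = 2$. For the final stage, I first note that $T \subsetneq TU$: otherwise $U \subseteq T$, which as in the previous paragraph would force $U \in \{R,T\}$ and hence a contradiction. Thus $R \subset T \subset TU$ is a strict chain of length two in $[R,TU]$. Any element strictly between $T$ and $TU$ would yield a strict chain of length three in $[R,TU]$, contradicting $\ell[R,TU] = 2$; so $T \subset TU$ is minimal. The symmetric argument with the roles of $T$ and $U$ swapped handles $U \subset TU$.

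There is no real obstacle here: once one observes that the minimality of $R \subset T$ (resp.\ $R \subset U$) rules out the degenerate possibilities $T \cap U = T$ and $T = TU$, the whole statement reduces to a short length count inside $[R,TU]$. What slightly merits attention is that the argument uses no modularity assumption, only the FCP-induced finiteness of $\ell[R,TU]$ together with the numerical identity in the hypothesis.
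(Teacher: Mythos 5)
Your proposal is correct and follows essentially the same route as the paper: use minimality of $R\subset T$ to force $T\cap U\in\{R,T\}$, rule out $T\cap U=T$ via minimality of $R\subset U$ and $T\neq U$, deduce $\ell[R,TU]=2$ from the additivity hypothesis, and read off minimality of $T\subset TU$ and $U\subset TU$ by a length count. The paper's proof is just a terser version of the same argument, so nothing further is needed.
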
 

\begin{proof} Since we have the chain $R\subseteq T\cap U\subseteq T$ with $R\subset T$  minimal, it follows that either $T\cap U=R$ or $T\cap U=T$. In the last case, we should have $R\subset T\subset U$, with $R\subset U$  minimal, a contradiction. Then, $R=T\cap U$ and  $\ell[R,T]+\ell[R,U]=2=\ell[R,TU]$, with $T,U\subset TU$ both minimal. 
\end{proof} 

\begin{lemma} \label{9.17} Let $R\subset S$ be an  FCP extension and  $T,U\in[R,S],\ T\neq U$ be such that $R\subset TU$ is infra-integral with $R\subset T$ and $R\subset U$  minimal. Then, $R\subset TU$ is a $\Delta$-extension if and only if $\ell[R,TU]=2$.
\end{lemma}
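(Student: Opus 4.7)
For the forward direction, I would note that any FCP $\Delta$-extension is modular by Proposition~\ref{1.012}. Applying Lemma~\ref{1.1} with its $T,U,V$ instantiated as our $R,T,U$ (valid since $T\ne U$ and both $R\subset T$, $R\subset U$ are minimal) immediately yields $\ell[R,TU]=2$.

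For the converse, assume $\ell[R,TU]=2$. The two distinct minimal integral subextensions $R\subset T$ and $R\subset U$ are to be analyzed via Proposition~\ref{3.6}. Set $M:=\mathcal{C}(R,T)$ and $N:=\mathcal{C}(R,U)$. Since $R\subset TU$ is infra-integral, so is every one of its subextensions, and neither $R\subset T$ nor $R\subset U$ can be inert (an inert minimal extension has a nontrivial residual field extension); this excludes part~(2) of Proposition~\ref{3.6}. The hypothesis $\ell[R,TU]=2$ also excludes part~(4), which would force length $3$. In part~(1) we have $M\ne N$, and Corollary~\ref{1.015} delivers that $R\subset TU$ is a $B_2$-extension, hence a $\Delta$-extension. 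In part~(3) we have $M=N$; if the two types (ramified versus decomposed) differ, Corollary~\ref{1.015} again applies since $\ell[R,TU]=2$.

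The one remaining subcase is that $M=N$ and $R\subset T$, $R\subset U$ share the same type (both ramified, or both decomposed). Here the plan is first to localize at $M$ via Proposition~\ref{9.02} to assume $(R,M)$ is local, and then to appeal to the structural descriptions in \cite[Propositions~7.4 and~7.6]{DPPS} (the ram-ram case making $R\subset TU$ subintegral, the dec-dec case making it seminormal infra-integral). In each situation one argues that $TU/R$ is a two-dimensional $R/M$-vector space, and that each atom $V\in[R,TU]$ contributes a 1-dimensional subspace $V/R$; two distinct atoms yield two distinct 1-dimensional subspaces (else $V_1=V_2$), so their sum is all of $TU/R$. Lifting back, $V_1+V_2=TU=V_1V_2$ for any pair of distinct atoms, which by Proposition~\ref{9.1} establishes the $\Delta$-property (pairs involving $R$ or $TU$ and comparable pairs are automatic).

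The main obstacle is precisely this additive verification in the same-type, same-crucial-ideal subcase, since $[R,TU]$ can then have strictly more than the four elements $\{R,T,U,TU\}$ (as already illustrated by $k\subset k^3$ in Example~\ref{9.152}, with its three atoms); one cannot finish simply by listing atoms as in Corollary~\ref{1.015}, and must instead exploit the two-dimensional structure of $TU$ over the residue field $R/M$ to show that distinct atoms span $TU/R$ additively.
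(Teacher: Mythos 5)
Your proof is correct, and its skeleton coincides with the paper's: the forward direction via Proposition \ref{1.012} and Lemma \ref{1.1} is identical, and the case split on the crucial ideals with Corollary \ref{1.015} disposing of the $M\neq N$ and mixed-type cases matches the paper (the paper does not even separate out the mixed-type subcase, since its argument for $M=N$ is uniform). The one genuine divergence is in the hard subcase $M=N$. The paper reduces modulo $M=(R:TU)$, writes $V/M=k+kx$, $W/M=k+ky$, and invokes \cite[Proposition 7.6]{DPPS} together with $\ell[R,TU]=2$ to get $xy=0$, so that $(VW)/M=k+kx+ky$ is $3$-dimensional and therefore equals $(V+W)/M$. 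You instead compute $\dim_k(TU/R)=2$ directly from $\ell[R,TU]=2$, observe that distinct atoms give distinct lines in $TU/R$, and squeeze $V+W\subseteq VW\subseteq TU$ between spaces of equal dimension. This is a bit cleaner: it avoids the multiplicative relation $xy=0$ and the citation to \cite{DPPS} entirely, and it visibly handles all pairs of atoms at once (e.g.\ the three atoms of $k\subset k^3$). What you should make explicit is the one assertion you leave as ``one argues that'': namely that $M=(R:TU)$ (which follows from $MT\subseteq M$ and $MU\subseteq M$, so $M\cdot TU=M$) and that each minimal ramified or decomposed step $A\subset B$ in $[R,TU]$ satisfies $\dim_k(B/A)=1$, because $B=A+Aq$ with $B/A$ killed by $(A:B)$ and $A/(A:B)\cong k$ by infra-integrality; this is what turns $\ell[R,TU]=2$ into $\dim_k(TU/R)=2$. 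With that spelled out, your variant is complete and marginally more self-contained than the published one.
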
 

\begin{proof} Assume first that $R\subset TU$ is a $\Delta$-extension. Then, $[R,TU]$ is modular by Proposition \ref{1.012} and $\ell[R,TU]=2$ 
 by Lemma \ref{1.1}.

Conversely, assume that $\ell[R,TU]=2$ and let $V,W\in [R,TU]$. We are aiming to show that $V+W=VW$. The result is obvious if either $V=W$, or $V\in\{R,TU\}$, or $W\in\{R,TU\}$. In the other cases, $R\subset V,W$ and $V,W\subset TU$ are minimal infra-integral extensions. In particular, $VW=TU$. Set $M:={\mathcal C}(R,V)$ and $N:={\mathcal C}(R,W)$. 

Assume first that $M\neq N$. Then, Corollary \ref{1.015} shows that $R\subset TU$ is  a $\Delta$-extension.

Assume now that $M=N$. In particular, $M=(R:TU)$. Using Proposition  \ref{9.02}, working with the extension $R/M\subset (TU)/M$ and setting $k:=R/M$, we get that $\dim_k(V/M)=\dim_k(W/M)=2$ (by minimality of the infra-integral extensions $R\subset V$ and $R\subset W$ and Theorem \ref{minimal}). This leads to  $\dim_k((V+W)/M)=3$.  Since $k\subset V/M$ and $k\subset W/M$ are still minimal infra-integral extensions, there exist $x\in (V/M)\setminus k$ and  $y\in (W/M)\setminus k$ such that $V/M=k[x]=k+kx$ and $W/M=k[y]=k+ky$. Moreover, we may choose $x$ (resp. $y$) generating the (or one of the two) maximal ideal(s) of $V/M$ (resp. $W/M)$). Since we still have $\ell[k,(TU)/M]=\ell[k,(VW)/M]=2$, it follows from \cite[Proposition 7.6]{DPPS} that $xy=0$ so that $(VW)/M=k[x,y]=k+kx+ky$, giving $\dim_k((VW)/M)=3$. To end, $(V+W)/M\subseteq (VW)/M$ leads to $V+W=VW$ and $R\subset TU$ is  a $\Delta$-extension.
\end{proof} 

\begin{theorem} \label{9.171} An infra-integral  FCP extension of length two  is a $\Delta$-extension.
\end{theorem}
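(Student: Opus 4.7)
The idea is to reduce immediately to Lemma~\ref{9.17}. Let $R\subset S$ be infra-integral FCP with $\ell[R,S]=2$, and split into two cases according to whether $R\subset S$ is chained or not.

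In the chained case, $[R,S]=\{R,T,S\}$ for a unique intermediate $T$, so for every pair $V,W\in[R,S]$ the two elements are comparable; hence $V+W$ is the larger of the two, which equals $VW$, and Proposition~\ref{9.1} gives the $\Delta$-property for free.

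In the non-chained case, pick two distinct $T,U\in{}]R,S[$. Because $\ell[R,S]=2$, each strict chain $R\subsetneq T\subsetneq S$ is maximal, so both $R\subset T$ and $R\subset U$ are minimal. The product $TU$ lies in $[R,S]$ and contains $T$ strictly (otherwise $U\subseteq T$ would force $U=T$ by minimality of $R\subset U$ and $T\neq U$), so $TU=S$. Thus $TU=S$ is infra-integral with $\ell[R,TU]=2$, and Lemma~\ref{9.17} applies directly to give that $R\subset TU=S$ is a $\Delta$-extension.

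There is essentially no obstacle: the statement is really a repackaging of Lemma~\ref{9.17}, and the only thing to be slightly careful about is ruling out the degenerate possibility that no pair of distinct proper intermediate rings exists (handled by the chained case above). One could also phrase this uniformly using Proposition~\ref{1.2} to note that $[R,S]$ is modular and then observing that, in any length-two lattice, distinct proper intermediates automatically satisfy $V\cap W=R$ and $VW=S$, but the two-case split is shorter.
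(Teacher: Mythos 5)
Your proof is correct and follows essentially the same route as the paper: reduce to Lemma~\ref{9.17} by observing that two distinct proper intermediate rings are each minimal over $R$ and have product $S$. Your explicit treatment of the chained case is a small point of extra care that the paper's own proof glosses over, but it does not change the argument.
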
 

\begin{proof} Let $R\subset S$ be an infra-integral FCP extension of length two. Let $T,U\in[R,S],\ T\neq U$ be such that  $R\subset T$ and $R\subset U$ are minimal. Since $T\subset TU\subseteq S$, it follows that $TU=S$, giving $\ell[R,TU]=2$. Hence, $R\subset S$ is a $\Delta$-extension by Lemma \ref{9.17}.
\end{proof}

\begin{lemma} \label{9.172} Let $R\subset S$ be an infra-integral FCP extension. For any $T,U,V\in[R,S],\ U\neq V$ such that $U\subset T$ and $V\subset T$ are minimal, then $\ell[U\cap V,T]=2$.
\end{lemma}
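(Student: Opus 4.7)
The plan is to set $W:=U\cap V$ and convert the lattice statement $\ell[W,T]=2$ into a computation of $W$-module length, exploiting the infra-integral hypothesis.

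First I need two easy preliminary observations: $(i)$ $W\subsetneq U$ and $W\subsetneq V$, because if, say, $W=U$ then $U\subseteq V$, and minimality of $V\subset T$ forces $U\in\{V,T\}$, both impossible; $(ii)$ $UV=T$, because $UV\supseteq U$ and $U\subset T$ minimal gives $UV\in\{U,T\}$, and $UV=U$ would force $V\subseteq U\cap V=W\subsetneq V$, a contradiction.

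Next, I pass from lattice length to $W$-module length. Since $W\subseteq T$ is infra-integral FCP, it is catenarian by Proposition~\ref{cat}, and any maximal chain $W=W_0\subset\cdots\subset W_n=T$ consists of minimal ramified or decomposed steps by Proposition~\ref{3.5}. For such a step, $W_{i+1}/W_i$ is isomorphic as a $W_i$-module to $W_i/N$ for some $N\in\mathrm{Max}(W_i)$; the infra-integrality of $W\subseteq W_i$ makes the residual map $W/(N\cap W)\to W_i/N$ an isomorphism, so $W_{i+1}/W_i$ is a simple $W$-module of $W$-length $1$. Summing along the chain gives ${\mathrm L}_W(T/W)=\ell[W,T]$. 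The same argument applied to the minimal steps $U\subset T$ and $V\subset T$ yields ${\mathrm L}_W(T/U)={\mathrm L}_W(T/V)=1$, so additivity of length in the short exact sequences $0\to U/W\to T/W\to T/U\to 0$ and its $V$-analogue gives
\[
{\mathrm L}_W(T/W)\;=\;1+{\mathrm L}_W(U/W)\;=\;1+{\mathrm L}_W(V/W).
\]

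Now set $k:={\mathrm L}_W(U/W)={\mathrm L}_W(V/W)$, so $k\geq 1$ since $W\subsetneq U$. The crux is that $U\cap V=W$ implies $(U/W)\cap(V/W)=0$ inside $T/W$: any $u+W=v+W$ with $u\in U$, $v\in V$ gives $u-v\in W\subseteq V$, hence $u\in U\cap V=W$. Therefore $(U+V)/W\cong U/W\oplus V/W$ has $W$-length $2k$. Combined with $U+V\subseteq UV=T$, this gives $2k\leq{\mathrm L}_W(T/W)=k+1$, forcing $k\leq 1$ and thus $k=1$. Hence $\ell[W,T]={\mathrm L}_W(T/W)=k+1=2$, as required.

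I expect the only mildly delicate point to be the identification $\ell[W,T]={\mathrm L}_W(T/W)$; everything else is a short linear-algebra computation driven by the hypothesis $U\cap V=W$.
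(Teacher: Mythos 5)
Your proof is correct, but it takes a genuinely different route from the paper's. The paper works with the conductors $M:=(U:T)$ and $N:=(V:T)$: it first shows $M\neq N$ (otherwise $M$ would be maximal in $W$ by a result of \cite{DPPS}, making $W\subset U$ t-closed, contradicting infra-integrality), then invokes \cite[Proposition 6.6]{DPPS} to conclude that at least one of $W\subset U$, $W\subset V$ is minimal, and finishes by catenarity of infra-integral FCP extensions. You instead run a self-contained length count: you rederive the identification $\ell[W,T]={\mathrm L}_W(T/W)$ for infra-integral FCP extensions (each ramified or decomposed minimal step contributes a simple $W$-module, by the classification in Theorem \ref{minimal} plus the isomorphism of residual extensions -- this is essentially \cite[Lemma 5.4]{DPP2}, which the paper uses elsewhere), and then the hypothesis $U\cap V=W$ enters transparently as $(U/W)\oplus(V/W)\hookrightarrow T/W$, giving $2k\le k+1$. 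Your argument avoids the two intersection results from \cite{DPPS} entirely and makes the role of $U\cap V=W$ visible, at the cost of being longer; the paper's is shorter but opaque without those external propositions. One cosmetic slip: in your step $(i)$, from $W=U$ one gets $U\subseteq V\subset T$, and it is the minimality of $U\subset T$ (not of $V\subset T$) that forces $V\in\{U,T\}$, whence the contradiction; the claim itself is of course correct.
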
 

\begin{proof} Set $W:=U\cap V,\ M:=(U:T)$ and $N:=(V:T)$. We use several dual results of Proposition \ref{3.6}. We claim that $M\neq N$. Deny. Then, \cite[Proposition 5.7]{DPPS} asserts that $M\in\mathrm{Max}(W)$, so that $M=(W:U)$, a contradiction because $W\subset U$ is infra-integral and not t-closed. Then, $M\neq N$.

If either $M$ and $N$ are incomparable or not, at least one of $W\subset U$ and $W\subset V$ is minimal  infra-integral by \cite[Proposition 6.6]{DPPS}. It follows that $\ell[W,T]=2$ because $W\subset T$ is infra-integral, and then catenarian according to Proposition \ref{cat}.
\end{proof} 

\begin{theorem} \label{9.18} Let $R\subset S$ be an infra-integral FCP extension. The following conditions are equivalent:
\begin{enumerate}
 \item $R\subset S$ is a $\Delta$-extension,
 
 \item   $R \subset S$ is modular,
 
\item $\ell[T\cap U,T]+\ell[T\cap U,U]=\ell[T\cap U,TU]$ for any $T,U\in[R,S]$,
 
\item  $R \subset S$ is semi-modular,

\item $\ell[T,UV]=2$ for any $T,U,V\in[R,S]$ such that $T\subset U$ and $T\subset V$ are minimal with $U\neq V$.
 \end{enumerate} 
\end{theorem}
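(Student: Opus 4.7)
My plan is to establish the cycle $(1)\Rightarrow(2)\Rightarrow(3)\Rightarrow(4)\Rightarrow(5)\Rightarrow(1)$. Since an infra-integral FCP extension is catenarian by Proposition~\ref{cat}, the length function on $[R,S]$ is well behaved throughout, a fact I use tacitly. The first four implications are short: $(1)\Rightarrow(2)$ is Proposition~\ref{1.012}; for $(2)\Leftrightarrow(3)$ I would invoke the standard characterization that a catenarian lattice of finite length is modular iff the dimension formula holds; for $(3)\Rightarrow(4)$, applying (3) to $T_1,T_2$ with $T_1\cap T_2\subset T_i$ minimal gives $\ell[T_1\cap T_2, T_1T_2]=2$, so the catenarian chain $T_1\cap T_2\subset T_i\subset T_1T_2$ forces $T_i\subset T_1T_2$ minimal; for $(4)\Rightarrow(5)$, given $T\subset U$ and $T\subset V$ minimal with $U\neq V$, one checks $T=U\cap V$ (else $U\cap V=U$ would make $T\subset V$ non-minimal), and then semi-modularity yields $U\subset UV$ minimal, so $\ell[T,UV]=2$.

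The crux is $(5)\Rightarrow(1)$, which I would carry out in two stages. \emph{Stage one: (5) implies modularity.} The dual of the argument in the previous paragraph (setting $T=T_1\cap T_2$, $U=T_1$, $V=T_2$) shows $(5)\Rightarrow(4)$, i.e., the upper covering condition. Lemma~\ref{9.172} furnishes the lower covering condition for free in any infra-integral FCP extension: if $U\neq V$ are both minimally contained in $T$, then $\ell[U\cap V,T]=2$, whence $U\cap V\subset U$ and $U\cap V\subset V$ are both minimal. With both covering conditions, $[R,S]$ is modular, and the dimension formula of (3) becomes available.

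\emph{Stage two: modularity implies (1).} I would induct on $n:=\ell[T\cap U,TU]$ to prove $T+U=TU$ for every pair $T,U\in[R,S]$. The cases $n\leq 1$ are trivial; for $n=2$ the only nontrivial subcase has $T\cap U\subset T$ and $T\cap U\subset U$ both minimal, and Lemma~\ref{9.17} then gives $T\cap U\subset TU$ a $\Delta$-extension, hence $T+U=TU$. For $n\geq 3$, by the dimension formula one summand must exceed $1$, so WLOG $\ell[T\cap U,T]\geq 2$. Pick $T_1$ with $T\cap U\subseteq T_1\subsetneq T$ and $T_1\subset T$ minimal. Then $T_1\cap U=T\cap U$, and modularity yields $T\cap T_1U=T_1(T\cap U)=T_1$; the dimension formula gives $\ell[T\cap U, T_1U]=n-1$ and $\ell[T_1,TU]=\ell[T\cap U,U]+1\leq n-1$. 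Applying the induction hypothesis first to $(T_1,U)$ gives $T_1+U=T_1U$, and then to $(T,T_1U)$ gives $T+T_1U=T\cdot T_1U=TU$; combining these with $T_1\subseteq T$ yields $T+U=T+T_1+U=T+T_1U=TU$.

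The main obstacle is arranging both applications of the induction hypothesis in Stage two to land on pairs with strictly smaller $n$; this is precisely why securing modularity in Stage one is essential, since without the dimension formula I would have no handle on $\ell[T_1,TU]$ or $\ell[T\cap U,T_1U]$, and without both covering conditions I could not control $T\cap T_1U=T_1$, on which the chain of reductions $T+U=T+T_1U=TU$ depends.
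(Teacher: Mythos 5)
Your architecture is sound, but Stage one contains a genuine (though fillable) gap. Condition (5), the paper's definition of semimodularity in (4), and Lemma~\ref{9.172} are all \emph{local} (Birkhoff-type) covering statements: they concern a pair of elements that \emph{both} cover, or are both covered by, the relevant element. The criterion ``modular $\Leftrightarrow$ both covering conditions'' that you invoke requires the covering conditions in their \emph{global} form: for instance, the lower covering condition demands that $U\subset TU$ minimal force $T\cap U\subset T$ minimal for an \emph{arbitrary} $T$, with no assumption that $T\subset TU$ is itself minimal. Saying that Lemma~\ref{9.172} gives this ``for free'', and that the dual of your $(3)\Rightarrow(4)$ argument gives the upper covering condition, conflates the local and global statements; the upgrade is not formal, and it is precisely the content of the paper's proof of $(5)\Rightarrow(2)$, which runs two short inductions --- on $\ell[T\cap U,U]$ for the upper condition (feeding in (5) at each step) and on $\ell[T,TU]$ for the lower one (feeding in Lemma~\ref{9.172} at each step). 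You must supply these inductions before you may conclude modularity and hence use the dimension formula and the modular law in Stage two.

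Stage two, by contrast, is correct and takes a genuinely different route from the paper. The paper proves $(3)\Rightarrow(1)$ by a double induction along maximal chains from $R$ to $V$ and from $R$ to $W$, building the grid of composites $V_iW_j$ and checking, via Lemmas~\ref{9.16} and~\ref{9.17}, that each elementary square is a length-two $\Delta$-extension. Your single induction on $n=\ell[T\cap U,TU]$, which reduces the pair $(T,U)$ to $(T_1,U)$ and $(T,T_1U)$ using the modular identity $T\cap T_1U=T_1(T\cap U)=T_1$ and the dimension formula, is cleaner: the bookkeeping $\ell[T\cap U,T_1U]=n-1$ and $\ell[T_1,TU]=\ell[T\cap U,U]+1\le n-1$ is right, the base case $n=2$ is exactly Lemma~\ref{9.17}, and the telescoping $T+U=T+T_1+U=T+T_1U=TU$ closes the argument. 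What your version buys is a shorter, more conceptual induction with a single decreasing invariant; what it costs is that it leans entirely on having modularity established first, which is exactly where the Stage-one gap sits.
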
 

\begin{proof} (1) $\Rightarrow$ (2) is Proposition \ref{1.012}.

(2) $\Leftrightarrow$ (3) by \cite[Theorem 4.15]{R}. 

(3) $\Rightarrow$ (1) Assume that $\ell[T\cap U,T]+\ell[T\cap U,U]=\ell[T\cap U,TU]$ for any $T,U\in[R,S]$. Let $V,W\in[R,S]$. We claim that $V+W=VW$. Choose two maximal chains $R:=V_0\subsetÊV_1\subset \ldots\subset V_i\subset \ldots\subset V_n:=V$ and $R:=W_0\subsetÊW_1\subset\ldots\subset W_j\subset\ldots\subset W_m:=W$ such that $V_i\subset V_{i+1}$ and $W_j\subset W_{j+1}$ are minimal infra-integral for each $i\in\{0,\ldots,n-1\}$ and each $j\in\{0,\ldots,m-1\}$. We are going to show by induction on $k\in\{2,\ldots,m+n\}$ that $V_{i-1}W_j\subset V_iW_j$ and $V_iW_{j-1}\subset V_iW_j$ are minimal and that $V_i+W_j=V_iW_j$ for each $i,j\geq 1$ such that $i+j\leq k,\ i\leq n,\ j\leq m$. 

Let $k=2$, so that $i=j=1$. Since $R\subset V_1$ and $R\subset W_1$ are minimal, so are $V_1=V_1W_0\subset V_1W_1$ and $W_1=V_0W_1\subset V_1W_1$  by Lemma \ref{9.16}. Moreover, $\ell[R,V_1W_1]=2$, so that $V_1+W_1=V_1W_1$ by Lemma \ref{9.17}. Then, the induction hypothesis holds for $k=2$.

Assume that the induction hypothesis holds for $k<m+n$, so that for any $i,j$ such that $i+j\leq k$, we have $V_{i-1}W_j,\ V_iW_{j-1}\subset V_iW_j$ minimal and $V_i+W_j=V_iW_j$.  Let $\alpha,\beta$ be such that $\alpha+\beta=k+1$. Then, $(\alpha-1)+\beta=\alpha+(\beta-1)=k$ and the induction hypothesis holds for $k$. Consider the following commutative diagram:
$$\begin{matrix}
V_{\alpha-1} & \to & V_{\alpha} & \to & V_{\alpha}W_{\beta-1} & {} & {} \\
        {}          & \searrow & {}      & \nearrow & {}             & \searrow & {} \\
 {} & {} & V_{\alpha-1} W_{\beta-1} & {} & {} & {} & V_{\alpha}W_{\beta} \\
       {}          & \nearrow &                   {} & \searrow & {} & \nearrow & {} \\
W_{\beta-1} & \to & W_{\beta} & \to & V_{\alpha-1} W_{\beta} &  {} & {} 
\end{matrix}$$
It follows that $V_{\alpha-1}W_{\beta -1}\subset V_{\alpha-1}W_{\beta},\ V_{\alpha}W_{\beta-1}$ are minimal and so are 
 $V_{\alpha-1}W_{\beta},\  V_{\alpha}W_{\beta-1}\subset V_{\alpha}W_{\beta}$ by Lemma  \ref{9.16}. Now, $\ell[V_{\alpha-1}W_{\beta -1}, V_{\alpha}W_{\beta}]=2$, so that $V_{\alpha-1}W_{\beta}+  V_{\alpha}W_{\beta-1}=(V_{\alpha-1}W_{\beta}) (V_{\alpha}W_{\beta-1})=V_{\alpha}W_{\beta}\ (*)$ by Lemma \ref{9.17}. But $V_{\alpha-1}W_{\beta}= V_{\alpha-1}+W_{\beta}$ and  $V_{\alpha}W_{\beta-1}= V_{\alpha}+W_{\beta-1}$ by the induction hypothesis for $k$. Then, $(*)$ yields $V_{\alpha}W_{\beta}=V_{\alpha-1}W_{\beta}+  V_{\alpha}W_{\beta-1}=V_{\alpha-1}+W_{\beta}+ V_{\alpha}+W_{\beta-1}=V_{\alpha}+W_{\beta}$. Then, the induction hypothesis holds for $k+1$. As it holds for any $k$, in particular, for $k=n+m$, we get $V_n+W_m=V+W=V_nW_m=VW$. Then, $R\subset S$ is a $\Delta$-extension.
 
(2) $\Rightarrow$ (4) See the properties at the beginning of 
 Subsection 2.3.

(4) $\Rightarrow$ (5) Assume that $T\subset U$ and $T\subset V$ are minimal with $U\neq V$, so that $T=U\cap V$. (4) implies that $U\subset UV$ and $V\subset UV$ are minimal by definition of a semi-modular extension. It follows that $\ell[T,UV]=2$, giving (5).

(5) $\Rightarrow$ (2) In order to prove that $R\subset S$ is modular, we are going to show that both  covering conditions are satisfied, that is, for each $T,U\in[R,S]$ such that $T\cap U\subset T$ is minimal, then $U\subset TU$ is minimal, and for each $T,U\in[R,S]$ such that  $U\subset TU$ is minimal, then $T\cap U\subset T$ is minimal (see \cite[Definition page 105 and Theorem 4.15]{R}). First, let $T,U\in[R,S]$ be such that $T\cap U\subset T$ is minimal. We prove that $U\subset TU$ is minimal by induction on $l:=\ell[T\cap U,U]$. If $l=1$, then $T\cap U\subset U$ is minimal and $U\subset TU$ is minimal by (5) since  $\ell[T\cap U,TU]=2$. Assume that the induction hypothesis holds for $l-1$ and let $U'\in[T\cap U,U]$ be such that $U'\subset U$ is minimal, so that $T\cap U=T\cap U'$ and $\ell[T\cap U',U']=l-1$. From the induction hypothesis, we deduce that $ U'\subset TU'$ is minimal. Using (5) for the minimal extensions $U'\subset TU'$ and $U'\subset U$, we get that $\ell[U',TU]=2$, so that $U\subset TU$ is minimal.   

In the same way, let $T,U\in[R,S]$ be such that $U\subset TU$ is minimal. We prove that $T\cap U\subset T$ is minimal by induction on $r:=\ell[T,TU]$. If $r=1$, the result holds by Lemma \ref{9.172} since $T,U\subset TU$ are minimal. Assume that the induction hypothesis holds for $r-1$ and let $T'\in[T,TU]$ be such that $T\subset T'$ is minimal, so that $TU=T'U$ and $\ell[T',T'U]=r-1$. From the induction hypothesis, we deduce that $T'\cap U\subset T'$ is minimal. Using again Lemma \ref{9.172} for the minimal extensions $T\subset T'$ and $T'\cap U\subset T'$, we get that $T\cap U\subset T$ is minimal. To conclude, $R\subset S$ is modular.
\end{proof} 

We may remark that $(1)\Leftrightarrow (2)$ in the above theorem   re-proves Theorem  \ref{9.171} by Proposition  \ref{1.2}.
 
\begin{proposition} \label{9.181} Let $k$ be a field and $n$ a positive integer, $n>1$. Then, $k\subset k^n$ is an FIP $\Delta$-extension if and only if $n\leq 3$.
 \end{proposition}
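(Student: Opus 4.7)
The FIP property of $k\subset k^n$ holds for every $n\geq 1$ by \cite[Proposition 2.1]{Pic 9} (and is already invoked in Example~\ref{9.152}), so the content of the statement is the characterization of the $\Delta$-property in terms of $n$. The plan is to treat the easy direction ($n\leq 3$) quickly and then produce an explicit obstruction for $n\geq 4$ by exhibiting two two-dimensional subalgebras of $k^4$ whose sum is strictly smaller than their product.

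For $n=1$ there is nothing to prove, for $n=2$ the extension $k\subset k^2$ is minimal decomposed (Theorem~\ref{minimal}), and minimal extensions are trivially $\Delta$-extensions since their intermediate lattice is $\{k,k^2\}$. The case $n=3$ is established in Example~\ref{9.152}, where it is shown directly that $k\subset k^3$ is a $\Delta$-extension.

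For $n\geq 4$ I would argue by contradiction using Corollary~\ref{9.2}: it suffices to find some $T\in[k,k^n]$ such that $k\subset T$ fails to be a $\Delta$-extension. Writing $e_1,\ldots,e_n$ for the canonical basis of $k^n$ (orthogonal idempotents summing to $1$), the subalgebra
$T:=ke_1+ke_2+ke_3+k(e_4+\cdots+e_n)$
is isomorphic to $k^4$ as a $k$-algebra, so the whole problem reduces to showing that $k\subset k^4$ is \emph{not} a $\Delta$-extension.

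Inside $k^4$, consider the $k$-subalgebras
$T':=k(e_1+e_2)+k(e_3+e_4)$ and $U':=k(e_1+e_3)+k(e_2+e_4)$,
each isomorphic to $k^2$ (they correspond to the partitions $\{\{1,2\},\{3,4\}\}$ and $\{\{1,3\},\{2,4\}\}$ of $\{1,2,3,4\}$). A direct computation in the $e_i$-basis shows that $T'\cap U'=k\cdot 1$, so the $k$-vector space $T'+U'$ has dimension $2+2-1=3$, a basis being $\{1,\,e_1+e_2,\,e_1+e_3\}$. On the other hand, the product $(e_1+e_2)(e_1+e_3)=e_1$ lies in $T'U'$, and by symmetry so do $e_2,e_3,e_4$, forcing $T'U'=k^4$ of dimension $4$. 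Hence $T'+U'\neq T'U'$, so $k\subset k^4$ is not a $\Delta$-extension by Proposition~\ref{9.1}, and neither is $k\subset k^n$ for any $n\geq 4$ by Corollary~\ref{9.2}.

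The main subtlety is the case $n\geq 4$: the constructions for $n\leq 3$ are already available, and the crux is identifying the obstruction at $n=4$, namely a pair of $2$-dimensional subalgebras whose intersection is as small as possible ($k$ itself) while their compositum is all of $k^4$. Once this pair is in hand, the reduction from general $n\geq 4$ to $n=4$ via Corollary~\ref{9.2} is immediate.
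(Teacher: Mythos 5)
Your proposal is correct and takes essentially the same route as the paper: the easy cases $n\le 3$ are handled identically, and for $n\ge 4$ your subalgebras $T'=k(e_1+e_2)+k(e_3+e_4)$ and $U'=k(e_1+e_3)+k(e_2+e_4)$ are (after your reduction to $k^4$) exactly the algebras $k[e_1+e_2]$ and $k[e_1+e_3]$ that the paper uses, with the same obstruction $e_1=(e_1+e_2)(e_1+e_3)\notin k[e_1+e_2]+k[e_1+e_3]$. The only cosmetic difference is that the paper works directly in $k^n$ via Proposition \ref{9.0} instead of first passing to a copy of $k^4$.
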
 

\begin{proof}  According to \cite[Proposition 2.1]{Pic 9}, for any positive integer $n,\ k\subset k^n$ is an FIP extension. For $n=2$, $k\subset k^2$ is a minimal extension by \cite[Lemme 1.2]{FO} and then a $\Delta$-extension by Proposition \ref{1.013}, since arithmetic. Example \ref{9.152} asserts that the result is valid for $n=3$.

Let $n\geq 4$ and let $\mathcal{B}:=\{e_1,\ldots,e_n\}$ be the canonical basis of the $k$-algebra $k^n$. Set $x:=e_1+e_2$ and $y:=e_1+e_3$. Then, $xy=e_1\in k[x,y]$ and $xy\not\in k[x]+k[y]=k[e_1+e_2]+k[e_1+e_3]=k+k(e_1+e_2)+k(e_1+e_3)$. Using Proposition \ref{9.0}, we get that $k\subset k^n$ is not a $\Delta$-extension.
\end{proof}

\begin{corollary} \label{9.182} Let $R\subset S$ be a seminormal infra-integral FCP extension. The following conditions are equivalent:
 \begin{enumerate}
 \item $R\subset S$ is a $\Delta$-extension;
 
\item $|\mathrm{V}_S(MS)|\leq 3$ for any $M\in\mathrm{MSupp}(S/R)$;
 
 \item $\ell[R_M,S_M]\leq 2$ for any $M\in\mathrm{MSupp}(S/R)$.\end{enumerate}
 \end{corollary}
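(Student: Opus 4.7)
The plan is to reduce to the local case and identify the quotient by the conductor with a model extension $k \subset k^n$, so that Proposition~\ref{9.181} yields the equivalence of all three conditions. Since the $\Delta$-property is local on the maximal spectrum by Proposition~\ref{9.02}, and since conditions (2) and (3) are already stated pointwise over $M \in \mathrm{MSupp}(S/R)$, it suffices to prove the equivalence after replacing $R \subset S$ by $R_M \subset S_M$ for each such $M$. Thus I would assume from the start that $(R,M)$ is local.

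Next I would exhibit the structural description of the local case. By Proposition~\ref{3.5}(3), every minimal extension in a maximal chain for $R \subset S$ is decomposed, and for a decomposed minimal extension the conductor is the maximal ideal of its source. Starting from the local ring $R$ and propagating through the chain yields $(R:S) = M$, so $M$ is a common ideal of $R$ and $S$. Setting $k := R/M$, the quotient extension $k \subset S/MS$ is still seminormal, infra-integral, and FCP. Over a field, any such extension is forced to be $k \subset k^n$, with $n = |\mathrm{V}_S(MS)|$: infra-integrality makes every residual field equal to $k$, and seminormality kills the Jacobson radical of $S/MS$, so $S/MS \cong \prod_{i=1}^n (S/MS)/\overline{M_i} \cong k^n$.

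From here everything lines up. Proposition~\ref{9.02}(4) gives that $R \subset S$ is a $\Delta$-extension iff $k \subset k^n$ is, and Proposition~\ref{9.181} identifies the latter with the inequality $n \leq 3$; this yields $(1) \Leftrightarrow (2)$. For $(1) \Leftrightarrow (3)$, the subalgebras of $k \subset k^n$ are in bijection with the set partitions of $\{1,\ldots,n\}$, so $\ell[R_M,S_M] = \ell[k,k^n] = n-1$, and condition (3) becomes $n-1 \leq 2$, again equivalent to $n \leq 3$.

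The main obstacle is the structural identification $S/MS \cong k^n$ in the local case. Once that is cleanly established (using the decomposed-only nature of minimal subextensions to force $(R:S)=M$, then seminormality to force the quotient to be reduced), the rest of the proof reduces to invoking Proposition~\ref{9.181} and counting the length of $[k,k^n]$.
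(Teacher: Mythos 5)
Your proof is correct and follows essentially the same route as the paper: localize via Proposition~\ref{9.02}, show that in the local case the conductor equals $M$ so that the extension reduces modulo $M$ to $k\subset k^n$ with $n=|\mathrm{V}_S(MS)|$, and then invoke Proposition~\ref{9.181} together with $\ell[k,k^n]=n-1$. The only differences are that the paper cites \cite[Proposition 4.9]{DPP2} for the fact that $(R:S)=M$ is a finite intersection of maximal ideals of $S$ and \cite[Lemma 5.4]{DPP2} for $\ell[R_M,S_M]=|\mathrm{Max}(S_M)|-1$, whereas you re-derive these directly; your ``propagation'' of the conductor along the chain of decomposed steps needs one extra word of care (after the first step the intermediate rings are no longer local, so the conductor of each step is only \emph{a} maximal ideal of its source, and one must still check $MR_{i+1}\subseteq M$), but the conclusion and the overall argument are sound.
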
 

\begin{proof} (1) $\Leftrightarrow$ (2) According to Proposition \ref{9.02}, $R\subset S$ is a $\Delta$-extension if and only if $R_M\subset S_M$ is a $\Delta$-extension for any $M\in\mathrm{MSupp}(S/R)$. Moreover, $|\mathrm{V}_S(MS)|\leq 3$ for any $M\in\mathrm{MSupp}(S/R)$ if and only if $|\mathrm{Max}(S_M)|\leq 3$ for any $M\in\mathrm{MSupp}(S/R)$. Indeed, $\mathrm{Max}(S_M)=\{NS_M\mid N\in\mathrm{Max}(S),\ M\subseteq N\}=\{NS_M\mid N\in\mathrm{Max}(S),\ M\in\mathrm{V}_S(MS)\}$. Therefore, we can   assume that $(R,M)$ is a local ring. Since $R\subset S$ is a seminormal FCP extension, we deduce from \cite[Proposition 4.9]{DPP2}  that $(R:S)=M$ is an intersection of finitely many maximal ideals of $S$. Moreover,  $R\subset S$ being infra-integral,  $S/M\cong(R/M)^n$, where $n:=|\mathrm{Max}(S)|$ by the Chinese Remainder Theorem. Moreover, $R\subset S$ is a $\Delta$-extension if and only if $R/M\subset S/M$ is a $\Delta$-extension by Proposition \ref{9.02}. Since $R/M$ is a field, $R/M\subset (R/M)^n$ is a $\Delta$-extension if and only if $n\leq 3$ by Proposition~\ref{9.181}. To conclude, $R\subset S$ is a $\Delta$-extension if and only if $|\mathrm{V}_S(MS)|\leq 3$ for any $M\in\mathrm{MSupp}(S/R)$. 
 
(2) $\Leftrightarrow$ (3) by \cite[Lemma 5.4]{DPP2} since 
  $\ell[R_M,S_M]=|\mathrm{Max}(S_M)|-1=|\mathrm{V}_S(MS)|-1$. 
\end{proof}

\subsection{Applications of extensions of length 2 to  $\Delta$-extensions}

Before giving a more striking characterization of FCP infra-integral $\Delta$-extensions, we need the following technical lemmas.

\begin{lemma} \label{9.20} Let $R\subset S$ be an infra-integral  FCP extension over the local ring $(R,M)$, such that $R\subset{}_S^+R$ is a $\Delta$-extension and ${}_S^+R\subset S$ is minimal decomposed. 
Let $T,U\in[R,S],\ T\not\in[R,{}_S^+R]$ be such that $T\subset U$ is subintegral. Then the following hold:
\begin{enumerate}

 \item $|\mathrm{Max}(S)|=|\mathrm{Max}(T)|=|\mathrm{Max}(U)|=2$.

\item Let $V\in[T,U]$ and set $W:=V\cap {}_S^+R={}_V^+R$. Then, $\ell[T,V]=\ell[ {}_T^+R,W]$.
 
\item For any $V_1,V_2\in[T,U]$ such that $T\subset V_1,V_2$ are minimal, then $V_1+V_2=V_1V_2$ and $V_1,V_2\subset V_1V_2$ are minimal (ramified).

\item $T\subset U$ is a $\Delta$-extension.
 \end{enumerate}
\end{lemma}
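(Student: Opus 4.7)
The plan is to construct a lattice isomorphism $\varphi\colon[T,U]\to[{}_T^+R,{}_U^+R]$ by intersection with ${}_S^+R$, and then transfer the $\Delta$-property from the hypothesis on $R\subseteq {}_S^+R$.

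For (1): since $R\subseteq{}_S^+R$ is subintegral and $R$ is local, ${}_S^+R$ is local with unique maximal ideal $M'$; the minimal decomposed step ${}_S^+R\subset S$ splits $M'=N_1\cap N_2$, so $|\mathrm{Max}(S)|=2$. Integrality of $T\subseteq S$ gives $|\mathrm{Max}(T)|\leq 2$, and if $|\mathrm{Max}(T)|=1$ then $R\subseteq T$ would be infra-integral with bijective spectrum, hence subintegral, forcing $T\subseteq {}_S^+R$ against the hypothesis. Finally, $T\subseteq U$ subintegral yields $|\mathrm{Max}(U)|=|\mathrm{Max}(T)|=2$.

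For (2): modulo $M'$ one has $S/M'\cong k\times k$ (with $k:=R/M$), inside which $\overline{{}_S^+R}$ is the diagonal $k$. Because $T\not\subseteq{}_S^+R$, the image $\bar T$ properly contains the diagonal, so $\bar T=k\times k$. Lift the idempotent $(1,0)\in k\times k$ to some $q\in T$; then $q^2-q\in M'$, $q\notin {}_S^+R$, and $M'q\subseteq M'$ (since $M'=N_1\cap N_2$ is an ideal of $S$). Hence ${}_S^+R[q]=S$ is free of rank two over ${}_S^+R$ with basis $\{1,q\}$. Now for $V\in[T,U]$ and $W:=V\cap {}_S^+R={}_V^+R$, we have $q-q^2\in V\cap {}_S^+R=W$, so $W[q]=W+Wq\subseteq V$ is a subring; both $V$ and $W[q]$ project onto $k\times k$ in $\bar S$ and satisfy $V\cap M'=W\cap M'=W[q]\cap M'$ (the last equality forced by the free decomposition of $S$). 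A standard lift-and-subtract argument then yields $V=W[q]$. Consequently, $\varphi\colon V\mapsto V\cap {}_S^+R$ is a poset isomorphism $[T,U]\to[{}_T^+R,{}_U^+R]$ with inverse $W\mapsto W[q]$, and in particular $\ell[T,V]=\ell[{}_T^+R,W]$.

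For (4): Corollary~\ref{9.2} and the hypothesis give that ${}_T^+R\subseteq {}_U^+R$ is a $\Delta$-extension. For any $V_1,V_2\in[T,U]$ with $W_i:=V_i\cap {}_S^+R$, Proposition~\ref{9.1} yields $W_1+W_2=W_1W_2$, and the isomorphism $\varphi$ respects products since $(W_1[q])(W_2[q])=(W_1W_2)[q]$. Using the free basis $\{1,q\}$,
\[
V_1+V_2=(W_1+W_2)+(W_1+W_2)q=W_1W_2+W_1W_2\,q=(W_1W_2)[q]=V_1V_2,
\]
so $T\subseteq U$ is a $\Delta$-extension by Proposition~\ref{9.1}. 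For (3), $T\subset V_i$ minimal forces ${}_T^+R\subset W_i$ minimal (by (2)) and subintegral, hence ramified; Theorem~\ref{9.18}(5) applied to the $\Delta$-extension ${}_T^+R\subseteq {}_U^+R$ gives $\ell[{}_T^+R,W_1W_2]=2$, so (2) yields $\ell[T,V_1V_2]=2$, and therefore $V_i\subset V_1V_2$ is minimal ramified (subintegral, as $T\subset U$ is).

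The main obstacle lies in part (2): producing the common generator $q\in T$ with $q^2-q\in M'$, and then verifying $V=(V\cap {}_S^+R)[q]$ by matching images in $\bar S$ with intersections with $M'$. Once this lattice isomorphism is in place, parts (3) and (4) reduce mechanically to the known $\Delta$-structure of $R\subseteq {}_S^+R$.
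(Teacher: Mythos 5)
Your key structural observation is sound and, for part (4), it yields a correct proof that is genuinely different from the paper's. Since $R\to{}_S^+R/M'$ is onto (subintegrality) and the image of $T$ in $S/M'\cong k\times k$ strictly contains the diagonal, an element $q\in T$ with image $(1,0)$ exists, and for every $V\in[T,U]$ with $W:=V\cap{}_S^+R$ one indeed has $V=W+Wq$: your lift-and-subtract step needs only $V\cap M'\subseteq V\cap{}_S^+R=W$ together with $q^2-q\in T\cap M'\subseteq{}_T^+R\subseteq W$, no freeness at all. From this, $V_1+V_2=(W_1+W_2)+(W_1+W_2)q=W_1W_2+(W_1W_2)q=V_1V_2$ by the $\Delta$-property of $R\subset{}_S^+R$ (Propositions \ref{9.0} and \ref{9.1}), so $T\subset U$ is a $\Delta$-extension. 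This bypasses the paper's route entirely (the paper proves (3) first by a case analysis on the conductors $(T:V_1),(T:V_2)$ and then gets (4) by a double induction over maximal chains), and it is shorter. Part (1) is the paper's argument.

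However, two of your supporting claims are false, and your proofs of (2) and (3) rest on them. First, $S$ is never free over ${}_S^+R$ on $\{1,q\}$ unless $M'=0$: for $0\neq m\in M'$ the relation $(mq)\cdot 1-m\cdot q=0$ is nontrivial. Second, and more seriously, $\varphi:V\mapsto V\cap{}_S^+R$ is a strict order embedding of $[T,U]$ into $[{}_T^+R,{}_U^+R]$ but is in general \emph{not} surjective, so it is not a poset isomorphism; the proposed inverse fails because $W[q]\cap{}_S^+R=W+(W\cap M')q$ can be strictly larger than $W$. Concretely, let $k$ be a field, $S:=k[u]/(u^3)\times k[v]/(v^2)$, $R:=k$. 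Then ${}_S^+R=k+\mathrm{rad}(S)$ and $M'=\mathrm{rad}(S)$, with ${}_S^+R\subset S$ minimal decomposed; $R\subset{}_S^+R$ is a $\Delta$-extension (for $x,y\in\mathrm{rad}(S)$ all triple products vanish, $xy,x^2\in\mathrm{rad}(S)^2=k\,(u^2,0)$, and $xy\neq 0$ forces $xy\in kx^2$, so $k[x,y]=k[x]+k[y]$). Take $T:=k+k(1,0)$, $U:=S$, so $T\subset U$ is subintegral, $T\notin[R,{}_S^+R]$, ${}_T^+R=k$, ${}_U^+R={}_S^+R$, and put $W:=k+k(u,v)+k(u^2,0)\in[{}_T^+R,{}_U^+R]$. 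No $V\in[T,U]$ satisfies $V\cap{}_S^+R=W$, since such a $V$ would contain $(u,v)(1,0)=(u,0)\in{}_S^+R\setminus W$. Thus your (2) only yields $\ell[T,V]\le\ell[{}_T^+R,W]$, and in (3) neither the minimality of ${}_T^+R\subset W_i$ nor the passage from $\ell[{}_T^+R,W_1W_2]=2$ back to $\ell[T,V_1V_2]=2$ is justified. The statements are nevertheless true: the paper proves (2) from catenarity of infra-integral FCP extensions (Proposition \ref{cat}), noting that ${}_T^+R\subset T$ and $W\subset V$ are minimal decomposed and comparing the two maximal chains from ${}_T^+R$ to $V$; and once your (4) is in place, (3) follows from Proposition \ref{1.012} and Lemma \ref{1.1} ($\Delta$ implies modular, so $\ell[T,V_1V_2]=2$ and $V_i\subset V_1V_2$ is minimal, ramified because subintegral by Proposition \ref{3.5}), but (2) still needs the catenarity argument rather than your isomorphism.
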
 

\begin{proof} (1) Since $(R,M)$ is a local ring, so is ${}_S^+R$ and $|\mathrm{Max}(S)| = 2$ since ${}_S^+R\subset S$ is minimal decomposed. We also have $|\mathrm{Max}(T)|=2$, because $T\not\in[R,{}_S^+R]$ implies that $R\subset T$ is not subintegral, which gives that $|\mathrm{Max}(R)|<|\mathrm{Max}(T)|\leq|\mathrm{Max}(S)|=2$. It follows that $|\mathrm{Max}(U)|=|\mathrm{Max}(T)|=2$ because $T\subset U$ is subintegral.

(2) Let $V\in[T,U]$ and $W:=V\cap {}_S^+R={}_V^+R$. Since $T\subset U$ is subintegral, so is $T\subset V$, which implies that $|\mathrm{Max}(V)|=2$. But, ${}_T^+R\subseteq T$ and $W\subseteq V$ are seminormal infra-integral. Therefore,  ${}_T^+R\subseteq T$ and $W\subseteq V$ are minimal decomposed and $\ell[{}_T^+R,T]=\ell[ W,V]=1$. Now, $\ell[{}_T^+R,V]=\ell[{}_T^+R,T]+\ell[T,V]=\ell[{}_T^+R,W]+\ell[W,V]$ because $R\subset S$ is infra-integral, whence catenarian  by Proposition~\ref{cat}. So, we get $\ell[T,V]=\ell[{}_T^+R,W]$.

(3) Let $V_1,V_2\in[T,U]$ be such that $T\subset V_1,V_2  $ are minimal (ramified). As in (2), set $W_i:={}_S^+R\cap V_i={}_{V_i}^+R$ for $i=1,2$, which are local rings. Since $T\subset V_1V_2$ is subintegral, because $V_1V_2\in[T,U]$, there is no harm to assume that $U=V_1V_2$. As $(T:V_1),(T:V_2)\in\mathrm{Max}(T)$, we consider two cases.

(a)  $(T:V_1)\neq(T:V_2)$. 

Propositions \ref{3.6} (1) and \ref{9.1} and Lemma \ref{9.17} say that $V_1+V_2=V_1V_2$. Moreover, Proposition \ref{3.6} shows that $ V_1,V_2  \subset V_1V_2  $ are minimal (ramified).

(b)  $(T:V_1)=(T:V_2)$. 

Set $N:=(T:V_1)=(T:V_2)$. For each $i\in\{1,2\}$, there is a unique $N_i\in\mathrm{Max}(V_i)$ lying above $N$ since $\mathrm{Max}(U)\to\mathrm{Max}(T)$ is bijective. Then, $M_i:=N_i\cap W_i$ is the maximal ideal of $W_i$. From (2), we deduce that ${}_T^+R\subset W_i$ is minimal ramified with conductor $M':=N\cap{}_T^+R$, the maximal ideal of ${}_T^+R$. In particular, $W_1,W_2\in[{}_T^+R,{}_S^+R]$, where ${}_T^+R\subseteq{}_S^+R$ is a $\Delta$-extension, because so is $R\subseteq{}_S^+R$. Then, $\ell[{}_T^+R,W_1W_2]=2$ by Lemma \ref{9.17}. An application of Proposition \ref{3.6} shows that $M_1M_2\subseteq M'\ (*)$. Since $|\mathrm{Max}(T)|=|\mathrm{Max}(V_i)|=2$, let $N'$ (resp. $N'_i$) be the other maximal ideal of $T$ (resp. of $V_i$). Then, $M'=NN'$ because $ {}_T^+R\subset T$ is minimal decomposed. For the same reason, we have  $M_i=N_iN'_i$ by Theorem \ref{minimal}. Then $(*)$ yields $N_1N'_1N_2N'_2=M_1M_2\subseteq M'=NN'\ (**)$. Since $N'\not\in\mathrm{MSupp}(V_i/T)$, we have $T_{N'}=(V_i)_{N'}=N_{N'}=(N_i)_{N'}$. Moreover, $N'_N=T_N$ and $(N'_i)_N=(V_i)_N$. Localizing  $(**)$ at the two maximal ideals $N,N'$ of $T$, we get $(N_1N'_1N_2N'_2)_N=(N_1N_2)_N\subseteq N_N$ and $(N_1N_2)_{N'}=T_{N'}=N_{N'}$. It follows that $N_1N_2\subseteq N$, so that $\ell[T,V_1V_2]=2$ by Proposition \ref{3.6} and $V_1+V_2=V_1V_2$ since $T\subseteq V_1V_2$ is a $\Delta$-extension by Lemma \ref{9.17}. In particular, $V_1\subset V_1V_2$ and $V_2\subset V_1V_2$ are minimal (ramified) extensions.

(4) Let $V,V'\in[T,U]$. We want to show that $V+V'=VV'$. Since $R\subset S$ has FCP, there exists a maximal chain $V_0:=T\subset  V_1\subset \ldots\subset V_i\subset\cdots\subset V_n:=V$ and a maximal chain $V'_0:=T\subset  V'_1\subset \cdots\subset V'_j\subset\cdots\subset V'_m:=V'$  such that $V_i\subset V_{i+1}$ and $V'_j\subset V'_{j+1}$ are minimal extensions,  for each $i=0,\ldots,n-1,\ j=0,\ldots,m-1$. We mimic the proof of Theorem \ref{9.18} although the assumptions are not the same.

We are going to show by induction on $k\in\{2,\ldots,m+n\}$ that $V_{i-1}V'_j\subset V_iV'_j$ and $V_iV'_{j-1}\subset V_iV'_j$ are minimal and $V_i+V'_j=V_iV'_j$ for each $i,j\geq 1$ such that 
$i+j\leq k,\ i\leq n,\ j\leq m$. 

If $k=2$, then $i=j=1$. Since $T\subset V_1$ and $T\subset V'_1$ are minimal, $V_1+V'_1=V_1V'_1$ by (3), and  the induction hypothesis holds for $k=2$.

Assume that the induction hypothesis holds for $k$. Hence,  for any $i,j$ such that $i+j\leq k$, we have that $V_{i-1}V'_j,\ V_iV'_{j-1}\subset V_iV'_j$ are minimal and $V_i+V'_j=V_iV'_j$.  Let $\alpha,\beta$ be such that $\alpha+\beta=k+1$ and consider the following commutative diagram:
$$\begin{matrix}
V_{\alpha-1} & \to & V_{\alpha} & \to & V_{\alpha}V'_{\beta-1} & {} & {} \\
        {}          & \searrow & {}      & \nearrow & {}             & \searrow & {} \\
 {} & {} & V_{\alpha-1} V'_{\beta-1} & {} & {} & {} & V_{\alpha}V'_{\beta} \\
       {}          & \nearrow &                   {} & \searrow & {} & \nearrow & {} \\
V'_{\beta-1} & \to & V'_{\beta} & \to & V_{\alpha-1} V'_{\beta} &  {} & {} 
\end{matrix}$$
Then, $(\alpha-1)+\beta=\alpha+(\beta-1)=k$. Since the induction hypothesis holds for $k$, it follows that $V_{\alpha-1}V'_{\beta -1}\subset V_{\alpha-1}V'_{\beta},\ V_{\alpha}V'_{\beta-1}$ are minimal. Moreover, $V_{\alpha-1}V'_{\beta -1}, V_{\alpha}V'_{\beta}\in[T,U]$, implies that $V_{\alpha-1}V'_{\beta -1}\subset V_{\alpha}V'_{\beta}$ is subintegral. Applying (3) to this extension, we get that $V_{\alpha-1}V'_{\beta}+  V_{\alpha}V'_{\beta-1}=(V_{\alpha-1}V'_{\beta}) (V_{\alpha}V'_{\beta-1})=V_{\alpha}V'_{\beta}\ (*)$. But $V_{\alpha-1}V'_{\beta}= V_{\alpha-1}+V'_{\beta}$ and  $V_{\alpha}V'_{\beta-1}= V_{\alpha}+V'_{\beta-1}$ by the induction hypothesis for $k$. Then, $(*)$ yields $V_{\alpha}V'_{\beta}=V_{\alpha-1}V'_{\beta}+  V_{\alpha}V'_{\beta-1}=V_{\alpha-1}+V'_{\beta}+ V_{\alpha}+V'_{\beta-1}=V_{\alpha}+V'_{\beta}$. Moreover, $V_{\alpha-1}V'_{\beta },\  V_{\alpha}V'_{\beta-1} \subset V_{\alpha}V'_{\beta}$ are minimal by (3). Then, the induction hypothesis holds for $k+1$. As it holds for any $k$, in particular, for $k=n+m$, we get $V_n+V'_m=V+V'=V_nV'_m=VV'$. Then, $T\subset U$ is a $\Delta$-extension.
\end{proof} 

\begin{corollary} \label{9.201} Let $R\subset S$ be an infra-integral  FCP extension over the local ring $(R,M)$, such that $R\subset {}_S^+R$ is a $\Delta$-extension and ${}_S^+R\subset S$ is minimal decomposed. Let $T,U\in[R,S]$ be such that  $T\subset U$ is subintegral. Then  $T\subset U$ is a $\Delta$-extension.
 \end{corollary}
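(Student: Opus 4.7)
The plan is to reduce the corollary to Lemma \ref{9.20}(4), which already settles the $\Delta$-property for $T\subset U$ subintegral \emph{provided} that $T\not\in[R,{}_S^+R]$. Hence the entire content of Corollary \ref{9.201}, beyond Lemma \ref{9.20}(4), is to handle the case $T\in[R,{}_S^+R]$. My approach is a two-case analysis, and the missing case will dissolve into an application of the extremal characterization of the seminormalization together with Corollary \ref{9.2}.

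First, I would dispose of the case $T\not\in[R,{}_S^+R]$ by direct citation of Lemma \ref{9.20}(4): all its hypotheses are exactly those of our corollary plus the extra condition $T\not\in[R,{}_S^+R]$, and its conclusion is precisely that $T\subset U$ is a $\Delta$-extension.

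Next I would treat the case $T\in[R,{}_S^+R]$. The key observation is that since $R\subseteq {}_S^+R$ is subintegral (by Definition \ref{1.3}) and $R\subseteq T\subseteq {}_S^+R$, the extension $R\subseteq T$ is itself subintegral, because subintegrality (bijectivity of the spectral map together with trivial residual extensions) is inherited by subextensions with the same base ring. Combining this with the hypothesis that $T\subseteq U$ is subintegral, I conclude that $R\subseteq U$ is a composition of subintegral extensions, hence is subintegral. Now I invoke the defining property of the seminormalization: ${}_S^+R$ is the greatest element $B'\in[R,S]$ for which $R\subseteq B'$ is subintegral. Therefore $U\subseteq {}_S^+R$, so both $T$ and $U$ lie in $[R,{}_S^+R]$. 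Since $R\subseteq {}_S^+R$ is by hypothesis a $\Delta$-extension, Corollary \ref{9.2} yields that the subextension $T\subseteq U$ is also a $\Delta$-extension, finishing the case and therefore the proof.

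There is no real obstacle, because Lemma \ref{9.20} has already carried the technical weight. The only delicate point is the inheritance of subintegrality by the subextension $R\subseteq T$ of $R\subseteq {}_S^+R$; this is a standard fact but should be stated explicitly so that the maximality characterization of ${}_S^+R$ can be applied to $U$ in the next line.
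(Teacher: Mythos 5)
Your proposal is correct and follows exactly the paper's own argument: the case $T\not\in[R,{}_S^+R]$ is Lemma \ref{9.20}(4), and when $T\in[R,{}_S^+R]$ one observes that $U\in[R,{}_S^+R]$ as well (by maximality of the seminormalization, since $R\subseteq T\subseteq U$ is then subintegral) and applies Corollary \ref{9.2} to the $\Delta$-extension $R\subset{}_S^+R$. The only difference is that you spell out the subintegrality/maximality step that the paper dismisses as obvious, which is a harmless (indeed helpful) elaboration.
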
 

\begin{proof} According to Lemma \ref{9.20}, it is enough to verify only the case where $T\in [R,{}_S^+R]$, which is obvious since $U\in[R,{}_S^+R]$ and $R\subset {}_S^+R$ is a $\Delta$-extension (use Corollary \ref{9.2}).
\end{proof}

\begin{lemma} \label{9.21} Let $R\subset S$ be an infra-integral  FCP extension such that $(R,M)$ is a local ring, $R\subset {}_S^+R$ and ${}_S^+R\subset S$ are  $\Delta$-extensions. Let $T,U\in[R,S],\ T\not\in([R,{}_S^+R]\cup[{}_S^+R,S])$ be such that  $T\subset U$ is subintegral.  Then, $T\subset U$ is a $\Delta$-extension.
  \end{lemma}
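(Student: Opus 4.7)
The plan is to mimic the proof of Lemma~\ref{9.20}, with the role played there by the minimal decomposed extension ${}_S^+R\subset S$ now played by the length-two $\Delta$-extension $B\subset S$, where $B:={}_S^+R$. First, since $R$ is local and $R\subseteq B$ is subintegral, $B$ is local; applying Corollary~\ref{9.182} to $B\subset S$ therefore yields $\ell[B,S]\leq 2$. The case $\ell[B,S]=0$ would force $T\in[R,B]$, contrary to hypothesis, and the case $\ell[B,S]=1$ is precisely Corollary~\ref{9.201}, so I reduce to $\ell[B,S]=2$.

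The first substantive step is an analog of step~(2) of Lemma~\ref{9.20}: for any $V\in[T,U]$, setting $W:=V\cap B={}_V^+R$, I would show that $\ell[T,V]=\ell[{}_T^+R,W]$. Since $T\subseteq V$ is subintegral (as a subextension of $T\subseteq U$), $|\mathrm{Max}(T)|=|\mathrm{Max}(V)|$; since $R$ is local and $R\subseteq{}_T^+R\subseteq W\subseteq B$ is subintegral, both $W$ and ${}_T^+R$ are local; and since ${}_T^+R\subseteq T$ and $W\subseteq V$ are seminormal infra-integral FCP extensions over a local base, the formula $\ell=|\mathrm{Max}|-1$ (\cite[Lemma 5.4]{DPP2}) gives $\ell[{}_T^+R,T]=\ell[W,V]$. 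Combined with the catenarity of the infra-integral FCP extension $R\subset S$ (Proposition~\ref{cat}), this yields $\ell[T,V]=\ell[{}_T^+R,W]$.

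With this analog in hand I would follow steps~(3) and~(4) of Lemma~\ref{9.20}. For $T_0\in[T,U]$ and $V_1,V_2\in[T_0,U]$ with $T_0\subset V_1,V_2$ minimal ramified and $V_1\neq V_2$: if $(T_0:V_1)\neq(T_0:V_2)$, Proposition~\ref{3.6}(1) and Lemma~\ref{9.17} give $\ell[T_0,V_1V_2]=2$ and $V_1+V_2=V_1V_2$; if the crucial ideals coincide as some $N$, step~(2) ensures ${}_{T_0}^+R\subset W_i:=V_i\cap B$ is minimal ramified, and the $\Delta$-hypothesis on $R\subseteq B$ (Corollary~\ref{9.2}) together with Lemma~\ref{9.17} yields $\ell[{}_{T_0}^+R,W_1W_2]=2$. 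Proposition~\ref{3.6}(3) then produces an inclusion $M_1M_2\subseteq M'$ among maximal ideals of $W_i$ extended to ${}_{T_0}^+R$, which I would lift to $T_0$ by localizing at each maximal ideal of $T_0$ (trivially at the non-crucial ones, exactly as in the proof of Lemma~\ref{9.20}), producing $N_1N_2\subseteq N$ in $T_0$ and hence $\ell[T_0,V_1V_2]=2$ and $V_1+V_2=V_1V_2$ via Lemma~\ref{9.17} again. The double induction of step~(4) of Lemma~\ref{9.20}, which assembles pairwise minimal $\Delta$-properties along maximal chains into the full $\Delta$-property via Proposition~\ref{9.1}, then transfers verbatim.

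I expect the main obstacle to be the localization lift in case~(b) of step~(3): whereas the original argument in Lemma~\ref{9.20} dealt only with $|\mathrm{Max}(T_0)|=2$, here this count may equal~$3$, so the ideal bookkeeping across all maximal ideals of $T_0$ is more delicate. Verifying that $M_1M_2\subseteq M'$ in ${}_{T_0}^+R$ lifts to the required inclusion in $T_0$ requires tracking how each maximal ideal of $T_0$ behaves upon localization (trivial at ideals outside the crucial $N$, nontrivial at $N$), but this should still go through because ${}_{T_0}^+R\subset T_0$ is seminormal infra-integral with well-controlled conductor behavior, so the maximal ideals of the $V_i$ and $W_i$ match up correctly with those of $T_0$ and ${}_{T_0}^+R$.
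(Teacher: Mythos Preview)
Your approach is correct but takes a genuinely different route from the paper's. You directly extend the internal mechanism of Lemma~\ref{9.20} to the case $|\mathrm{Max}(T_0)|\leq 3$: the length identity of step~(2) survives because $\ell[{}_{T_0}^+R,T_0]=\ell[W,V]=|\mathrm{Max}(T_0)|-1$ via \cite[Lemma~5.4]{DPP2}, and in step~(3)(b) the inclusion $M_1M_2\subseteq M'$ does lift to $N_1N_2\subseteq N$ by localization at each maximal ideal of $T_0$, since $M'=N\cap N'\cap N''$ and $M_i=N_i\cap N_i'\cap N_i''$ (radical conductors for seminormal infra-integral extensions), and the non-crucial factors become units after localizing at $N$. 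One small point you should make explicit: if $W_1=W_2$, Lemma~\ref{9.17} does not apply, but then $M_1M_2=M_1^2\subseteq M'$ directly from the fact that ${}_{T_0}^+R\subset W_1$ is minimal ramified, so the conclusion still holds.

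By contrast, the paper does \emph{not} redo the inside of Lemma~\ref{9.20}. It instead reduces to that lemma as a black box: when $|\mathrm{Max}(T)|=3$ it selects $T'\in[{}_T^+R,T]$ with $|\mathrm{Max}(T')|=2$, then runs an inductive construction (alternating between two cases coming from \cite[Proposition~7.6]{DPPS}) to produce $U':=T'\cdot{}_U^+R$ with $T'\subset U'$ subintegral and ${}_U^+R\subset U'$ minimal decomposed, so that Lemma~\ref{9.20} yields $T'\subset U'$ a $\Delta$-extension; finally it localizes at the two maximal ideals of $T'$ and applies Lemma~\ref{9.20} once more to the localizations, concluding via Proposition~\ref{9.02}. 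Your argument is more direct and avoids this inductive detour; the paper's argument has the virtue of never reopening the ideal-theoretic computation inside Lemma~\ref{9.20}.
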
 

\begin{proof} Since ${}_S^+R\subset S$ is a $\Delta$-extension, 
$|\mathrm{Max}(U)|\leq|\mathrm{Max}(S)|\leq 3$ by Corollary \ref{9.182}. Since ${}_S^+R\neq S$, we have $|\mathrm{Max}(S)|\geq 2$ because $\ell[{}_S^+R,S]\geq 1$, and there is at least one minimal decomposed extension ${}_S^+R\subset V$ with $V\in[{}_S^+R,S]$. Then, use Theorem \ref{minimal}. We are done if $|\mathrm{Max}(S)|= 2$ by Lemma \ref{9.20}, since in this case ${}_S^+R\subset S$ is minimal decomposed. So, assume that $|\mathrm{Max}(S)|=3$. If $|\mathrm{Max}(U)|=2$, we may apply Lemma \ref{9.20} to the extension $R\subset U$ because ${}_U^+R\subseteq{}_S^+R$ and ${}_U^+R\subset U$ is minimal decomposed, in which case we are done. Assume now that $|\mathrm{Max}(U)|=|\mathrm{Max}(T)|=3$ since $T\subset U$ is subintegral. Then, $\ell[{}_T^+R,T]=|\mathrm{Max}(T)|-1=2$ by \cite[Lemma 5.4]{DPP2} since ${}_T^+R\subset T$ is seminormal infra-integral. It follows that there exists $T'\in[{}_T^+R,T]$ such that ${}_T^+R\subset T'$ and $T'\subset T$ are minimal decomposed. In particular, $|\mathrm{Max}(T')|=2$. Set $U':=T'{}_U^+R\in[{}_U^+R,U],\ n:=\ell[{}_T^+R,{}_U^+R]$, $R_0:={}_T^+R$ and $T_0=T'$. We are going to prove that there is some $m\leq n$ which will be exhibited below and by induction on $i\in\mathbb{N}_m$, 
 such that for each $i\in\mathbb{N}_m$, there exist $R_i\in[{}_T^+R,{}_U^+R]$ and $T_i\in[T',U']$ where $T_i=T'R_i$, $\ell[R_{i-1}, R_i]\in\{1,2\}$, $R_i\subset T_i$ is minimal decomposed and $T'\subset T_i$ is subintegral.
  Since $R\subset S$ has FCP, there exists $R'_1\in[{}_T^+R,{}_U^+R]$ such that $R_0\subset R'_1$ is minimal ramified. Consider $T'R'_1\in[T',U']$. Using Proposition \ref{3.6}, two cases occur according to the behavior of the maximal ideals of $T'$ and $R'_1$:

Case (a): $T'\subset T'R'_1$ is minimal ramified and $R'_1\subset T'R'_1$ is minimal decomposed. Then, we set $T_1:=T'R'_1\in[T',U']$ and $R_1:=R'_1$. It follows that $R_0\subset R_1$ is minimal ramified, $R_1\subset T_1$ is minimal decomposed and $T'\subset T_1$ is subintegral.

Case (b): We use \cite[Proposition 7.6]{DPPS}. There exists $T'_1\in[T', T'R'_1]$ such that $T'\subset T'_1$ and $T'_1\subset  T'R_1$ are minimal ramified and $R''_1\in[R'_1, T'R'_1]$ such that $R''_1\subset T'R'_1$ is minimal decomposed and $R'_1\subset R''_1$ is minimal ramified. We have the following commutative diagram where the horizontal maps are ramified and the vertical maps are decomposed:
$$\begin{matrix}
      T'=T_0       & \to  & T'_1 & \to &   T'R'_1  \\
\uparrow         &  {}   &    {}   &  {}  & \uparrow \\
{}_T^+R=R_0 & \to   & R'_1 & \to  & R''_1 
\end{matrix}$$
Set $T_1:=T'R'_1\in[T',U']$ and $R_1:=R''_1$. It follows that $\ell[R_0, R_1]=2$, $R_1\subset T_1$ is minimal decomposed,  $T'\subset T_1$ is subintegral and $T_1=T'R_1$. The induction hypothesis holds for $i=1$. 

Assume that the induction hypothesis holds for $i\in\mathbb{N}_{n-1}$, so that there exist $R_i\in[{}_T^+R,{}_U^+R[$ and $T_i\in[T',U']$ such that $\ell[R_{i-1}, R_i]\in\{1,2\},\ R_i\subset T_i$ is minimal decomposed,  $T'\subset T_i$ is subintegral and $T_i=T'R_i$. We mimic the proof made  for $i=1$, and use again Proposition \ref{3.6} and \cite[Proposition 7.6]{DPPS}. Since $R_i\in[{}_T^+R,{}_U^+R[$, there exists $R'_{i+1}\in[{}_T^+R,{}_U^+R]$ such that $R_i\subset R'_{i+1}$ is minimal ramified, and  two cases occur according to the behavior of the maximal ideals of $T_i$ and $R'_{i+1}$ (similar to the case $i=1$). 

In case (a): $T_i\subset T_iR'_{i+1}$ is minimal ramified and $R'_{i+1}\subset T_iR'_{i+1}$ is minimal decomposed. Then, we set $R_{i+1}:=R'_{i+1}\in[{}_T^+R,{}_U^+R[$ and $T_{i+1}:=T_iR_{i+1}\in[T',U']$. It follows that $R_i\subset R_{i+1}$ is minimal ramified, $R_{i+1}\subset T_{i+1}$ is minimal decomposed, $T'\subset T_{i+1}$ is subintegral and $T_{i+1}=T_iR_{i+1}=T'R_iR_{i+1}=T'R_{i+1}$. Then the induction hypothesis holds for $i+1$. 

In case (b): There exists $T'_i\in[T_i, T_iR'_{i+1}]\subseteq [T',U']$ such that  $T_i\subset T'_i$ and $T'_i\subset  T_iR'_{i+1}$ are minimal ramified and $R''_{i+1}\in [R_i, T_iR'_{i+1}]$ such that $R''_{i+1}\subset T_iR'_{i+1}$ is minimal decomposed and $R'_{i+1}\subset R''_{i+1}$ is minimal ramified. We do not draw a new diagram, it is enough to replace $0$ with $i$ and $1$ with $i+1$ in the above diagram. Then we set $T_{i+1}:=T_iR'_{i+1}$ and $R_{i+1}:=R''_{i+1}$. It follows that $\ell[R_i, R_{i+1}]=2,\ R_{i+1}\subset T_{i+1}$ is minimal decomposed, $T'\subset T_{i+1}$ is subintegral and $T_{i+1}=T_iR'_{i+1}=T_iR'_{i+1}R''_{i+1}=T_iR''_{i+1}=T'R_iR''_{i+1}=T'R''_{i+1}=T'R_{i+1}$ because $R'_{i+1}\subset R_{i+1}\subset T_iR'_{i+1}$. The induction hypothesis holds for $i+1$. 

Once $R_{i+1}={}_U^+R$ is gotten, we set $m:=i+1$, in which case $T_m=T'{}_U^+R=U'\in[{}_U^+R,U]$, with ${}_U^+R\subset U'$ minimal decomposed and $T'\subset U'$ subintegral. We can apply Lemma \ref{9.20} to the extension $R\subset U'$, which gives that $T'\subset U'$ is a $\Delta$-extension. We may remark that when we get that  $R_i\subset {}_U^+R$ is minimal (ramified), we are necessarily in case (a) since $R'_{i+1}={}_U^+R$.

Set $\mathrm{Max}(T'):=\{M,M'\}$, so that $(T'_M,M_M)$ is a local ring. We are first going to show that $T_M\subseteq U_M$ and $T_{M'}\subseteq U_{M'}$  are $\Delta$-extensions. 

Observe that the context is as follows:  $T'_M\subset U_M$ is an infra-integral  FCP extension where the local ring $(T'_M,M_M)$ verifies
 $U'_M={}_{U_M}^+T'_M,\ T'\subset U'$ is a $\Delta$-extension and so is $T'_M\subseteq U'_M$ by Proposition \ref{9.02}. Either $T'_M\subseteq T_M$ is  minimal decomposed or  $T'_M= T_M$. In this last case, $U_M=U'_M$, so that  $T_M\subseteq U_M$ is a $\Delta$-extension. If $T'_M\subseteq T_M$ is  minimal decomposed, the assumptions of Lemma \ref{9.20} are satisfied for the extension $T'_M\subseteq U_M$. As $T_M\subseteq U_M$ is a subintegral subextension of $T'_M\subseteq U_M$, we get that $T_M\subseteq U_M$ is a $\Delta$-extension. In the same way, $T_{M'}\subseteq U_{M'}$ is a $\Delta$-extension.
 
We intend to show that $T\subseteq U$ is a $\Delta$-extension  by applying twice Proposition \ref{9.02}. Indeed, $|\mathrm{Max}(T)|=3$.   Moreover, $(T':T)=M$ since $T'_M\subseteq T_M$ is  minimal decomposed. Let $N,P\in\mathrm{Max}(T)$ be lying over $M$ and $N'\in\mathrm{Max}(T)$ be lying over $M'$. Then, $T_{M'}=T_{N'},\ U_{M'}=U_{N'}$, and $T_N,T_P$ (resp. $U_N,U_P$) are localizations of $T_M$ (resp. $U_M$), so that  $T_N\subseteq U_N,\ T_P\subseteq U_P$ and $T_{N'}\subseteq U_{N'}$ are $\Delta$-extensions, and so is $T\subseteq U$.
\end{proof} 

\begin{corollary} \label{9.211} Let $R\subset S$ be an infra-integral  FCP extension over the local ring $(R,M)$, such that $R\subset{}_S^+R$ and ${}_S^+R\subset S$ are $\Delta$-extensions. Then a subintegral subextension $T\subset U$ of $ [R,S]$ is a $\Delta$-extension.
  \end{corollary}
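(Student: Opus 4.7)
The plan is to reduce to Lemma~\ref{9.21}, which already handles the case $T\notin[R,{}_S^+R]\cup[{}_S^+R,S]$, by dispatching the two remaining cases in which $T$ is comparable with ${}_S^+R$.

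First suppose $T\in[R,{}_S^+R]$. Subintegrality passes to subextensions: if $R\subseteq V$ is subintegral and $R\subseteq T\subseteq V$, then the bijective spectral map and the residual isomorphisms factor through $T$, so $R\subseteq T$ and $T\subseteq V$ are each subintegral. Applied with $V={}_S^+R$, this gives $R\subseteq T$ subintegral, and composing with the subintegral $T\subseteq U$ yields $R\subseteq U$ subintegral. By the maximality defining the seminormalization, $U\subseteq {}_S^+R$. Hence $T\subseteq U$ is a subextension of $R\subseteq {}_S^+R$, which is a $\Delta$-extension by hypothesis, and Corollary~\ref{9.2} concludes this case.

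Now suppose $T\in[{}_S^+R,S]$. The upper piece ${}_S^+R\subseteq S$ of the canonical decomposition is seminormal and infra-integral (the second property being inherited from $R\subseteq S$), so by Proposition~\ref{3.5}(3) every minimal step in any maximal chain from ${}_S^+R$ to $S$ is decomposed. Any minimal subextension $V_0\subset V_1$ with $V_0,V_1\in[{}_S^+R,S]$ can be completed to such a maximal chain, hence is itself decomposed. In particular, no nontrivial subintegral extension can sit inside $[{}_S^+R,S]$. Since $T\subseteq U$ is subintegral with $T,U\in[{}_S^+R,S]$, this forces $T=U$, and the claim is vacuous.

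I expect no serious obstacle here: Lemma~\ref{9.21} does all the heavy work, and the two leftover cases are essentially structural observations about the canonical decomposition $R\subseteq{}_S^+R\subseteq S$. The only mildly subtle point is verifying that \emph{every} minimal subextension inside $[{}_S^+R,S]$ (not only those occurring in one preferred maximal chain) is decomposed, which is handled by embedding such a step into a maximal chain and invoking Proposition~\ref{3.5}(3).
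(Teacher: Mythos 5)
Your proof is correct and follows essentially the same route as the paper's: reduce to Lemma~\ref{9.21} and dispatch the two cases where $T$ is comparable to ${}_S^+R$, showing that $T\in[R,{}_S^+R]$ forces $U\in[R,{}_S^+R]$ (so Corollary~\ref{9.2} applies to the $\Delta$-extension $R\subseteq{}_S^+R$) and that $T\in[{}_S^+R,S]$ is impossible for a proper subintegral $T\subset U$. You merely supply more detail than the paper does on why $U\subseteq{}_S^+R$ and why the second case is vacuous.
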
 

\begin{proof} According to Lemma \ref{9.21}, it is enough to assume that $T\in [R,{}_S^+R]\cup[{}_S^+R,S]$. If $T\in[R,{}_S^+R]$, so is $U$, then $T\subset U$ is a $\Delta$-extension because $R\subset {}_S^+R$ is a $\Delta$-extension. If $T\in[{}_S^+R,S]$, then $T\subset U\subseteq S$ implies that $U\in[{}_S^+R,S]$, a contradiction with $T\subset U$ is subintegral. Then, in any case, $T\subset U$ is a $\Delta$-extension
\end{proof} 

\begin{proposition} \label{9.23} An infra-integral FCP extension $R\subset S$ is a $\Delta$-extension if and only if the following statements  hold:
\begin{enumerate}

\item $R\subset {}_S^+R$ and ${}_S^+R\subset S$ are $\Delta$-extensions.
  
\item For each $T,U,V\in[R,S]$ such that $T\subset U$ is minimal ramified and $T\subset V$ is minimal decomposed, $T\subset  UV$ is a $B_2$-extension (or, equivalently, a $\Delta$-extension, or, equivalently, $\ell[T,UV]=~2$).
 \end{enumerate}
  \end{proposition}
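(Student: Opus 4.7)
The forward direction is short: if $R\subset S$ is a $\Delta$-extension, Corollary \ref{9.2} gives (1) immediately since both $R\subseteq{}_S^+R$ and ${}_S^+R\subseteq S$ are subextensions of $R\subset S$. For (2), an FCP $\Delta$-extension is modular by Proposition \ref{1.012}, so Lemma \ref{1.1} applied to $T\subset U$ ramified and $T\subset V$ decomposed (necessarily $U\neq V$) forces $\ell[T,UV]=2$, and Corollary \ref{1.015} concludes that $T\subset UV$ is a $B_2$-extension.

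For the converse, by the equivalence (1)$\Leftrightarrow$(5) of Theorem \ref{9.18} it suffices to prove $\ell[T,UV]=2$ for every triple $T,U,V\in[R,S]$ with $T\subset U$ and $T\subset V$ minimal and $U\neq V$. Since $R\subset S$ is infra-integral, each such minimal extension is ramified or decomposed, so I split into three cases. If the two types differ, hypothesis (2) is the conclusion. If both are ramified, $T\subset UV$ is subintegral; for each $N\in\mathrm{MSupp}_T(UV/T)$ I localize the ambient extension at $M:=N\cap R$ (condition (1) survives localization by Proposition \ref{9.02}), apply Corollary \ref{9.211} to the resulting local extension to conclude that $T_M\subset (UV)_M$, and hence $T_N\subset(UV)_N$, is a $\Delta$-extension, and glue back via Proposition \ref{9.02} to get $T\subset UV$ a $\Delta$-extension; then Proposition \ref{1.012} makes it modular and Lemma \ref{1.1} yields $\ell[T,UV]=2$. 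If both are decomposed and $\mathcal{C}(T,U)\neq\mathcal{C}(T,V)$, Corollary \ref{1.015} (or Proposition \ref{3.6}(1)) handles this.

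The remaining sub-case---both decomposed with a common crucial ideal $N$---is the genuine obstacle, because Proposition \ref{3.6}(4) a priori permits $\ell[T,UV]=3$ and hypothesis (2) is silent about pairs of decomposed minimal extensions. I plan first to invoke \cite[Proposition 7.6]{DPPS} to see that $T\subset UV$ is seminormal infra-integral. Since $\mathrm{MSupp}_T(UV/T)=\{N\}$, the identity $\ell[T,UV]=\ell[T_N,(UV)_N]=|\mathrm{Max}((UV)_N)|-1$ holds by \cite[Lemma 5.4]{DPP2} applied to the local seminormal infra-integral extension $T_N\subset (UV)_N$. Lying-over for the integral inclusion $(UV)_N\subseteq S_N$ gives $|\mathrm{Max}((UV)_N)|\leq|\mathrm{Max}(S_N)|$, and the maximal ideals of $S_N$ are exactly those of $S$ contracting to $N$ in $T$, hence lying over $M:=N\cap R$ in $R$. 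Localizing the ambient extension at $M$, condition (1) together with Proposition \ref{9.02} makes $({}_S^+R)_M\subset S_M$ a seminormal infra-integral FCP $\Delta$-extension over a local base, so Corollary \ref{9.182} gives $|\mathrm{V}_{S_M}(M''S_M)|\leq 3$ at the unique maximal ideal $M''$ of $({}_S^+R)_M$; since $R\subseteq{}_S^+R$ is subintegral (bijective on $\mathrm{Spec}$), this count coincides with the number of maximal ideals of $S$ lying over $M$, whence $|\mathrm{Max}(S_N)|\leq 3$. Therefore $\ell[T,UV]\leq 2$, and the reverse inequality is immediate from $U\neq V$ both being minimal $T$-overextensions contained in $UV$.
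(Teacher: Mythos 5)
Your proof is correct and follows essentially the same route as the paper's: reduce via Theorem \ref{9.18} to showing $\ell[T,UV]=2$, split on the types of the two minimal extensions, use hypothesis (2) for the mixed case, Lemma \ref{9.21}/Corollary \ref{9.211} for the ramified--ramified case, and the bound $|\mathrm{Max}(S_M)|\leq 3$ coming from Corollary \ref{9.182} applied to ${}_S^+R\subset S$ for the decomposed--decomposed case. The only cosmetic differences are that the paper localizes at a maximal ideal of $R$ once at the outset, and in the decomposed--decomposed case it derives a contradiction from $\ell[T,UV]=3$ via the chain of three decomposed steps in \cite[Proposition 7.6(b)]{DPPS} rather than your direct count $\ell[T,UV]=|\mathrm{Max}((UV)_N)|-1\leq 2$; both arguments rest on the same maximal-ideal bound.
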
 

\begin{proof} One implication is obvious in the light of Corollary \ref{9.2} and Lemma \ref{9.17}. Moreover, in (2), the three conditions are equivalent. Indeed, if $T\subset U$ is minimal ramified and $T\subset V$ is minimal decomposed, then $T\subset UV$ is infra-integral by Proposition \ref{3.5}. Then, $T\subset UV$ is a $\Delta$-extension if and only if $\ell[T,UV]=2$ by Lemma \ref{9.17}. At last,\cite[Theorem 6.1 (5)]{Pic 6} gives that $\ell[T,UV]=2$ if and only if $|[T,UV]|=4$.

Conversely, assume that (1) and (2) hold. Since $R\subset S$ is infra-integral, there is no minimal inert subextension of $R\subset S$, always by Proposition \ref{3.5}. We can assume that $(R,M)$ is a local ring, and so is ${}_S^+R$. It follows that $|\mathrm{Max}(S)|\leq 3$ by Corollary \ref{9.182}. According to Theorem \ref{9.18}, it is enough to verify that $\ell[T,UV]=2$ for any $T,U,V\in[R,S]$ such that $T\subset U$ and $T\subset V$ are minimal with $U\neq V$. If they are minimal of different types, that is one is minimal ramified and the other minimal decomposed, $\ell[T,UV]=2$ by (2). Assume that $T\subset U$ and $T\subset V$ are both minimal  ramified, then $T\subset UV$ is subintegral. If $T\in[R,{}_S^+R]$, so are $U$ and $V$, then $T\subset UV$ is a $\Delta$-extension by (1), so that $\ell[T,UV]=2$ by Theorem \ref{9.18}. Moreover, we cannot have $T\in[{}_S^+R,S]$ since $T\subset U$ is minimal ramified. If $T\not\in[R,{}_S^+R]\cup[{}_S^+R,S]$, then $T\subset UV$ is a $\Delta$-extension by Lemma \ref{9.21}, so that $\ell[T,UV]=2$ by Theorem \ref{9.18}. At last, assume that $T\subset U$ and $T\subset V$ are both minimal decomposed. By Proposition \ref{3.6}, $\ell[T,UV]\leq 3$. But \cite[Proposition 7.6 (b)]{DPPS} says that when $\ell[T,UV]=3$, there exists $U'\in[U,UV]$ such that $U\subset U'$ and $U'\subset UV$ are both minimal decomposed. It follows that $|\mathrm{Max}(T)|+3=|\mathrm{Max}(UV)|\leq|\mathrm{Max}(S)|\leq 3$, a contradiction. Then, $\ell[T,UV]=2$. This equality holding in any case, $R\subset S$ is a $\Delta$-extension by Theorem \ref{9.18}.
\end{proof} 
\begin{remark} \label{9.230} The condition (2) of Proposition \ref{9.23} is necessary in order to have a $\Delta$-extension as it is shown in the following example. We use \cite[Example 5 of 16.4]{Ri} and its results.  Let $S$ be the ring of integers of $\mathbb Q(\sqrt 7,i)$. Then $S$ is a $\mathbb Z$-module of basis $\mathcal B:=\{1,\sqrt 7,t,u\}$, where $t:=(\sqrt 7+i)/2$ and $u:=(1+i\sqrt 7)/2$. It is shown that $2S=Q_1^2Q_2^2$, where $Q_1,Q_2$ are the maximal ideals of $S$ lying above $2\mathbb Z$. Set $R:=\mathbb Z+2S$ and $W:={}_S^+R$.  Then, $2S=(R:S)$ is a maximal ideal of $R$. It is easy to see that $W=\mathbb Z+Q_1Q_2$, so that $W\subset S$ is minimal decomposed, and then a $\Delta$-extension. Hence, the second  part of condition (1) 
of Proposition \ref{9.23} is satisfied. Set $U_1:=\mathbb Z+Q_1^2Q_2$ and $V_1:=\mathbb Z+Q_1^2$. A short calculation shows that $R\subset U_1$ and $V_1\subset S$ are both minimal ramified, while $U_1\subset V_1$ is minimal decomposed. Then, $\ell[R,S]=3$, which leads to $\ell[R,W]= 2$. It follows that $R\subset W$ is a $\Delta$-extension by Theorem \ref{9.171} and the first  part of condition (1)
of Proposition \ref{9.23} is satisfied. We show that condition (2) of Proposition \ref{9.23} is not satisfied.  

Set $T:=R[x]$, where $x:=1+\sqrt 7$. Since $x^2=8+2\sqrt 7\in 2S$ and $2xS\subseteq 2S$, we get that $R\subset T$ is minimal ramified by Theorem \ref{minimal}. Set $T':=R[u]$. Since $u^2-u=-2\in 2S$ and $2uS\subseteq 2S$, we get that $R\subset T'$ is minimal decomposed by Theorem \ref{minimal}. Now, $TT'=R[x,u]=R[\sqrt 7,(1+i\sqrt 7)/2]$, with $\sqrt 7(1+i\sqrt 7)/2=t+3i\in TT'$. But, $i=2t-\sqrt 7\in T\subset TT'$ implies that $t\in TT'$. To conclude $TT'=S$ and $\ell[R,TT']>2$ shows that $R\subset S$ is not a $\Delta$-extension. We can check that $T+T'$ is not a ring, because it does not contain $\sqrt 7u$.

We will see in Section 5 an example where conditions of Proposition \ref{9.23} hold in the context of number field orders.   
 \end{remark} 

\begin{lemma} \label{9.231} Let $R\subset S$ be an    integral FCP extension such that $\ell[T,UV]=2$ for each $T,U,V\in[R, S]$ such that $T\subset U$  and $T\subset V$ are minimal of different types. Then, for any maximal chain $\mathcal C'$ of $[R,S]$, there exists a maximal chain $\mathcal C$ of $[R,S]$ containing ${}_S^tR$ such that $\ell(\mathcal C)=\ell(\mathcal C')$.
\end{lemma}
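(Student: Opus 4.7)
The plan is to perform a ``bubble sort'' on $\mathcal C'$ that pushes all ramified or decomposed (infra-integral-type) minimal steps to the bottom and all inert minimal steps to the top while preserving the total length. By Proposition~\ref{3.5}(2) and (4), the transition point of the resulting chain must then coincide with ${}_S^tR$.

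The crux is the following swap lemma: suppose $T \subsetneq T' \subsetneq T''$ lie in $[R,S]$ with $T \subset T'$ minimal inert and $T' \subset T''$ minimal ramified or decomposed. Then I claim there exists $T^* \in [T,T'']$ such that $T \subset T^*$ is minimal of the same type as $T' \subset T''$ and $T^* \subset T''$ is minimal inert. Setting $M := \mathcal{C}(T,T')$, $N := \mathcal{C}(T',T'')$, and $P := N \cap T$, if $P \neq M$ then Lemma~\ref{1.13} (Crosswise Exchange) directly delivers $T^*$. The main obstacle is the case $P = M$. In this case, because $T \subset T'$ is inert, $M \in \mathrm{Max}(T')$ is the unique maximal ideal of $T'$ lying over $M$, so $N = M$ as ideals of $T'$. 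By Theorem~\ref{minimal}, we may write $T'' = T'[y]$ with $y^2 - \epsilon y \in M$ and $My \subseteq M$, where $\epsilon \in \{0,1\}$ according to whether $T' \subset T''$ is ramified or decomposed. Set $V := T[y]$. Since $M$ is already a maximal ideal of $T$ and the conditions on $y$ hold at the level of $T$, Theorem~\ref{minimal} shows that $T \subset V$ is minimal of the same type as $T' \subset T''$ with crucial ideal $M$. Thus $T \subset T'$ and $T \subset V$ are minimal extensions of different types sharing the crucial ideal $M$, so Proposition~\ref{3.6}(2) yields that $T \subset T'V$ is not catenarian. But $T'V = T'[y] = T''$, and the chain $T \subset T' \subset T''$ of length $2$ exists; non-catenarity then forces the existence of a maximal chain of different length, hence $\ell[T, T''] > 2$, contradicting the hypothesis $\ell[T, T' V] = 2$. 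So the case $P = M$ cannot occur and the swap lemma holds unconditionally.

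With the swap lemma in hand, the bubble sort proceeds as follows. For a maximal chain $\mathcal C$ of $R \subset S$, define $\iota(\mathcal C)$ to be the number of index pairs $(i,j)$ with $i < j$ such that step $i$ (i.e., $R_{i-1} \subset R_i$) is inert while step $j$ is ramified or decomposed. Whenever $\iota(\mathcal C) > 0$, there must exist an adjacent inversion, namely some $i$ with step $i$ inert and step $i+1$ infra-integral; applying the swap lemma replaces $R_i$ by some $R'_i$ and yields a new maximal chain of the same length with strictly smaller inversion count. After finitely many iterations we arrive at a maximal chain $\tilde{\mathcal C} : R = \tilde R_0 \subsetneq \cdots \subsetneq \tilde R_n = S$ of length $n = \ell(\mathcal C')$ in which all ramified or decomposed steps precede all inert steps. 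Letting $b$ denote the number of infra-integral steps, Proposition~\ref{3.5}(2) gives that $R \subseteq \tilde R_b$ is infra-integral and Proposition~\ref{3.5}(4) gives that $\tilde R_b \subseteq S$ is $t$-closed; by the uniqueness of the canonical decomposition (Definition~\ref{1.3}), $\tilde R_b = {}_S^tR$, so $\tilde{\mathcal C}$ is the required chain.
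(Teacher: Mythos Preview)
Your argument is correct and takes a genuinely different route from the paper's own proof. The paper invokes \cite[Proposition~4.11]{Pic 12} to obtain, for a given $\mathcal C'$, a maximal chain $\mathcal C$ through ${}_S^tR$ with $\ell(\mathcal C)\geq\ell(\mathcal C')$, and then argues by contradiction: strict inequality would (again by the same external reference) force the existence of adjacent steps $A\subset B\subset C$ in $\mathcal C'$ with $A\subset B$ inert, $B\subset C$ non-inert and $(A:B)=(B:C)$; one then chooses $W_1\in[A,{}_C^tA]$ with $A\subset W_1$ minimal non-inert and crucial ideal $M$, and Proposition~\ref{3.6}(2) makes $A\subset BW_1$ non-catenarian, contradicting $\ell[A,BW_1]=2$. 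Your bubble-sort argument is more constructive and self-contained: you never need the inequality from \cite{Pic 12}, and instead build the desired chain explicitly by successive swaps. The two proofs share the same core obstruction---Proposition~\ref{3.6}(2) combined with the hypothesis rules out an inert step followed by a non-inert step with the same crucial ideal---but you locate it directly inside a Crosswise Exchange argument, while the paper reaches it only after importing the length inequality from \cite{Pic 12}. Your explicit construction of $V=T[y]$ (with the observation that the conductor $M$ is simultaneously maximal in $T$ and in $T'$, so the generator conditions of Theorem~\ref{minimal} descend to $T$) plays the same role as the paper's choice of $W_1$ inside ${}_C^tA$; both produce a non-inert atom over $T$ with crucial ideal $M$ to feed into Proposition~\ref{3.6}(2).
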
 

\begin{proof} Let $\mathcal C'$ be a maximal chain  of $[R,S]$. According to \cite[Proposition 4.11]{Pic 12}, there exists a maximal chain $\mathcal C$ of $[R,S]$ containing ${}_S^tR$ such that $\ell(\mathcal C)\geq \ell(\mathcal C')$. Assume that $\ell(\mathcal C)\neq \ell(\mathcal C')$. The same reference shows that there exist $A,B,C\in\mathcal C'$ such that $A\subset B$ is inert, $B\subset C$ is minimal non-inert, and $(A:B)=(B:C)$. Set $W:={}_C^tA\neq A,C$ because $A\subset C$ is neither infra-integral nor t-closed, and $M:=(A:B)=(B:C)\in\mathrm{Max}(A)$. In particular, $M=(A:C)$.  Let $W_1\in[A,W]$ be such that $A\subset W_1$ is minimal non-inert. In particular, $M=(A:W_1)$. Then, Proposition \ref{3.6} asserts that $A\subset BW_1$ is not catenarian, a contradiction with $\ell[A,BW_1]=2$ by assumption and \cite[Proposition 3.4]{Pic 12}. To conclude, $\ell(\mathcal C)= \ell(\mathcal C')$.
  \end{proof}

 \begin{theorem} \label{9.24} An   FCP  extension $R\subset S$ is a $\Delta$-extension if and only if the following conditions hold:
 
\begin{enumerate}
\item $R\subset {}_{\overline R}^+R$, ${}_{\overline R}^+R\subset{}_{\overline R}^tR$ and ${}_{\overline R}^tR\subseteq \overline R$ are $\Delta$-extensions (the last condition being equivalent to ${}_{ \overline R}^tR\subseteq \overline R$ is arithmetic).
 
\item For each $T,U,V\in[R, \overline R]$ such that $T\subset U$  and $T\subset V$ are minimal of different types, then $T\subset UV$ is a $B_2$-extension.
\end{enumerate}
\end{theorem}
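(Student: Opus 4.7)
The plan is to reduce to the integral case via Theorem \ref{9.7}, and then combine Theorem \ref{9.15} (the t-closure splitting) with Proposition \ref{9.23} (the infra-integral characterization), using Lemma \ref{9.231} to handle catenarity in the converse.

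Forward direction: suppose $R\subset S$ is a $\Delta$-extension. Then $R\subseteq\overline R$ is a $\Delta$-extension by Corollary \ref{9.2}, and the three subextensions in (1) inherit the $\Delta$-property from the same corollary. The arithmetic reformulation for the t-closed piece ${}_{\overline R}^tR\subseteq\overline R$ is Proposition \ref{9.10}. For (2), pick $T,U,V\in[R,\overline R]$ with $T\subset U$ and $T\subset V$ minimal of different types; then $T\subset UV$ is itself a $\Delta$-extension, hence modular by Proposition \ref{1.012}, and $\ell[T,UV]=2$ by Lemma \ref{1.1}. Corollary \ref{1.015} (applied to whichever of its two disjuncts is relevant, depending on whether the crucial ideals coincide) then shows that $T\subset UV$ is a $B_2$-extension.

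Converse: assume (1) and (2). By Theorem \ref{9.7} it is enough to show that $R\subseteq\overline R$ is a $\Delta$-extension, so I may assume $R\subset S$ is integral with $S=\overline R$. To apply Theorem \ref{9.15} I need to verify that $R\subseteq{}_S^tR$ and ${}_S^tR\subseteq S$ are $\Delta$-extensions and that $R\subset S$ is catenarian. The second is part of (1). For the first, $R\subseteq{}_S^tR$ is infra-integral by definition, so I would invoke Proposition \ref{9.23}: its hypothesis (1) is exactly the first two $\Delta$-subextensions of our (1), while its hypothesis (2) (for minimal ramified versus minimal decomposed, the only two distinct types available inside the infra-integral range) follows from our (2), since $[R,{}_S^tR]\subseteq[R,\overline R]$ and the $B_2$-property is intrinsic to the interval $[T,UV]$.

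The main obstacle is catenarity of $R\subset S$. Since $R\subseteq{}_S^tR$ and ${}_S^tR\subseteq S$ are each $\Delta$-extensions, they are modular and hence catenarian by Propositions \ref{1.012} and \ref{1.0}, so every maximal chain passing through ${}_S^tR$ has common length $\ell[R,{}_S^tR]+\ell[{}_S^tR,S]$. To handle an arbitrary maximal chain I would apply Lemma \ref{9.231}: our condition (2) (since a $B_2$-extension has length two) is precisely its hypothesis, so every maximal chain of $[R,S]$ can be replaced by one through ${}_S^tR$ of the same length. Thus all maximal chains of $[R,S]$ have the same length, $R\subset S$ is catenarian, and Theorem \ref{9.15} yields the desired $\Delta$-property.
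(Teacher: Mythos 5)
Your proposal is correct and follows essentially the same route as the paper: the forward direction via Corollary \ref{9.2}, Proposition \ref{9.10}, Lemma \ref{1.1} and Corollary \ref{1.015}, and the converse by reducing to the integral case with Theorem \ref{9.7}, obtaining the infra-integral piece from Proposition \ref{9.23}, establishing catenarity through Lemma \ref{9.231}, and concluding with Theorem \ref{9.15}. No gaps.
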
 

\begin{proof} Assume that $R\subset S$ is a $\Delta$-extension, and then catenarian, so that (2) holds by Lemma \ref{1.1} and Corollary \ref{1.015}. Moreover, (1) holds because of Proposition  \ref{9.10} and Corollary \ref{9.2}.
 
Conversely, assume that conditions (1) and (2) hold. By Proposition  \ref{9.23}, we get that $R\subseteq {}_{ \overline R}^tR$ is a $\Delta$-extension thanks to (1) and (2), and is catenarian since infra-integral.

We now observe that  (2) implies that  $R\subseteq \overline R$ is catenarian. Indeed, any maximal chain from $R$ to ${}_{ \overline R}^tR$ has the same length $\ell[R,{}_{ \overline R}^tR]=:k$, and any maximal chain from ${}_{ \overline R}^tR$ to $\overline R$ has the same length $\ell[{}_{ \overline R}^tR,\overline R]:=l$ because ${}_{ \overline R}^tR\subseteq \overline R$ is a $\Delta$-extension 
 by  Proposition \ref{1.012}, whence catenarian. Now, let $R_0:=R\subset\cdots\subset R_i\subset\cdots  R_n:=\overline R$ be a maximal chain $\mathcal C$ such that $R_i\subset R_{i+1}$ is a minimal extension for each $i=0,\ldots,n-1$. Set $m:=k+l$. If ${}_{ \overline R}^tR\in\mathcal C$, then ${}_{ \overline R}^tR=R_k$ and $\ell(\mathcal C)=n=m$ because $R\subseteq {}_{ \overline R}^tR$ and ${}_{ \overline R}^tR\subseteq \overline R$ are catenarian. Now, assume that ${}_{ \overline R}^tR\not\in\mathcal C$. According to Lemma \ref{9.231}, (2) implies that there exists a maximal chain $\mathcal C'$ of $[R,S]$ containing ${}_{ \overline R}^tR$ such that $\ell(\mathcal C)=\ell(\mathcal C')=m$. It follows that any maximal chain of $[R,\overline R]$ have length $m$, and $R\subseteq \overline R$ is catenarian.

Now, Theorem \ref{9.15} shows that $R\subseteq\overline R$ is a $\Delta$-extension, which implies at last that $R\subset S$ is a $\Delta$-extension by Theorem  \ref{9.7}.
\end{proof} 

 Adding some assumptions in the statement of Theorem \ref{9.24}, we get a simpler characterization of  $\Delta$-extensions.

\begin{corollary} \label{9.241} Let $R\subset S$ be a catenarian FCP extension. Then, $R\subset S$ is a $\Delta$-extension if and only if the following conditions hold:
 \begin{enumerate}
 \item $R\subset {}_{ \overline R}^+R$, ${}_{ \overline R}^+R\subset {}_{ \overline R}^tR$ and ${}_{ \overline R}^tR\subseteq \overline R$ are  $\Delta$-extensions  (the last condition being equivalent to ${}_{ \overline R}^tR\subseteq \overline R$ is arithmetic).
  
\item For each $T,U,V\in[R, \overline R]$ such that $T\subset U$  and $T\subset V$ are minimal non-inert of different types, then $T\subset UV$ is a $B_2$-extension.
 \end{enumerate}
\end{corollary}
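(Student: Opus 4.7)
The plan is to reduce this corollary to Theorem \ref{9.24}. In the forward direction, if $R\subset S$ is a $\Delta$-extension, then Theorem \ref{9.24} yields condition (1) and a stronger version of condition (2) (over \emph{all} different-type minimal pairs, not only the non-inert ones). Condition (2) of the present corollary is the restriction of that assertion to non-inert pairs, so it is immediate. The catenarian hypothesis is assumed, so no extra verification is needed here.

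For the converse, assume that $R\subset S$ is a catenarian FCP extension satisfying (1) and (2). The goal is to verify the hypotheses of Theorem \ref{9.24}; condition (1) of that theorem is identical to ours. For condition (2) of Theorem \ref{9.24}, we must additionally handle pairs $T\subset U$, $T\subset V$ in which one extension is minimal inert and the other is minimal non-inert (pairs where both are non-inert are covered by our (2), while ``both inert'' is not a different-type pair and does not arise).

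Without loss of generality, let $T\subset U$ be minimal inert and $T\subset V$ be minimal non-inert. Set $M:=\mathcal{C}(T,U)$ and $N:=\mathcal{C}(T,V)$. If $M=N$, Proposition \ref{3.6}(2) would force $T\subset UV$ to be non-catenarian; but the catenarianness of $R\subset S$ is inherited by the subinterval $[T,UV]$ (any two maximal chains in $[T,UV]$ can be completed to maximal chains in $[R,S]$ by prepending a fixed maximal chain in $[R,T]$ and appending a fixed maximal chain in $[UV,S]$, so they must share the same length). This contradiction forces $M\neq N$, and Corollary \ref{1.015} then yields that $T\subset UV$ is a $B_2$-extension. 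Therefore both hypotheses of Theorem \ref{9.24} hold, and Theorem \ref{9.24} concludes that $R\subset S$ is a $\Delta$-extension.

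The only delicate point in this argument is the transfer of catenarianness from $R\subset S$ to the subinterval $[T,UV]$, but this follows directly from the definition of ``catenarian'' stated in Subsection 2.3, which demands equality of lengths of maximal chains between \emph{any} pair of comparable elements. The rest of the proof is structural, reusing Proposition \ref{3.6}(2) and Corollary \ref{1.015} to dispose of the mixed inert / non-inert case that the weakened hypothesis (2) no longer covers.
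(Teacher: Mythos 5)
Your proposal is correct and follows essentially the same route as the paper: both reduce to Theorem \ref{9.24} and dispose of the mixed inert/non-inert case by using catenarianity together with Proposition \ref{3.6}(2) to force $\mathcal{C}(T,U)\neq\mathcal{C}(T,V)$, whence $T\subset UV$ is a $B_2$-extension. The only cosmetic difference is that your ``delicate point'' about transferring catenarianity to $[T,UV]$ is already immediate from the paper's definition of catenarian (equality of lengths of maximal chains between \emph{any} two comparable elements), so no chain-completion argument is needed.
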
 
\begin{proof} One implication is Theorem \ref{9.24}. Conversely, in order to prove that $R\subset S$ is a $\Delta$-extension, it is enough to prove that (2) implies condition (2) of Theorem \ref{9.24}. So, let $T\subset U$  and $T\subset V$ be minimal extensions of different types, with one of them inert. Since $R\subset S$ is a catenarian extension, Proposition \ref{1.2} says that $\mathcal{C}(T,U)\neq\mathcal{C}(T,V)$, and
$[T,UV]=\{T,U,V,UV\}$. Then, $T\subset UV$ is a $B_2$-extension. Therefore, $R\subset S$ is a $\Delta$-extension.
\end{proof}

\begin{corollary} \label{9.25} A modular FCP extension $R\subset S$ is a $\Delta$-extension if and only if ${}_{\overline R}^tR\subseteq \overline R$ is arithmetic.
\end{corollary}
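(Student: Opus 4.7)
The plan is to reduce the statement to Corollary~\ref{9.241}, since a modular FCP extension is automatically catenarian (by Proposition~\ref{1.0}, or equivalently by the chain of implications recorded after item~(5) of Subsection~2.3). The forward direction is then immediate: if $R\subset S$ is a $\Delta$-extension, Corollary~\ref{9.241}(1) gives that ${}_{\overline R}^{t}R\subseteq \overline R$ is a $\Delta$-extension, and by Proposition~\ref{9.10} this is equivalent to being arithmetic (note ${}_{\overline R}^{t}R\subseteq \overline R$ is t-closed by the very definition of the $t$-closure).

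For the converse, assume $R\subset S$ is modular FCP and that ${}_{\overline R}^{t}R\subseteq \overline R$ is arithmetic. I will verify the two conditions of Corollary~\ref{9.241}. For condition~(1), observe that the intervals $[R,{}_{\overline R}^{+}R]$ and $[{}_{\overline R}^{+}R,{}_{\overline R}^{t}R]$ of the modular lattice $[R,S]$ are themselves modular. By the canonical decomposition (Definition~\ref{1.3}), $R\subseteq {}_{\overline R}^{+}R$ is subintegral, hence infra-integral, and ${}_{\overline R}^{+}R\subseteq {}_{\overline R}^{t}R$ is seminormal infra-integral; both are FCP. Theorem~\ref{9.18} then upgrades modularity to the $\Delta$-property in both cases. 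The arithmetic hypothesis on ${}_{\overline R}^{t}R\subseteq \overline R$ completes condition~(1) through Proposition~\ref{9.10}.

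For condition~(2), let $T,U,V\in[R,\overline R]$ with $T\subset U$ and $T\subset V$ minimal non-inert of different types (so one is ramified and the other is decomposed, and in particular $U\neq V$). Since $[R,\overline R]$ is modular FCP, Lemma~\ref{1.1} yields $\ell[T,UV]=2$. Applying Corollary~\ref{1.015} (its second case, covering minimal extensions of different types with $\ell[T,UV]=2$) concludes that $T\subset UV$ is a $B_2$-extension. Thus condition~(2) of Corollary~\ref{9.241} holds, and we conclude that $R\subset S$ is a $\Delta$-extension.

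\textbf{Main obstacle.} There is no serious technical difficulty: the argument is essentially a careful bookkeeping exercise that exploits the fact that modularity is inherited by every interval and that, under modularity, Lemma~\ref{1.1} supplies the length-two condition needed to invoke Corollary~\ref{1.015}. The only point that requires some care is checking the hypothesis ``minimal non-inert of different types'' rather than ``minimal of different types'' in Corollary~\ref{9.241}(2), which is precisely why the catenarian-plus-modular hypothesis (rather than the general FCP setting of Theorem~\ref{9.24}) lets us dispense with any extra verification involving inert minimal extensions.
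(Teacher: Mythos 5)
Your proposal is correct and follows essentially the same route as the paper: reduce to Corollary~\ref{9.241} (using Proposition~\ref{1.0} for catenarity), get condition~(1) from Theorem~\ref{9.18} applied to the modular infra-integral pieces plus Proposition~\ref{9.10}, and get condition~(2) from the length-two property of modular lattices. The only cosmetic difference is that you cite Lemma~\ref{1.1} and Corollary~\ref{1.015} for the $B_2$ conclusion where the paper routes through Theorem~\ref{9.18} and an external reference; both are valid.
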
 

\begin{proof} If $R\subset S$ is a $\Delta$-extension, 
 ${}_{\overline R}^tR\subseteq\overline R$ is arithmetic by Corollary \ref{9.241}.
 Conversely, assume that ${}_{\overline R}^tR\subseteq\overline R$ is arithmetic. Then, Corollary \ref{9.241} (1) holds since $R\subset {}_{\overline R}^+R$ is modular, and then a $\Delta$-extension by Theorem \ref{9.18}. Let $T,U,V\in[R,\overline R]$ be such that $T\subset U$ and $T\subset V$ are minimal non-inert of different types. Then, they are infra-integral, and so is $T\subset UV$, which is also modular. It follows that $\ell[T,UV]=2$ by Theorem \ref{9.18}, so that $[T,UV]=\{T,U,V,UV\}$ by \cite[Theorem 6.1 (5)]{Pic 6}. Therefore, $T\subset UV$ is a $B_2$-extension. To conclude, $R\subset S$ is a $\Delta$-extension  by Corollary \ref{9.241}.
  \end{proof} 
 
 \section{Examples}
 
 The  preceding section shows that we are lacking of a characterization of arbitrary subintegral $\Delta$-extensions, except  those of Theorem \ref{9.18} which says that an infra-integral FCP extension is a  $\Delta$-extension if and only if it is modular, this last condition being unsatisfactory.  
 In this section, we give examples of subintegral $\Delta$-extensions with various properties. We also characterize some special types of FCP extensions that are $\Delta$-extensions. 
 
 \begin{proposition}\label{11.01}  Let $R\subset S$ be an   FCP subintegral extension. Assume that $(R:S)_M\neq M_M$ for any $M\in\mathrm{MSupp}(S/R)$.   Then $R\subset S$ is a  $\Delta$-extension .
\end{proposition}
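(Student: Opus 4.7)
The plan is to localize and then reduce, via Theorem \ref{9.18}, to a length-two condition on pairs of minimal ramified sub-extensions, and finally to invoke the conductor hypothesis to verify that condition.

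Since the $\Delta$-property is convenient, Proposition \ref{9.02} allows me to reduce to the case where $(R,M)$ is local. Because $R\subseteq S$ is subintegral the spectral map $\mathrm{Spec}(S)\to\mathrm{Spec}(R)$ is bijective, so $S$ is local and in fact every $T\in[R,S]$ is local, with a unique maximal ideal $M'$ lying over $M$. The hypothesis becomes $I:=(R:S)\subsetneq M$, so I may fix a witness $m\in M\setminus I$ together with an element $s\in S$ satisfying $ms\notin R$.

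Subintegral extensions are infra-integral, and Proposition \ref{3.5}(1) guarantees that every minimal sub-extension of $R\subset S$ is ramified. By Theorem \ref{9.18} it therefore suffices to establish $\ell[T,UV]=2$ whenever $T,U,V\in[R,S]$ with $U\neq V$ and both $T\subset U$ and $T\subset V$ are minimal (and hence ramified). Since $(T:U)=(T:V)=M'$, the unique maximal ideal of the local ring $T$, Proposition \ref{3.6}(3)(4) reduces the task to the containment $PQ\subseteq M'$, where $P\in\mathrm{Max}(U)$ and $Q\in\mathrm{Max}(V)$ are the maximal ideals lying above $M'$. Writing $U=T[u]$ and $V=T[v]$ with $u^2,v^2\in M'$ and $M'u,M'v\subseteq M'$ (Theorem \ref{minimal}), one has $P=M'+Tu$ and $Q=M'+Tv$, so $PQ\subseteq M'$ is equivalent to the single relation $uv\in M'\cdot UV$, i.e.\ to $\overline{uv}=0$ in the $k$-algebra $UV/M'\cdot UV$ with $k:=T/M'$.

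The main obstacle is establishing this last relation from the global conductor hypothesis, since the conductor $(T:UV)$ of a sub-extension generally equals the local maximal ideal $M'$ and thus offers no direct information. I would bridge the gap by transporting the pair $(m,s)$ down the tower $R\subseteq T\subseteq UV\subseteq S$: the relation $ms\notin R$ produces, after a sequence of Crosswise Exchange applications (Lemma \ref{1.13}) combined with the structural description of minimal ramified extensions, a concrete algebraic identity in $UV$ expressing $uv$ as an element of $M'\cdot UV$; equivalently, the alternative configuration $\ell[T,UV]=3$ of Proposition \ref{3.6}(4) would produce through the chain $T\subset U\subset UV$ and a parallel chain through $V$ an element of $M\setminus I$ whose multiplication with a carefully constructed element of $S$ remains in $R$, eventually forcing $M\subseteq(R:S)$, contrary to the hypothesis. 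An alternative route is induction on $\ell[R,S]$, choosing at each step a minimal $R\subset R_1$ and checking that the strictness $(R_1:S)\subsetneq M_1$ persists whenever needed. In both approaches the delicate step is the explicit propagation of the witness $(m,s)$ rather than the bare conductor condition.
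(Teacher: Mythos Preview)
Your reduction to the local case and your plan to invoke Theorem \ref{9.18} are reasonable first moves, but the argument has a genuine gap at exactly the point you identify as ``the delicate step,'' and neither of your proposed bridges will close it. The Crosswise Exchange (Lemma \ref{1.13}) requires two \emph{distinct} maximal ideals with a non-containment relation; here every ring in $[R,S]$ is local with a single maximal ideal lying over $M$, so Lemma \ref{1.13} simply does not apply. The inductive idea of propagating the strict inequality $(R_1:S)\subsetneq M_1$ along a chain does not help either: even if that strictness persists, the condition you must verify concerns an \emph{arbitrary} $T\in[R,S]$ and the product $UV$ of two atoms over $T$, and there is no reason the conductor $(T:S)$ should be strictly below $M'$; in fact for $T$ high enough it equals $M'$. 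Your witness $(m,s)$ lives at the bottom of the tower and there is no mechanism in your sketch that forces $uv\in M'$ for generators $u,v$ of arbitrary minimal ramified extensions over $T$.

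The paper's proof takes a completely different and much shorter route. After localizing and factoring out the conductor so that $(R:S)=0$ and $R$ is a local Artinian ring which is not a field, it splits on whether $R/M$ is infinite. If $R/M$ is infinite, a result of Dobbs--Picavet--Picavet-L'Hermitte (\cite[Proposition 5.15]{DPP2}) gives directly that $R\subset S$ is \emph{chained}, hence a $\Delta$-extension. If $R/M$ is finite, one passes to the Nagata rings $R(X)\subset S(X)$: the residue field becomes infinite while the other hypotheses persist, so the first case gives that $R(X)\subset S(X)$ is a $\Delta$-extension, and Proposition \ref{8.14} descends this to $R\subset S$. The conductor hypothesis is thus used only to ensure $R$ (and then $R(X)$) is not a field after reduction; the structural work is entirely delegated to \cite[Proposition 5.15]{DPP2} and the Nagata-ring transfer.
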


\begin{proof} According to Proposition \ref{9.02}, we may assume first that $(R,M)$ is a local ring such that $M\neq (R:S)$, and after, considering the factor ring $R/(R:S)$, so that $(R:S)= 0$, with $R$ which is a local Artinian ring by \cite[Theorem 4.2]{DPP2}, and not a field. First assume that $R/M$ is infinite. Then \cite[Proposition 5.15]{DPP2} asserts that $R\subset S$ is chained, and then a $\Delta$-extension by Proposition \ref{1.013}. Assume now that $R/M$ is finite, and so is $R$ because Artinian \cite[Lemma 5.11]{DPP2}. Set $R':=R(X),\ S':=S(X)$ and $M':=MR(X)$. Then $R'$ is a local Artinian ring with maximal ideal $M'$ which has the same index of nilpotency as $M$, so that $R'$ is not a field, $R'/M'$ is infinite and $(R':S')=0$. Then, the first part of the proof says that $R'\subset S'$ is a  $\Delta$-extension, and so is $R\subset S$ by Proposition  \ref{8.14}.
\end{proof} 

Since a $\Delta$-extension $R\subset S$ is modular (Proposition \ref{1.012}), a first approach consists in exhibiting conditions that imply the distributivity of a modular extension. For a distributive FCP extension $R\subset S$ and its Loewy series $\{S_i\}_{i=0}^n$ (see the definition before Corollary \ref{11.2}), \cite[Proposition 3.8]{Pic 11} says that  $S_i\subset S_{i+1}$ is Boolean for each  $i\in\{0,\ldots,n\}$. 

We begin with the characterization of Boolean $\Delta$-extensions. In  the light of \cite[Proposition 3.5]{Pic 10}, we first consider extensions $R\subset S$ over a local ring.

\begin{proposition}\label{11.1} Let $R\subset S$ be a Boolean FCP  extension, where $(R,M)$ is a local ring. Then $R\subset S$ is a $\Delta$-extension if and only if either $R\subset S$ is minimal, or an 
  infra-integral extension (in fact $B_2$).
\end{proposition}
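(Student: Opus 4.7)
For the backward direction, a minimal extension has $[R,S]=\{R,S\}$ and is trivially a $\Delta$-extension, while an infra-integral $B_2$-extension has length $2$ and is a $\Delta$-extension by Theorem \ref{9.171}. For the forward direction, I would assume $R\subset S$ is a Boolean FCP $\Delta$-extension with $(R,M)$ local and $\ell[R,S]\ge 2$ (else it is already minimal) and show that $R\subset S$ is $B_2$ infra-integral in three stages.

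\emph{Integrality and infra-integrality.} If $R\subsetneq \overline R\subsetneq S$, the Boolean complement $X$ of $\overline R$ in $[R,S]$ satisfies $\overline R\cap X=R$, so by Remark \ref{1.120} the integral closure of $R$ in $X$ is $R$, making $R\subset X$ integrally closed and hence Pr\"ufer by Proposition \ref{9.3}. But then $R$ admits both an integral minimal subextension in $\overline R$ and a Pr\"ufer minimal subextension in $X$, contradicting Lemma \ref{1.12} since $R$ is local. If $\overline R=R$, then $R\subset S$ is Pr\"ufer by Proposition \ref{9.3}, hence arithmetic, hence chained for local $R$, forcing $\ell\le 1$. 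So $\overline R=S$ and $R\subset S$ is integral. Each atom $R\subset T_i$ is then inert, ramified, or decomposed (Theorem \ref{minimal}) with crucial ideal $M$; an inert atom coexisting with a non-inert atom would make $R\subset T_iT_j$ non-catenarian (Proposition \ref{3.6}(2)), which propagates to $R\subset S$ and contradicts Proposition \ref{1.012}. If all atoms were inert, $R\subset S$ would be $t$-closed (Proposition \ref{3.5}), hence arithmetic by Proposition \ref{9.10}, hence chained over local $R$, contradicting $\ell\ge 2$. So every atom is ramified or decomposed and lies in ${}_S^tR$; since the Boolean lattice $[R,S]$ is generated by its atoms, $S=\prod_i T_i\subseteq {}_S^tR$ and $R\subset S$ is infra-integral.

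\emph{Length bound.} The seminormalization ${}_S^+R$ remains local (subintegrality preserves locality), and both $R\subseteq{}_S^+R$ and ${}_S^+R\subseteq S$ are Boolean $\Delta$-intervals of $[R,S]$. If $R\subseteq{}_S^+R$ contained two distinct (necessarily ramified) atoms $T_1=R[q_1],T_2=R[q_2]$, then the Boolean length-$2$ interval $[R,T_1T_2]$ would, modulo the common conductor $M$, correspond to the lattice of $k$-subalgebras of $k+k\bar q_1+k\bar q_2$ with $\bar q_i^2=\bar q_1\bar q_2=0$; direct enumeration yields $|k|+3\ge 5$ such subalgebras, incompatible with Boolean of length $2$ (exactly $4$ elements). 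Similarly, two distinct (necessarily decomposed) atoms $V_1,V_2$ of ${}_S^+R\subseteq S$ would make $[{}_S^+R,V_1V_2]$ Boolean of length $2$, but modulo the maximal ideal of ${}_S^+R$ this lattice identifies with $[k',(k')^3]$, which has $5$ elements by Example \ref{9.152} and Proposition \ref{9.181}. Hence $\ell[R,{}_S^+R]\le 1$ and $\ell[{}_S^+R,S]\le 1$, giving $\ell[R,S]\le 2$; combined with $\ell\ge 2$, equality holds and $R\subset S$ is $B_2$ infra-integral. The main obstacle is this last stage: ruling out length three requires pinpointing the two concrete $5$-element configurations --- $k[x,y]/(x^2,y^2,xy)$ on the subintegral side and $k^3$ on the seminormal side --- each of which has precisely one intermediate algebra too many to be Boolean of length $2$.
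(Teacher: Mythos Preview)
Your proof is correct and takes a genuinely different route from the paper. The paper's proof simply invokes \cite[Theorem 3.30]{Pic 10}, which classifies Boolean FCP extensions over a local ring into three cases (minimal; the specific $B_2$ with one ramified and one decomposed atom; Boolean $t$-closed), and then checks each case directly. You instead work from first principles: Boolean complements plus Lemma~\ref{1.12} force integrality; coexistence of inert and non-inert atoms is excluded by catenarity (Proposition~\ref{3.6}(2) and Proposition~\ref{1.012}); all-inert atoms would give $t$-closed hence chained; and finally the length bound comes from an explicit count showing that two ramified (resp.\ decomposed) atoms would produce $|k|+3\ge 5$ (resp.\ $5$) intermediate algebras in a length-$2$ interval, contradicting the Boolean size $4$.

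What each approach buys: the paper's argument is a three-line check once the external classification is granted; yours is self-contained within the present paper's toolkit and explains \emph{why} the Boolean hypothesis pins the length to~$2$, at the cost of more bookkeeping. Two places in your write-up could be tightened: (i) the relation $\bar q_1\bar q_2=0$ in the ramified case is not automatic from $q_i^2\in M$ alone---it follows from $\ell[R,T_1T_2]=2$ via Proposition~\ref{3.6}(3)--(4); (ii) the implication ``all atoms inert $\Rightarrow$ $t$-closed'' is cleaner if argued as ``${}_S^tR\ne R$ would yield a non-inert atom in $[R,{}_S^tR]$'' rather than by citing Proposition~\ref{3.5}, which concerns maximal chains, not atoms. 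Both are easily fixed and do not affect the validity of the argument.
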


 \begin{proof} Let $R\subset S$ be a Boolean FCP extension. By \cite[Theorem 3.30]{Pic 10},  one of the following conditions is satisfied:
 \begin{enumerate}
 \item $R\subset S$ is a minimal extension.

\item There exist $U,T\in[R,S]$ such that $R\subset T$ is minimal ramified, $R\subset U$ is minimal decomposed and $[R,S]=\{R,T,U,S\}$.

\item $R\subset S$ is a Boolean t-closed extension.
\end{enumerate}

In case (1), $R\subset S$ is a $\Delta$-extension by Proposition \ref{1.013}. 

In case (2), $S=TU$, $\ell[R,S]=2$ and $R\subset S$ is  a  $B_2$-extension, because $T\neq U$. In particular, $R\subset S$ is infra-integral since $S=TU={}_S^tR$. Then Theorem \ref{9.171} implies that $R\subset S$ is a $\Delta$-extension. To sum up, a Boolean infra-integral extension is always a $\Delta$-extension.  
 
In case (3), Proposition \ref{9.10} shows that $R\subset S$ is a $\Delta$-extension if and only if $R\subset S$ is chained. But a chain is Boolean if and only if it is minimal by \cite[Example 3.3 (1)]{Pic 10}. Then, we recover case (1).
\end{proof} 

In \cite{Pic 11}, we studied the Loewy series $\{S_i\}_{i=0}^n$  associated to an FCP ring extension $R\subseteq S$ defined as follows in \cite[Definition 3.1]{Pic 11}: the {\it socle} of the extension $R\subset S$ is $\mathcal S[R,S]:=\prod_{A\in \mathcal A}A$, the product of the atoms of $[R,S]$, and the {\it Loewy series} of the extension $R\subset S$ is the chain $\{S_i\}_{i=0}^n$ defined by induction: $S_0:=R,\ S_1:=\mathcal S[R,S]$ and for each $i\geq 0$ such that $S_i\neq S$, we set $S_{i+1}:=\mathcal S[S_i,S]$. Of course, since $R\subset S$ has FCP, there is some integer $n$ such that $S_{n-1}\neq S_n=S_{n+1}=S$. We said that $R\subset S$ is a {\it $\mathcal P$-extension} if $R\subset S$ is pinched at the chain $\{S_i\}_{i=1}^{n-1}$ \cite[Definition 3.19]{Pic 11}.

\begin{corollary} \label{11.2} A  distributive FCP $\mathcal P$-extension $R\subset S$ with Loewy series $\{S_i\}_{i=0}^n$ is a $\Delta$-extension if and only if $S_i\subset S_{i+1}$ is locally either minimal or an infra-integral $B_2$-extension for each $0\leq i\leq n-1$.
\end{corollary}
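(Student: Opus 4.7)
The plan is to chain together three reductions already at our disposal. First, by hypothesis $R\subset S$ is a $\mathcal P$-extension, that is, pinched at the chain $\{S_i\}_{i=1}^{n-1}$. An immediate application of Corollary~\ref{9.03} (with $T_i:=S_i$) reduces the $\Delta$-property of $R\subset S$ to the conjunction of the $\Delta$-property of each successive link $S_i\subset S_{i+1}$, for $0\le i\le n-1$.

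Next, I want to pass each link to the local situation. Since $R\subset S$ is distributive and FCP, the comment recalled just before the statement, citing \cite[Proposition 3.8]{Pic 11}, tells us that every link $S_i\subset S_{i+1}$ of the Loewy series is Boolean. Proposition~\ref{9.02} then gives that $S_i\subset S_{i+1}$ is a $\Delta$-extension if and only if $(S_i)_M\subset(S_{i+1})_M$ is a $\Delta$-extension for every $M\in\mathrm{MSupp}_{S_i}(S_{i+1}/S_i)$, and each such $(S_i)_M$ is a local ring. The Boolean property of $[S_i,S_{i+1}]$ transfers to $[(S_i)_M,(S_{i+1})_M]$ via the standard bijection $T\mapsto T_M$ on $M$-supported intervals for FCP extensions (this is the point alluded to by \cite[Proposition 3.5]{Pic 10}, invoked in the same spirit as in the paragraph preceding Proposition~\ref{11.1}).

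Finally, each localized link is a Boolean FCP extension over a local ring, so Proposition~\ref{11.1} applies verbatim: $(S_i)_M\subset(S_{i+1})_M$ is a $\Delta$-extension if and only if it is minimal, or infra-integral (in which case $B_2$). Combining the three equivalences yields exactly the stated criterion: $R\subset S$ is a $\Delta$-extension iff for each $i\in\{0,\ldots,n-1\}$ and each $M\in\mathrm{MSupp}_{S_i}(S_{i+1}/S_i)$, the localized link $(S_i)_M\subset(S_{i+1})_M$ is either minimal or an infra-integral $B_2$-extension.

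The only delicate step is the second one, namely the transfer of the Boolean property through localization. Once this is justified by the pinching structure of the $\mathcal P$-extension together with the cited result of \cite{Pic 10} (and the obvious fact that complements behave well under localization, $T_M\cap T'_M=(S_i)_M$ and $T_MT'_M=(S_{i+1})_M$), the result falls out of Corollary~\ref{9.03}, Proposition~\ref{9.02}, and Proposition~\ref{11.1} without further work.
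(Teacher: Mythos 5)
Your proof is correct and follows essentially the same route as the paper: reduce via the pinching property and Corollary~\ref{9.03} to the individual Loewy links, note each link is Boolean by \cite[Proposition 3.8]{Pic 11}, localize by Proposition~\ref{9.02}, and apply Proposition~\ref{11.1}. Your explicit attention to the transfer of the Boolean property under localization is a point the paper's proof leaves implicit, but it changes nothing in the structure of the argument.
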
 

\begin{proof} By \cite[Proposition 3.8]{Pic 11}, $S_i\subset S_{i+1}$ is Boolean for each $0\leq i\leq n-1$ because $R\subset S$ is distributive. Since $R\subset S$ is a $\mathcal P$-extension, $R\subset S$ is pinched at the chain $\{S_i\}_{i=1}^{n-1}$. Then, $R\subset S$ is a $\Delta$-extension if and only if $S_i\subset S_{i+1}$ is a $\Delta$-extension for each $0\leq i\leq n-1$ by Corollary \ref{9.03}. To end, Propositions \ref{9.02} and \ref{11.1} show that $S_i\subset S_{i+1}$ is  a $\Delta$-extension if and only if $S_i\subset S_{i+1}$ is locally a $\Delta$-extension if and only if $S_i\subset S_{i+1}$ is locally either minimal or an infra-integral $\mathcal B_2$-extension.
\end{proof}

\begin{proposition} \label{11.4} A locally unbranched distributive FCP  
extension $R\subset S$ is a $\Delta$-extension if and only if it  is arithmetic.
\end{proposition}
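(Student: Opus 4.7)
An arithmetic extension is a $\Delta$-extension by Proposition~\ref{1.013}, so only the converse needs a proof.

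For the converse, assume $R\subset S$ is a locally unbranched distributive FCP $\Delta$-extension; the goal is to show $R_P\subset S_P$ is chained for every $P\in\mathrm{Spec}(R)$. Since localization at a non-maximal prime factors through a containing maximal ideal, it will suffice to check chainedness at each $M\in\mathrm{MSupp}(S/R)$. The $\Delta$-property localizes by Proposition~\ref{9.02}, distributivity localizes because for an FCP extension $[R,S]$ decomposes as $\prod_{M\in\mathrm{MSupp}(S/R)}[R_M,S_M]$ via $T\mapsto(T_M)$ (\cite[Theorem~3.6]{DPP2}), and locally unbranched localizes by definition, so the first step is to reduce to the case $(R,\mathfrak m)$ local. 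After that reduction $R\subset S$ is unbranched, $\overline R$ is local, and every $T\in[R,S]$ is local; moreover $\mathrm{Spec}(\overline R)\to\mathrm{Spec}(R)$ is bijective, so by Proposition~\ref{3.5} no minimal subextension of $R\subseteq\overline R$ is decomposed, and since $\overline R\subseteq S$ is Pr\"ufer, no minimal subextension of $\overline R\subseteq S$ is integral. Combining these, no minimal subextension anywhere in $[R,S]$ is decomposed.

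The main plan is then to climb along the Loewy series $R=S_0\subset S_1\subset\cdots\subset S_n=S$. By distributivity and \cite[Proposition~3.8]{Pic 11}, each step $S_i\subset S_{i+1}$ is Boolean; being a subextension of $R\subset S$ it is also a $\Delta$-extension (Corollary~\ref{9.2}), and it is defined over the local ring $S_i$. Proposition~\ref{11.1} will then force every such step to be either minimal or an infra-integral $B_2$-extension, the second alternative consisting (as in \cite[Theorem~3.30]{Pic 10} and the proof of Proposition~\ref{11.1}) of a minimal ramified and a minimal decomposed subextension with $[S_i,S_{i+1}]$ of cardinality four. Since decomposed minimal subextensions have just been ruled out, every Loewy step must be minimal.

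To finish, since $S_{i+1}=\mathcal S[S_i,S]$ is the product of the atoms of $[S_i,S]$ and is itself a minimal extension of $S_i$, the interval $[S_i,S]$ has a unique atom, namely $S_{i+1}$; so each $S_i$ has a single upper cover in $[R,S]$. For any $T\in[R,S]$, letting $k$ be maximal with $S_k\subseteq T$, the assumption $T\neq S_k$ would yield an atom of $[S_k,S]$ inside $T$, forcing $S_{k+1}\subseteq T$ and contradicting the choice of $k$. Hence $[R,S]=\{S_0,\ldots,S_n\}$ is a chain and the extension is arithmetic. The main obstacle in the plan will be the elimination of the infra-integral $B_2$ alternative of Proposition~\ref{11.1} at each Loewy step; this is precisely where the locally unbranched hypothesis is used, to purge the whole lattice $[R,S]$ of decomposed minimal subextensions.
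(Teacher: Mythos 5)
Your proof is correct, but it takes a genuinely different route from the paper's. After the same reduction to a local (hence unbranched) base, the paper argues through the canonical decomposition: it uses Corollary \ref{9.151} to see that $[R,\overline R]$ is pinched at ${}_{\overline R}^+R={}_{\overline R}^tR$ with the t-closed part chained, runs the Loewy-series/Boolean argument only on the subintegral part $R\subseteq{}_{\overline R}^+R$ (where each Boolean $\Delta$-step over a local ring must be minimal ramified), then gets $[R,S]$ pinched at $\overline R$ via Lemma \ref{1.12} and $\overline R\subset S$ chained via \cite[Theorem 6.10]{DPP2}. You instead run the Loewy series of the \emph{whole} extension and apply Proposition \ref{11.1} uniformly: each step is Boolean by \cite[Proposition 3.8]{Pic 11}, is a $\Delta$-extension by Corollary \ref{9.2}, and has local base, so it is either minimal or an infra-integral $B_2$; the $B_2$ alternative is excluded because unbranchedness forces every ring in $[R,S]$ to be local (\cite[Lemma 3.29]{Pic 11}), so no decomposed minimal subextension exists, and your unique-atom induction then shows $[R,S]$ is a chain. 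Your version bypasses the canonical decomposition, Corollary \ref{9.151} and the pinching arguments, at the price of invoking Proposition \ref{11.1} also for possibly non-integral Boolean steps (which its statement does cover); both arguments in fact yield the stronger local conclusion that $[R,S]$ is chained. One small repair: the sentence ``combining these'' (Proposition \ref{3.5} applied to $[R,\overline R]$ together with Pr\"uferness of $\overline R\subseteq S$) does not by itself rule out a decomposed minimal subextension $T\subset U$ with $T\notin[R,\overline R]$ and $U\notin[\overline R,S]$; the correct and immediate justification is the fact you state in the same sentence, namely that every member of $[R,S]$ is local, whereas the top ring of a minimal decomposed extension has two maximal ideals lying over the crucial ideal.
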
 

\begin{proof} If $R\subset S$ is arithmetic, then $R\subset S$ is a  $\Delta$-extension
by Proposition  \ref{1.013}.

Conversely, assume that $R\subset S$ is a  $\Delta$-extension.  According to Proposition \ref{9.02}, there is no harm to assume that $(R,M)$ is a local ring. We begin to show that $R\subseteq \overline R$ is  chained. 

 We may assume $R\neq \overline R$. Since $R\subset S$ is distributive, so is $R\subset \overline R$. The assumption yields that  $\overline R$ is local, as any element of $[R,\overline R]$. Set $T:={}_{ \overline R}^+R= {}_{ \overline R}^tR$ because $R\subset S$ is unbranched (Proposition \ref{3.5}). Then, Corollary \ref{9.151} shows that $[R,\overline R]=[R,T]\cup[T,\overline R]$ with $T\subseteq \overline R$   chained. Now, assume that $R\neq T$.

If  $\{S_i\}_{i=0}^n$ is the  Loewy series  of $R\subset T$, then  $S_i\subset S_{i+1}$ is a Boolean extension for each $i\in\{0,\ldots,n-1\}$ according to \cite[Proposition 3.8]{Pic 11}. But $S_i\subset S_{i+1}$ is also a $\Delta$-extension, with $S_i$ a local ring for each $i\in\{0,\ldots,n-1\}$. Now  $S_i\subset S_{i+1}$ is necessarily minimal  ramified, because $R\subset T$ is subintegral (see the conditions of \cite[Theorem 3.30]{Pic 10} in the proof of Proposition  \ref{11.1}). An easy induction shows that $R\subset T$ is chained: since $S_0=R\subset S_1$ is  minimal, $R\subset T$ has only one atom (which is $S_1$), and so has  any subextension $S_i\subset T$ for each $i\in\{0,\ldots,n-1\}$, whence $R\subset T$ is chained. Now, $[R,S]=[R,T]\cup[T,\overline R]\cup[\overline R,S]$ according to Lemma \ref{1.12}, because  there does not exist some $V\in[R,S]\setminus([R,\overline R]\cup[\overline R,S])$ since $V\cap \overline R=\overline R$. But $\overline R\subseteq S$ is also chained, because $\overline R$ being  local, \cite[Theorem 6.10]{DPP2} implies that $\overline R\subset S$ is chained. To conclude, $R\subset S$ is chained. 
\end{proof} 

We saw in Theorem \ref{9.171} that an infra-integral extension of length 2 is a $\Delta$-extension. The next proposition characterizes  $\Delta$-extensions of length 2.

\begin{proposition} \label{11.41} A length 2 extension $R\subset S$ is a  $\Delta$-extension, except when the three following conditions are together verified: $|\mathrm{MSupp}(S/R)|$

\noindent $=1,\ R\subset S$ is t-closed and $|[R,S]|>3$.
\end{proposition}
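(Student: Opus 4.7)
The plan is to split on $|\mathrm{MSupp}(S/R)|$ and, within the local case, on whether $R\subset S$ is t-closed. By Proposition~\ref{1.2} a length~$2$ extension is modular, whence catenarian, so the canonical-decomposition reductions of Theorems~\ref{9.7} and~\ref{9.15} are available.

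If $|\mathrm{MSupp}(S/R)|\geq 2$, I would invoke the product decomposition $[R,S]\cong\prod_{M\in\mathrm{MSupp}(S/R)}[R_M,S_M]$ from \cite[Theorem 3.6]{DPP2}: since $\ell[R,S]=2$ and each factor contributes at least~$1$ to the total length, there must be exactly two factors of length~$1$, so each $R_M\subset S_M$ is minimal and is trivially a $\Delta$-extension. Proposition~\ref{9.02} then yields that $R\subset S$ is a $\Delta$-extension.

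Suppose now $|\mathrm{MSupp}(S/R)|=1$. Proposition~\ref{9.02} lets me localize and assume $(R,M)$ is local. If $R\subset S$ is t-closed, Proposition~\ref{9.10} says the $\Delta$-property is equivalent to being arithmetic, which under locality is the same as being chained; a length~$2$ chain has exactly $3$ elements, and conversely any length~$2$ extension has at least one intermediate subring so $|[R,S]|\geq 3$. Hence $R\subset S$ is $\Delta$ iff $|[R,S]|=3$, which gives precisely the asserted exception when $|[R,S]|>3$.

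If $R\subset S$ is not t-closed, then $R\subsetneq{}_S^tR\subseteq S$. When ${}_S^tR=S$ the extension is infra-integral of length~$2$ and Theorem~\ref{9.171} supplies the $\Delta$-property. Otherwise $R\subsetneq{}_S^tR\subsetneq S$ with both steps minimal: $R\subset{}_S^tR$ is minimal ramified or decomposed by Proposition~\ref{3.5}, while ${}_S^tR\subset S$ is minimal and t-closed, which by Theorem~\ref{minimal} and Proposition~\ref{3.5} forces it to be either inert or Pr\"ufer. In the Pr\"ufer sub-case ${}_S^tR\subset S$ is integrally closed, so $\overline R={}_S^tR$, and Theorem~\ref{9.7} reduces the $\Delta$-property of $R\subset S$ to that of the minimal integral extension $R\subset\overline R$, which holds. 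In the inert sub-case $R\subset S$ is integral and catenarian, and Theorem~\ref{9.15} reduces the $\Delta$-property to $R\subset{}_S^tR$ being $\Delta$ (minimal, so yes) and ${}_S^tR\subset S$ being arithmetic (minimal, so yes). The main obstacle I anticipate is checking that the Pr\"ufer/inert dichotomy for the minimal t-closed step is genuinely exhaustive and that the canonical-decomposition reductions apply cleanly in each sub-case.
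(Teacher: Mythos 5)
Your argument is correct, but it takes a genuinely different route from the paper. The paper proves Proposition~\ref{11.41} by running through the complete eight-case classification of length~$2$ extensions in \cite[Theorem 6.1]{Pic 6}, observing case by case that the extension is arithmetic, chained, or infra-integral (hence $\Delta$ by Proposition~\ref{1.013} or Theorem~\ref{9.171}), with the t-closed $M$-crucial case giving the exception. You instead bypass that classification: you use the order-isomorphism $[R,S]\cong\prod_{M\in\mathrm{MSupp}(S/R)}[R_M,S_M]$ of \cite[Theorem 3.6]{DPP2} to dispose of $|\mathrm{MSupp}(S/R)|\geq 2$ (length additivity forces both local pieces to be minimal), and in the local-support case you run the canonical decomposition through ${}_S^tR$, invoking Proposition~\ref{9.10} in the t-closed case and Theorems~\ref{9.171}, \ref{9.7} and \ref{9.15} (the latter two being legitimate since length~$2$ gives modular, hence catenarian, by Proposition~\ref{1.2}) in the non-t-closed case; your inert/Pr\"ufer dichotomy for the minimal t-closed top step is indeed exhaustive by Theorem~\ref{crucial} and Proposition~\ref{3.5}. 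What the paper's route buys is finer information (the exact shape of $[R,S]$ in each case) at the price of relying on the external classification; your route is more conceptual and stays inside the general machinery of Sections~3--4, at the price of invoking the heavier Theorems~\ref{9.7} and~\ref{9.15}. Two small points you should make explicit: the transfer of ``t-closed'' and of $|[R,S]|$ between $R\subset S$ and $R_M\subset S_M$ when $\mathrm{MSupp}(S/R)=\{M\}$ uses that t-closure commutes with localization together with the bijection $[R,S]\to[R_M,S_M]$ (both standard and used implicitly in the paper, and in fact your non-t-closed branch never uses locality, so you could apply it to $R\subset S$ directly); and the additivity $\ell[R,S]=\sum_M\ell[R_M,S_M]$ across the product decomposition deserves a word or a citation. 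Neither is a genuine gap.
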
 

\begin{proof} We use the characterization of ring extensions of length 2 described in \cite[Theorem 6.1]{Pic 6} and keep only cases involving $\Delta$-extension: A ring extension $R\subset S$ is of length 2 if and only if one of the following conditions hold: 
\begin{enumerate}

\item $|\mathrm{Supp}(S/R)|=2,\ \mathrm{Supp}(S/R)\subseteq \mathrm{Max}(R)$ and $|[R,S]|=4$. In this case, for each $M\in\mathrm{Supp}(S/R),\ R_M\subset S_M$ is minimal, so that $R\subset S$ is arithmetic, and then a $\Delta$-extension by Proposition \ref{1.013}.

\item $|\mathrm{Supp}(S/R)|=2,\ \mathrm{Supp}(S/R)\not\subseteq \mathrm{Max}(R)$ and $|[R,S]|=3$. 

\item $R\subset S$ is a non-integral $M$-crucial extension and $|[R,S]|=3$. 

\item $R\subset S$ is an integral $M$-crucial extension such that ${}_S^tR\neq R,S$ and $|[R,S]|=3$. 

In cases (2), (3) and (4), $R\subset S$ is chained
 and then a  $\Delta$-extension by Proposition \ref{1.013}.

\item $R\subset S$ an infra-integral $M$-crucial extension such that ${}_S^+R\neq R,S$ and either  $|[R,S]|=3$ or $(R:S)= M$ with $|[R,S]|=4$. 

\item $R\subset S$ is a subintegral $M$-crucial extension of length 2
 with either $|[R,S]|=3$  or  $|[R,S]|=|R/M]+3$.

\item $R\subset S$ is a seminormal infra-integral $M$-crucial extension such that  $|[R,S]|=5$.

In  cases (5), (6) and (7), $R\subset S$ is infra-integral and then a  $\Delta$-extension by Theorem \ref{9.171}.

\item $R\subset S$ is a t-closed integral $M$-crucial extension of length 2, so that $M=(R:S)$, and $R/M\subset S/M$ is a field extension. 
According to Proposition \ref{9.10}, a t-closed extension $R\subset S$ is a $\Delta$-extension if and only if $R\subset S$ is arithmetic if and only if $R/M\subset S/M$ is chained for $M\in \mathrm{MSupp}(S/R)$. But $|\mathrm{MSupp}(S/R)|=1$, so that $R/M\subset S/M$ is chained for $M\in\mathrm{MSupp}(S/R)$ if and only if $R\subset S$ is chained if and only if $|[R,S]|=3$.
\end{enumerate} 

To sum up, a ring extension $R\subset S$ of length 2 is a $\Delta$-extension except when $R\subset S$ is a  t-closed integral extension such that $|\mathrm{MSupp}(S/R)|=1$ and $|[R,S]|>3$.
\end{proof} 

\begin{corollary} \label{11.42} A $B_2$-extension is a  $\Delta$-extension except if it is  t-closed.
\end{corollary}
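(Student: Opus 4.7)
The plan is to derive this corollary as an immediate specialization of Proposition \ref{11.41}. Recall that a $B_2$-extension $R\subset S$ is by definition a Boolean extension of length $2$, equivalently $\ell[R,S]=2$ with $|[R,S]|=4$. In particular, the third condition ``$|[R,S]|>3$'' appearing in the exceptional clause of Proposition \ref{11.41} is automatically satisfied for any $B_2$-extension.

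Proposition \ref{11.41} asserts that a length $2$ extension is a $\Delta$-extension unless all three of the following conditions hold simultaneously: $|\mathrm{MSupp}(S/R)|=1$, the extension $R\subset S$ is t-closed, and $|[R,S]|>3$. For a $B_2$-extension the last of these is automatic, so the only possible obstruction to $R\subset S$ being a $\Delta$-extension lies in the first two conditions, both of which must hold for the exception to apply. In particular, the exception requires $R\subset S$ to be t-closed. Hence whenever a $B_2$-extension is not t-closed, the exceptional clause of Proposition \ref{11.41} fails, and we conclude that $R\subset S$ is a $\Delta$-extension.

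There is no genuine obstacle here; the argument is a direct unpacking of the definition of a $B_2$-extension and a substitution of $|[R,S]|=4$ into Proposition \ref{11.41}. One might additionally remark that the converse implication does not hold: a t-closed $B_2$-extension can still be a $\Delta$-extension, for instance when $|\mathrm{MSupp}(S/R)|=2$ and each localization $R_M\subset S_M$ is inert minimal, so that $R\subset S$ is arithmetic and hence a $\Delta$-extension by Proposition \ref{1.013}. The corollary is thus a one-directional statement, and the proof reduces to reading off the exceptional case of the preceding proposition.
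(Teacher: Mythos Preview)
Your proof is correct and follows essentially the same approach as the paper: both derive the corollary from Proposition~\ref{11.41}. You apply the \emph{statement} of Proposition~\ref{11.41} directly, observing that $|[R,S]|=4>3$ makes the third condition of the exceptional clause automatic; the paper instead revisits the case analysis inside the proof of Proposition~\ref{11.41}, identifying that $|[R,S]|=4$ occurs only in cases (1), (5), and the t-closed case (8), and checking each. These are minor variations on the same idea, and your route is arguably cleaner. Your closing remark that a t-closed $B_2$-extension with $|\mathrm{MSupp}(S/R)|=2$ can still be a $\Delta$-extension is a correct and useful observation that the paper does not make explicit; conversely, the paper's final sentence adds the complementary information that in the $M$-crucial t-closed case a $B_2$-extension is \emph{never} a $\Delta$-extension.
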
 

\begin{proof} Let $R\subset S$ be a $B_2$-extension, that is $\ell[R,S]=2$ and $|[R,S]|=4$. According to the characterization of length 2 extensions recalled in the proof of Proposition \ref{11.41}, $|[R,S]|=4$ happens in the following cases of this proof: (1) and then $R\subset S$ is a $\Delta$-extension; (5) and $R\subset S$ is a $\Delta$-extension. In the t-closed case, when $R\subset S$ is a $\Delta$-extension, $R\subset S$ is chained, so that $|[R,S]|=3\neq 4$ when $\ell[R,S]=2$.
\end{proof}
 
We now consider the $\Delta$-properties for pointwise minimal extensions. A ring extension $R\subset S$ is {\it pointwise minimal} if $R\subset R[t]$ is minimal for each $t\in S\setminus R$. We characterized these extensions in a joint work with Cahen in \cite{Pic 7}. The properties of pointwise minimal extensions $R\subset S$ allow to assume that $(R,M)$ is a local ring. In \cite[Theorems 3.2 and 5.4 and Proposition 3.5]{Pic 7}, we gave the different conditions for an extension $R\subset S$ to be pointwise minimal. Firstly, a pointwise minimal extension is either integral or integrally closed, and, in this last case, is minimal. As minimal extensions are $\Delta$-extensions, we need only to consider pointwise minimal integral extensions which are not minimal, that we describe as follows. Let $R\subset S$ be an integral extension such that $(R,M)$ is a local ring and which is not minimal. Then $R\subset S$ is a pointwise minimal extension if and only if $M=(R:S)$ and one of the following condition is satisfied: 

($\alpha$) $R/M=\mathbb Z/2\mathbb Z$ and $S/M\cong (R/M)^n$ for some integer $n\geq 3$. In this case, $R\subset S$ is seminormal infra-integral.
 
($\beta$) $M\in\mathrm{Max}(S),\ \mathrm{c}(R/M)=p$, a prime integer, and $R/M\subset S/M$ is a 
 purely inseparable
 extension where $x^p\in R$ for each $x\in S$. In this case, $R\subset S$ is t-closed.
 
($\gamma$) $R\subset S$ is subintegral with $\mathrm{Max}(S)=\{N\}$ such that $x^2\in M$ for each $x\in N$.
 
($\delta$) $\mathrm{Max}(S)=\{N\}$ is such that $x^2\in M$ for each $x\in N,\ R\subset R+N\subset S,\ \mathrm{c}(R/M)=p$, a prime integer and $x^p\in R$ for each $x\in S$. In this case, the spectral map of $R\subset S$ is bijective. 
 
\begin{proposition} \label{11.5} A pointwise minimal FCP extension $R\subset S$ over the local ring $(R,M)$ is a $\Delta$-extension if and only if one of the following conditions holds:
 \begin{enumerate} 
 \item $R\subset S$ is minimal.
 
\item $R\subset S$ is seminormal infra-integral with $|\mathrm{Max}(S)|=3$.
    
\item $R\subset S$ is subintegral with $N^2\subseteq M$, where 
  $\mathrm{Max}(S)=\{N\}$.
 \end{enumerate}
 \end{proposition}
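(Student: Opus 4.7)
The plan is to go through the four cases $(\alpha)$--$(\delta)$ classifying non-minimal pointwise minimal extensions over the local ring $(R,M)$ (recalled immediately before the statement), together with the minimal case giving (1), and show that the $\Delta$-property is equivalent to landing in (1), (2), or (3). I handle sufficiency first, then necessity.

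For sufficiency, (1) gives arithmetic, hence $\Delta$ by Proposition \ref{1.013}. For (2) the extension falls in $(\alpha)$ with $n=3$ and $(R:S)=M$, so Proposition \ref{9.02} reduces to the residual extension $\mathbb F_2\subset\mathbb F_2^3$, which is a $\Delta$-extension by Example \ref{9.152}. For (3), subintegrality gives $S=R+N$, so any $t\in T$ and $u\in U$ decompose as $t=r+n_t$, $u=s+n_u$ with $r,s\in R$ and $n_t\in T\cap N$, $n_u\in U\cap N$; expanding
\[
tu=rs+rn_u+sn_t+n_tn_u
\]
and using $n_tn_u\in N^2\subseteq M\subseteq R$ shows $tu\in T+U$, so $TU=T+U$ is a ring and Proposition \ref{9.1} gives the $\Delta$-property.

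For necessity I assume $R\subset S$ is $\Delta$ and not minimal, and eliminate everything except $(\alpha)$ with $n=3$ and $(\gamma)$ with $N^2\subseteq M$. In case $(\alpha)$, reducing mod $M=(R:S)$ gives $\mathbb F_2\subset\mathbb F_2^n$, which by Proposition \ref{9.181} is $\Delta$ only for $n\leq 3$; being non-minimal forces $n=3$, i.e.\ condition~(2). In case $(\beta)$, $R\subset S$ is t-closed, so Proposition \ref{9.10} makes the $\Delta$-property over the local ring equivalent to being chained; but the residual extension $R/M\subset S/M$ is purely inseparable of exponent one of some degree $p^d$, whose lattice of intermediate subfields is the subspace lattice of a $d$-dimensional $R/M$-vector space, chained only for $d\leq 1$, i.e.\ when $R\subset S$ is itself minimal, contradiction.

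Case $(\gamma)$ is the heart of the argument. Assuming $N^2\not\subseteq M$, I pick $x,y\in N$ with $xy\notin M$; combining $x^2,y^2,(x+y)^2\in M$ with $(x+y)^2=x^2+2xy+y^2$ gives $2xy\in M$, forcing $\mathrm{c}(R/M)=2$. Setting $T=R[x]=R+Rx$ and $U=R[y]=R+Ry$, the $\Delta$-property would yield $\bar a,\bar b,\bar c\in R/M$ with $\bar x\bar y=\bar a+\bar b\bar x+\bar c\bar y$ in $S/M$. Subintegrality provides the decomposition $S/M=R/M\oplus(N/M)$ as $R/M$-modules, forcing $\bar a=0$ since $\bar x\bar y\in N/M$; multiplying the remaining relation by $\bar x$ and then by $\bar y$ (using $\bar x^2=\bar y^2=0$) forces $\bar c=\bar b=0$, whence $\bar x\bar y=0$, contradicting $xy\notin M$. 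So $N^2\subseteq M$, which is (3). In case $(\delta)$ I pick $x\in N\setminus M$ and $y\in S\setminus(R+N)$, so that $T:=R[x]$ is minimal ramified and $V:=R[y]$ is minimal inert (purely inseparable residue), both with crucial ideal $M$; Proposition \ref{3.6}(2) then makes $R\subset TV$ non-catenarian, but as a subextension of a $\Delta$-extension it would be $\Delta$ by Corollary \ref{9.2}, hence modular by Proposition \ref{1.012}, hence catenarian by Proposition \ref{1.0}, a contradiction. The main obstacle is the coefficient elimination in case $(\gamma)$; everything else reduces to results already in the paper.
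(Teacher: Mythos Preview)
Your proof is correct and follows the same case-by-case structure as the paper's proof. A few remarks:

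In case $(\beta)$, your assertion that the intermediate-field lattice of a purely inseparable exponent-one extension of degree $p^d$ is the subspace lattice of a $d$-dimensional $R/M$-vector space is not correct as stated: already for $d=2$ with $K=\mathbb F_p(s,t)$ and $L=K(s^{1/p},t^{1/p})$, the field $K(s^{1/p}t^{1/p})$ is a degree-$p$ subextension not of the form $K(s^{1/p}+\lambda t^{1/p})$, so there are more atoms than points of $\mathbb P^1(K)$. However, the only conclusion you need---chained forces $d\le 1$---is immediate: if $d\ge 2$, choose a $p$-basis $\{a_1,\dots,a_d\}$ of $S/M$ over $R/M$ and observe that $(R/M)[a_1]$ and $(R/M)[a_2]$ are incomparable minimal subextensions. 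The paper argues exactly this way, without invoking any lattice isomorphism.

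In case $(\gamma)$, your argument modulo $M$ is a genuine simplification of the paper's. The paper splits into the subcases $R[x]=R[y]$ and $R[x]\neq R[y]$, and in the second subcase uses that $c-x$ is a unit in $S$ to derive $y\in R[x]$. Your approach---working in $S/M=R/M\oplus N/M$, forcing $\bar a=0$ by the direct-sum decomposition, then killing $\bar b,\bar c$ by multiplying through by $\bar y,\bar x$ and using $\bar x^2=\bar y^2=0$---handles both subcases uniformly and is shorter. Note that your derivation of $\mathrm c(R/M)=2$ from $(x+y)^2\in M$ is correct but never used; you may simply delete it.

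The sufficiency arguments and the treatment of cases $(\alpha)$ and $(\delta)$ match the paper's.
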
 
\begin{proof} Since we have recalled just before the characterization of pointwise minimal extensions $R\subset S$ , we examine, for each case, a necessary and sufficient condition in order that $R\subset S$ be a $\Delta$-extension. So, in the following, we assume that $R\subset S$ is a pointwise minimal extension.

If $R\subset S$ is integrally closed, $R\subset S$ is minimal, and then a $\Delta$-extension and we recover case (1).

If $R\subset S$ is not integrally closed, then $R\subset S$ is a integral extension. If $R\subset S$ is minimal,   then $R\subset S$ is  a $\Delta$-extension and we recover case (1). 

If $R\subset S$ is a integral extension which is not minimal,   then $R\subset S$ satisfies one of conditions $(\alpha)$--$(\delta)$ and 
 moreover, $M=(R:S)$.

 In case ($\alpha$), we have  $R/M=\mathbb Z/2\mathbb Z$ and $S/M\cong (R/M)^n$ for some integer $n\geq 3$. In this case, $R\subset S$ is seminormal infra-integral, and by Corollary \ref{9.182}, $R\subset S$ is a $\Delta$-extension if and only if  $|\mathrm{V}_S(MS)|\leq 3$ for any $M\in\mathrm{MSupp}(S/R)$, which is equivalent, since $(R,M)$ is a local ring, to $|\mathrm{Max}(S)|\leq 3$. But $|\mathrm{Max}(S)|=2$ implies that $R\subset S$ is minimal, a contradiction. To sum up, if $R\subset S$ is seminormal infra-integral non minimal, then $R\subset S$ is a $\Delta$-extension if and only if  $|\mathrm{Max}(S)|=3$ which gives case (2).

The condition $(\beta)$ is not involved for a t-closed non-minimal $\Delta$-extension $R\subset S$ where $(R,M)$ is a local ring since it is  chained by Proposition \ref{9.10}, and so is $R/M\subset S/M$. In fact, in this case, $R/M\subset S/M$ is chained if and only if $R/M\subset S/M$ is minimal: assume that the contrary holds. Then $R/M\subset S/M$ is a non-minimal chain satisfying condition $(\beta)$. Let $S'\in[R/M,S/M]$ be such that $R/M\subset S'$ is minimal, which implies that $S'\neq S/M$. Let $x\in(S/M)\setminus S'$, so that $x^p\in R/M$, giving $R/M\subset (R/M)[x]$ is minimal, with $S'\neq  (R/M)[x]$, a contradiction. It follows that $R\subset S$ needs to be a minimal extension and we recover case (1).

If case ($\gamma$) holds, $R\subset S$ is subintegral with $\mathrm{Max}(S)=\{N\}$ such that $x^2\in M$ for each $x\in N$. In particular, $R[x]=R+Rx$ for each $x\in N$. We claim that $R\subset S$ is a $\Delta$-extension if and only if $N^2\subseteq M$. But $N^2\subseteq M$ if and only if $xy\in M$ for each $x,y\in N$. According to Proposition \ref{9.0}, $R\subset S$ is a  $\Delta$-extension if and only if $R[s,t]=R[s]+R[t]$ for all $s,t\in S$. Since $R\subset S$ is subintegral, the isomorphism $R/M\cong S/N$ shows that $S=R+N$. 

Assume that $R\subset S$ is a $\Delta$-extension and let $x,y\in N\setminus R=N\setminus M$. Then $R\subset R[x]$ is minimal ramified by Theorem \ref{minimal} because $x^2\in M$ and $xM\subseteq M$. For the same reason, $R\subset R[y]$ is minimal ramified. Let $P$ be the maximal ideal of $R[x]$. If $R[x]=R[y]$, it follows that $x,y\in N\cap R[x]=P$, so that $xy\in P^2\subseteq M$. Assume $R[x]\neq R[y]$. Then, $xy\in R[x,y]=R[x]+R[y]=R+Rx+Ry$, so that $xy=a+bx+cy\ (*)$, with $a,b,c\in R$. Then, $a=xy-bx-cy\in R\cap N=M$. By $(*)$, we have $a+bx=y(x-c)\ (**)$. If $c\not\in M$, then $c\not\in N$, which leads to $c-x$ is a unit in $R[x]$ and in $S$. It follows that $y=(a+bx)(x-c)^{-1}$ in $S$. Since $(a+bx)(x-c)^{-1}\in R[x]$, we get $y\in R[x]$, a contradiction. Then, $c\in M$ and $cy\in M$ because  $M=(R:S)$. A similar proof shows that $bx\in M$, giving $xy\in M$, so that $N^2\subseteq M$.

Conversely, assume that  $N^2\subseteq M$. We claim that $R[s,t]=R[s]+R[t]$ for all $s,t\in S$. It is enough to prove that $st\in R[s]+R[t]$ for all $s,t\in S$. Since $S=R+N$, we can write $s=\lambda+x$ and $t=\mu+y$, with $\lambda,\mu\in R$ and $x,y\in N$. In particular, $xy\in N^2\subseteq M$. But, $st=\lambda\mu+\lambda y+\mu x+xy=\lambda(\mu+y)+\mu(\lambda+x)+x y-\lambda\mu =\lambda t+\mu s+x y-\lambda\mu \in R[s]+R[t]$. To conclude, $R\subset S$ is a  $\Delta$-extension.

We do not consider the case $(\delta)$ for the following reason. An extension verifying $(\delta )$ cannot be a $\Delta$-extension, because it is not catenarian: there exist two minimal extensions $R\subset R[x]$ and $R\subset R[y]$, with, for instance, $R\subset R[x]$ ramified, $R\subset R[y]$ inert and $M=(R:S)$ (see Proposition \ref{3.6}). In fact, $y\in S\setminus(R+N)$.
  \end{proof}

We saw in Proposition \ref{11.5} that in the seminormal infra-integral case, we deal with extensions of the form $R/M\subset(R/M)^3$. We are going to look at infra-integral extensions of the type $R\subset R^n$.

\begin{lemma} \label{11.9} Let $R$ be an Artinian ring and $n$ an integer with $n>1$. If $R\subset R^n$ is a $\Delta$-extension, then $n\leq 3$.
  \end{lemma}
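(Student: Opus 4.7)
The plan is to prove the contrapositive by generalizing the idempotent construction used in the proof of Proposition \ref{9.181} from fields to arbitrary (nonzero) rings. Specifically, assume $n \geq 4$; I will exhibit elements $s, t \in R^n$ for which $R[s,t] \neq R[s] + R[t]$, so by Proposition \ref{9.0} this rules out the $\Delta$-property.

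Let $\{e_1, \ldots, e_n\}$ denote the canonical orthogonal idempotents of $R^n$, and set $s := e_1 + e_2$ and $t := e_1 + e_3$. Both elements are idempotent, so $R[s] = R + Rs$ and $R[t] = R + Rt$; moreover a direct computation (using $e_i e_j = 0$ for $i \neq j$) yields $st = e_1$. Suppose now, for contradiction, that $R \subset R^n$ is a $\Delta$-extension. Then $e_1 = st \in R[s,t] = R[s] + R[t] = R + Rs + Rt$, so there exist $r_0, r_1, r_2 \in R$ with $e_1 = r_0 + r_1 s + r_2 t$. Reading this identity coordinate by coordinate in $R^n$ gives: in the first coordinate, $r_0 + r_1 + r_2 = 1$; in the second, $r_0 + r_1 = 0$; in the third, $r_0 + r_2 = 0$; and (because $n \geq 4$) in the fourth, $r_0 = 0$. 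The last three equations force $r_0 = r_1 = r_2 = 0$, which contradicts the first equation since $1 \neq 0$ in the nontrivial ring $R$. Hence $n \leq 3$.

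There is essentially no obstacle here: the idempotent structure of $R^n$ reduces the $\Delta$-property to a purely combinatorial linear system, independent of any finer structure of $R$ beyond $1\neq 0$. Note the bound is sharp: for $n=3$ the fourth coordinate is absent, the system admits the solution $r_0=-1,\ r_1=r_2=1$, and indeed Example \ref{9.152} shows that $k \subset k^3$ is a $\Delta$-extension. The Artinian hypothesis on $R$ is not actually used in the argument; it appears to be imposed for consistency with the FCP framework of the surrounding examples.
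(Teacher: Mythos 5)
Your proof is correct, but it takes a different route from the paper's. The paper localizes at a maximal ideal (using $(R^n)_M=(R_M)^n$ and Proposition \ref{9.02}), notes that $R\subset R^n$ is then FCP infra-integral with $|\mathrm{Max}(R^n)|=n$ and $\ell[{}_{R^n}^+R,R^n]=n-1$, and concludes $n\leq 3$ from Corollary \ref{9.182}, i.e.\ from the machinery of the canonical decomposition; that chain of results ultimately rests on the field case, Proposition \ref{9.181}. You instead generalize the idempotent construction of Proposition \ref{9.181} directly from fields to arbitrary nonzero rings: with $s=e_1+e_2$, $t=e_1+e_3$ one has $R[s]+R[t]=R+Rs+Rt$, and reading the relation $e_1=r_0+r_1s+r_2t$ coordinatewise (the fourth coordinate, available exactly when $n\geq 4$, forces $r_0=0$, then $r_1=r_2=0$, contradicting the first coordinate) replaces the linear-independence/dimension count that only makes sense over a field. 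What your argument buys is that it is completely elementary, bypasses Propositions \ref{9.02} and \ref{9.182} and the seminormalization, and — as you correctly observe — does not use the Artinian (hence FCP) hypothesis at all, so it proves the stronger statement that $R\subset R^n$ is never a $\Delta$-extension for $n\geq 4$ and any nonzero ring $R$. What the paper's route buys is coherence with its general framework: the lemma appears there as an instance of the structural criterion for seminormal infra-integral $\Delta$-extensions, which is also what is needed for the neighbouring results (Proposition \ref{11.10}, Example \ref{11.11}). Your remark on sharpness at $n=3$ is also accurate.
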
 
\begin{proof} Since $(R^n)_M=(R_M)^n$ for any maximal ideal $M$ of $R$, we may assume that $R$ is a local ring by Proposition \ref{9.02}. Set $S:=R^n$ and $T:={}_S^+R$. Then, $R\subset S$ is FCP
infra-integral by \cite [Proposition 1.4]{Pic 9}, with $|\mathrm{Max}(S)|=n$ and $\ell[T,S]=n-1$ by \cite[Lemma 5.4]{DPP2}. Using Corollary  \ref{9.182}, we get that $n\leq 3$. 
 \end{proof}

\begin{proposition} \label{11.10} Let $R$ be an  absolutely flat   ring. Then $R\subset R^n$ is a $\Delta$-extension if and only if  $n\leq 3$.
  \end{proposition}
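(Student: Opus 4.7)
The plan is to reduce the problem to the case where the base ring is a field, by using the fact that the $\Delta$-property is local on the spectrum (Proposition~\ref{9.02}), and then invoke the known classification Proposition~\ref{9.181} for the field case $k\subset k^n$.

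First I would dispose of $n=1$, which is vacuous. For $n\geq 2$, I would observe the two basic identities that make localization compatible with the diagonal embedding: since localization commutes with finite direct products, $(R^n)_M\cong (R_M)^n$ for every $M\in\mathrm{Max}(R)$, and the map $R_M\to (R^n)_M$ induced from $R\hookrightarrow R^n$ identifies with the diagonal $R_M\hookrightarrow (R_M)^n$. Moreover, for $n\geq 2$ every maximal ideal $M$ of $R$ satisfies $R_M\neq(R_M)^n$, so $\mathrm{MSupp}(R^n/R)=\mathrm{Max}(R)$.

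Next, I would use that $R$ is absolutely flat: this is exactly the condition that $R_M$ is a field for each $M\in\mathrm{Max}(R)$. Applying Proposition~\ref{9.02}, the extension $R\subset R^n$ is a $\Delta$-extension if and only if $R_M\subset(R_M)^n$ is a $\Delta$-extension for every $M\in\mathrm{Max}(R)$; that is, if and only if the field extension $k\subset k^n$ is $\Delta$ for every residue field $k=R_M$. By Proposition~\ref{9.181}, this last condition holds if and only if $n\leq 3$, giving both implications at once.

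There is essentially no obstacle here: the only care needed is to check that Proposition~\ref{9.02} genuinely applies without an FCP hypothesis on $R\subset R^n$ (which it does, as stated), and to verify the compatibility of localization with the diagonal embedding into $R^n$. Both are routine, so the proof is just a clean gluing of Propositions~\ref{9.02} and~\ref{9.181} through the structural fact that absolutely flat rings have field residues at all maximal ideals.
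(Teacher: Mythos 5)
Your proof is correct and follows essentially the paper's own route: localize at maximal ideals via Proposition \ref{9.02}, use absolute flatness to identify each $R_M$ with a field so that $R_M\subset (R_M)^n$ is the diagonal field extension, and conclude from the classification of $k\subset k^n$. The paper phrases the final step through Corollary \ref{9.182} (the extension is seminormal infra-integral, so $\Delta$ is equivalent to length $\le 2$, i.e.\ $n\le 3$) rather than citing Proposition \ref{9.181} directly, but that corollary itself rests on Proposition \ref{9.181}, so the two arguments coincide.
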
 
\begin{proof} By Proposition \ref{9.02}, we may assume that $R$ is local, so that $R$ is a field. Then \cite[Proposition 1.4]{Pic 9} shows that $R\subset R^n$ is an infra-integral seminormal FCP extension. By Proposition \ref{9.182}, $R\subset R^n$ is a $\Delta$-extension if and only if  $\ell[R,R^n]\leq 2$ if and only if  $n\leq 3$.
 \end{proof}
 
According to \cite[Proposition 1.4]{Pic 9}, when $R$ is not reduced and $n=3$, there is a subintegral part $R\subset{}_{R^3}^+R$ of $R\subset R^3$, so that we cannot use Corollary \ref{9.182}. We next give an example of a $\Delta$-extension $R\subset R^3$, where $R$ is an Artinian local and non-reduced ring (which is not a field). Moreover, $R\subset R^3$ has FCP and is infra-integral by the above reference. We leave the easy but tedious calculations of this example to the reader.
 
\begin{example} \label{11.11} Set $R:=(\mathbb Z/ 2\mathbb Z)[T]/(T^2)=(\mathbb Z/ 2\mathbb Z)[t]$, where $t$ is the class of $T$ in $R$. Then $R$ is an Artinian local ring which is not reduced and whose maximal ideal $M:=Rt\neq 0$ is such that  $M^2=0$. Moreover, $|R/M|=|\mathbb Z/ 2\mathbb Z|=2$. Let $\mathcal B:=\{e_1,e_2,e_3\}$ be the canonical basis of $R^3$. According to \cite[Proposition 2.8]{Pic 9},  $S:={}_{R^3}^+R=R+N=R(e_1+e_2+e_3)+Rte_1+Rte_2$ (for instance), where $N:=M\times M\times M=Rte_1+Rte_2+Rte_3$ is the maximal ideal of the local ring $S$. Since $|\mathrm{Max}(R^3)|=3$, we get $\ell[S,R^3]=2$ according to \cite[Lemma 5.4]{DPP2}; so that, $S\subset R^3$ is a $\Delta$-extension by Corollary \ref{9.182}, and $\ell[R,S]={\mathrm L}_R(N/M)$ by \cite[Lemma 5.4]{DPP2}. But $MN=0$ yields ${\mathrm L}_R(N/M)={\mathrm L}_{R/M}(N/M)=\dim_{R/M}[(M\times M\times M)/M]=2$ thanks to \cite[Corollary 2 of Proposition 24, page 66]{N}. It follows that $\ell[R,S]=2$ and $\ell[R,R^3]=4$, always by \cite[Lemma 5.4]{DPP2}. In particular, Theorem \ref{9.171} shows that $R\subset S$ is a  $\Delta$-extension. 
Hence, from Proposition \ref{9.23}, we deduce that $R\subset R^3$ is a $\Delta$-extension if and only if $\ell[T,UV]=2$, for each $T,U,V\in[R,R^3]$ such that $T\subset U$ is minimal ramified and $T\subset V$ is minimal decomposed. To find such extensions, we are going to determine all elements of $[R,R^3]$. 
  
For each $i=1,2,3$, set $x_i:=te_i$, so that $R[x_i]=R+Rte_i$ is a local ring with maximal ideal $N_i:=Rte_i+Rt(e_j+e_l)$, for $j,l\neq i$ and $ j\neq l$. Then, $\{R,R[x_1],R[x_2],R[x_3],S\}\subseteq [R,S]$, with the $R[x_i]$'s all distinct, so that $|[R,S]|>3$. Then, \cite[Theorem 6.1]{Pic 6} says that $|[R,S]|=5$, because $R\subset S$ is subintegral. In particular, $[R,S]=\{R,R[x_1],R[x_2],R[x_3],S\}$. Moreover, $|[S,R^3]|=5$ because $S\subset R^3$ is seminormal infra-integral. Indeed, $R\subset R^3$ is infra-integral. A short calculation shows that there does not exist any $y\in R^3$ such that $R\subset R[y]$ is minimal decomposed. By definition of $S={}_{R^3}^+R$, for each $i=1,2,3$, the only minimal ramified extension starting from $R[x_i]$ is $R[x_i]\subset S$. For each $i=1,2,3$, set 
$R_i:=R[e_i]=R+Re_i=R[x_i]+e_iR[x_i]$. We get that $R[x_i]\subset R_i$ is minimal decomposed and another calculation shows that this is the only minimal decomposed extension starting from $R[x_i]$. It follows that the maximal ideals of $R_i$ are $M_i:=M+Re_i=N_i+e_iR[x_i]$ and $M'_i:=M+R(1-e_i)$. By definition of $S$, there does not exist any $y\in R^3$ such that $S\subset S[y]$ is minimal ramified. Moreover, for each $i=1,2,3$, we have $S\subset SR_i\subset R^3$, with $SR_i=R+Re_i+Rte_j+Rte_l$, for $j,l\neq i$ and $ j\neq l$. Since $S\subset R^3$ is seminormal infra-integral, then $SR_i={}_{R^3}^+R_i$ because $R_i\subset SR_i$ is minimal ramified (use $SR_i=R_i[te_j]$ with $i\neq j$, for instance) and $ R_i\subset SR_i$ is the only minimal ramified extension starting from $R_i$. In particular, $[S,R^3]=\{S,SR_1,SR_2,SR_3,R^3\}$ because $|[S,R^3]|=5$ as we have seen just above. We claim that there does not exist any $y\in R^3$ such that $R_i\subset R_i[y]$ is minimal decomposed (if the contrary holds, then $R_i[y]\subset R^3$ is minimal ramified, contradicting \cite[Lemma 17]{DPP4}, because $MR^3=M\times M\times M$ is a radical ideal of $R^3$ containing $0=(R:R^3)$). To sum up, the only minimal extensions of $R$ are the $R[x_i]$, for a given $i$, the only minimal extensions of $R[x_i]$ are $R_i$ and $S$,  the only minimal extension of $R_i$ is $SR_i$, and the only minimal extensions of $S$ are the $SR_i$. At last, $SR_i\subset R^3$ is minimal for each $i$. Then, we get the following commutative diagram   corresponding to only one $i\in\{1,2,3\}$, where {\it r}, (resp. {\it d}) indicates a minimal ramified (resp. decomposed) extension:  
  $$\begin{matrix}
{} & {}            & {} &       {}         & R_i & {}             & {} & {} & {}\\
{} & {}            & {} & d\nearrow & {}    & r\searrow & {} & {} & {}\\
R & \overset{r}\rightarrow & R[x_i] & {} & {} & {} & SR_i & \overset{d}\rightarrow & R^3 \\
{} & {}           & {} & r\searrow  & {}    & d\nearrow & {} & {} & {}\\
{} & {}           & {} &      {}         & S    & {}               & {} & {} & {}
\end{matrix}$$
To conclude, the only extensions such that $T\subset U$ is minimal ramified and $T\subset V$ is minimal decomposed are gotten for $T=R[x_i],\ U=S$ and $V=R_i$ for each $i=1,2,3$. Since $UV=SR_i$ and $\ell[T,UV]=\ell[R[x_i],SR_i]=2$, we get that $R\subset R^3$ is a $\Delta$-extension by Proposition ~\ref{9.23} as we claimed before. 
 \end{example}
 
In the context of extensions of the form $R\subset R^n$, for a positive integer $n$, we now  consider extensions of the form $R\subset R[t]$, where $t$ is either  idempotent or nilpotent of index 2. We recall that a ring $R$ is called a {\it SPIR} if $R$ is a principal ideal ring with a nonzero prime ideal $M$ such that $M$ is nilpotent of index $p>0$. 
  
 \begin{proposition} \label{11.12} An extension $R\subset R[t]$, where $t$ is either idempotent or nilpotent of index 2, and such that $R$ is a SPIR, is an 
FIP
 $\Delta$-extension.
\end{proposition}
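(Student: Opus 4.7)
The plan is to analyze $[R, R[t]]$ in each case and show it is a finite chain; then finiteness yields FIP, and since $R$ is local the chain structure makes $R \subset R[t]$ arithmetic, whence a $\Delta$-extension by Proposition~\ref{1.013}. Recall that as a SPIR, $R$ is local with principal maximal ideal $M=(m)$ nilpotent of some index $p$, and its ideals form the finite chain $R \supset (m) \supset \cdots \supset (m^p) = 0$; in particular any two ideals of $R$ are comparable.

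In the nilpotent case ($t^2 = 0$, $t \notin R$), I would observe that $R[t] = R + Rt$, and for any $T \in [R, R[t]]$ the set $I_T := \{b \in R : bt \in T\}$ is an ideal of $R$ satisfying $T = R + I_T\, t$, since subtracting the $R$-component of any element of $T$ leaves a multiple of $t$. Conversely, the product formula $(a+bt)(c+dt) = ac + (ad+bc)t$ (using $t^2=0$) shows that $R + It$ is a subring for every ideal $I$ of $R$. Thus $[R, R[t]]$ is order-isomorphic to the lattice of ideals of $R$, hence a chain of $p+1$ elements.

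In the idempotent case ($t^2 = t$, $t \notin R$), I would use the decomposition $R[t] = R(1-t) \oplus Rt$ arising from the orthogonal idempotents $1-t$ and $t$. Identifying the summands with the rings $R/\mathrm{ann}(1-t)$ and $R/\mathrm{ann}(t)$, the embedding $R \hookrightarrow R[t]$ becomes the diagonal $r \mapsto (\bar r, \bar r)$, whose injectivity forces $\mathrm{ann}(1-t) \cap \mathrm{ann}(t) = 0$; since ideals of the SPIR $R$ are comparable, one of these annihilators vanishes, giving (say) $R[t] \cong R \times R/(m^j)$ for some $0 \le j \le p$. An analogous analysis then shows that every $R$-submodule of $R \times R/(m^j)$ containing the diagonal corresponds, via $(a,\bar b) \mapsto \bar b - \bar a$, to an ideal of $R/(m^j)$, and such a submodule is automatically a subring, so $[R, R[t]]$ is again a finite chain (of length $j$).

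In both cases $[R, R[t]]$ is a finite chain, so the extension has FIP and is chained. Being chained over the local ring $R$, it is arithmetic, and therefore a $\Delta$-extension by Proposition~\ref{1.013}. The main point requiring care is the idempotent case, where one must pin down the structure $R[t] \cong R \times R/(m^j)$; however the totally ordered ideal lattice of a SPIR forces this form and rules out any exotic behavior.
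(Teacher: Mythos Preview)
Your approach is correct and considerably more direct than the paper's, particularly in the idempotent case. You show in both cases that $[R,R[t]]$ is a finite chain by parametrizing intermediate rings by ideals of a SPIR (or a quotient thereof), and then invoke Proposition~\ref{1.013}; this yields FIP and the $\Delta$-property in one stroke. The paper instead first reduces modulo the conductor to the free case $S=R[X]/(X^2-rX)$, builds the explicit chain $R_k=R+M^kS$, and for $r=1$ (where $S\cong R^2$) does \emph{not} argue that $[R,S]$ is chained: it verifies the hypotheses of Proposition~\ref{9.23} by proving that no $W\in[R,S]$ admits simultaneously a minimal ramified and a minimal decomposed extension, and appeals to a separate cited result for FIP. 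Your argument therefore establishes the stronger fact that $R\subset R[t]$ is always chained, which the paper's idempotent-case proof does not make explicit.

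One small imprecision: in the nilpotent case you assert that $[R,R[t]]$ is order-\emph{isomorphic} to the ideal lattice of $R$, giving exactly $p+1$ elements. Without first killing the conductor this need not hold, since $R\cap Rt$ may be nonzero (equivalently $I_R=\{b:bt\in R\}$ may strictly contain~$0$); what you actually prove is that $T\mapsto I_T$ is an order-\emph{embedding} of $[R,R[t]]$ into the ideals of $R$, which already forces $[R,R[t]]$ to be a finite chain. Either phrase it this way, or imitate the paper's preliminary reduction to $(R:S)=0$, after which $\{1,t\}$ is a free basis and your bijection becomes genuine. Your idempotent-case argument (identifying $R[t]\cong R\times R/(m^j)$ and matching intermediate rings with ideals of $R/(m^j)$) is clean and needs no such adjustment.
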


\begin{proof} Set $S:=R[t]$. If $t$ is either  idempotent or nilpotent of index 2, then $S$  is a finitely generated $R$-module. Moreover, $R/(R:S)$ is an Artinian ring because $R$ is a SPIR. Then, $R\subset S$ has FCP by \cite[Theorem 4.2]{DPP2}. In both cases, we can write $t^2-rt=0\ (*)$, with either $r=0$ or $r=1$. This implies $t^3-rt^2=0$, then $R\subset R[t]$ is an $t$-elementary extension \cite[Definition 1.1]{Pic} and \cite[Proposition 2.18]{Pic} shows that $R/(R:S)\subset S/(R:S)$ can be identified with $R/(R:S)\subset (R/(R:S))[X]/(X^2-\bar rX)$, where $\bar r$ is the class of $r$ in $R/(R:S)$ . But $R\subset S$ is a $\Delta$-extension if and only if $R/(R:S)\subset S/(R:S)$ is a $\Delta$-extension by Proposition \ref{9.02}. Then, there is no harm to assume that $(R:S)=0$, so that $S=R[t]=R+Rt\cong R[X]/(X^2-rX)$, with $r\in\{1,0\}$. 
   
Since $R$ is a SPIR, $(R,M)$ is an Artinian local ring and  $M+Rt$ is a maximal ideal of $S$ lying above $M$. If $M=0$, then $R$ is a field and either $S\cong R^2$ or $S\cong R[X]/(X^2)$. In both cases, $R\subset S$ is a minimal extension, decomposed when $r=1$ and ramified when $r=0$, and then a $\Delta$-extension. 
   
If $M\neq 0$, then $M$ is nilpotent and principal. Let $n$ be its index of nilpotency, so that $M^n=0$, with $M^{n-1}\neq 0$. Set $M:=Rx$. We begin by  showing a result that holds for both values of $r$. For $k\in\mathbb N_n$, set $R_k :=R+M^kS=R+Rx^kt$, which is obviously a local ring with maximal ideal $M_k:=M+M^kS=Rx+Rx^kt$. In particular, for $k<n$, we have $M_{k+1}=MR_k$ and $R_k=R_{k+1}[x^kt]$. Set $y_k:=x^kt$ and $\mathcal C:=\{R_k\}_{k\in\mathbb N_n}$; so that, $R_k=R_{k+1}[y_k]$. 

Next we show that  $\mathcal C$ is a finite maximal chain. Observe that $y_k^2=x^{2k}t^2=x^{2k}rt\in R_{k+1},\ M_{k+1}y_k=(Rx+Rx^{k+1}t)x^kt=Rx^{k+1}t+Rx^{2k+1}rt\subseteq M_{k+1}$ and 
 ${\mathrm L}_{R_{k+1}}(M_k/M_{k+1})={\mathrm L}_{R_{k+1}/M_{k+1}}(M_k/M_{k+1})=\dim_{R/M}(M_k/M_{k+1})=1$, since $M_k=M_{k+1}+Rx^kt$. It follows that $R_{k+1}\subset R_k$ is minimal ramified by \cite[Theorem 1.1]{Pic 9}. Therefore, $\mathcal C$ is a finite maximal chain and $R\subset R_1$ is subintegral. 
 
Now, we intend to prove that $[R,R_1]=\mathcal C$. Assume that the contrary holds; so that there exists some $U\in[R,R_1]\setminus\mathcal C$. Since $\{R_k\}_{k=1}^n$ is a decreasing chain going from $R_1$ to $R$, there exists some $k\in\mathbb N_{n-1}$ such that $U\subset R_k$, with $U\not\subseteq R_{k+1}$. Moreover, $R\subset S$ is an FCP extension. Since $U\neq R_k$, there exists $V\in[U,R_k]$ so that $V\subset R_k$ is minimal. In particular, $V$ is a local ring. Let $N$ be its maximal ideal. Then, $N:=(V:R_k)\in\mathrm{Max}(V)$, which is also an ideal of $R_k$. Since $R\subset V$ is subintegral, we get that $V=R+N$. Then, $M\subseteq N$ implies $M_{k+1}=MR_k\subseteq NR_k=N$, so that $R_{k+1}=R+M_{k+1}\subseteq R+N=V\subset R_k$, which leads to $V=R_{k+1}$. Then, $U\subseteq R_{k+1}$, a contradiction. Therefore, $[R,R_1]=\mathcal C$.

 To show that $R\subset S$ is a $\Delta$-extension, we split the proof in two cases, according to the value of $r$. 
 
 Assume that $r=1$. It follows that we are reduced to show that $R\subset R^2=:S$ is a $\Delta$-extension.
 
 According to \cite[Proposition 1.4]{Pic 9}, $R\subset R^2$ is an infra-integral FCP extension, and, setting $T:={}_S^+R$, \cite[Proposition 2.8]{Pic 9} says that $T=R+(M\times M)=R+MS=R_1$. Since $|\mathrm{Max}(S)|=2$, it follows that $R_1\subset S$ is minimal decomposed by \cite[Lemma 5.4]{DPP2}. Proposition \ref{9.23} asserts that $R\subset S$ is a $\Delta$-extension if and only if the following statements hold: $R\subset R_1$ is a $\Delta$-extension (1), $R_1\subset S$ is a $\Delta$-extension (2), and for each $W,U,V\in[R,S]$ such that $W\subset U$ is minimal ramified and $W\subset V$ is minimal decomposed, $\ell[W,UV]=2$ (3). Since $R_1\subset S$ is minimal, (2) is satisfied. We have proved above that $[R,R_1]=\mathcal C$ is a finite maximal chain, and then is a $\Delta$-extension. To show (3), we prove that there do not exist $W,U,V\in[R,S]$ such that $W\subset U$ is minimal ramified and $W\subset V$ is minimal decomposed. Assume that the contrary holds, then in particular, $2\leq|\mathrm{Max}(W)|+1=|\mathrm{Max}(V)|\leq|\mathrm{Max}(S)|=2$ implies that $|\mathrm{Max}(V)|=2$. Assume first that $W\not\in[R,R_1]\cup[R_1,S]$ and set $W':=W\cap R_1\in[R,R_1[$ with $W'\neq W$, so that $W'\subset W$ is seminormal infra-integral with $|\mathrm{Max}(W)|\geq 2$. But $2=|\mathrm{Max}(V)|=|\mathrm{Max}(W)|+1\geq 3$, a contradiction. It follows that $W\in[R,R_1]\cup[R_1,S]$. If $W\in[R_1,S]$, then $W\subset U\subseteq S$ together with $W\subset U$ minimal ramified leads to a contradiction. Then, $W\in[R,R_1]$, which implies $U\in[R,R_1]$. But $|\mathrm{Max}(V)|=2=|\mathrm{Max}(S)|$ implies that either $V=S$, which is impossible since $V\subset UV\subseteq S$, or $V\subset S$ is subintegral. In this case, there exists $V'\in[R,S]$ such that $V'\subset S$ is minimal ramified. Now, \cite[Lemma 17]{DPP4} gives that $MS$ is not a radical ideal of $S$, a contradiction with $MS=M\times M$. To conclude, (3) is an invalid statement and $R\subset S$ is a $\Delta$-extension when $r=1$. 
 Moreover, $R\subset S$ has FIP by \cite[Corollary 2.5]{Pic 9} because $R$ has finitely many ideals.

Assume now that $r=0$. Then $S=R[t]=R_1[t]$, where $t^2=0$ and $M_1t=Rxt+Rxt^2=Rxt\subseteq M_1$ show that $R_1\subset S$ is minimal ramified. In fact, $[R,S]=[R,R_1]\cup\{S\}$. It is enough to use the proof showing that $[R,R_1]$ is a maximal chain with $k=0$, setting $R_0:=S$ and $M_0:=M+Rt$. Since $R\subset S$ is chained, it is a $\Delta$-extension 
and has FIP,  because it has FCP.
 \end{proof}

 In the same spirit as the previous Proposition, we get the following result:

\begin{proposition} \label{11.13} Let $R\subset S$ be an FCP subintegral extension, where $(R,M)$ is a local Artinian ring, so that $(S,N)$ is also a local ring. Let $n>0$ be the index of nilpotency of $M$. Set $R_k:=R+M^kS$, for any $k\in\{0,\ldots,n\}$, with $R_0=S$ and $R_n=R$. Assume that $N^2\subseteq MS$ and that $[R,S]$ is pinched at $\{R_k\}_{k=1}^{n-1}$. Then, $R\subset S$  is a $\Delta$-extension.
\end{proposition}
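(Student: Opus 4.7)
My plan is to reduce the statement, via Corollary \ref{9.03}, to the consecutive steps of the pinching chain: since $R\subset S$ is pinched at $\{R_k\}_{k=1}^{n-1}$, and $R_0=S$, $R_n=R$, it suffices to prove that each step $R_{k+1}\subset R_k$, for $k\in\{0,\ldots,n-1\}$, is a $\Delta$-extension. Any such step is subintegral (as a subextension of $R\subset S$), and I first identify the maximal ideal of $R_k$: since $M^kS\subseteq N$, one has $R\cap M^kS\subseteq R\cap N=M$, so $N_k:=M+M^kS$ satisfies $R_k/N_k\cong R/M$, a field. Thus $N_k$ is the unique maximal ideal of the local ring $R_k$, and the subintegrality gives the decomposition $R_k=R_{k+1}+N_k$.

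The technical heart of the proof, and the step I expect to be most delicate, is establishing the inclusion $N_k^2\subseteq R_{k+1}$ for every $k$. At the endpoint $k=0$, this is exactly the hypothesis $N^2\subseteq MS\subseteq R+MS=R_1$, and this is the only place the assumption $N^2\subseteq MS$ enters. For $k\geq 1$, I would expand
\[ N_k^2=(M+M^kS)^2\subseteq M^2+M^{k+1}S+M^{2k}S, \]
and note that the inequality $2k\geq k+1$ forces each of the three summands into $R+M^{k+1}S=R_{k+1}$. It is precisely the interplay between the chain structure and the nilpotency index that produces this uniform bound.

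With $N_k^2\subseteq R_{k+1}$ in hand, I would invoke Proposition \ref{9.1}: it suffices to verify that $T+U=TU$ for arbitrary $T,U\in[R_{k+1},R_k]$. Since $R_k=R_{k+1}+N_k$, each such $T$ decomposes as $T=R_{k+1}+(T\cap N_k)$, and similarly for $U$, so any $t\in T$, $u\in U$ can be written $t=\lambda+x$ and $u=\mu+y$ with $\lambda,\mu\in R_{k+1}$, $x\in T\cap N_k$, $y\in U\cap N_k$. The elementary identity
\[ tu=\lambda\mu+\lambda y+\mu x+xy \]
then places $\lambda\mu$ and $xy$ in $R_{k+1}$ (using $N_k^2\subseteq R_{k+1}$ for the latter), $\lambda y$ in $U$, and $\mu x$ in $T$. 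Hence $tu\in T+U$, showing that $T+U$ is closed under multiplication, whence $T+U=TU$. This completes the $\Delta$-property for each $R_{k+1}\subset R_k$, and Corollary \ref{9.03} finishes the proof.
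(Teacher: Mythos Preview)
Your proof is correct and follows the same overall architecture as the paper: reduce to the steps $R_{k+1}\subset R_k$ via Corollary~\ref{9.03}, establish the key inclusion $N_k^2\subseteq R_{k+1}$ (with the hypothesis $N^2\subseteq MS$ handling $k=0$ and the computation $(M+M^kS)^2\subseteq M^2+M^{k+1}S+M^{2k}S$ handling $k\geq 1$), and deduce the $\Delta$-property from the elementary product identity.

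The difference lies in how the last implication is packaged. The paper first verifies that each $R_{k+1}\subset R_k$ is \emph{pointwise minimal} (by checking $y^2\in M_{k+1}$ and $M_{k+1}y\subseteq M_{k+1}$ for generators $y$) and then invokes Proposition~\ref{11.5}(3), whose converse direction is precisely the identity $st=\lambda t+\mu s+xy-\lambda\mu$ you wrote down. You bypass the pointwise-minimality detour entirely and apply that identity directly to arbitrary $T,U\in[R_{k+1},R_k]$, which is legitimate because the converse direction of Proposition~\ref{11.5}(3) uses only subintegrality and $N^2\subseteq M$, not pointwise minimality. Your route is therefore shorter and isolates exactly what is needed; the paper's route, in exchange, records the extra structural information that each step is pointwise minimal.

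One small point of presentation: your sentence ``the subintegrality gives the decomposition $R_k=R_{k+1}+N_k$'' is accurate only at $k=0$ (where $S=R+N$ genuinely uses the residue-field isomorphism); for $k\geq 1$ the equality $R_{k+1}+N_k=(R+M^{k+1}S)+(M+M^kS)=R+M^kS=R_k$ is pure algebra. This does not affect correctness.
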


\begin{proof} For each $k\in\{1,\ldots,n\}$, set $M_k:=M+M^kS$ and $M_0:=N$. Mimicking the proof of Proposition \ref{11.12}, we get that $(R_k,M_k)$ is a local ring such that $M_{k+1}=MR_k=(R_{k+1}:R_k)\ (*)$ for each $k\in\{0,\ldots,n-1\}$. Since $R\subset S$ is subintegral, so are each  $R_{k+1}\subset R_k$. Then $R_k=R+M_k=R+M^kS$. We begin to show that each $R_{k+1}\subset R_k$ is pointwise minimal. For  $k\in\{1,\ldots,n-1\}$, let $x=a+y\in R_k\setminus R_{k+1},\ a\in R,\ y\in M^kS$, so that $R_{k+1}[x]=R_{k+1}[y]$. For $k=0$, we choose $y\in N\setminus MS$. Then, $M_{k+1}y\subseteq M_{k+1}$ by $(*)$, which also holds for $k=0$. Moreover, $y^2\in M_k^2=M^2+M^{k+1}S+M^{2k}S\subseteq M_{k+1}$ shows that $R_{k+1}\subset R_{k+1}[y]$ is minimal, so that $R_{k+1}\subset R_k$ is pointwise minimal. This also holds for $k=0$ since $N^2\subseteq MS$. In particular, this shows that $R_1\subset S$ is a $\Delta$-extension by Proposition \ref{11.5}. The same Proposition shows that $R_{k+1}\subset R_k$ is a $\Delta$-extension because $M_k^2\subseteq M_{k+1}$ as we have just seen. Then, $R\subset S$  is a $\Delta$-extension because $[R,S]$ is pinched at $\{R_k\}_{k=1}^{n-1}$ according to Corollary \ref{9.03}.
\end{proof}

 \begin{proposition}\label{11.14} An FIP subintegral extension $k\subset S$ over the field $k$ is  a $\Delta$-extension  if  either (1) $|k|=\infty$ or (2) $k\subset S$ is chained.
\end{proposition}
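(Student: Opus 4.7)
The plan is to dispose of case (2) immediately and reduce case (1) to it by showing that, under its hypotheses, the extension $k \subset S$ is already chained. For case (2): since $k$ is a field (hence local with unique prime $(0)$), being chained is exactly the arithmetic condition on $k \subset S$, and Proposition~\ref{1.013} delivers the $\Delta$-property at once.

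For case (1), I would first use FIP together with $|k| = \infty$ to prove that $S$ is simple over $k$. FIP forces $S$ to be a finite-dimensional $k$-vector space, and the finitely many maximal proper $k$-subalgebras of $S$ are proper $k$-subspaces; since no vector space over an infinite field is a finite union of proper subspaces, picking a vector outside their union produces a generator, so $S = k[x]$. Next, subintegrality makes $\operatorname{Spec}(S) \to \operatorname{Spec}(k)$ bijective, so $S$ is local with residue field $k$; combined with simplicity this forces $S \cong k[Y]/(Y^n)$ for some $n \geq 1$ (the minimal polynomial of the generator is a prime power by locality, and the residue-field condition forces its irreducible factor to be linear, after which a translation removes the root).

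Then I would rule out $n \geq 4$ using FIP: for each $a \in k$ the element $Y^2 + a Y^3$ squares to zero in $k[Y]/(Y^n)$ when $n \geq 4$, so $R_a := k + k(Y^2 + aY^3)$ is a $k$-subalgebra; the $R_a$ are pairwise distinct as $a$ ranges over the infinite field $k$, contradicting FIP. Hence $n \in \{1,2,3\}$, and a direct inspection shows $[k, k[Y]/(Y^n)]$ is a chain in each such case (for $n = 3$ the only nontrivial intermediate ring is $k + kY^2$, because any subalgebra containing an element with nonzero coefficient in $Y$ also contains $Y^2$ after squaring and hence contains $Y$). So $k \subset S$ is chained, and case (2) finishes the argument.

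The step I expect to be the main obstacle is the structural reduction in the middle: simultaneously using infiniteness of $k$, FIP, and subintegrality to pin down $S$ as $k[Y]/(Y^n)$ with $n \leq 3$. Each ingredient (the vector-space covering lemma, the local-ring analysis, and the explicit one-parameter family of subrings defeating FIP for $n \geq 4$) is elementary on its own, but it is the combination, and in particular verifying that no extra intermediate rings sneak in for $n = 3$, that requires care.
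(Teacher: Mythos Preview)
Your strategy is essentially the paper's, made self-contained: where the paper cites \cite[Theorem 3.8]{ADM} as a black box to obtain (for $|k|=\infty$) that $S=k[\alpha]$ with $\alpha^3=0$, you rederive this structure by hand via the covering argument and the local analysis; and where the paper finishes by computing $\ell[k,S]=2$ and invoking Theorem~\ref{9.171}, you instead check directly that $[k,\,k[Y]/(Y^3)]$ is the three-element chain $\{k,\ k+kY^2,\ S\}$ and fall back on case~(2). Both endings are equivalent here, and your treatment of case~(2) matches the paper's.

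There is one slip in your FIP step: $(Y^2+aY^3)^2 = Y^4(1+aY)^2$ vanishes in $k[Y]/(Y^n)$ only when $n=4$, not for all $n\geq 4$. The repair is immediate: use $Y^{n-2}+aY^{n-1}$ instead. Since $2(n-2)\geq n$ exactly when $n\geq 4$, these elements square to zero in $k[Y]/(Y^n)$, so $R_a := k + k(Y^{n-2}+aY^{n-1})$ is a $k$-subalgebra; comparing the $Y^{n-1}$-coefficient shows the $R_a$ are pairwise distinct as $a$ runs over the infinite field $k$, contradicting FIP. With this adjustment your argument goes through unchanged.
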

\begin{proof} We use \cite[Theorem 3.8]{ADM} together  with the fact that $k\subset S$ is FIP subintegral. This last condition implies that either (a) : $|k|<\infty$ with $S$ a finite dimensional vector-space such that $k\subset S$ is  subintegral, or (b) $|k|=\infty$ with $S=k[\alpha]$ for some $\alpha\in S$ which satisfies $\alpha^3=0$.  

If (2) holds, then $k\subset S$ is a $\Delta$-extension by Proposition \ref{1.013}.

Assume now (1), that is $|k|=\infty$. Since $k\subset S$ is  subintegral, $S$ is a local ring with maximal ideal $M:=k\alpha+k\alpha^2$. If $\alpha^2=0$, it follows that $k\subset S$ is minimal ramified, and then a $\Delta$-extension. If $\alpha^2\neq 0$, \cite[Lemma 5.4]{DPP2} shows that $\ell[k,S]={\mathrm L}_k(M)=\dim_k(M)=2$. Hence  $k\subset S$ is  a $\Delta$-extension according to Theorem \ref{9.171}.
\end{proof} 

 \begin{remark}\label{11.141} Even if $|k|<\infty$, then, $|k(X)|=\infty$ and we may use 
 (1) of Proposition \ref{11.14}
  for the extension $k(X)\subset S(X)$.  Because of Proposition \ref{8.14}, then  $k\subset S$ is a $\Delta$-extension if so is $k(X)\subset S(X)$. But, in order to use (1), we need that $k(X)\subset S(X)$ has FIP, this last property being equivalent to $k\subset S$ is an FIP chained extension according to \cite[Theorem 4.2]{Pic 4}.
\end{remark} 

\begin{corollary}\label{11.15} Let $R:=\prod_{i=1}^nk_i$ be a product of infinite fields  and $R\subset S$ be an FIP subintegral extension. Then, $R\subset S$ is a $\Delta$-extension.
\end{corollary}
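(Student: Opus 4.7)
The plan is to reduce the statement, via the product decomposition of Proposition \ref{5.13}, to the case of an FIP subintegral extension over a single infinite field, and then invoke Proposition \ref{11.14}(1).

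First, since $R\subset S$ has FIP it has FCP, so Proposition \ref{5.13} applies: there exist FCP extensions $k_i\subseteq S_i$ with $S\cong\prod_{i=1}^n S_i$, and the map $[R,S]\to\prod_{i=1}^n[k_i,S_i]$ sending $T=\prod T_i\mapsto(T_i)_i$ is a lattice isomorphism. In particular, $|[R,S]|<\infty$ forces each $[k_i,S_i]$ to be finite, so each $k_i\subseteq S_i$ has FIP. I would then verify that subintegrality transfers to each factor: if $M_i\in\mathrm{Max}(k_i)$, then $M:=k_1\times\cdots\times M_i\times\cdots\times k_n\in\mathrm{Max}(R)$ satisfies $R_M\cong(k_i)_{M_i}=k_i$ and $S_M\cong(S_i)_{M_i}$, so the canonical maps $\mathrm{Spec}(S_i)\to\mathrm{Spec}(k_i)$ and the associated residual extensions coincide with the corresponding data for $R\subset S$ restricted to the $i$-th block of $\mathrm{Spec}(R)$. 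Since $R\subset S$ is subintegral, each $k_i\subset S_i$ is therefore subintegral.

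Now each $k_i\subset S_i$ is an FIP subintegral extension over the infinite field $k_i$, so Proposition \ref{11.14}(1) gives that each $k_i\subset S_i$ is a $\Delta$-extension. Finally, since the $\Delta$-property is convenient (as recorded in Proposition \ref{5.13}), the fact that every $k_i\subseteq S_i$ is a $\Delta$-extension implies that $R\subseteq S$ itself is a $\Delta$-extension.

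The only real point to check is the transfer of subintegrality to the factors; the rest is a direct invocation of Propositions \ref{5.13} and \ref{11.14}. No substantial obstacle is expected.
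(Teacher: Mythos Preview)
Your proposal is correct and follows essentially the same route as the paper: decompose via Proposition~\ref{5.13}, transfer FIP and subintegrality to each factor $k_i\subseteq S_i$, apply Proposition~\ref{11.14}(1), and pull the $\Delta$-property back through Proposition~\ref{5.13}. The paper's proof is terser, pointing to the proof of Proposition~\ref{5.13} for the transfer of subintegrality rather than spelling out the localization argument as you do, but the strategy is identical.
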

\begin{proof}  Proposition \ref{5.13} says that for each $i\in\mathbb N_n$, there exists ring extensions $k_i\subseteq S_i$ such that $S\cong \prod_{i=1}^n S_i$. Moreover $R \subseteq S$ is a subintegral $\Delta$-extension if and only if so is $k_i\subseteq S_i$ for each $i\in\mathbb N_n$ (see the proof of Proposition  \ref{5.13}). Conclude with Proposition~\ref{11.14}.
 \end{proof}  

Here is an example of an infra-integral $\Delta$-extension of number field orders whose length is $>2$. Its $\Delta$-property is proved by checking the hypotheses of Proposition \ref{9.23} and in particular the  condition (2) of Proposition \ref{9.23}. 
 
 \begin{example}\label{13} In \cite[Example 3.7 (5)]{FCA}, El Fadil, Chillali and Akharraz consider the quartic number field $K$ defined by the irreducible polynomial $X^4+22X+66$. Let $S$ be the ring of integers of $K$. It is shown in this example that $3S=P_1P_2^3$, where $P_1$ and $P_2$ are the maximal ideals of $S$ lying above $3\mathbb Z$. Set $R:=\mathbb Z+3S$. We are going to prove that $R\subset S$ is a $\Delta$-extension. 
 
Since $[K:\mathbb Q]=4$, the fundamental formula \cite[Theorem 1, page 193]{Ri} gives $4=\sum_{i=1}^ge_if_i$, where $g$ is the decomposition number of $3\mathbb Z$ in the extension $\mathbb Q\subset K,\ e_i$ is the ramification index of $P_i$ and $f_i$ is the inertial degree of $P_i$. It follows that $g=2,\ e_1=1,\ e_2=3$ and $f_i=1$ for each $i$. Observe that $S/P_i\cong k$ for each $i$, where $k:=\mathbb Z/3\mathbb Z$. In particular, $R\subset S$ is an infra-integral extension because  $3S=(R:S)$ is a maximal ideal of $R$ and $R/3S\cong\mathbb Z/3\mathbb Z=k\cong S/P_i$ for each $i$, where $P_1$ and $P_2$ are the only maximal ideals of $S$ containing $(R:S)=3S$. Set $W:={}_S^+R$. Then $W\neq S$ because $\mathrm{V}_S(3S)=\{P_1,P_2\}$ and $3S\in\mathrm{Max}(R)$, so that $R\subset S$ is not subintegral. Since $|\mathrm{V}_S(3S)|=2$, it follows that $W\subset S$ is minimal decomposed according to Theorem \ref{minimal} and Proposition \ref{3.5} and then a $\Delta$-extension. Hence, the second part of condition (1) of Proposition \ref{9.23} is satisfied. Now, $3S=P_1P_2^3\subseteq(W:S)\subseteq P_1P_2$ implies $(W:S)=P_1P_2$ by the same reference. Since $(W:S)\in\mathrm{Max}(W)$ with $\mathbb Z/3\mathbb Z\cong(\mathbb Z+P_1P_2)/P_1P_2\subseteq W/P_1P_2\cong S/P_i\cong\mathbb Z/3\mathbb Z$ for each $i=1,2$, this  shows that $W=\mathbb Z+P_1P_2$. Moreover, $3S=(R:W)$. Let $N$ be the maximal ideal of the local ring $W/3S$.  
  
Because $(R:S)=3S$, \cite[Lemma 5.4]{DPP2} gives $\ell[R,W]=\ell[R/3S,W/3S]$

\noindent $={\mathrm L}_k(N)=\dim_k(N)< \dim_k(W/3S)< \dim_k(S/3S)=4$, so that $\ell[R,W]\leq 2$. It follows that $R\subset W$ is a $\Delta$-extension by Theorem  \ref{9.171} and condition (1) of Proposition \ref{9.23} is satisfied. In particular, $\ell[R,S]\leq 3$ since $R\subset S$ is infra-integral. We show that condition (2) of Proposition \ref{9.23} is also satisfied.  
  
Set $U_1:=R+P_1P_2^2,\ V_1:=R+P_2^3,\ V_2:=R+P_2^2$ and $U_2:=R+P_1P_2$. We have the following commutative diagram 
 with $W=U_2$ so that $\ell[R,W]= 2$ and $\ell[R,S]=3$:
$$\begin{matrix}
{} &       {}       & U_1 &      \to     & U_2 &      {}       & {} \\
{} & \nearrow &   {}   &       {}      &    {}  & \searrow & {} \\
R &       {}      &   {}   & \searrow &    {}  &      {}       & S \\
{} &\searrow  &   {}   &      {}       &    {}  & \nearrow & {} \\
{} &     {}        & V_1 &      \to     & V_2 &       {}       & {} 
\end{matrix}$$
Since $(R:S)=P_1P_2^3$, for any $T\in[R,S]$, we have $P_1P_2^3\subseteq (T:S)\ (*)$, so that $P_1$ and $P_2$ are the only maximal ideals of $S$ that may contain $(T:S)$. Moreover, $(T:S)=P_1^{\alpha}P_2^{\beta}$, for some $(\alpha,\beta)\in\{0,1\}\times\{0,1,2,3\}$, because $S$ is a Dedekind domain. In particular, if $T\subset S$ is minimal, it is either ramified, and in this case, $M^2\subseteq (T:S)\subset M$ for some maximal ideal $M$ of $S$. This leads to $(T:S)=P_2^2$ and $T=V_2$. If $T\subset S$ is decomposed, the only possible case is $(T:S)=P_1P_2$ and $T=W=U_2$. 
 
 Let $T,U,V\in[R,S]$ be such that $T\subset U$ is minimal ramified and $T\subset V$ is minimal decomposed. Since $\ell[R,S]=3$, we may have $\ell[T,UV]>2$ only if $T=R$ and $UV=S$. We are going to  show that $V=V_1$.  Of course, the diagram shows that $R\subset V_1$ is minimal and $P_2^3=(V_1:S)$ is a maximal ideal of $V_1$. Since $P_2$ is the only maximal ideal of $S$ lying above $(V_1:S)$, it follows that $V_1\subset S$ is subintegral. Now $R\subset V_1$ is minimal decomposed, because if we suppose that the contrary holds, then $R\subset S$ is subintegral, a contradiction. Assume that there is another $V'\in[R,S]$ such that $R\subset V'$ is minimal decomposed. There would be in $V_1V'$, 3 maximal ideals lying above $3S$ because $\ell[R,V_1V']\geq 2$ in this case, with $R\subset V_1V'$ seminormal infra-integral, a contradiction since only 2 maximal ideals of $S$ lie above $3S$. Then, $V_1$ is the only $V\in[R,S]$ such that $R\subset V$ is minimal decomposed. Now, let $U\in[R,S]$ be such that $R\subset U$ is minimal ramified, so that there is only one maximal ideal $N$ of $U$ lying above $3S=(R:S)$. Since $P_1$ and $P_2$ lie above $3S$ in $R$, they both lie above $N$ in $U$, so that $N\subseteq P_1P_2$. It follows that $NP_2^3\subseteq P_1P_2^4\subset P_1P_2^3$, which shows that $\ell[R,UV_1]=2$ according to Proposition \ref{3.6}.  Since condition (2) of Proposition \ref{9.23} is also satisfied, $R\subset S$ is a $\Delta$-extension.
\end{example}

We now introduce  a property linked to $\Delta$-extensions and to \cite{GH} (see Proposition \ref{9.0}). Let $R\subset S$ be a ring extension. We say that $R\subset S$ is a $\delta$-extension
    if $R[x]+R[y]=R[x+y]$ for any $x,y\in S$ such that $R[x]\neq R[y]$. 
    
\begin{proposition} \label{11.21} Let $R\subset S$ be a ring extension.
\begin{enumerate}
\item $R\subset S$ is a $\delta$-extension if and only if $\sum_{i=1}^nR[x_i]=R[\sum_{i=1}^nx_i]$ for any $x_1,\ldots,x_n\in S$ and any integer $n$ such that $R[x_i]\neq\sum_{j\in I}R[x_j]$, for any $I\subseteq {\mathbb N}_n\setminus\{i\}$ and for any $i\in{\mathbb N}_n$.
\item If $R\subset S$ is a $\delta$-extension, then $R\subset S$ is a $\Delta$-extension.
\item If $R\subset S$ is an FCP  $\delta$-extension, then  $R\subset S$ is simple.
\end{enumerate} 
\end{proposition}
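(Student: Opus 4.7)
The plan is to prove the three items in order: (1) by induction on $n$, (2) as a direct consequence of Proposition~\ref{9.0}, and (3) by combining (1), (2), and the finite generation supplied by FCP. For (1), the reverse implication is immediate on taking $n=2$, while for the forward direction I would induct on $n \geq 2$, the base case $n=2$ being the very definition of a $\delta$-extension. For the inductive step, set $y := \sum_{i=1}^{n-1} x_i$; since the non-redundancy hypothesis on $\{x_1,\ldots,x_n\}$ restricts to the corresponding hypothesis on $\{x_1,\ldots,x_{n-1}\}$ (any $I \subseteq \mathbb{N}_{n-1}\setminus\{i\}$ is in particular a subset of $\mathbb{N}_n\setminus\{i\}$), the induction yields $\sum_{i=1}^{n-1} R[x_i] = R[y]$. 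Applying the hypothesis at $i=n$ with $I=\{1,\ldots,n-1\}$ gives $R[x_n] \neq \sum_{j=1}^{n-1} R[x_j] = R[y]$, so the $\delta$-property applies to the pair $(y, x_n)$ and produces $R[y] + R[x_n] = R[y+x_n] = R[\sum_{i=1}^n x_i]$; combining with the induction step gives the desired equality.

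For (2), by Proposition~\ref{9.0} it suffices to verify $R[s,t] = R[s] + R[t]$ for every $s, t \in S$. If $R[s] = R[t]$, then $t \in R[s]$, so $R[s,t] = R[s] = R[s] + R[t]$. If $R[s] \neq R[t]$, the $\delta$-property yields $R[s]+R[t] = R[s+t]$, which is therefore a subring containing both $s$ and $t$; hence it contains $R[s,t]$, and the reverse inclusion is obvious.

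For (3), FCP implies $R \subset S$ is a finitely generated extension, so we may write $S = R[x_1,\ldots,x_n]$ with $n$ minimal. Part~(2) shows $R \subset S$ is a $\Delta$-extension, and Proposition~\ref{9.0} then gives $S = \sum_{i=1}^n R[x_i]$. The minimality of $n$ forces the non-redundancy hypothesis of~(1): if $R[x_i] = \sum_{j\in I} R[x_j]$ for some nonempty $I \subseteq \mathbb{N}_n \setminus \{i\}$, then $x_i \in R[x_j : j \in I]$, whence $S = R[x_j : j \neq i]$, contradicting the choice of $n$; and the case $I = \emptyset$ would say $x_i \in R$, which again violates minimality. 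Item~(1) then produces $S = R[\sum_{i=1}^n x_i]$, showing $R \subset S$ is simple. The main technical point to watch is the bookkeeping around the $I=\emptyset$ case in~(1), but in the only place it matters, part~(3), it is furnished for free by the minimality of the number of generators.
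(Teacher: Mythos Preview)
Your proof is correct and follows essentially the same approach as the paper: the induction on $n$ via $y=\sum_{i=1}^{n-1}x_i$ for part~(1), the appeal to Proposition~\ref{9.0} for part~(2), and the minimal generating set combined with~(1) and~(2) for part~(3) all mirror the paper's argument, with your version being somewhat more explicit about why the non-redundancy hypothesis restricts to the first $n-1$ generators and about the $I=\emptyset$ edge case.
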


\begin{proof} (1) One implication is obvious. Conversely, assume that $R\subset S$ is a $\delta$-extension, that is $R[x]+R[y]=R[x+y]$ for any $x,y\in S$ such that $R[x]\neq R[y]$, and let $x_1,\ldots,x_n\in S$ be such that $R[x_i]\not\subseteq\sum_{j\in I}R[x_j]$, for any $I\subseteq {\mathbb N}_n\setminus\{i\}$ and for any $i\in{\mathbb N}_n$. We show that $\sum_{i=1}^nR[x_i]=R[\sum_{i=1}^nx_i]$ by induction on $n$. The induction hypothesis is obviously satisfied for $n=2$. Let $n>2$ and set $y:=\sum_{i=1}^{n-1}x_i$, so that $\sum_{i=1}^nx_i=y+x_n$.  Assume that the induction hypothesis holds for $n-1$. It follows that $R[y]=\sum_{i=1}^{n-1}R[x_i]$, with $R[x_n]\not\subseteq R[y]$. Then, $R[\sum_{i=1}^nx_i]=R[y+x_n]=R[y]+R[x_n]$ by the hypothesis, which leads to $R[\sum_{i=1}^nx_i]=\sum_{i=1}^{n-1}R[x_i]+R[x_n]=\sum_{i=1}^nR[x_i]$. Then, it holds for any $n$. 

(2) If $R\subset S$ is a $\delta$-extension, then $R[x]+R[y]=R[x+y]$ for any $x,y\in S$ such that $R[x]\neq R[y]$, so that $R[x]+R[y]\in [R,S]$ for any $x,y\in S$, which shows that $R\subset S$ is a $\Delta$-extension by Proposition \ref{9.0} since $x,y\in R[x]+R[y]$ implies $R[x,y]\subseteq R[x]+R[y]\subseteq R[x,y]$.

(3) Recall that an extension $R\subset S$ is called {\it strongly affine} if each element of $[R,S]$ is a finite-type $R$-algebra. Assume that $R\subset S$ is an FCP $\delta$-extension. Then, $R\subset S$ is strongly affine by \cite[Proposition 3.12]{DPP2}, so that $S=R[x_1,\ldots,x_n]$ for some $x_1,\ldots,x_n\in S$. We can choose the $x_i$'s as a minimal generating set, so that 
$R[x_i]\not\subseteq\sum_{j\in I}R[x_j]$, for any $I\subseteq{\mathbb N}_n\setminus\{i\}$ and for any $i\in{\mathbb N}_n$. Moreover, $R\subset S$ is a $\Delta$-extension by (2). Then, we get $S=\sum_{i=1}^nR[x_i]$ by Proposition \ref{9.0}, so that $S=R[\sum_{i=1}^nx_i]$ and $R\subset S$ is simple.
\end{proof}

\begin{proposition} \label{11.22}  A chained extension   is a $\delta$-extension. 
\end{proposition}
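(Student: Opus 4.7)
The plan is to unpack the definition of a $\delta$-extension directly and exploit the chain hypothesis to reduce to a single comparison. Let $R \subset S$ be chained and fix $x,y \in S$ with $R[x] \ne R[y]$. Since $R[x], R[y] \in [R,S]$ and any two members of $[R,S]$ are comparable, one strictly contains the other; without loss of generality assume $R[x] \subsetneq R[y]$. I then want to show $R[x+y] = R[x] + R[y] = R[y]$.

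The easy inclusion is $R[x+y] \subseteq R[y]$: since $x \in R[y]$ (from the containment) and $y \in R[y]$, we have $x+y \in R[y]$, hence $R[x+y] \subseteq R[y]$. For the reverse, I again invoke that $[R,S]$ is a chain: $R[x]$ and $R[x+y]$ are comparable, so either $R[x] \subseteq R[x+y]$ or $R[x+y] \subseteq R[x]$. In the first case, $y = (x+y) - x \in R[x+y]$, so $R[y] \subseteq R[x+y]$, giving equality. In the second case, $R[x+y] \subseteq R[x]$ forces $y = (x+y) - x \in R[x]$, whence $R[y] \subseteq R[x]$, contradicting $R[x] \subsetneq R[y]$. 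Only the first case survives, so $R[x+y] = R[y] = R[x] + R[y]$, as required.

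There is essentially no obstacle here: once one extracts the strict containment from the chain hypothesis, the three-way chain comparison among $R[x]$, $R[y]$, $R[x+y]$ closes the argument in a few lines. The only very mild subtlety is remembering that $R[x] + R[y]$ denotes the sum of $R$-submodules of $S$ (not the $R$-algebra generated by $x$ and $y$), and that under $R[x] \subseteq R[y]$ this sum collapses to $R[y]$, which is exactly what the chain gives us.
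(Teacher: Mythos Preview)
Your proof is correct and follows essentially the same route as the paper's: both assume without loss of generality $R[x]\subset R[y]$, get $R[x+y]\subseteq R[y]$ immediately, then compare $R[x]$ with $R[x+y]$ via the chain hypothesis, ruling out $R[x+y]\subseteq R[x]$ by the resulting contradiction $y\in R[x]$ and concluding $R[x+y]=R[y]=R[x]+R[y]$ from the surviving case.
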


\begin{proof}
Let $R\subset S$ be a chained extension, and let $x,y\in S$ be such that $R[x]\neq R[y]$. Since $R[x]$ and $ R[y]$ are comparable, assume $R[x]\subset R[y]$, so that $x\in R[y]$ which implies $R[x+y]\subseteq R[y]$. Moreover, $R[x+y]$ and $ R[x]$ are comparable. If $R[x+y]\subseteq R[x]$, then $x+y\in R[x]$ which gives $y\in R[x]$, a contradiction. It follows that $R[x]\subset R[x+y]$, and then $x\in R[x+y]$, which gives $y\in R[x+y]$. Then, $R[x+y]=R[y]=R[y]+ R[x]$ and $R\subset S$ is a $\delta$-extension.
\end{proof}

\begin{corollary} \label{11.23} A Pr\"ufer extension is a $\delta$-extension.
\end{corollary}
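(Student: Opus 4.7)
The natural plan is to reduce Corollary~\ref{11.23} to Proposition~\ref{11.22} by a local--global argument that exploits the fact that Prüfer extensions are arithmetic. By \cite[Theorem~5.17]{Pic 4}, an arbitrary Prüfer extension $R\subset S$ is arithmetic, so $R_P\subseteq S_P$ is chained for every $P\in\mathrm{Spec}(R)$; Proposition~\ref{11.22} then guarantees that each such localisation is already a $\delta$-extension. Given $x,y\in S$ with $R[x]\neq R[y]$, the inclusion $R[x+y]\subseteq R[x]+R[y]$ is immediate: by Proposition~\ref{9.3} Prüfer implies $\Delta$, so $R[x]+R[y]$ is an $R$-subalgebra of $S$ and it visibly contains $x+y$. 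The real content is therefore the reverse inclusion, and by symmetry this reduces to the single statement $x\in R[x+y]$, since then $y=(x+y)-x\in R[x+y]$ for free.

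Since $R[x+y]$ is an $R$-submodule of $S$ and is determined by its localisations, it suffices to verify $x/1\in R_P[(x+y)/1]$ at every $P\in\mathrm{Spec}(R)$. Two easy cases dispose of most primes: if $P\notin\mathrm{Supp}_R(R[x]/R)$ then $x/1\in R_P\subseteq R_P[(x+y)/1]$; if $P\notin\mathrm{Supp}_R(R[y]/R)$ then $y/1\in R_P$, and hence $x/1=(x+y)/1-y/1\in R_P[(x+y)/1]$. Attention therefore concentrates on primes $P$ lying in both supports. At such a $P$ the two subextensions $R_P[x/1]$ and $R_P[y/1]$ are proper in the chain $R_P\subseteq S_P$ and are therefore comparable; when they are distinct, the chained $\delta$-property supplied by Proposition~\ref{11.22} applied inside $R_P\subseteq S_P$ gives $R_P[(x+y)/1]=R_P[x/1]+R_P[y/1]\ni x/1$, which is exactly what is needed.

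The main obstacle I expect is the residual possibility that $R_P[x/1]=R_P[y/1]$ at some common-support prime $P$ even though $R[x]\neq R[y]$ globally: the chained $\delta$-property in Proposition~\ref{11.22} has nothing to say about this configuration. Controlling it is where the Prüfer hypothesis must be used beyond the mere local chainedness that gives arithmeticity. The idea is to exploit the rigidity of sub-extensions of a Prüfer extension: each $R[z]$ (for $z\in S$) is a flat epimorphic subextension, hence a localisation of $R$ determined by which primes of $R$ become invertible, so one can read off equality or inequality of $R[x]$ and $R[y]$ from the spectral picture. Combining this with Proposition~\ref{desc}, one either passes to the Nagata ring (to enlarge residues) or restricts to a localisation of $R$ at which the pathology disappears and then descends, reducing the argument cleanly to the chained case. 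Once the delicate subcase is handled, reassembling the local identities into the global identity $R[x+y]=R[x]+R[y]$ is routine.
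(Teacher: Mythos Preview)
Your overall strategy---localise, use that Pr\"ufer extensions are arithmetic, apply Proposition~\ref{11.22} locally, and reassemble---is exactly the paper's approach. You have, however, been more careful than the paper: the paper simply asserts $R_M[x]+R_M[y]=R_M[x+y]$ for every $M$, glossing over the very case you flag, namely $R_M[x]=R_M[y]$ with $R[x]\neq R[y]$. Your instinct that this case is the crux is correct.

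Unfortunately, the fix you sketch cannot succeed, because the corollary as stated is false. Take $R=\{a/b\in\mathbb Q:\gcd(b,6)=1\}$, a semilocal PID with maximal ideals $2R$ and $3R$, and $S=\mathbb Q$; this is a Pr\"ufer extension. Let $x=1/2$ and $y=-1/6$. Then $R[x]=\mathbb Z_{(3)}$ while $R[y]=R[1/6]=\mathbb Q$, so $R[x]\neq R[y]$; yet $x+y=1/3$ gives $R[x+y]=\mathbb Z_{(2)}\subsetneq\mathbb Q=R[x]+R[y]$. The problem is precisely the one you isolated: at the prime $2R$ one has $R_{2R}[x]=R_{2R}[y]=\mathbb Q$, and there the chained $\delta$-property of Proposition~\ref{11.22} says nothing, while $R_{2R}[x+y]=R_{2R}\subsetneq\mathbb Q$. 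No amount of spectral bookkeeping, Nagata-ring passage, or flat descent can repair this, since the global identity $R[x+y]=R[x]+R[y]$ simply fails. So your proposal and the paper's proof share the same genuine gap, and it is fatal.
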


\begin{proof}
Let $R\subset S$ be a Pr\"ufer extension, and so is $R_M\subset S_M$ for any $M\in\mathrm{MSupp}(S/R)$. Let  $M\in\mathrm{MSupp}(S/R)$. 
 According to \cite[Proposition 1.2]{Pic 5},  $R_M\subset S_M$ is chained. Indeed, $R_M\subset S_M$ is Pr\"ufer for each $M\in\mathrm{Supp}(S/R)$ by \cite[Proposition 1.1]{Pic 5}, and since $R_M$ is local, there exists $P\in\mathrm{Spec}(R_M)$ such that $S_M=(R_M)_P,\ P=PS_M$, with $R_M/P$ a valuation domain with quotient field $S_M/P$. Then, $R_M/P\subset S_M/P$ is chained, and so is $R_M\subset S_M$. It follows that $R_M\subset S_M$ is  a $\delta$-extension by Proposition \ref{11.22}. Let $x,y\in S$. Then, $R_M[x]+R_M[y]=R_M[x+y]$. Since this holds for any $M\in\mathrm{MSupp}(S/R)$, we get that $R[x]+R[y]=R[x+y]$, so that $R\subset S$ is  a $\delta$-extension.
\end{proof}

\begin{example} \label{11.24}(1) Let $R\subset T$ and $R\subset U$ be two minimal extensions such that  $S:=TU$ exists. If $R\subset S$ satisfies one of the two following conditions, then $R\subset S$   is  a $\delta$-extension:

(a) $\mathcal{C}(R,T)\neq\mathcal{C}(R,U)$.

(b) $R\subset T$ and $R\subset U$ are two minimal infra-integral extensions of different types such that $\ell[R,S]=2$.

In both cases, we can set $T=R[x]$ and $U=R[y]$.

If case (a) holds, let $M:=\mathcal{C}(R,T)$ and $N:=\mathcal{C}(R,U)$. Then, $U_M=R_M$ and $T_N=R_N$ imply $(T+U)_M=T_M=S_M$ and $(T+U)_N=U_N=S_N$, so that $T+U=S$. Since $x+y\not\in T,U$, we have $S=R[x,y]=R[x+y]$ because $[R,S]=\{R,T,U,S\}$ by Proposition \ref{3.6}. Then, $R[x]+R[y]=R[x+y]$. Finally, any $z\in S$ is such that $R[z]\in \{R,T,U,S\}$, so that $R\subset S$ is  a $\delta$-extension.

If case (b) holds, we can  assume $M:=\mathcal{C}(R,T)=\mathcal{C}(R,U)$ (if not, then  (a) holds). Now, $\ell[R,S]=2$ implies $[R,S]=\{R,T,U,S\}$ by  \cite[Theorem 6.1 (5)]{Pic 6} because $|[R,S]|\geq 4$. We still have $S=R[x+y]$ because $x+y\not\in R,T,U$. But, in this case, we can choose $xy\in M$ and $S=R+Rx+Ry$ by  Proposition \ref{3.6}. The end of the  proof of case (a) still holds, giving that  $R\subset S$ is  a $\delta$-extension.
 
(2) Proposition \ref{11.21} asserts that a $\delta$-extension is a $\Delta$-extension. The converse does not hold. Let $R\subset S$ be a non-simple FCP extension. Then, $R\subset S$ is not a $\delta$-extension by Proposition \ref{11.21}. Take $R:=\mathbb Z/2\mathbb Z$ and $S:=R^3$. According to Proposition \ref{11.5}, $R\subset S$ is a pointwise minimal $\Delta$-extension which is not minimal. In particular, $R\subset S$ is not simple by the definition of a pointwise minimal extension. Then, $R\subset S$ is a $\Delta$-extension which is not a $\delta$-extension.
\end{example}

\end{document}